\documentclass[11pt]{amsart}

\usepackage{amssymb, amstext, amscd, amsmath, color}

\usepackage{url}
\usepackage{tikz}
\usepackage{epstopdf}
\usepackage{amsfonts}
\usepackage{amsthm}
\usepackage{amsfonts}
\usepackage{array}
\usepackage{epsfig}
\usepackage{eucal}
\usepackage{latexsym}
\usepackage{mathrsfs}
\usepackage{textcomp}
\usepackage{verbatim}
\usepackage{setspace}
\usepackage{enumerate}
\usepackage[colorlinks=true,linkcolor=blue,urlcolor=blue,citecolor=red]{hyperref}

\newtheorem{thm}{Theorem}[section]

\newtheorem{claim}[thm]{Claim}

\newtheorem{cor}[thm]{Corollary}

\newtheorem{example}[thm]{Example}

\newtheorem{lem}[thm]{Lemma}

\newtheorem{prop}[thm]{Proposition}
\newtheorem{rem}[thm]{Remark}

\numberwithin{equation}{section}

\newenvironment{case}[1]
  {\innercustomthm}
  {\endinnercustomthm}

\renewcommand{\H}{{\mathcal{H}}}

  \newcommand{\U}{{\mathcal{U}}}

\newcommand{\rank}{\operatorname{rank}}

\newcommand{\Iso}{\operatorname{Isom}}
\newcommand{\reg}{\operatorname{Reg}}
\newcommand{\sreg}{\operatorname{Str}}
\newcommand{\creg}{\operatorname{Com}}
\newcommand{\csreg}{\operatorname{ComStr}}
\newcommand{\grig}{\operatorname{GRig}}
\newcommand{\config}{\mathcal{C}(G,p)}
\newcommand{\confige}{\mathcal{C}(G - vw,p)}
\newcommand{\sm}{\setminus}

\usepackage{blkarray}

\textwidth   16.1cm \textheight  22.3cm \topmargin  -0.4in
\oddsidemargin  -0.05in \evensidemargin  -0.05in

\tikzstyle{vertex}=[circle, draw, fill=black, inner sep=0pt, minimum size=3pt]
\tikzstyle{fadedvertex}=[vertex, fill=black!30]
\tikzstyle{edge}=[line width=1pt]
\tikzstyle{fadededge}=[edge,color=black!30]

\begin{document}

\title[Uniquely realisable graphs in analytic normed planes]{Uniquely realisable graphs in analytic normed planes}
\author[Sean Dewar]{Sean Dewar}
\address{Johann Radon Institute\\ Altenberger Strasse 69\\ 4040\\ Linz\\
Austria}
\email{sean.dewar@ricam.oeaw.ac.at}
\author[John Hewetson]{John Hewetson}
\address{Dept.\ Math.\ Stats.\\ Lancaster University\\
Lancaster LA1 4YF \\U.K. }
\email{j.hewetson2@lancaster.ac.uk}
\author[Anthony Nixon]{Anthony Nixon}
\address{Dept.\ Math.\ Stats.\\ Lancaster University\\
Lancaster LA1 4YF \\U.K.}
\email{a.nixon@lancaster.ac.uk}
\thanks{2010 {\it  Mathematics Subject Classification.}
52C25, 05C10, 52A21, 53A35, 52B40\\
Key words and phrases: bar-joint framework, global rigidity, non-Euclidean framework, sparsity matroid, connected matroid, recursive construction, normed spaces}

\begin{abstract}
A bar-joint framework $(G,p)$ in the Euclidean space $\mathbb{E}^d$ is globally rigid if it is the unique realisation, up to rigid congruences, of $G$ in $\mathbb{E}^d$ with the edge lengths of $(G,p)$. Building on key results of Hendrickson \cite{hendrickson} and Connelly \cite{C2005}, Jackson and Jord\'{a}n \cite{J&J} gave a complete combinatorial characterisation of when a generic framework is global rigidity in $\mathbb{E}^2$. We prove an analogous result when the Euclidean norm is replaced by any norm that is analytic on $\mathbb{R}^2 \setminus \{0\}$. More precisely, we show that a graph $G=(V,E)$ is globally rigid in a non-Euclidean analytic normed plane if and only if $G$ is 2-connected and $G-e$ contains 2 edge-disjoint spanning trees for all $e\in E$. 
The main technical tool is a recursive construction of 2-connected and redundantly rigid graphs in analytic normed planes. We also obtain some sufficient conditions for global rigidity as corollaries of our main result and prove that the analogous necessary conditions hold in $d$-dimensional analytic normed spaces.
\end{abstract}

\date{}
\maketitle

\section{Introduction}

A \emph{bar-joint framework} $(G,p)$ in $\mathbb{E}^d$ is an ordered pair consisting of a finite, simple graph $G=(V,E)$ and a map $p:V\rightarrow \mathbb{R}^d$.
Given a framework $(G,p)$ in $\mathbb{E}^d$, a fundamental question is whether the edge lengths of $(G,p)$ determine a unique framework up to rigid congruences of $\mathbb{E}^d$. This is the concept of global rigidity which has been researched intensively over the last 40 years (e.g. \cite{con,GHT,J&J}).

The study of global rigidity is motivated by a number of fundamental questions in various areas of applied mathematics, including protein structure determination \cite{Lav} and sensor network localisation \cite{And}. These applications often involve distance measures other than the standard Euclidean norm, which motivates the topic of this paper; the study of global rigidity for frameworks in normed spaces. A research program in this direction was initiated by the first and third author in \cite{DN}. A (real) normed plane is \emph{analytic} if it is non-Euclidean and the norm restricted to the non-zero points is a real analytic function. While analytic norms are special, we can approximate any norm by a uniformly convergent sequence of analytic norms. Hence it is reasonable to expect that the analytic case is representative of the general situation. In this article we give a complete combinatorial description of global rigidity for the class of analytic normed planes.

One may also consider the related concept of local rigidity, where there must exist only a finite number of realisations (up to isometric transformations) in the given normed space. This has also been studied intensively for many years in the Euclidean case (see, for example, \cite{asi-rot,laman,PG} and the recent survey \cite{NSW}), and research in the non-Euclidean setting was recently initiated by Kitson and Power \cite{kit-pow-1}. Indeed the main result of \cite{kit-pow-1}, which we describe below, will be important in our analysis. Their work influenced subsequent study of a number of related rigidity problems, including for polyhedral and matrix norms \cite{K15,KL}, general normed planes \cite{D19}, higher dimensions \cite{DKN}, and in the presence of symmetry \cite{KKM,KNS}.

Many papers have also considered the ostensibly similar concepts of rigidity and global rigidity under non-Euclidean metrics, such as hyperbolic and Minkowski metrics \cite{GTfields,NW, SalW}. However these contexts are sufficiently similar to the Euclidean setting for the same combinatorial techniques to be applied. Moreover, at the level of infinitesimal rigidity there is an elegant projective invariance \cite{NSW,SalW}. In the case of non-Euclidean norms these conveniences are unavailable, and alternative combinatorial ideas are required to study new classes of graphs and matroids. Furthermore, since distance constraints are not quadratic in the non-Euclidean case, unlike in the Euclidean case, we do not have the luxury of an obviously accessible ``equilibrium stress matrix'' approach (\cite{con,C2005,GHT} inter alia). We overcome these issues by restricting to analytic normed planes, deploying combinatorial tools from \cite{DHN} and \cite{JNglobal}, and making use of the analytic geometry techniques developed in \cite{DN}. In particular we will apply the theory of generalised rigid body motions introduced by the first and third author in \cite{DN}. 

The main result of the paper is as follows. (The definitions of globally rigid and redundantly rigid graphs are formalised in Section \ref{subsec:defs}.)

\begin{thm}\label{thm:grne}
	Let $G=(V,E)$ be a graph with $|V|\geq 2$ and $X$ be an analytic normed plane. 
	Then $G$ is globally rigid in $X$ if and only if $G$ is 2-connected and redundantly rigid in $X$.
\end{thm}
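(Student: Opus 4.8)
The plan is to prove the two implications separately, with necessity following the classical template of Hendrickson and nearly all of the work going into sufficiency.

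\emph{Necessity.} Suppose a generic realisation $(G,p)$ is globally rigid in $X$. Then $(G,p)$ is rigid, since a non-trivial analytic flex would sweep out a continuum of pairwise incongruent realisations; so $G$ is rigid in $X$, and in particular $G$ contains two edge-disjoint spanning trees and is therefore $2$-edge-connected. To obtain redundant rigidity I would argue as Hendrickson: if $G-e$ fails to be rigid for some edge $e=vw$, take a non-trivial analytic flex $(p_t)$ of $(G-e,p)$; the function $t\mapsto\|p_t(v)-p_t(w)\|_X$ is analytic, and, using that the realisation space of $G-e$ with the prescribed lengths is compact modulo translation (as $G-e$ is connected), one sees that either this function is constant, so that $(G,p)$ itself flexes and $G$ is not rigid, a contradiction; or it attains the value $\|p(v)-p(w)\|_X$ at some realisation of $G-e$ incongruent to $p$, hence at a realisation of $(G,p)$ incongruent to $p$, contradicting global rigidity. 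For $2$-connectivity, if $v$ were a cut vertex with $G=G_1\cup G_2$ and $V(G_1)\cap V(G_2)=\{v\}$, then, as $x\mapsto -x$ is an isometry of every norm, the point reflection $x\mapsto 2p(v)-x$ is an isometry of $X$ fixing $p(v)$; the map equal to $p$ on $G_1$ and to its composite with this reflection on $G_2$ is then a realisation of $(G,p)$, and since $G$ is rigid each of $G_1,G_2$ has at least three vertices, so for generic $p$ the set $p(V(G_1))$ is affinely spanning and the only affine isometry fixing it is the identity; hence the new realisation is incongruent to $p$. All of this goes through in analytic normed spaces of every dimension, which yields the stated $d$-dimensional necessary conditions.

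\emph{Sufficiency: reduction to operations and base cases.} For sufficiency I would first use the analyticity of the norm, via the methods of \cite{DN}, to show that global rigidity in $X$ is a generic property of the graph, so that it suffices to exhibit, for each $2$-connected redundantly rigid $G$, one globally rigid generic realisation, and one may reason purely combinatorially. Two operations preserving global rigidity in $X$ are then required: addition of an edge, which is immediate because a generic realisation of $G+e$ is a generic realisation of $G$ whose realisations form a subset of those of $(G,p)$; and the $1$-extension, namely deleting an edge $uv$ and adjoining a new vertex adjacent to $u$, $v$ and one further vertex, which is the serious point. Granting both, the theorem reduces, via a recursive construction of $2$-connected redundantly rigid graphs in analytic normed planes (the paper's main combinatorial contribution, resting on tools from \cite{DHN}) — which expresses every such graph in terms of a short explicit list of base graphs, the smallest being $K_5-e$, together with $1$-extensions and edge additions, with every intermediate graph again $2$-connected and redundantly rigid (this uses, among other things, connectedness of the $(2,2)$-sparsity matroid of such graphs) — to checking that the base graphs are globally rigid in $X$. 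This last is done by hand from the analytic-geometric description of norm-distance constraints in \cite{DN}; graphs on at most four vertices are never simultaneously $2$-connected and redundantly rigid, nor globally rigid, so the statement is vacuous there.

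\emph{The $1$-extension lemma, and the main obstacle.} The heart of the proof, and the step I expect to be the main obstacle, is that the $1$-extension preserves global rigidity in $X$. Let $G$ be globally rigid in $X$, let $G'$ be its $1$-extension with new degree-three vertex $z$ adjacent to $u,v,x$ and deleted edge $uv$, let $p'$ be generic and let $q'$ be any realisation of $(G',p')$; the goal is that $q'$ be congruent to $p'$. Since $p'$ restricted to $V(G')\setminus\{z\}$, with $uv$ reinstated, is a generic realisation of $G$ and hence globally rigid, any realisation of that framework manufactured from $q'$ must be congruent to $p'|_G$, after which one need only pin down the position of $z$ from its three distance constraints. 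Constructing such a realisation amounts to establishing $\|q'(u)-q'(v)\|_X=\|p'(u)-p'(v)\|_X$, after possibly composing $q'$ with a generalised rigid body motion of $X$, and then ruling out that the admissible positions of $z$ produce incongruent realisations; both points are handled with the analytic-geometric techniques of \cite{DN} and the theory of generalised rigid body motions introduced there, which here do the job performed by equilibrium stress matrices in Connelly's Euclidean argument. Over $\mathbb{E}^2$ that argument runs through the transformation of stress matrices under vertex splitting, and no such calculus is available to us because norm-distance constraints are not quadratic; it is precisely the \emph{analytic}, rather than merely smooth, hypothesis that excludes a continuum of accidental alternative realisations and makes the \cite{DN} analysis applicable. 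A secondary, purely combinatorial, hurdle — that the recursion can always be performed within the class of $2$-connected redundantly rigid graphs and that the $1$-extension can always be taken to act on a genuine edge of the current graph — is dealt with by the matroidal arguments of \cite{DHN}.
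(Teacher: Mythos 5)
Your necessity argument follows the same route as the paper (negation about a cut vertex for $2$-connectivity; compactness of the configuration space of $G-vw$ modulo isometries plus the intermediate value theorem for redundant rigidity), though you should note that the case where $t\mapsto\|p_t(v)-p_t(w)\|$ attains its extremum at $t=0$ needs a separate argument via infinitesimal rigidity, and that the paper works with completely strongly regular rather than generic placements. Two remarks on your set-up for sufficiency: it is \emph{not} known that global rigidity is a generic property in these spaces --- the paper explicitly flags this as open after Proposition \ref{prop:union} --- and the correct substitute is the averaging theorem (Theorem \ref{thm:ave}), which upgrades a single globally rigid \emph{and infinitesimally rigid} framework to an open set of such; this is why every geometric step in the paper also tracks infinitesimal rigidity.

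The genuine gap is in your recursive construction. You claim every $2$-connected redundantly rigid graph is obtained from a short list of base graphs by $1$-extensions and edge additions, with all intermediate graphs in the class. This is false: the $(2,2)$-circuit $B_2$ (two copies of $K_4$ sharing a vertex, plus one extra edge) is $2$-connected and redundantly rigid, but every degree-$3$ vertex of $B_2$ lies in a $K_4$, so its neighbours are pairwise adjacent and no $1$-reduction can be performed at all; nor can $B_2$ arise by edge addition from a redundantly rigid graph, since deleting any edge leaves a $(2,2)$-tight graph. The paper's Theorem \ref{thm:1} therefore needs two further operations --- the $K_4^-$-extension and the \emph{generalised vertex split} (of which the $1$-extension is only a special case) --- and establishing that these are always available is the main combinatorial content of Sections \ref{sec:circuits}--\ref{sec:comb} (ear decompositions of $\mathcal{M}(2,2)$-connected graphs, Theorems \ref{thm:key} and \ref{thm:recurse2}); it is the paper's own contribution, not imported from \cite{DHN}. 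Correspondingly, your geometric burden is not just the $1$-extension: you need that $K_4^-$-extensions preserve global rigidity (Theorem \ref{thm:k4-}, proved here via $z$-reflections and a perturbation placing the two new vertices very close to the ends of the subdivided edge) and that generalised vertex splits do so under a rigidity hypothesis on $G'-uv$ (Theorem \ref{thm:vsplitglobal}, from \cite{DHN}). Without these, the induction cannot reach graphs such as $B_2$ or, more generally, graphs with non-trivial $2$-vertex-separations.
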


To prove the combinatorial part of the theorem we make use of the strategy developed in \cite{JKN,JNglobal}. Those papers concern the global rigidity of frameworks in $\mathbb{R}^3$ that are restricted to lie on the surface of a cylinder. While we know of no direct equivalence, it turns out that, for both rigidity and global rigidity, the `generic' situation with any analytic norm is equivalent to the generic situation on the surface of a cylinder with the Euclidean norm. That is, the graphs underlying generically rigid frameworks on the surface of the cylinder with the Euclidean norm are the same as those underlying `generically' rigid frameworks under any analytic norm. The equivalence for infinitesimal rigidity can be read off by comparing the main results of \cite{dew2,kit-pow-1} and \cite{NOP}, while the corresponding equivalence for global rigidity follows in the same manner by comparing our main result to the main result of \cite{JNglobal}.

There is one major combinatorial complication that we face in extending the strategy from \cite{JNglobal}. In that context the key difficulty was to establish global rigidity for `circuits' in an appropriate matroid; the general case then followed by a relatively straightforward argument. We may establish global rigidity for the appropriate circuits in analytic normed planes using their combinatorial result (stated as Theorem \ref{thm:construction} below) in combination with the results of \cite{DHN,DN} and Sections \ref{sec:hendrickson} and \ref{sec:proof}. However in Section \ref{sec:circuits} we will explain why their argument for the general case cannot (to our knowledge) be applied to analytic normed planes. Instead we use `ear decompositions' to develop a more detailed combinatorial theory of the class of redundantly rigid and 2-connected graphs in analytic normed planes. Our main technical combinatorial result is a recursive construction, Theorem \ref{thm:1}, which is a generalisation of \cite[Theorem 2.1]{JNglobal} from the special case of circuits to arbitrary redundantly rigid and 2-connected graphs.

We conclude the introduction with an outline of the paper. In Section \ref{sec:background} we introduce the necessary background from rigidity theory, survey the Euclidean case and what is known in other normed planes, and provide key background results concerning analytic normed planes. We then prove precise analogues to the so-called Hendrickson conditions (see \cite{hendrickson} for the Euclidean case) in Section \ref{sec:hendrickson}; these are graph theoretic conditions necessary for a framework to be globally rigid in an arbitrary dimension analytic normed space with finitely many linear isometries. The remainder of the paper is restricted to the 2-dimensional case, and we proceed in a purely combinatorial manner. Sections \ref{sec:circuits}, \ref{sec:mcon} and \ref{sec:comb} form the main technical part of the paper. In these sections we analyse the simple $(2,2)$-sparse matroid $\mathcal{M}(2,2)$, equate the necessary conditions of Section \ref{sec:hendrickson} with a connectivity condition in $\mathcal{M}(2,2)$, deduce key `gluing' properties of the graphs for which $\mathcal{M}(2,2)$ satisfies the connectivity condition, and use three graph operations to give a recursive construction of all such graphs. We then use this construction in Section \ref{sec:proof} to prove that the necessary graph conditions are also sufficient, completing the proof of our characterisation. The geometric extension part of our inductive proof relies on the results of \cite{DHN,DN} and one new geometric argument which we develop. We conclude this section, and the paper, with a number of results giving sufficient combinatorial conditions for a graph to be global rigidity in analytic normed planes. These corollaries to Theorem \ref{thm:grne} include connectivity conditions, spectral conditions and vertex-redundant rigidity conditions which guarantee global rigidity, as well as evaluations of when a transitive graph, an Erd\"{o}s-R\'{e}nyi random graph and a random regular graph are globally rigid.

\section{Normed space rigidity theory} \label{sec:background}

In this section we introduce the concepts and results from rigidity theory in normed spaces that we require throughout the paper.

\subsection{Normed space geometry}

All normed spaces will be finite-dimensional real linear spaces.
There are two main types:
\emph{Euclidean spaces}, where the norm satisfies the parallelogram law, and \emph{non-Euclidean spaces}, where the norm does not.
As Euclidean spaces of the same dimension are isometrically isomorphic,
we shall denote any Euclidean space of dimension $d$ by $\mathbb{E}^d$. 

We require the following terminology.
The dual of a normed space $X$ will be denoted by $X^*$
and will have the norm 
\begin{align*}
    \|f\|^* := \sup \{ f(z) : z \in X,~ \|z\| = 1\}.
\end{align*}
A point $x$ in a normed space $X$ is \emph{supported by $f \in X^*$} (equivalently, $f$ is a \emph{support functional of $x$}) if $f(x) = \|x\|^2$ and $\|f\|^* = \|x\|$.
It follows from the Hahn-Banach theorem that every point has at least one support functional, and every linear functional of $X$ is a support functional of a point in $X$.
A non-zero point $x \in X$ is said to be \emph{smooth} if it has exactly one support functional, which we will denote by $\varphi_x$;
we will also fix $\varphi_0$ to be the zero function of $X$.
We say a normed space $X$ is \emph{smooth} if every non-zero point is smooth,
and we say $X$ is \emph{strictly convex} if every element of $X^*$ is the support functional of exactly one point of $X$.
Given a smooth normed space $X$,
we define
\begin{align*}
    \varphi : X \rightarrow X^*, ~ x \mapsto \varphi_x
\end{align*}
to be the \emph{support map of $X$}.
The map $\varphi$ is continuous and homogeneous (i.e., $\varphi_{cx} = c \varphi_x$ for all $c \in \mathbb{R}$ and $x \in X$), and will be a homeomorphism if and only if $X$ is strictly convex (see for example \cite[Part III]{beauzamy} or \cite[Ch. II]{cio}).

We shall be particularly interested in \emph{analytic normed spaces};
i.e., non-Euclidean normed spaces where the norm is a real analytic function when restricted to the set of non-zero points in the space.
Although the norm of a Euclidean space will also be a real analytic function when restricted to its set of non-zero points,
we opt to consider Euclidean spaces as entirely separate entities.
All analytic normed spaces are smooth and strictly convex \cite[Lemma 3.1]{DN}.

\subsection{Rigidity in normed spaces}

For a finite set $V$ and normed space $X$, a \emph{placement of $V$ in $X$} is any element $p = (p_v)_{v \in V}$ in the vector space $X^V$.
We denote the restriction of $p$ to a subset $U \subset V$ by $p|_U := (p_v)_{v \in U}$.
For a given placement $p$ and any map $g:X \rightarrow X$, 
we define the placement $g \circ p := (g(p_v))_{v \in V}$.
A placement is \emph{spanning} if the affine span of the set $\{p_v : v \in V\}$ is $X$;
equivalently, $p$ is spanning if the only affine map $g:X \rightarrow X$ where $g \circ p = p$ is the identity map, denoted by $I$.
Given that $\Iso (X)$ is the set of isometries of $X$,
we say a placement is \emph{isometrically full} if, given $g \in \Iso (X)$, we have $g \circ p = p$ if and only if $g = I$.
If $|V| \geq \dim X +1$,
then the set of placements of $V$ that are not spanning form a \emph{null} subset of $X^V$ (i.e., a set with Lebesgue measure zero) and hence the set of spanning placements is a \emph{conull} subset of $X^V$ (i.e., a set with a null complement set)\footnote{Any conull subset is dense, and any closed null subset is nowhere dense, but the converse of these statements are not true;
for example,
the set of rationals is a null dense subset of $\mathbb{R}$,
and any fat Cantor set is a closed nowhere dense subset of $\mathbb{R}$ with positive Lebesgue measure.}. 
Since all isometries of a normed space are affine \cite{mazurulam},
the set of isometrically full placements forms a conull subset of $X^V$.

A \emph{framework in $X$} is a pair $(G,p)$ where $G = (V,E)$ is a (finite, simple) graph and $p$ is a placement of $V$ in $X$;
here we also say that $p$ is a \emph{placement of $G$ in $X$}.
We denote the set of placements of $G$ with non-zero edge lengths by
\begin{align*}
X_G := \left\{ p \in X^{V} : p_v \neq p_w \text{ for all } vw \in E \right\}.
\end{align*}
$X_G$ is an open conull subset of $X^{V}$; 
further, if $\dim X >1$ then $X_G$ is path-connected.
We define the \emph{rigidity map of $G$ in $X$} to be the map
\begin{align*}
f_G : X^{V}  \rightarrow \mathbb{R}^{E}, ~ p = (p_v)_{v \in V} \mapsto \left( \frac{1}{2}\|p_v - p_w \|^2 \right)_{vw \in E}.
\end{align*}
Two frameworks $(G,p)$ and $(G,q)$ in a normed space $X$ are \emph{equivalent} 
(denoted $(G,p) \sim (G,p')$) if $f_G(p) = f_G(p')$,
and are \emph{congruent} (or $p \sim p'$) if there exists $g \in \Iso (X)$ such that $p' = g \circ p$.
We say that a framework $(G,p)$ in $X$ is \emph{locally rigid} if there exists $\epsilon >0$ such that every framework $(G,q)$ in $X$ that is equivalent to $(G,p)$ and has $\|p_v-q_v\| < \epsilon$ for each $v \in V$ is necessarily congruent to $(G,p)$;
    otherwise
    $(G,p)$ is \emph{locally flexible}. (See Figures \ref{fig:grexamplesone} and \ref{fig:grexamplesone2} for basic examples.)
    We say that $(G,p)$ is \emph{globally rigid} if every framework in $X$ that is equivalent to $(G,p)$ is also congruent to $(G,p)$.

\begin{figure}[htp]
\begin{center}
     \begin{tikzpicture}[scale=0.8]
    \node[vertex] (a) at (2,0.5) {};
	\node[vertex,label={[white]270:$y$}] (b) at (1,-1) {};
    \node[vertex] (c) at (1,1) {};
	
    \node[vertex] (e) at (-2,-0.5) {};
	\node[vertex] (f) at (-1,-1) {};
    \node[vertex] (g) at (-1,1) {};
	
	\draw[edge] (a)edge(b);
	\draw[edge] (a)edge(c);
	\draw[edge] (b)edge(c);
	
	\draw[edge] (a)edge(e);
	\draw[edge] (b)edge(f);
	\draw[edge] (c)edge(g);
	
	\draw[edge] (e)edge(f);
	\draw[edge] (e)edge(g);
	\draw[edge] (f)edge(g);
\end{tikzpicture}\qquad \qquad
    \begin{tikzpicture}[scale=0.8]
    \node[vertex] (a) at (-2,-1) {};
	\node[vertex] (b) at (-2,1) {};
	
    \node[vertex,label={90:$x$}] (c) at (0,1) {};
	\node[vertex,label={270:$y$}] (d) at (0,-1) {};
	
    \node[vertex] (e) at (2,-1) {};
	\node[vertex] (f) at (2,1) {};

	\draw[edge] (a)edge(b);
	\draw[edge] (a)edge(c);
	\draw[edge] (a)edge(d);
	\draw[edge] (b)edge(c);
	\draw[edge] (b)edge(d);
    
	\draw[edge] (c)edge(d);
	\draw[edge] (c)edge(e);
	\draw[edge] (c)edge(f);
	\draw[edge] (d)edge(e);
	\draw[edge] (d)edge(f);
	\draw[edge] (e)edge(f);
\end{tikzpicture}
\end{center}
\vspace{-0.3cm}
\caption{(Left): A locally rigid but not globally rigid framework in the Euclidean plane.
(Right): A locally rigid framework in $\ell_p^2$ for any $2 < p < \infty$; see \cite[Proposition 6.6]{D21} for a direct proof.
Although the underlying graph does have globally rigid placements for all even values of $p$ (Theorem \ref{thm:1}), this particular framework is not globally rigid since the labelled vertices $x,y$ lie on a line of reflection in any $\ell_p^2$.}
\label{fig:grexamplesone}
\end{figure}
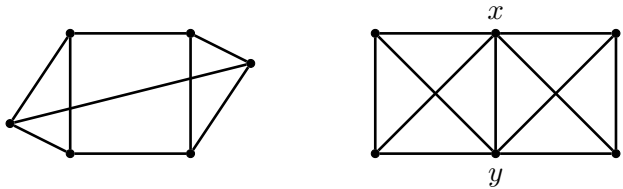

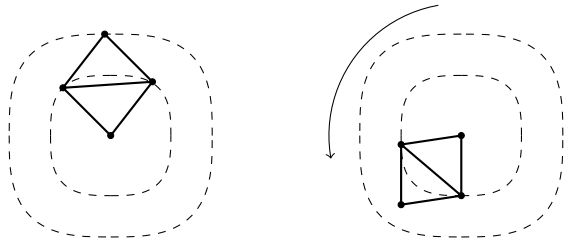
\begin{figure}[htp]
\begin{center}

\begin{tikzpicture}[scale=0.8]
\draw[thick] (0,0) -- (-0.794,0.794); 
\draw[thick] (0,0) -- (0.694,0.894); 

\draw[thick] (-0.794,0.794) -- (-0.1,1.687); 
\draw[thick] (-0.794,0.794) -- (0.694,0.894); 
\draw[thick] (0.694,0.894) -- (-0.1,1.687); 

\draw [dashed,domain=0:90] plot ({(cos(\x))^(2/3)}, {(sin(\x))^(2/3)});
\draw [dashed,domain=90:180] plot ({-(-cos(\x))^(2/3)}, {(sin(\x))^(2/3)});
\draw [dashed,domain=180:270] plot ({-(-cos(\x))^(2/3)}, {-(-sin(\x))^(2/3)});
\draw [dashed,domain=270:360] plot ({(cos(\x))^(2/3)}, {-(-sin(\x))^(2/3)});

\draw [dashed,domain=0:90] plot ({1.687*(cos(\x))^(2/3)}, {1.687*(sin(\x))^(2/3)});
\draw [dashed,domain=90:180] plot ({-1.687*(-cos(\x))^(2/3)}, {1.687*(sin(\x))^(2/3)});
\draw [dashed,domain=180:270] plot ({-1.687*(-cos(\x))^(2/3)}, {-1.687*(-sin(\x))^(2/3)});
\draw [dashed,domain=270:360] plot ({1.687*(cos(\x))^(2/3)}, {-1.687*(-sin(\x))^(2/3)});

\draw[fill] (0,0) circle [radius=0.05];
\draw[fill] (-0.794,0.794) circle [radius=0.05];
\draw[fill] (0.694,0.894) circle [radius=0.05];
\draw[fill] (-0.1,1.687) circle [radius=0.05];

\end{tikzpicture}\qquad\qquad
\begin{tikzpicture}[scale=0.8]
\draw [dashed,domain=0:90] plot ({(cos(\x))^(2/3)}, {(sin(\x))^(2/3)});
\draw [dashed,domain=90:180] plot ({-(-cos(\x))^(2/3)}, {(sin(\x))^(2/3)});
\draw [dashed,domain=180:270] plot ({-(-cos(\x))^(2/3)}, {-(-sin(\x))^(2/3)});
\draw [dashed,domain=270:360] plot ({(cos(\x))^(2/3)}, {-(-sin(\x))^(2/3)});

\draw [dashed,domain=0:90] plot ({1.687*(cos(\x))^(2/3)}, {1.687*(sin(\x))^(2/3)});
\draw [dashed,domain=90:180] plot ({-1.687*(-cos(\x))^(2/3)}, {1.687*(sin(\x))^(2/3)});
\draw [dashed,domain=180:270] plot ({-1.687*(-cos(\x))^(2/3)}, {-1.687*(-sin(\x))^(2/3)});
\draw [dashed,domain=270:360] plot ({1.687*(cos(\x))^(2/3)}, {-1.687*(-sin(\x))^(2/3)});

\draw[fill] (0,0) circle [radius=0.05];

\draw[thick] (0,0) -- (-1,-0.15); 
\draw[thick] (0,0) -- (0,-1); 

\draw[thick] (-1,-0.15) -- (0,-1); 
\draw[thick] (-1,-0.15) -- (-1,-1.15); 
\draw[thick] (0,-1) -- (-1,-1.15); 

\draw[fill] (-1,-0.15) circle [radius=0.05];
\draw[fill] (0,-1) circle [radius=0.05];
\draw[fill] (-1,-1.15) circle [radius=0.05];

\draw[->,domain=100:190] plot ({2.2*cos(\x)}, {2.2*sin(\x)});

\end{tikzpicture}
\end{center}
\vspace{-0.3cm}
\caption{A flexible framework in the analytic plane $\ell_4^2$ (left) and an equivalent but non-congruent framework that can be reached by a continuous motion (right).}
\label{fig:grexamplesone2}
\end{figure}

Determining whether a framework in $\mathbb{E}^d$ is locally rigid is NP-hard when $d \geq 2$ \cite{abbott}.
One would expect that this is also true for (almost all) normed spaces of dimension two and higher.
This motivates us to consider whether a framework can be deformed by infinitesimal motions. For suitable frameworks this will provide a sufficient condition for local rigidity, and it allows us to discuss when local rigidity is a property of a graph. 

Our change in approach requires some additional definitions.
For a graph $G = (V,E)$, a placement $p$ of $G$ in $X$ and the corresponding framework $(G,p)$ are said to be \emph{well-positioned} if $f_G$ is differentiable at $p$ and $p \in X_G$; equivalently, $(G,p)$ is well-positioned if $p_v-p_w$ is smooth for all $vw \in E$.
It is immediate that, given $G$ has at least one edge, the space $X$ is smooth if and only if the set of well-positioned placements of $G$ is exactly the set $X_G$.
If $(G,p)$ is well-positioned, we define the \emph{rigidity operator of $(G,p)$} to be the derivative 
\begin{align*}
df_G(p) : X^{V}  \rightarrow \mathbb{R}^{E}, ~ (x_v)_{v \in V} \mapsto (\varphi_{p_v-p_w}(x_v - x_w ))_{vw \in E}
\end{align*}
of $f_G$ at $p$.
Any element $u \in \ker df_G(p)$ is called an \emph{infinitesimal flex of $(G,p)$}.
An infinitesimal flex $u \in X^V$ is \emph{trivial} if there exists a linear map $T:X \rightarrow X$ and a point $z \in X$ where $u_v = T(p_v) +z$ for all $v \in V$,
and for every point $x \in X$ with support functional $f \in X^*$,
we have $f \circ T(x) = 0$.
The map $T$ will always be a tangent vector of the linear isometry group of $X$ at the identity map $I$\footnote{If we consider the map $T$ as a linear vector field, then for any integral curves $\alpha,\beta: \mathbb{R} \rightarrow X$ (i.e., differentiable maps where $\alpha'(t) = T(\alpha(t))$) and any $t \in \mathbb{R}$, the distance $\|\alpha(t) - \beta(t)\|$ must be constant as $t$ varies.
This is because the support function equation satisfied by $T$ at every point in $X$ implies that every generalised derivative of the map $t \mapsto \|\alpha(t) - \beta(t)\|$ is constant; see \cite{D21} for more details.
Hence $T$ can be constructed to be the derivative of a differentiable path in the linear isometry group of $X$,
i.e., $T$ is a tangent vector of the linear isometry group of $X$ at $I$.
The aforementioned integral curves can be constructed for any two points easily since $T$ is linear; see \cite[Section 4.1]{manifold} for more details.}.

We say that a well-positioned framework $(G,p)$ in a normed space $X$ is \emph{infinitesimally rigid} if every infinitesimal flex of $(G,p)$ is trivial.
The following result outlines a relationship between local and infinitesimal rigidity.
We first recall that a well-positioned framework $(G,p)$ is \emph{regular} if the rigidity operator $df_G(p)$ has maximal rank over all well-positioned placements of $G$ in $X$.

\begin{thm}\label{t:asimowroth}
	Let $(G,p)$ be a well-positioned framework in a normed space $X$.
	\begin{enumerate}[(i)]
	    \item \cite[Theorem 3.9]{D21} If $(G,p)$ is infinitesimally rigid, then it is also locally rigid.
	    \item \cite[Theorem 1.1 \& Lemma 4.4]{D19} If $(G,p)$ is regular and locally rigid, and the set of smooth points of $X$ is open,
	    then $(G,p)$ is infinitesimally rigid.
	\end{enumerate}
\end{thm}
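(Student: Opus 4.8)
The plan is to adapt the classical Asimow--Roth argument to the normed-space setting, treating the two implications separately; both are the cited results of \cite{D21,D19}, but if one were to reconstruct the arguments the strategies would run as follows.

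For part (i), write $\mathcal{T}\subseteq X^V$ for the linear space of trivial infinitesimal flexes of $(G,p)$. The support-functional condition in the definition of triviality gives $\mathcal{T}\subseteq\ker df_G(p)$ automatically, so infinitesimal rigidity is precisely the equality $\ker df_G(p)=\mathcal{T}$. I would first record the geometric input that the orbit $\mathcal{O}_p:=\Iso(X)\cdot p$ is an embedded submanifold of $X^V$ (the action of $\Iso(X)$ on $X^V$ is proper, since $\Iso(X)$ acts properly on the finite-dimensional normed space $X$), with tangent space $T_p\mathcal{O}_p=\mathcal{T}$, because $\mathcal{T}$ is exactly the image of the infinitesimal action $(T,z)\mapsto(Tp_v+z)_{v\in V}$ (the admissible maps $T$ being precisely the Lie algebra of the linear isometry group of $X$, as recorded just after the definition of a trivial flex). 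Next I would take a linear complement $\mathcal{W}$ to $\mathcal{T}$ and a local slice $S\ni p$ for the action, that is, a submanifold with $T_pS=\mathcal{W}$ such that $\Iso(X)\cdot S$ is a neighbourhood of $p$ and $\mathcal{O}_p\cap S=\{p\}$ locally. Then, given a framework $(G,q)$ equivalent to $(G,p)$ with $q$ close to $p$, I would write $q=g\circ q'$ with $g$ near $I$ and $q'\in S$; since edge lengths are $\Iso(X)$-invariant, $(G,q')$ is also equivalent to $(G,p)$. Finally, because $\ker df_G(p)=\mathcal{T}$ the restriction $df_G(p)|_{\mathcal{W}}$ is injective, so $\|df_G(p)w\|\geq c\|w\|$ on $\mathcal{W}$ for some $c>0$; combined with the expansion $f_G(q')-f_G(p)=df_G(p)(q'-p)+o(\|q'-p\|)$, valid since $f_G$ is differentiable at $p$, this forces $q'=p$ once $q'$ is sufficiently close to $p$, whence $q=g\circ p$ is congruent to $(G,p)$. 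This yields local rigidity.

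For part (ii), the plan is to observe that the two additional hypotheses promote $f_G$ to a $C^1$ map of locally constant rank near $p$, and then to compare dimensions. Since the set of smooth points of $X$ is open, the set $\Omega$ of well-positioned placements of $G$ in $X$ is open in $X^V$ and contains $p$; and since the support map $\varphi$ is continuous, the map $q\mapsto df_G(q)$ is continuous on $\Omega$, so $f_G\in C^1(\Omega)$. Regularity says $\rank df_G(p)=r$ is maximal over $\Omega$, and as rank is lower semicontinuous this forces $\rank df_G(q)=r$ for all $q$ in a neighbourhood of $p$. The constant rank theorem then gives that $f_G^{-1}(f_G(p))$ is, near $p$, an embedded submanifold of $X^V$ of dimension $|V|\dim X-r=\dim\ker df_G(p)$. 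I would then argue by contraposition: if $(G,p)$ is not infinitesimally rigid then $\dim\ker df_G(p)>\dim\mathcal{T}=\dim\mathcal{O}_p$, so $\mathcal{O}_p\subseteq f_G^{-1}(f_G(p))$ is an embedded submanifold of strictly smaller dimension, hence nowhere dense in $f_G^{-1}(f_G(p))$ near $p$; therefore every neighbourhood of $p$ contains some $q\in f_G^{-1}(f_G(p))\setminus\mathcal{O}_p$, and since $\Omega\subseteq X_G$ is open, such a $q$ gives a framework $(G,q)$ equivalent but not congruent to $(G,p)$. Thus $(G,p)$ is locally flexible, the contrapositive of the claim.

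The step I expect to be the main obstacle is controlling the regularity of $f_G$. In part (i) the map $f_G$ is only known to be differentiable \emph{at the single point} $p$, so neither the inverse function theorem nor the constant rank theorem applies to it; the argument must rest solely on the two facts that survive this weakness, namely the elementary local injectivity at $p$ of a map whose derivative is injective on a transversal subspace, and the smoothness of the $\Iso(X)$-action (hence existence of the slice $S$), which is independent of any property of $f_G$. In part (ii) the difficulty is the opposite: one genuinely needs $f_G$ to be $C^1$ of constant rank near $p$, and the real content is recognising that the hypothesis that $X$ has an open set of smooth points (which makes $\Omega$ open, and then $f_G\in C^1$ via continuity of $\varphi$) and the hypothesis that $(G,p)$ is regular (which makes $\rank df_G$ locally constant) are exactly the two ingredients that supply this, after which the constant rank theorem and the orbit-versus-fibre dimension count complete the proof.
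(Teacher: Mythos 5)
The paper does not prove this theorem: it is imported verbatim from \cite{D21} and \cite{D19}, so there is no internal proof to compare against, and your reconstruction must be judged on its own terms. It is sound and follows the standard Asimow--Roth route that those references adapt: in (i) the orbit/transversal decomposition for the proper action of $\Iso(X)$ together with the first-order injectivity estimate (which, as you note, needs only differentiability of $f_G$ at the single point $p$), and in (ii) openness of the set of well-positioned placements, continuity of the support map on the smooth points, the constant rank theorem, and the orbit-versus-fibre dimension count, with each hypothesis correctly located where it is actually needed.
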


Given a well-positioned framework $(G,p)$ in a normed space $X$, the following three potential properties of $(G, p)$ are of particular interest to us: $(G,p)$ is \emph{independent} if $\rank df_G(p) =|E|$;
$(G,p)$ is \emph{minimally (infinitesimally) rigid} if $(G,p)$ is both infinitesimally rigid and independent; and 
$(G,p)$ is \emph{redundantly (infinitesimally) rigid} if $(G-e,p)$ is infinitesimally rigid for every edge $e \in E$.

\subsection{Regular frameworks and placements} \label{subsec:reg}

We will require the following strengthening of the notion of a regular framework.
A well-positioned framework $(G,p)$ is \emph{strongly regular} if every framework equivalent to $(G,p)$ has maximal rank.
The placement $p$ of a set $V$ is \emph{completely regular} if for all graphs $H$ with $V(H) := V$, $(H,p)$ is well-positioned and regular,
and \emph{completely strongly regular} if for all graphs $H$ with $V(H) := V$, $(H,p)$ is well-positioned and strongly regular.
We denote the set of regular placements of $G$ in $X$ by $\reg(G;X)$, the set of strongly regular placements of $G$ in $X$ by $\sreg(G;X)$, the set of completely regular placements of a set $V$ in $X$ by $\creg (V;X)$,
and the set of completely strongly regular placements of a set $V$ in $X$ by $\csreg (V;X)$.
It is immediate that if a framework is either strongly or completely regular then it is regular,
however, Figure \ref{fig:notcomreg} highlights the fact that
completely regular and strongly regular are properties that do not obey a hierarchy.

 \begin{figure}[htp]
\begin{center}
\begin{tikzpicture}[scale=.4]

\filldraw (0,0) circle (3pt)node[anchor=east]{};
\filldraw (-0.5,3.5) circle (3pt)node[anchor=east]{};
\filldraw (3.5,0) circle (3pt)node[anchor=north]{};
\filldraw (4,3.5) circle (3pt)node[anchor=south]{};
\filldraw (1.75,3.5) circle (3pt)node[anchor=east]{};

 \draw[black,thick]
(0,0) -- (3.5,0) -- (4,3.5) -- (0,0);

 \draw[black,thick]
(3.5,0) -- (4,3.5) -- (0,0);

 \draw[black,thick]
(3.5,0) -- (1.75,3.5) -- (0,0);

 \draw[black,thick]
(3.5,0) -- (-0.5,3.5) -- (0,0);

\draw[black,dashed]
(-1,3.5) -- (4.5,3.5);

        \end{tikzpicture}
         \hspace{0.5cm}
     \begin{tikzpicture}[scale=.4]
\filldraw (0,0) circle (3pt)node[anchor=east]{};
\filldraw (0,3.5) circle (3pt)node[anchor=east]{};
\filldraw (3.5,0) circle (3pt)node[anchor=north]{};
\filldraw (3.5,3.5) circle (3pt)node[anchor=south]{};

 \draw[black,thick]
(0,0) -- (0,3.5) -- (3.5,3.5) -- (3.5,0) -- (0,0);

\end{tikzpicture}
\end{center}
\vspace{-0.3cm}
\caption{Two frameworks in the Euclidean plane. The framework on the left is strongly regular but not completely regular as it has a colinear triple. The framework on the right is completely regular but not strongly regular as we can flatten the framework into a colinear non-regular framework.}
\label{fig:notcomreg}
\end{figure}
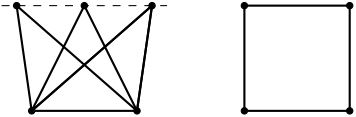

\begin{rem}\label{rem:diffman}
Let $(G,p)$ be a strongly regular framework in a normed space. It follows immediately from the definition of strongly regular that the point $f_G(p)$ is a regular value of the map $f_G$. This implies that the set $f_G^{-1}(f_G(p))$ of equivalent placements forms a differentiable manifold (see \cite[Theorem 3.5.4]{manifold}).
\end{rem}

The set $\reg(G;X)$ will always be an open subset of the set of well-positioned placements of $G$ in $X$ (see \cite[Lemma 4.4]{D19}),
which in turn is a conull subset of $X^V$.
Hence the set $\reg(G;X)$ is not a null set.
Fortunately, the situation is drastically improved for analytic normed spaces.

\begin{prop}[{\cite[Propositions 3.2 and 3.6]{DN}}]\label{prop:regcreg}
	Let $X$ be an analytic normed space and $G=(V,E)$ any graph.
	Then $\reg (G;X)$, $\sreg (G;X)$, $\creg (V;X)$ and $\csreg (V;X)$ are open conull subsets of $X^V$.
\end{prop}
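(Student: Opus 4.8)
\textbf{Proof proposal for Proposition \ref{prop:regcreg}.}

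The plan is to exploit the real-analyticity of the norm to reduce each statement to the elementary fact that the zero set of a not-identically-zero real analytic function is null. First I would treat $\reg(G;X)$. Since $X$ is analytic, it is smooth and strictly convex \cite[Lemma 3.1]{DN}, so the support map $\varphi$ is a homeomorphism; moreover, away from the origin $\varphi$ is itself real analytic (being built from derivatives of the analytic norm). Consequently, for $p\in X_G$ the entries $\varphi_{p_v-p_w}$ of the rigidity operator $df_G(p)$ depend real-analytically on $p$, so every maximal-size minor of $df_G(p)$ is a real analytic function on the connected open set $X_G$ (recall $X_G$ is path-connected since $\dim X>1$; for $\dim X=1$ the statement is trivial or handled directly). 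Let $r$ be the generic rank, i.e.\ the maximum of $\rank df_G(p)$ over $p\in X_G$. Pick $p_0$ achieving it and an $r\times r$ minor $M$ of $df_G(p_0)$ that is nonzero; then $M$, viewed as an analytic function on $X_G$, is not identically zero, so its zero set is null. Since any $p\in X_G$ at which that particular minor is nonzero has $\rank df_G(p)\ge r$, hence $=r$, such $p$ is regular. Therefore the complement of $\reg(G;X)$ within $X_G$ is contained in a null set, and $X^V\setminus X_G$ is itself null, so $\reg(G;X)$ is conull; openness is \cite[Lemma 4.4]{D19}.

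Next I would handle $\creg(V;X)$: take $H_0=(V,E_0)$ to be the complete graph on $V$, and observe that for any graph $H=(V,E)$ with the same vertex set, $df_H(p)$ is the submatrix of $df_{H_0}(p)$ obtained by selecting the rows indexed by $E\subseteq E_0$. A placement $p\in X_{H_0}$ is completely regular precisely when, for every such $H$, the selected rows of $df_{H_0}(p)$ have rank equal to the generic rank of that row-subset. For each fixed $H$ the set of $p$ failing this is null by the previous paragraph applied to $H$, and there are only finitely many graphs $H$ on the finite set $V$; a finite union of null sets is null. Since also $X_{H_0}=\bigcap_{H}X_H$ up to a null set (each $X_H$ is conull), we conclude $\creg(V;X)$ is conull. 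Openness follows because it is a finite intersection of the open sets $\reg(H;X)$ intersected with the open set $X_{H_0}$.

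The remaining two sets, $\sreg(G;X)$ and $\csreg(V;X)$, are the genuinely new point and where the main obstacle lies: strong regularity is not a pointwise-rank condition at $p$ but a condition on the entire fibre $f_G^{-1}(f_G(p))$, so one cannot directly apply the minor argument. The idea is that $p$ fails to be strongly regular iff there exists an equivalent placement $q$ (i.e.\ $f_G(q)=f_G(p)$, $q\in X_G$) with $\rank df_G(q)<r$. Equivalently, writing $Z=\{q\in X_G:\rank df_G(q)<r\}$ for the (analytic, null, proper) ``singular set'' produced above, the non-strongly-regular placements are exactly $f_G^{-1}\bigl(f_G(Z)\bigr)\cap X_G$. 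So the task reduces to showing $f_G^{-1}(f_G(Z))$ is null whenever $Z$ is the zero set of a nontrivial real analytic function on $X_G$. For this I would argue as in \cite{DN}: the components of $f_G$ are real analytic on $X_G$, so $f_G$ is a real analytic map; $f_G(Z)$ is contained in the image of a null analytic set, and pulling back, $f_G^{-1}(f_G(Z))$ is contained in the vanishing locus of a real analytic function on $X_G$ that does not vanish identically — concretely, one composes the defining analytic function of $f_G(Z)$ (obtained by a Remmert-type or resultant/elimination argument, or more simply by noting $f_G(Z)$ lies in a proper analytic subvariety of $\mathbb{R}^E$ cut out by analytic relations among the coordinates that must be satisfied on $Z$) with $f_G$, checking it is not identically zero by exhibiting one strongly regular placement. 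Hence $\sreg(G;X)$ is conull, and $\csreg(V;X)$ conull by the same finite-union argument over all graphs $H$ on $V$. Throughout, the key leverage — and what makes the analytic hypothesis essential — is that nontrivial real analytic functions have null zero sets and that the class of such sets is closed under the finite operations (intersection, finite union, and pullback by analytic maps) we need; the subtle step is controlling the image $f_G(Z)$, which is precisely the content borrowed from \cite[Propositions 3.2 and 3.6]{DN}.
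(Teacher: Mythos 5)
Your treatment of $\reg(G;X)$ and $\creg(V;X)$ is correct: on the connected open set $X_G$ the entries of $df_G(p)$ are analytic in $p$, a fixed maximal minor is a nontrivial analytic function whose zero set is null (cf.\ \cite{mityagin}), and $\creg$ is a finite intersection over the graphs on $V$. Note, though, that the paper gives no proof of Proposition \ref{prop:regcreg} at all — it is imported verbatim from \cite{DN} — and the part of the statement whose proof is genuinely nontrivial is exactly the part your proposal leaves vague, namely $\sreg$ and $\csreg$.

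The gap is in your mechanism for strong regularity. You correctly reduce to showing that $f_G^{-1}(f_G(Z))$ is null, where $Z\subseteq X_G$ is the null set of non-regular placements, but the claim that $f_G(Z)$ lies in a proper analytic subvariety of $\mathbb{R}^E$ "cut out by analytic relations" is unjustified and false in general: there is no Remmert proper mapping theorem over $\mathbb{R}$, images of real analytic sets under real analytic maps are only subanalytic, and a measure-zero subanalytic set need not be contained in the zero set of any nontrivial analytic function (it can be dense in $f_G(X_G)$, whereas a proper analytic subvariety is closed and nowhere dense). Your fallback of "checking $g\circ f_G\not\equiv 0$ by exhibiting one strongly regular placement" is circular, since the existence of such a placement is part of what is being proved. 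A correct route avoids analytic images altogether: (i) $Z$ is null by your minor argument; (ii) by the Federer refinement of Sard's theorem for $C^\infty$ maps, $f_G(Z)=f_G(\{\rank df_G<r\})$ has Hausdorff dimension at most $r-1$, so $\mathcal{H}^r(f_G(Z))=0$; (iii) on the open set $X_G\setminus Z$ the rank of $df_G$ is constantly $r$, so by the constant rank theorem $f_G$ is locally a projection onto an $r$-dimensional submanifold, and the preimage of a set of $\mathcal{H}^r$-measure zero under such a projection is Lebesgue-null; combining these, $f_G^{-1}(f_G(Z))$ is null. Finally, you never address why $\sreg(G;X)$ and $\csreg(V;X)$ are \emph{open}: unlike regularity this is not a pointwise minor condition, and it requires a compactness argument on the fibres (pin one vertex per component so fibres are compact and vary upper semicontinuously; non-regularity is closed, so a limit of non-regular equivalent placements is again one).
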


Similarly to analytic normed spaces,
the sets of regular, strongly regular, completely regular and completely strongly regular placements in any Euclidean space are all open conull sets.
Many results in Euclidean rigidity theory are worded in terms of \emph{generic placements};
i.e., a placement $p:V \rightarrow \mathbb{E}^d$ where the coordinates of $p$ form an algebraically independent set.
In our language, any generic placement is completely strongly regular (this follows from \cite[Proposition 3.3]{C2005}), however not every completely strongly regular framework will be generic; this is immediate since the set $\csreg(V;\mathbb{E}^d)$ is open and conull, while the set of generic placements is only conull.

\subsection{Combinatorial rigidity in normed planes}\label{subsec:defs}

Given a connected graph, the placement of this graph in X that takes every vertex to the same point will give a locally rigid framework. The existence of such degenerate placements motivates us to consider, rather than whether there exists a placement giving a locally rigid framework, whether there exists an open set of placements corresponding to locally rigid frameworks.
If a framework $(G,p)$ is infinitesimally rigid in a normed space $X$,
then there exists an open set $U \subset X^V$ where for all $q \in U$, the framework $(G,q)$ is locally rigid \cite[Corollary 3.10]{D21}. This fact underlines the usefulness of infinitesimal rigidity and leads us to give the following definitions.
A graph $G = (V,E)$ is \emph{rigid} (respectively, \emph{minimally rigid}, \emph{redundantly rigid}) in a normed space $X$ if there exists a well-positioned framework $(G,p)$ in $X$ which is infinitesimally rigid (respectively, minimally rigid, redundantly rigid).
A graph that is not rigid in $X$ is said to be \emph{flexible}.

\begin{rem}\label{rem:infnorm}
An equivalent condition for a graph $G$ to be minimally rigid in a normed space $X$ is that
	$G$ is rigid in $X$ and $G-e$ is flexible in $X$ for all $e \in E$
	 (see \cite[Proposition 1.3.17]{dewphd} for a full proof). 
	However, perhaps surprisingly, it is not true for all normed spaces that $G$ is redundantly rigid in $X$ if and only if $G -e$ is rigid for all $e \in E$ in $X$.
	For an example, take the normed space $\ell_\infty^2 := (\mathbb{R}^2, \| \cdot \|_{\infty})$, where $\|(a,b)\|_\infty := \max\{|a|,|b|\}$.
	For any well-positioned framework $(G,p)$,
	we can define a partition $E_1 \cup E_2$ of the edges of $G=(V,E)$,
	where $vw \in E_1$ (respectively, $vw \in E_2$) if and only if, given $(x_1,x_2) = p_v-p_w$, we have $|x_1| > |x_2|$ (respectively, $|x_2| > |x_1|$).
	It was shown in \cite{K15} that the rank of $df_G(p)$ is equal to the rank of the oriented incidence matrices of the \emph{monochromatic subgraphs} $G_1 = (V,E_1)$ and $G_2 = (V,E_2)$ of $(G,p)$.
	Hence a well-positioned framework is infinitesimally rigid in $\ell_\infty^2$ if and only if both its monochromatic subgraphs are connected,
	and redundantly rigid in $\ell_\infty^2$ if and only if both its monochromatic subgraphs are 2-edge-connected.
	Consider the graph $K_5^-$, the complete graph on five vertices minus an edge.
	While $K_5^- - e$ is rigid in $\ell^2_\infty$ for any edge $e \in E(K_5^-)$ (see Theorem \ref{thm:rigidne} below),
	any placement will not be redundantly rigid, as the edges of $K_5^-$ cannot be decomposed into two 2-edge-connected spanning subgraphs.
\end{rem}

Given a non-negative integer $k$, a graph $G=(V,E)$ is \emph{$(2,k)$-sparse} if $|E'|\leq 2|V'|-k$ for every subgraph $(V',E')$ of $G$ (with $|E'|>0$). Further $G$ is \emph{$(2,k)$-tight} if it is $(2,k)$-sparse and $|E|=2|V|-k$.
The following result of Pollaczek-Geiringer \cite{PG} exactly characterises which graphs are minimally rigid in the Euclidean plane.

\begin{thm}\label{thm:rigide}
	A graph is minimally rigid in $\mathbb{E}^2$ if and only if it is isomorphic to $K_{1}$ or it is $(2,3)$-tight.
\end{thm}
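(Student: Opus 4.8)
The plan is to prove the two implications separately: for necessity I would use a rank count for the rigidity operator, and for sufficiency I would argue by induction using the Henneberg operations ($0$- and $1$-extensions), the main work being a geometric lemma showing a $1$-extension preserves minimal rigidity.

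\emph{Necessity.} Suppose $G=(V,E)$ is minimally rigid in $\mathbb{E}^2$. If $|V|=1$ then $G=K_1$, so assume $|V|\geq 2$ and fix a well-positioned placement $p$ with $(G,p)$ infinitesimally rigid and independent. Since $p$ has non-zero edge lengths the points $p_v$ are not all equal, so the space of trivial infinitesimal flexes of $(G,p)$ — spanned by the two translations and the single rotation of $\mathbb{E}^2$ — is exactly $3$-dimensional; infinitesimal rigidity then gives $\rank df_G(p)=2|V|-3$, and independence gives $\rank df_G(p)=|E|$, so $|E|=2|V|-3$. For $(2,3)$-sparsity, let $(V',E')$ be a subgraph with $E'\neq\emptyset$, so $|V'|\geq 2$. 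The rows of $df_G(p)$ indexed by $E'$ are linearly independent, and after discarding identically-zero columns they form the rigidity operator of $(G',p|_{V'})$, whose kernel always contains the $3$-dimensional space of translations and a rotation; hence $|E'|=\rank df_{G'}(p|_{V'})\leq 2|V'|-3$. Thus $G$ is $(2,3)$-sparse with $|E|=2|V|-3$, i.e.\ $(2,3)$-tight.

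\emph{Sufficiency.} Suppose now $G$ is $(2,3)$-tight. Such a graph has $|V|\geq 2$, and if $|V|=2$ then $G=K_2$, which is directly checked to be minimally rigid in $\mathbb{E}^2$. I would induct on $|V|$, using that every $(2,3)$-tight graph on $n\geq 3$ vertices has minimum degree $2$ or $3$ (the average degree is $4-6/n<4$, while a vertex of degree at most $1$ would violate $(2,3)$-sparsity of $G$ with that vertex removed). If $G$ has a vertex $v$ of degree $2$, then $G-v$ is $(2,3)$-tight on $n-1$ vertices, hence minimally rigid by induction; placing $v$ off the line through the placements of its two neighbours makes the two new rows of $df$ mutually independent and independent of the rows of $df_{G-v}$ (being the only rows with nonzero entries in $v$'s columns), so $G$ is minimally rigid. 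Otherwise $G$ has minimum degree $3$; choose a vertex $v$ of degree $3$ with neighbours $a,b,c$. At least one of $ab,ac,bc$ is a non-edge (else $\{v,a,b,c\}$ spans a $K_4$, with $6>2\cdot 4-3$ edges, contradicting sparsity), and a standard submodularity argument for $H\mapsto 2|V(H)|-|E(H)|$ shows one can pick such a non-edge $e$ with $G':=(G-v)+e$ again $(2,3)$-tight. By induction $G'$ is minimally rigid, and $G$ arises from $G'$ by the $1$-extension that deletes $e$ and adds $v$ joined to $a,b,c$; so it remains to show a $1$-extension of a minimally rigid graph is minimally rigid.

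\emph{The $1$-extension lemma and the main obstacle.} This last step is the crux. Let $(G',p')$ be minimally rigid in $\mathbb{E}^2$ with $e=ab\in E(G')$ and $c\in V(G')\setminus\{a,b\}$ chosen (by genericity) not collinear with $a,b$ under $p'$; let $H=(G'-ab)+w+\{wa,wb,wc\}$. I would take the placement $q$ agreeing with $p'$ on $V(G')$ and sending $w$ to a generic point on the line through $p'_a,p'_b$, so that $q_w-p'_a$ and $q_w-p'_b$ are parallel. If $u\in\ker df_H(q)$, the constraints from $wa$ and $wb$ combine to give $\langle p'_a-p'_b,\,u_a-u_b\rangle=0$, which is exactly the edge constraint for the deleted edge $ab$; hence $u|_{V(G')}$ is an infinitesimal flex of $(G',p')$ and so trivial, after which the non-parallel constraints from $wa$ and $wc$ force $u_w$ to extend the same trivial motion. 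Thus $(H,q)$ is infinitesimally rigid, and since $|E(H)|=2|V(H)|-3$ a rank count shows it is also independent, hence minimally rigid, completing the induction. I expect the main obstacle to be precisely this geometric $1$-extension lemma — the device of placing the new vertex on the line through the endpoints of the removed edge so that the two new equations reconstitute the removed one — together with the combinatorial reversibility of $1$-extensions (selecting a non-edge whose addition preserves $(2,3)$-tightness), which is where the submodularity of $2|V(\cdot)|-|E(\cdot)|$ is needed.
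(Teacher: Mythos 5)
Your proposal is correct, but there is nothing in the paper to compare it against: Theorem \ref{thm:rigide} is stated as classical background with only the citation to Pollaczek-Geiringer \cite{PG}, and no proof is given in the text. What you have written is the standard Henneberg--Laman argument: a rank/kernel count for necessity (the trivial flexes form a $3$-dimensional space once the placement has two distinct points, which non-zero edge lengths guarantee), and for sufficiency an induction on $|V|$ via $0$- and $1$-extensions, with the new vertex placed on the line through the endpoints of the deleted edge so that the two new constraints reconstitute the deleted one. The two places where you defer to standard facts are both genuine and routine to fill in: (i) the submodularity of $X\mapsto 2|X|-i(X)$ really does produce an admissible $1$-reduction at a degree-$3$ vertex, since if every candidate non-edge among the neighbours were blocked by a tight set containing its endpoints, the union of those tight sets together with $v$ would violate $(2,3)$-sparsity of $G$; and (ii) the requirement that $p'_a,p'_b,p'_c$ be non-collinear can be arranged by perturbing $p'$, because the set of independent and infinitesimally rigid placements is open. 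With those two points spelled out, the argument is a complete and standard proof of the theorem.
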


The next result (proved in full generality in \cite{dew2}, building on the special case of $\ell_p^2$ \cite{kit-pow-1}) gives an analogous characterisation of the class of minimally rigid graphs for any non-Euclidean normed plane.

\begin{thm}\label{thm:rigidne}
	A graph is minimally rigid in a non-Euclidean normed plane if and only if it is $(2,2)$-tight.
\end{thm}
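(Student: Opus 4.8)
The plan is to establish the two directions separately, treating the necessity of $(2,2)$-tightness as the routine part and the sufficiency as the substantive part, following the template of the Euclidean proof (Theorem \ref{thm:rigide}) but adapting it to account for the reduced dimension of the trivial flex space. First I would compute the dimension of the space of trivial infinitesimal flexes in a non-Euclidean normed plane. Translations contribute a $2$-dimensional space, and by the Mazur–Ulam theorem together with the structure of the linear isometry group, a non-Euclidean normed plane has only finitely many linear isometries (its isometry group is discrete modulo translations), so there are no nontrivial one-parameter families of linear maps $T$ satisfying the support-functional condition in the definition of a trivial flex; equivalently, the tangent space to the linear isometry group at $I$ is trivial. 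Hence the space of trivial flexes of any isometrically full, well-positioned framework has dimension exactly $2$ (as opposed to $3$ in $\mathbb{E}^2$). For the necessity direction: if $(G,p)$ is well-positioned and infinitesimally rigid then $\operatorname{rank} df_G(p) = 2|V| - 2$, so $|E| \geq 2|V|-2$; applying the same count to every subgraph induced on a vertex subset $V'$ with $|V'| \geq 2$ (using that an infinitesimally rigid graph on $|V'|$ vertices forces $2|V'|-2$ independent rows, and that independence is inherited by subgraphs via the restriction of the rigidity operator) gives $|E'| \leq 2|V'|-2$ whenever $(G,p)$ is also independent; minimal rigidity then forces equality $|E| = 2|V|-2$, i.e. $(2,2)$-tightness.

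For the sufficiency direction, the strategy is a standard inductive argument over the recursive (Henneberg-type) construction of $(2,2)$-tight graphs, which is known: every $(2,2)$-tight graph on $|V| \geq 2$ vertices can be built from $K_2$ (or the appropriate small base graphs) by a sequence of $0$-extensions (adding a vertex of degree $2$) and $1$-extensions (subdividing an edge and joining the new vertex to a third vertex), and possibly a further operation such as vertex-to-$K_4$ or edge-joining moves depending on the precise version of the characterisation used. The base case is immediate since $K_2$ (one edge, two smooth-difference points) is minimally rigid — its rigidity operator is a single nonzero row, which spans a space of the correct corank $2|V|-2 = 2$. The inductive step requires showing that if $G$ is obtained from a minimally rigid graph $G'$ by a $0$- or $1$-extension, then $G$ is minimally rigid in the same normed plane; since the edge count is preserved as $2|V|-2$ under both operations, it suffices to show rigidity is preserved for a suitable (regular) placement. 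The $0$-extension is the easy case: extending an infinitesimally rigid placement of $G'$ by placing the new degree-$2$ vertex generically, one checks that the two new edges, whose support functionals are linearly independent generically (using smoothness and strict convexity of a non-Euclidean plane, and the fact that a non-Euclidean norm has no line segment on its unit sphere through which two distinct directions would share a support functional), pin the new vertex.

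The hard part will be the $1$-extension step, exactly as in the Euclidean setting: one removes the new vertex $v_0$ and the edge to the "third" vertex, reinserts the subdivided edge $xy$, obtaining $G'$, and must argue that for a generic (completely regular, which exists and is conull by Proposition \ref{prop:regcreg}) placement of $G$ the framework is infinitesimally rigid. The delicate point is that the three edges at $v_0$ in $G$ correspond, after the reduction, to a $2$-dimensional constraint, and one needs to choose the placement of $v_0$ so that the resulting flex space collapses correctly — in the Euclidean case this uses the quadratic structure (the "three collinear points" obstruction and a genericity/averaging argument à la Tay–Whiteley); here, since distance constraints are not quadratic, one instead argues directly with the rigidity operator, showing that the set of placements of $v_0$ for which the $1$-extension fails to be rigid is a proper (hence null, by analyticity) subvariety, using that the support map $\varphi$ is analytic on $\mathbb{R}^2 \setminus \{0\}$. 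I expect the cleanest route is: assume for contradiction that $G$ is flexible at every placement; then by analyticity $df_G$ is rank-deficient everywhere; take a nontrivial flex $u$, restrict to $G'$ by noting $u$ restricted to $V \setminus \{v_0\}$ together with a suitable value at the merged vertex gives a flex of $G'$, deduce $u$ is trivial on $G'$ by the inductive hypothesis, and then show the three edge constraints at $v_0$ force $u$ to be trivial on all of $G$ unless $x, y, v_0$ and the third vertex sit in a degenerate configuration — which is avoided on a conull set. This final genericity bookkeeping, rather than any single hard computation, is where the real work of the proof lies; the references to \cite{kit-pow-1} and \cite{dew2} presumably carry it out in detail, and here we may simply cite it.
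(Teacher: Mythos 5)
The paper does not prove this statement itself: Theorem \ref{thm:rigidne} is imported from \cite{dew2} (with the $\ell_p^2$ case from \cite{kit-pow-1}), so your sketch has to be judged against those proofs. Your necessity direction is essentially right: since a non-Euclidean normed plane has only finitely many linear isometries, the trivial flex space is exactly the $2$-dimensional space of translations, so an independent infinitesimally rigid framework has $|E|=2|V|-2$, and independence restricted to induced subframeworks gives $(2,2)$-sparsity. The overall inductive shape of your sufficiency argument is also the one used in \cite{kit-pow-1}.

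However, the sufficiency sketch has genuine gaps. First, your base case is wrong: $K_2$ is not $(2,2)$-tight ($|E|=1\neq 2=2|V|-2$), and it is \emph{flexible} in every normed plane --- its rigidity operator is a single row of a $1\times 4$ matrix, so the kernel is $3$-dimensional while the trivial flexes form only a $2$-dimensional space (the paper itself uses this fact, citing \cite[Theorem 5.8]{D19}). The smallest nontrivial $(2,2)$-tight simple graph is $K_4$, and the construction must start there. Second, $0$- and $1$-extensions do not suffice to generate all simple $(2,2)$-tight graphs: for example, two copies of $K_4$ glued at a vertex is $(2,2)$-tight, has no degree-$2$ vertex, and every degree-$3$ vertex lies in a $K_4$, so no $1$-reduction preserves simplicity. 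The known construction (from \cite{NOP}, used in \cite{kit-pow-1}) additionally requires vertex splitting and $K_4$-type substitution moves, and showing that \emph{these} preserve infinitesimal rigidity is precisely where the geometric difficulty lies; you mention such moves parenthetically but give no argument for them. Third, your proposed genericity argument for the $1$-extension step relies on analyticity of the support map, but the theorem is stated for \emph{arbitrary} non-Euclidean normed planes, including non-smooth ones such as $\ell_1^2$ and $\ell_\infty^2$ (see Remark \ref{rem:infnorm}), where that argument is unavailable; the proof in \cite{dew2} has to work without it.
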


By a classical result of Nash-Williams \cite{N-W}, a graph is $(2,2)$-tight if and only if it is the edge-disjoint union of two spanning trees. Such graphs are well studied, for example in tree packing problems in combinatorial optimisation \cite{Fra}.
By comparing Theorem \ref{thm:rigide} and Theorem \ref{thm:rigidne} we observe that there are graphs which are rigid in non-Euclidean normed planes that are flexible in $\mathbb{E}^2$, 
an example being two copies of $K_4$ intersecting in a single vertex, 
and vice versa, 
an example being the complete bipartite graph $K_{3,3}$.

We define a graph $G$ to be \emph{globally rigid} in a normed space $X$ if the set
\begin{align*}
    \grig (G; X) := \left\{ p \in X^V : (G,p) \text{ is globally rigid} \right\}
\end{align*}
has an open interior.
In the literature it is common to define a graph $G$ to be globally rigid in $\mathbb{E}^d$ if (and only if) there exists a globally rigid generic framework $(G,p)$ in $\mathbb{E}^d$.
It follows from the ``averaging theorem'' of Connelly and Whiteley \cite{CW} that this definition is equivalent to our own.
Furthermore, it was proven in \cite{C2005,GHT} that global rigidity in Euclidean spaces is a ``generic property'' (i.e., if the property holds for a single generic placement in the space then it holds for all of them). Hence the interior of the set $\grig (G; \mathbb{E}^d)$
will either be empty (in which case $G$ is not globally rigid) or an open conull set (in which case $G$ is globally rigid).

We next state two key combinatorial results for global rigidity in Euclidean spaces. The main purpose of this paper is to prove precise analogues of both results for analytic normed spaces.

\begin{thm}[\cite{hendrickson}]\label{thm:euclidhendrickson}
	If a graph $G$ is globally rigid in $\mathbb{E}^d$, then $G$ is either a complete graph on at most $d+1$ vertices or $G$ is $(d+1)$-connected and redundantly rigid in $\mathbb{E}^d$.
\end{thm}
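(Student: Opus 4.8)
The plan is to verify the two necessary conditions separately, in each case producing from a generic globally rigid framework either an equivalent non-congruent framework (contradicting global rigidity) or a non-trivial continuous flex (contradicting rigidity, which global rigidity implies). Fix a generic placement $p$ of $G$ in $\mathbb{E}^d$. Since Euclidean global rigidity is a generic property, $(G,p)$ is globally rigid, hence locally rigid, hence --- being regular --- infinitesimally rigid by Theorem~\ref{t:asimowroth}(ii); in particular $G$ is rigid. Being generic, $p$ is also completely strongly regular (Subsection~\ref{subsec:reg}), so for every spanning subgraph $H$ of $G$ the framework $(H,p)$ is strongly regular and $\rank df_H(p)$ realises the generic rank of $E(H)$ in the $d$-dimensional rigidity matroid.

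\emph{Connectivity.} Suppose $G$ is not $(d+1)$-connected. If $|V(G)|\le d+1$, then a rank count shows that a rigid graph on at most $d+1$ vertices has exactly $\binom{|V|}{2}$ edges, so $G$ is complete and the first alternative of the theorem holds. Otherwise $|V(G)|\ge d+2$ and $G$ has a vertex cut $S$ with $|S|\le d$; let $C$ be a component of $G-S$. Since $|S|\le d$, we may fix a hyperplane $\Pi$ containing $\{p_v:v\in S\}$, and define $q$ to agree with $p$ on $V\setminus V(C)$ and to equal the reflection of $p$ in $\Pi$ on $V(C)$. Each edge of $G$ lies inside $C$, inside $V\setminus V(C)$, or joins $C$ to $S$, and in each case keeps its length (the reflection is an isometry and fixes $p_v$ for $v\in S$); thus $(G,q)$ is equivalent to $(G,p)$. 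Choosing $\Pi$ generically, a routine argument using the genericity of $p$ shows that no isometry of $\mathbb{E}^d$ carries $p$ to $q$, so $(G,q)$ is not congruent to $(G,p)$ --- contradicting global rigidity.

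\emph{Redundant rigidity.} By the above we may assume $|V(G)|\ge d+2$, so $G$ is $(d+1)$-connected and $H:=G-e$ is connected for every $e=uv\in E$; suppose some such $H$ is not rigid. Since $p$ is strongly regular for $H$, the fibre $f_H^{-1}(f_H(p))$ is a differentiable manifold (Remark~\ref{rem:diffman}), and as $H$ is connected with prescribed edge lengths its quotient $M:=f_H^{-1}(f_H(p))/\Iso(\mathbb{E}^d)$ by the (free) isometry action is a compact differentiable manifold; the component $M_0$ of $[p]$ has positive dimension because $H$ is flexible. Consider the smooth function $\lambda:M_0\to\mathbb{R}$ with $\lambda([q])=\|q_u-q_v\|^2$, well defined since isometries preserve distances. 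If $\lambda$ were constant, every framework in $M_0$ would be equivalent to $(G,p)$, contradicting rigidity of $G$; so $\lambda$ is non-constant. As $G$ is rigid but $H$ is not, $\rank df_G(p)>\rank df_H(p)$, so the gradient at $p$ of $q\mapsto\|q_u-q_v\|^2$ is not normal to the fibre; hence $\lambda$ has non-zero differential at $[p]$, and in particular $c:=\lambda([p])$ is not an extreme value of $\lambda$ on $M_0$. If $[p]$ were the only point of $M_0$ with $\lambda$-value $c$, then $M_0\setminus\{[p]\}$ would be the disjoint union of the non-empty open sets $\lambda^{-1}([\min,c))$ and $\lambda^{-1}((c,\max])$, hence disconnected --- impossible for a connected positive-dimensional manifold with a point removed. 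So there is $[q]\ne[p]$ in $M_0$ with $\lambda([q])=c$; any placement $q$ representing it makes $(G,q)$ equivalent but not congruent to $(G,p)$, the final contradiction.

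I expect the redundant-rigidity step to be the main obstacle, and within it two points that need the genericity hypothesis handled carefully: that the reduced configuration space $M$ really is a compact differentiable manifold --- which uses both the strong regularity of the generic placement (via Remark~\ref{rem:diffman}) and the connectedness of $G-e$ coming from the connectivity part --- and that the length function $\lambda$ takes a regular, hence non-extremal, value at $[p]$, which here follows at once from the strict inequality $\rank df_G(p)>\rank df_{G-e}(p)$. The connectivity step, including the small-clique base case and the incongruence check, is then comparatively routine. All of this is classical, being due to Hendrickson~\cite{hendrickson}.
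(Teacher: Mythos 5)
The paper does not prove this statement itself (it is quoted from Hendrickson), but your reconstruction is correct and follows exactly the strategy the paper deploys in Section \ref{sec:hendrickson} for the normed-space analogues (Theorems \ref{thm:connectivity} and \ref{thm:red}): reflect one side of a small vertex cut to break global rigidity when the connectivity fails, and for redundant rigidity run a non-constant edge-length function over the compact quotient configuration space of $G-e$ and locate a second point in the same level set. The only spots requiring the care you already flag are the freeness of the isometry action on the fibre (strong regularity of the generic $p$ excludes degenerate equivalent placements, exactly as in the paper's Lemma \ref{l:config}) and the non-extremality of $\lambda([p])$, which your comparison of $\rank df_G(p)$ with $\rank df_{G-e}(p)$ handles correctly.
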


\begin{thm}[\cite{C2005,hendrickson,J&J}]\label{thm:euclidglobal}
	A graph $G$ is globally rigid in $\mathbb{E}^2$ if and only if $G$ is either a complete graph on at most 3 vertices or $G$ is 3-connected and redundantly rigid in $\mathbb{E}^2$.
\end{thm}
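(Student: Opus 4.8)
The plan is to prove the two implications separately; the ``only if'' direction is immediate and all the work is in the ``if'' direction.

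For necessity, a complete graph on at most three vertices has no non-congruent equivalent framework, so it is trivially globally rigid, while any non-complete graph on fewer than four vertices is not even rigid. For $|V| \ge 4$ one applies Theorem \ref{thm:euclidhendrickson} with $d = 2$ directly: global rigidity forces $G$ to be $3$-connected and redundantly rigid in $\mathbb{E}^2$.

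For sufficiency, $K_1, K_2, K_3$ are again immediate, so suppose $G$ is $3$-connected and redundantly rigid with $n := |V| \ge 4$. Since our open-interior definition of a globally rigid graph agrees with the existence of a single globally rigid generic framework (by the averaging theorem \cite{CW}) and global rigidity is a generic property (by \cite{C2005,GHT}), it suffices to produce one generic framework $(G,p)$ that is globally rigid. For this I would use Connelly's sufficient condition \cite{C2005}: a generic framework in $\mathbb{E}^2$ is globally rigid whenever it admits an equilibrium stress matrix of rank $n-3$, the maximum rank possible (the kernel always contains the $3$-dimensional space spanned by the all-ones vector and the two coordinate vectors of $p$). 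Thus the goal reduces to the purely existential statement that \emph{every $3$-connected redundantly rigid graph on $n \ge 4$ vertices has a generic realisation with a rank-$(n-3)$ equilibrium stress matrix.}

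To prove this I would induct along a recursive construction. We may assume $\mathcal{R}_2(G)$, the $2$-dimensional generic rigidity matroid of $G$, is connected --- if not, $G$ decomposes along a separation of $\mathcal{R}_2(G)$ into smaller $3$-connected redundantly rigid graphs overlapping (by $3$-connectedness) in enough vertices to be glued back together, using that globally rigid graphs sharing three non-collinear vertices glue to a globally rigid graph. When $\mathcal{R}_2(G)$ is connected, the combinatorial core is that $G$ is built up from $K_4$ by edge additions and $1$-extensions (Henneberg type-$2$ moves), each of which keeps the rigidity matroid connected. It then remains to check that a rank-$(n-3)$ stress is produced for the base and survives each move. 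A generic $K_4$ carries a nowhere-zero equilibrium stress of matrix rank $1 = 4-3$. Adding an edge keeps $n$ fixed, can only enlarge the space of stresses, and cannot raise the rank above $n-3$, so a maximal-rank stress persists. The crucial step is the $1$-extension: from a generic $(G,p)$ with a rank-$(n-3)$ stress one must build a generic realisation of the $1$-extension carrying a stress of rank $(n+1)-3 = n-2$. I would do this by a limiting argument --- first placing the new degree-$3$ vertex on the line through the two endpoints of the subdivided edge, where the old stress extends and can be completed to one of rank $n-2$ on the degenerate framework, then perturbing to a generic configuration and invoking lower semicontinuity of rank together with the a priori bound $n-2$ to force equality. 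This final perturbation --- restoring genericity without losing stress rank --- is the main obstacle and the technical heart of the proof.
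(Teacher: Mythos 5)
This theorem is quoted background: the paper states it with the citations \cite{hendrickson,C2005,J&J} and gives no proof, so there is no in-paper argument to compare against. At the level of architecture your sketch is a faithful reconstruction of the literature proof: necessity is Theorem \ref{thm:euclidhendrickson}; sufficiency reduces, via the averaging theorem \cite{CW} and the fact that global rigidity is a generic property \cite{C2005,GHT}, to exhibiting one generic realisation with an equilibrium stress matrix of rank $n-3$; and that realisation is produced by running Connelly's stress-rank arguments along the recursive construction of $3$-connected, $\mathcal{R}_2$-connected graphs from $K_4$ by $1$-extensions and edge additions \cite{BJ,J&J}.

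Two points need repair. First, your reduction to the case where $\mathcal{R}_2(G)$ is connected is wrong as written: in a redundantly rigid graph every edge lies in a circuit, so each component of $\mathcal{R}_2(G)$ induces a rigid subgraph, and additivity of rank across distinct components forces any two of them to share at most one vertex. The pieces could therefore never ``overlap in enough vertices'' for the three-point gluing lemma. The correct step --- and a key lemma of \cite{J&J} --- is that $3$-connectivity together with redundant rigidity already implies $\mathcal{R}_2(G)$ is connected; the case you propose to glue around is vacuous, but showing this requires a genuine argument rather than a reduction. Second, the two pillars you invoke are each substantial theorems: the recursive construction is the resolution of Connelly's conjecture \cite{BJ} extended to $M$-connected graphs in \cite{J&J}, and the $1$-extension step is Connelly's theorem in \cite{C2005}. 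For the latter, lower semicontinuity of matrix rank does not by itself close the argument: after perturbing the degenerate configuration to a generic one you must first produce an equilibrium stress of the perturbed framework near the degenerate stress, and the space of equilibrium stresses can drop in dimension as you leave the degenerate configuration; handling exactly this is the content of Connelly's proof. As a roadmap your proposal is correct; as a proof it delegates the real work to these cited theorems and misstates the $M$-connectivity reduction.
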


It is easy to find examples of graphs that are globally rigid in the Euclidean plane but not in other normed planes. 
In particular,
any wheel graph is globally rigid in $\mathbb{E}^2$ by \cite{BJ},
but in any non-Euclidean normed plane such graphs are minimally rigid and hence not redundantly rigid. 
In due course we will see that redundant rigidity is also a necessary condition for global rigidity in analytic normed planes (see Theorem \ref{thm:hendrickson}). 
Conversely, it was shown in \cite{DN} that a graph formed by taking two large complete graphs and gluing them at a single edge is globally rigid in any analytic normed plane, 
whereas in $\mathbb{E}^2$ there is an obvious violation of global rigidity obtained by reflecting one of the parts through the line defined by the edge in common.

\section{Necessary conditions for global rigidity in normed spaces} \label{sec:hendrickson}

In this section we develop necessary Hendrickson-type conditions \cite{hendrickson} for graphs to be globally rigid. We will work in the generality of normed spaces, though we will occasionally require the additional assumption that the normed space contains only finitely many linear isometries.
After this section we will focus solely on non-Euclidean normed planes, which always have finitely many linear isometries (see \cite[pg. 83]{minkowski}).

\subsection{Globally rigid graphs are 2-connected}

We first prove that all globally rigid graphs are 2-connected.

\begin{thm}\label{thm:connectivity}
	If $G=(V,E)$ is globally rigid in a non-Euclidean normed space $X$ and $|V| \geq 2$,
	then $G$ is $2$-connected.
\end{thm}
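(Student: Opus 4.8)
The plan is to prove the contrapositive in the strong form: \emph{if $G$ is not $2$-connected then $\grig(G;X)$ is a Lebesgue-null set}, which by the definition of global rigidity of a graph shows that $G$ is not globally rigid. So assume $G$ fails to be $2$-connected; since $|V|\geq 2$ this means $G$ is disconnected or $G$ has a cut vertex. I would first record a uniform combinatorial picture covering both cases: one can write $V=V_1\cup V_2$ where $S:=V_1\cap V_2$ has $|S|\leq 1$, every edge of $G$ lies inside $G[V_1]$ or inside $G[V_2]$, and there are vertices $u_0\in V_1\setminus S$ and $w_0\in V_2\setminus S$ with $u_0w_0\notin E$. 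For $G$ disconnected take $S=\emptyset$ with $V_1$ the vertex set of one component and $V_2$ the rest; for a cut vertex $v$ take $S=\{v\}$, let $V_1\setminus\{v\}$ be the vertex set of one component of $G-v$ and $V_2=V\setminus(V_1\setminus\{v\})$, and observe that the only $G$-edges leaving $V_1\setminus\{v\}$ go to $v$, so any $u_0\in V_1\setminus\{v\}$ and $w_0\in V_2\setminus\{v\}$ are non-adjacent.

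The core construction is a ``partial flip''. Pick an isometry $\sigma$ of $X$ that is the identity on $S$: if $S=\{v\}$ take the point reflection $\sigma(x)=2p_v-x$, which is an isometry because $x\mapsto -x$ is a linear isometry of every normed space (norms being symmetric); if $S=\emptyset$ take a translation $\sigma(x)=x+t$ with $t$ chosen later. Define $q$ by $q_x:=\sigma(p_x)$ for $x\in V_1$ and $q_x:=p_x$ for $x\in V_2$; this is consistent on $S$. Since $\sigma$ is an affine isometry it preserves the length of every edge inside $G[V_1]$, edges inside $G[V_2]$ are untouched, and there are no other edges, so $f_G(q)=f_G(p)$, i.e.\ $(G,q)$ is equivalent to $(G,p)$.

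It then remains to show that for a generic choice of $p$ (and of $t$) the frameworks $(G,q)$ and $(G,p)$ are \emph{not} congruent. The key point is that a congruence must preserve \emph{all} pairwise distances, in particular that of the non-adjacent pair $u_0,w_0$; but $\|q_{u_0}-q_{w_0}\|=\|\sigma(p_{u_0})-p_{w_0}\|$, which equals $\|2p_v-p_{u_0}-p_{w_0}\|$ when $S=\{v\}$ and $\|p_{u_0}+t-p_{w_0}\|$ when $S=\emptyset$. In the cut-vertex case the set of placements satisfying $\|2p_v-p_{u_0}-p_{w_0}\|=\|p_{u_0}-p_{w_0}\|$ is null: for fixed $p_{u_0},p_{w_0}$ the equation forces $2p_v-p_{u_0}-p_{w_0}$ onto a sphere (a null subset of $X$), so one concludes by Fubini. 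In the disconnected case, for any $p$ with $p_{u_0}\neq p_{w_0}$ one can choose $\|t\|$ arbitrarily small with $\|p_{u_0}+t-p_{w_0}\|\neq\|p_{u_0}-p_{w_0}\|$. Either way, for every $p$ outside a null set the framework $(G,p)$ admits an equivalent but non-congruent framework, so $\grig(G;X)$ lies inside a null set and $G$ is not globally rigid.

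I do not anticipate a genuine obstacle. The two points that need care are the book-keeping that produces the partition $(V_1,V_2,S)$ together with the non-adjacent pair $u_0,w_0$ from the failure of $2$-connectivity, and the routine-but-essential use of the fact that a congruence preserves non-edge distances, not merely edge lengths. Note that the argument uses nothing about $X$ beyond being a normed space: the point reflection through a single point is always an isometry, which is exactly why $2$-connectivity (rather than the $3$-connectivity of $\mathbb{E}^2$, where one instead reflects across the affine line spanned by a $2$-element separator) is the bound that comes out here.
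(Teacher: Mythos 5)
Your reduction ``not $2$-connected and $|V|\geq 2$ implies disconnected or has a cut vertex'' fails in exactly one case, and it is the case that carries all the weight of the non-Euclidean hypothesis: $G=K_2$. The complete graph on two adjacent vertices is connected and has no cut vertex, yet it is not $2$-connected, and your decomposition $(V_1,V_2,S)$ with a non-adjacent pair $u_0,w_0$ cannot be produced for it (the two vertices are adjacent, and no split leaves both $V_1\setminus S$ and $V_2\setminus S$ non-empty). Your own closing remark --- that the argument uses nothing about $X$ beyond being a normed space --- is the symptom rather than a feature: the statement is \emph{false} for Euclidean $X$, precisely because $K_2$ is globally rigid in $\mathbb{E}^d$, so any correct proof must invoke non-Euclideanness somewhere, and it is needed exactly here. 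The paper handles $K_2$ separately: if $(K_2,p)$ were globally rigid then, after translating $p_w$ to the origin, every point $x$ with $\|x\|=\|p_v\|$ gives an equivalent placement, so congruence would force the linear isometry group of $X$ to act transitively on that sphere, which implies $X$ is Euclidean (\cite[Corollary 3.3.5]{minkowski}), a contradiction. You need to add this case (or an equivalent argument) for the proof to be complete.

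For $|V|\geq 3$ (and for the edgeless two-vertex graph) your argument is sound and is essentially the paper's: the ``partial flip'' $x\mapsto 2p_v-x$ on one side of the cut vertex is the paper's map $p'_u=-p_u$ after normalising $p_v=0$, and your null-set/Fubini packaging of genericity plays the same role as the paper's perturbation of a placement chosen in the interior of $\grig(G;X)$; either packaging is acceptable.
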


\begin{proof}
	Suppose for a contradiction that $G$ is globally rigid in $X$ but it is not 2-connected. 
	First suppose that $G$ is a complete graph with two vertices $v,w$.
	Fix $(G,p)$ to be a globally rigid framework in $X$. 
    By applying translations to $p$,
    we may suppose that $p_w = 0$,
    and hence $p_v \in B_r := \{ x \in X : \|x\|=r\}$.
    For any point $x \in B_r$,
    note that the framework $(G,q)$, where $q_w = 0$ and $q_v = x$, will be equivalent to $(G,p)$.
    As $(G,p)$ is globally rigid, there exists a linear isometry of $X$ that maps $p_v$ to $x$.
    Hence the set of linear isometries of $X$ act transitively on $B_r$.
    However, this implies $X$ is Euclidean (see \cite[Corollary 3.3.5]{minkowski}),
    a contradiction.
	
	Now suppose that $G$ is not the complete graph with 2 vertices.
	Hence there exists $u \in V$ and a partition $V_1,V_2$ of $V \setminus \{u\}$ so that there is no edge of $G$ connecting a vertex in $V_1$ to a vertex in $V_2$.
	Choose any well-positioned placement $p$ of $G$ with $p_u = 0$ so that $p$ lies in the interior of $\grig (G;X)$.
	By perturbing if necessary,
	we may also assume that: (i) for any distinct vertices $v,w$ we have $p_v\neq p_w$, and (ii) there exist vertices $v_1 \in V_1$ and $v_2 \in V_2$ where $\|p_{v_1} - p_{v_2}\| \neq \|p_{v_1} + p_{v_2}\|$.
	To see why we may assume point (ii),
	we note that if $\|p_{v_1} - p_{v_2}\| = \|p_{v_1} + p_{v_2}\|$,
	we will exchange $p$ for the placement $q \in \grig (G;X)$ where $q_{v_1} = p_{v_1} + \delta(p_{v_1}+p_{v_2})$ and $q_{v_2} = p_{v_2} + \delta(p_{v_1}+p_{v_2})$ for sufficiently small $\delta>0$ and $q_v=p_v$ otherwise,
	as then $\|q_{v_1} - q_{v_2}\| = \|p_{v_1} - p_{v_2}\|$ and $\|q_{v_1} + q_{v_2}\| = (1+\delta)\|p_{v_1} + p_{v_2}\|$;
	indeed if $\|p_{v_1} - p_{v_2}\| = (1+\delta)\|p_{v_1} + p_{v_2}\|$ also,
	then $p_{v_1}=p_{v_2}$, contradicting our assumption that $p$ is an injective map.
	
	Define the placement $p'$ of $G$ with
$p'_v =	p_v$ if $v \in V_1 \cup \{u\}$ and $p'_v:=	-p_v$ if  $v \in V_2$.
	Then $(G,p) \sim (G,p')$ but $p \not\sim p'$ since $\|p'_{v_1} - p'_{v_2} \| = \|p_{v_1} + p_{v_2} \| \neq \|p_{v_1} - p_{v_2} \|.$
	Thus $(G,p)$ is not globally rigid.
	Hence $\grig (G;X)$ cannot have a non-empty interior so $G$ is not globally rigid in $X$, a contradiction.
\end{proof}

Compared to Euclidean spaces, this is a relatively low connectivity requirement.
Indeed, any graph with at least $d+2$ vertices that is globally rigid in $\mathbb{E}^d$ must be $(d+1)$-connected by Theorem \ref{thm:euclidhendrickson}.
Figure \ref{fig:grexamples} shows two examples and relates them to this connectivity result.
However, for any normed space with finitely many linear isometries, 2-connectivity is a tight necessary condition as we now prove.

  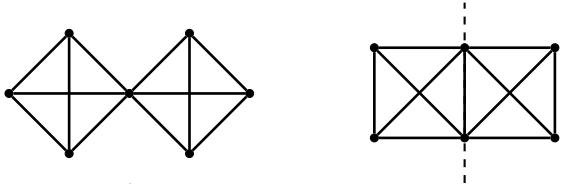
\begin{figure}[htp]
\begin{center}
     \begin{tikzpicture}[scale=0.8]
    \node[vertex] (a) at (2,0) {};
	\node[vertex] (b) at (1,-1) {};
    \node[vertex] (c) at (1,1) {};
    
	\node[vertex] (d) at (0,0) {};
	
    \node[vertex] (e) at (-2,0) {};
	\node[vertex] (f) at (-1,-1) {};
    \node[vertex] (g) at (-1,1) {};
	
	\draw[edge] (a)edge(b);
	\draw[edge] (a)edge(c);
	\draw[edge] (a)edge(d);
	\draw[edge] (b)edge(c);
	\draw[edge] (b)edge(d);
	\draw[edge] (c)edge(d);
    
	\draw[edge] (d)edge(e);
	\draw[edge] (d)edge(f);
	\draw[edge] (d)edge(g);
	\draw[edge] (e)edge(f);
	\draw[edge] (e)edge(g);
	\draw[edge] (f)edge(g);
	
	\filldraw[black] (0,-1.5) circle {};
\end{tikzpicture}\qquad \qquad
    \begin{tikzpicture}[scale=0.6]
    \node[vertex] (a) at (-2,-1) {};
	\node[vertex] (b) at (-2,1) {};
	
    \node[vertex] (c) at (0,1) {};
	\node[vertex] (d) at (0,-1) {};
	
    \node[vertex] (e) at (2,-1) {};
	\node[vertex] (f) at (2,1) {};

	\draw[edge] (a)edge(b);
	\draw[edge] (a)edge(c);
	\draw[edge] (a)edge(d);
	\draw[edge] (b)edge(c);
	\draw[edge] (b)edge(d);
	\draw[edge] (c)edge(d);
    
	\draw[edge] (c)edge(e);
	\draw[edge] (c)edge(f);
	\draw[edge] (d)edge(e);
	\draw[edge] (d)edge(f);
	\draw[edge] (e)edge(f);

\draw[thick,dashed]
(0,-2) -- (0,2);
\end{tikzpicture}
\end{center}
\vspace{-0.3cm}
\caption{
(Left): A graph that is minimally rigid in any non-Euclidean normed plane, but dependent and flexible in the Euclidean plane.
However, it is not globally rigid in any normed space as it is not 2-connected.
(Right): A graph that is rigid in any normed plane. 
This graph is also globally rigid in all analytic normed planes \cite{DN} (see Theorem \ref{lem:basegraphs} below). It is not globally rigid in the Euclidean plane; for almost any placement, the left two vertices may be reflected across the dashed line to obtain an equivalent but non-congruent framework. (Note that this reflection does not exist in most normed planes.)}
\label{fig:grexamples}
\end{figure}

\begin{prop}
    Let $X$ be a normed space with finitely many linear isometries.
    Suppose there exists a graph $G=(V,E)$ that is globally rigid in $X$.
    Then there exists a graph $G'=(V',E')$ that is globally rigid in $X$ but not 3-connected.
\end{prop}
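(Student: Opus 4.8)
The plan is to build $G'$ by gluing two copies of $G$ along a common edge; the only input from the hypothesis is finiteness of the linear isometry group, which rules out the reflection-type obstruction that makes this construction fail in $\mathbb{E}^2$. If $G$ is already not $3$-connected, take $G'=G$. So I would assume $G$ is $3$-connected; then $G$ is connected with $|V|\ge 4$, so it has an edge $uv$. Let $G_1=(V_1,E_1)$ and $G_2=(V_2,E_2)$ be copies of $G$ whose vertex sets meet exactly in $\{u,v\}$, chosen so that the two copies of the edge $uv$ are identified and the copy-isomorphisms agree on $\{u,v\}$, and put $G':=G_1\cup G_2$ (so $uv$ is a single edge of $G'$). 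Since $|V_i\setminus\{u,v\}|=|V|-2\ge 2>0$, the pair $\{u,v\}$ is a $2$-vertex-cut of $G'$, so $G'$ is not $3$-connected. It remains to show $G'$ is globally rigid in $X$, i.e.\ that $\grig(G';X)$ has non-empty interior.

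For this I first pin down the placements to use. Start from a non-empty open set $U\subseteq X^V$ of globally rigid placements of $G$ (which exists since $\grig(G;X)$ has non-empty interior). As $X$ has only finitely many linear isometries, the set $B\subseteq X$ of vectors fixed by some non-identity linear isometry is a finite union of proper linear subspaces, hence closed and Lebesgue-null; so $U':=\{p\in U: p_u-p_v\notin B\}$ is open and non-empty (a null set cannot swallow the open set $U$). Transporting $U'$ onto $X^{V_1}$ and onto $X^{V_2}$ via the copy-isomorphisms, and using that the coordinate projection onto the $\{u,v\}$-entries is an open map, I can pick $(a,b)$ in its (open, non-empty) image and placements $p^{(i)}$ of $G_i$ in $U'$ with $(p^{(i)}_u,p^{(i)}_v)=(a,b)$; these agree on $\{u,v\}$ and so glue to a placement of $G'$. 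More generally the set
\[
\mathcal{O}:=\{\,p\in X^{V(G')}:\ p|_{V_1}\in U',\ p|_{V_2}\in U'\,\}
\]
is open and non-empty, and I claim every $p\in\mathcal{O}$ gives a globally rigid framework $(G',p)$, which completes the proof.

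To verify the claim, let $(G',q)$ be equivalent to $(G',p)$. For $e\in E_i$ both endpoints lie in $V_i$, so the $e$-coordinate of the rigidity map depends only on the restriction to $V_i$; hence $(G_i,q|_{V_i})\sim(G_i,p|_{V_i})$, and global rigidity of the pieces yields $g_i\in\Iso(X)$ with $q|_{V_i}=g_i\circ p|_{V_i}$. Replacing $q$ by $g_1^{-1}\circ q$ (still equivalent to $(G',p)$, and congruent to $q$) I may assume $q|_{V_1}=p|_{V_1}$, so that $q|_{V_2}=g\circ p|_{V_2}$ with $g:=g_1^{-1}g_2\in\Iso(X)$. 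Evaluating at $u$ and $v$ (both in $V_1\cap V_2$) gives $g(p_u)=p_u$ and $g(p_v)=p_v$; writing $g$ as an affine map $x\mapsto Tx+c$ (every isometry of a normed space is affine), its linear part satisfies $T(p_u-p_v)=p_u-p_v$, and since $p_u-p_v\notin B$ this forces $T=I$ and hence $c=0$, i.e.\ $g=I$. Thus $q|_{V_2}=p|_{V_2}$, so $q$ agrees with $p$ on $V_1\cup V_2=V(G')$; unwinding the replacement, the original framework is congruent to $(G',p)$. Hence $\mathcal{O}\subseteq\grig(G';X)$ and $G'$ is globally rigid.

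The step I expect to be the real obstacle — and the one genuinely non-formal point — is arranging that $p_u-p_v$ is fixed by no non-identity linear isometry of $X$: this is exactly the feature that fails in the Euclidean plane, where the line through $p_u$ and $p_v$ supports a reflecting isometry and edge-gluing destroys global rigidity. Finiteness of the linear isometry group is what turns the offending vectors into a null set, so it can be avoided while still keeping an \emph{open} set of admissible placements, as the definition of global rigidity demands. Everything else — the openness of $\mathcal{O}$, the restriction of equivalences to the two halves, and the small-graph reduction — is routine bookkeeping.
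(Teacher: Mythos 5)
Your proof is correct and follows essentially the same route as the paper: glue two copies of $G$ along an edge, restrict to an open set of placements where $p_u-p_v$ is fixed by no non-trivial linear isometry (possible since the finitely many non-identity linear isometries fix only a closed null set of vectors), and then force the two isometries witnessing global rigidity of the halves to coincide. Your extra care about the case where $G$ is already not $3$-connected and the explicit affine decomposition of the gluing isometry are minor refinements of the paper's argument, not a different approach.
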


\begin{proof}
    Choose an edge $v_1v_2 \in E$.
    Define $G'=(V',E')$ to be the graph formed from gluing two copies $G_1 = (V_1,E_1)$ and $G_2 = (V_2,E_2)$ of $G$ at the edge $v_1v_2$.
    As $X$ only has finitely many linear isometries,
    there exists an open dense set of points that are not invariant under any non-trivial linear isometry of $X$.
    Hence we may choose an open set $U \subset \grig(G;X)$ where for each $p \in U$, the vector $p_{v_1}-p_{v_2}$ is not invariant under any non-trivial linear isometry of $X$.
    Define $U'$ to be the set of placements of $G'$ in $X$ where for each $p \in U'$ we have $p|_{V_i} \in U$ for each $i \in \{1,2\}$.
    Since the set $U$ is open, it follows that $U'$ is also open.
    Choose a placement $p \in U'$,
    and choose a placement $q \in X^{V'}$ so that $(G,q) \sim (G,p)$ and $q_{v_1} = p_{v_1}$.
    By applying translations, we may assume $p_{v_1}=0$.
    Since both $(G_1,p|_{V_1})$ and $(G_2,p|_{V_2})$ are globally rigid,
    there exist linear isometries $T_1,T_2$ of $X$ such that $T_i(p_v)= q_v$ for all $v \in V_i$.
    Importantly, $T_1(p_{v_2}) = T_2(p_{v_2}) = q_{v_2}$.
    As $p_{v_2} - p_{v_1} = p_{v_2}$ is invariant under linear isometries and $p_{v_2} = T_2^{-1} \circ T_1 (p_{v_2})$,
    we must have $T_1 = T_2$.
    Hence $q \sim p$ and $(G,p)$ is globally rigid.
\end{proof}

\subsection{Global rigidity implies redundant rigidity} \label{sec:redundant}

Recalling the relevant notation from Section \ref{sec:background},
we define for a normed space $X$ and any graph $G=(V,E)$ the set
\begin{align*}
	X^{IF}_{G} := \left\{ p \in X_{G} : g \circ p = p \Rightarrow g = I \text{ for all } g \in \Iso (X) \right\}.
\end{align*}
Note that if $p \in X_G$ is spanning,
then $p \in X_G^{IF}$.
Using this fact we immediately obtain the following result.

\begin{lem}\label{l:quotopenconull}
	Let $X$ be a $d$-dimensional normed space and $G = (V,E)$.
	If $|V| \geq d+1$ then $X^{IF}_{G}$ is an open conull set.
\end{lem}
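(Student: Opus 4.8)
The plan is to show that the complement $X^V \setminus X^{IF}_G$ is contained in a null set, since $X^{IF}_G = X_G \cap \{p : g\circ p = p \Rightarrow g = I\}$ and $X_G$ is already known to be open and conull. The crucial observation, noted immediately before the statement, is that every spanning placement $p \in X_G$ lies in $X^{IF}_G$: if $g \in \Iso(X)$ fixes a spanning placement then, because all isometries of a normed space are affine (Mazur--Ulam), $g$ is an affine map fixing a set whose affine span is $X$, hence $g = I$. Therefore $X_G \setminus X^{IF}_G$ is contained in the set of non-spanning placements of $V$.

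First I would invoke the hypothesis $|V| \geq d+1 = \dim X + 1$ together with the fact, recalled in Section~\ref{subsec:reg} (``if $|V| \geq \dim X + 1$, then the set of placements of $V$ that are not spanning form a null subset of $X^V$''), to conclude that the non-spanning placements of $V$ form a null set. Consequently $X_G \setminus X^{IF}_G$ is null. Since $X_G$ is conull, $X^{IF}_G = X_G \setminus (X_G \setminus X^{IF}_G)$ is the intersection of a conull set with the complement of a null set, hence conull.

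For openness, I would argue that $X^{IF}_G$ contains the set $S$ of spanning placements in $X_G$, which is open: $X_G$ is open, and spanning is an open condition (it is equivalent to some $(d{+}1)\times(d{+}1)$ minor of the matrix of differences $p_{v} - p_{v_0}$ being nonzero, a condition preserved under small perturbations). Thus $X^{IF}_G \supseteq S$ with $S$ open and conull, so $X^{IF}_G$ has conull interior. To get that $X^{IF}_G$ is itself open one can note that it is sandwiched between the open set $S$ and the conull set $X^{IF}_G$; more directly, one shows the complement within $X_G$ is closed. Suppose $p_n \to p$ with each $p_n \in X_G \setminus X^{IF}_G$; pick $g_n \in \Iso(X) \setminus \{I\}$ fixing $p_n$. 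By the Mazur--Ulam theorem each $g_n$ is affine, so $g_n(x) = A_n x + b_n$ with $A_n$ a linear isometry; since (as recorded at the start of Section~\ref{sec:hendrickson}, using that non-Euclidean planes, and more generally the relevant spaces, have relatively compact isometry groups on the linear part) the linear isometry group is compact and the translations $b_n$ are bounded on a bounded orbit, we may pass to a subsequence with $g_n \to g \in \Iso(X)$, $g$ fixing $p$; and $g \neq I$ follows because the $g_n$ agree with $g$ on the converging placements while being nontrivial affine maps (this is the one place a little care is needed, as in principle $g$ could degenerate to $I$).

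\emph{Main obstacle.} The genuinely delicate point is this last closedness/compactness step: ensuring that a sequence of nontrivial isometries fixing $p_n \to p$ has a subsequential limit that is still a \emph{nontrivial} isometry fixing $p$, rather than collapsing to the identity. The cleanest route, and the one I would ultimately adopt, avoids this entirely by reducing to the spanning-placement argument as above: since spanning placements are already isometrically full (by Mazur--Ulam, exactly as the paper states just before the lemma), and the spanning placements form an open conull subset of $X^V$ once $|V| \geq d+1$, the set $X^{IF}_G \supseteq X_G \cap \{p \text{ spanning}\}$ is open (intersection of two open sets) and conull (intersection of two conull sets), which is all that is required. The only input beyond the definitions is the pair of facts, both quoted in Section~\ref{sec:background}, that non-spanning placements are null and that isometries are affine.
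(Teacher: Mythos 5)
This is essentially the paper's own proof: the lemma is presented as an immediate consequence of the observation (made just before its statement) that spanning placements in $X_G$ are isometrically full, together with the facts recorded in Section 2 that the spanning placements form an open conull subset of $X^V$ when $|V| \geq d+1$ and that $X_G$ is an open conull set. The one caveat --- which applies equally to the paper's one-line derivation --- is that containing an open conull set yields conullity but not, by itself, openness of $X^{IF}_{G}$; you correctly flag this point, and your compactness sketch for the closedness of the complement is the natural way to address that shared gap.
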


We can also determine whether a placement lies in $X^{IF}_{G}$ solely by the dimension of its kernel,
although this will only be useful for sufficiently large graphs.

\begin{lem}[{\cite[Lemma 3.9]{DN}}]\label{l:quotcount}
	Let $X$ be a $d$-dimensional smooth normed space,
	$G = (V,E)$ and $p \in X_G$ be a placement of $G$ with $\dim \ker df_G(p) < d-1 + |V|$.
	Then $p \in X^{IF}_{G}$.
\end{lem}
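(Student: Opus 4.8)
The plan is to argue by contradiction using the hypothesis that the normed space has finitely many linear isometries (equivalently, since $X$ is smooth and finite-dimensional, the linear isometry group is a discrete subgroup of $GL(X)$, so its Lie algebra — the tangent space at $I$ — is trivial in the sense that it is spanned by the constant trivial flexes only). Suppose $p \in X_G$ has $\dim \ker df_G(p) < d - 1 + |V|$ but $p \notin X^{IF}_G$. Then there is a non-identity isometry $g \in \Iso(X)$ with $g \circ p = p$. Since all isometries of a normed space are affine (the Mazur--Ulam theorem, cited earlier), write $g(x) = A(x) + b$ with $A$ a linear isometry and $b \in X$. The set of such fixed isometries forms a subgroup $H$ of $\Iso(X)$ stabilising $p$; the key point is that $H$ is a \emph{positive-dimensional} Lie group whenever it is infinite, and otherwise it is finite but still non-trivial, so we split into cases.

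First I would handle the case where the stabiliser $H$ is positive-dimensional. Then there is a non-trivial one-parameter subgroup $g_t$ of $\Iso(X)$ with $g_t \circ p = p$ for all $t$, and differentiating at $t = 0$ produces a non-trivial infinitesimal flex $u$ of $(G,p)$ of the form $u_v = T(p_v) + z$ where $T$ is a tangent vector of the linear isometry group at $I$; by the hypothesis on $T$ stated in the paper this $u$ lies in $\ker df_G(p)$. The subtlety is bookkeeping the dimension: the space of trivial flexes of the form $T(p_v) + z$ has dimension at most $d$ (from the translations $z$) plus the dimension of the isometry Lie algebra; since a smooth non-Euclidean normed space has at most $d$ isometry-algebra dimension in fact much less — but I only need that the \emph{full} trivial flex space has dimension at most $d - 1 + |V|$ is false, so the counting must instead go the other way. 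Reconsidering: the correct mechanism is that $g \circ p = p$ forces the placement to be non-spanning, hence its affine span lies in a proper affine subspace, and then a dimension count on $\ker df_G(p)$ using the structure of placements lying in a proper subspace yields $\dim \ker df_G(p) \geq d - 1 + |V|$, contradicting the hypothesis. Concretely, if $p$ is fixed by a non-trivial affine isometry $g$, then the affine span $A$ of $\{p_v\}$ is $g$-invariant and proper (a non-trivial affine isometry of $X$ cannot fix $X$ pointwise unless it is the identity), so $\dim A \leq d - 1$; the placement then factors through $A$, and one obtains at least $|V| \cdot \mathrm{codim}$-many extra flex directions by moving each vertex off $A$ — but these need not be flexes.

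Given the delicacy, the cleanest route — and the one I would actually write — invokes Lemma \ref{l:quotopenconull} combined with a rank/dimension argument: the map sending a placement to its orbit under $\Iso(X)$ is, on the conull set $X^{IF}_G$, locally a submersion onto a manifold of dimension $|V| \cdot d$ minus the dimension of a generic orbit. The generic orbit has dimension $d + (\text{isometry Lie algebra dim})$, and for a smooth normed space the isometry Lie algebra at $I$ has dimension at most... Actually, the honest statement the authors likely intend: if $p \notin X^{IF}_G$ then the stabiliser is non-trivial, so $p$ lies in the closed set of placements with non-trivial stabiliser; a placement has non-trivial isometry stabiliser only if it is non-spanning or invariant under a linear isometry fixing a point, and for such a $p$ one shows directly (as in \cite[Lemma 3.9]{DN}, which this lemma simply quotes) that the trivial flex space — which always sits inside $\ker df_G(p)$ — has dimension at least $d - 1 + |V|$. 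So the whole proof reduces to: (1) contrapositive — assume non-trivial stabiliser; (2) produce from the stabiliser either a continuous family of congruent placements or enough independent trivial flexes; (3) count to get $\dim \ker df_G(p) \geq d - 1 + |V|$. The main obstacle is step (2)--(3): correctly accounting for how a non-trivial (possibly finite-order) isometry stabiliser forces the placement into a proper affine subspace and then converting that into the precise lower bound $d - 1 + |V|$ on the kernel dimension, which is exactly the content borrowed verbatim from \cite{DN} and which I would therefore cite rather than reprove in full.
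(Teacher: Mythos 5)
The paper does not prove this lemma; it imports it verbatim from \cite[Lemma 3.9]{DN}, so there is no internal argument to compare against. Judged as a proof, your proposal has a genuine gap, and you flag it yourself: you reduce to the contrapositive and observe correctly that a non-identity isometry $g$ with $g \circ p = p$ fixes the affine span of $\{p_v\}$ pointwise, so that span is proper, but you never convert this into the required bound $\dim \ker df_G(p) \geq d-1+|V|$. Both of your attempts at that step fail for identifiable reasons. The opening appeal to ``finitely many linear isometries'' uses a hypothesis the lemma does not have (only smoothness of $X$ is assumed). And the idea of ``moving each vertex off the affine span'' to manufacture $|V|\cdot\mathrm{codim}$ extra flexes is Euclidean intuition: in a general smooth normed space the support functional $\varphi_y$ of a vector $y$ in a proper subspace $W$ need not annihilate any fixed complement of $W$, so those directions are not infinitesimal flexes --- as you concede. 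Ending with ``I would cite rather than reprove'' leaves the entire content of the lemma unestablished.

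The missing idea is equivariance of the support map under linear isometries. Since $g$ fixes $p_{v_0}$, Mazur--Ulam lets you write $g(x) = A(x - p_{v_0}) + p_{v_0}$ with $A$ a non-identity linear isometry, and then every edge vector $p_v - p_w$ lies in $\ker(A - I)$. Smoothness gives $\varphi_{Ay} = \varphi_y \circ A^{-1}$ by uniqueness of support functionals, so for $y = p_v - p_w$ one gets $\varphi_y \circ A = \varphi_y$, i.e.\ $\varphi_y$ vanishes on the non-zero subspace $U := \operatorname{im}(A - I)$. Hence $U^V \subseteq \ker df_G(p)$, and adding the $d$-dimensional space of translations (which meets $U^V$ in a copy of $U$) yields
\[
\dim \ker df_G(p) \;\geq\; |V|\dim U + d - \dim U \;=\; (|V|-1)\dim U + d \;\geq\; |V| - 1 + d,
\]
contradicting the hypothesis. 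This equivariance argument, not a spanning/non-spanning dichotomy, is what makes the count work, and it is exactly the step your write-up defers to the citation.
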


Given a normed space $X$ and two placements $p,p'$ of a set $V$, we remember that $p \sim p'$ if and only if there exists an isometry $g \in \Iso (X)$ with $p' = g \circ p$. It is immediate that if $p \in X_G$ and $g \in \Iso (X)$ then $g \circ p \in X_G$. Define the quotient space $X^V/\sim$ with equivalence classes $\tilde{p} := \{ q \in X^V : q \sim p\}$ and quotient map $\pi : X^V \rightarrow X^V/\sim$.

\begin{lem}\label{l:quotman}
	Let $X$ be a normed space and $M$ be a differentiable submanifold of $X^{IF}_{G}$, 
	where $g \circ p \in M$ for each $p \in M$ and $g \in \Iso (X)$.
	Then the set $\pi(M)$ is a differentiable submanifold of $X^V/\sim$ with $\dim \pi(M) = \dim M - \dim \Iso (X)$,
	and the map $\pi$ restricted to $M$ and $\pi(M)$ is a submersion (i.e., a differentiable map with surjective derivative everywhere).
\end{lem}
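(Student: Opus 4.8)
The plan is to exhibit $\pi(M)$ as a manifold locally, by constructing a smooth local section of $\pi$ and invoking the standard quotient-manifold machinery. The key structural fact to exploit is that the isometry group $\Iso(X)$ acts on $M$, and that this action is \emph{free} on $X_G^{IF}$ by the very definition of that set: if $g \circ p = p$ then $g = I$. Since all isometries of a normed space are affine (Mazur--Ulam), $\Iso(X)$ is a Lie group acting on $X^V$ smoothly, and the orbits of this free action are exactly the fibres $\tilde p$ of $\pi$. So the statement is really the assertion that a free, proper Lie group action restricted to an invariant submanifold descends to a manifold structure on the orbit space, with the quotient map a submersion and the expected dimension drop $\dim \Iso(X)$.

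First I would record that $\Iso(X)$ is a (compact, since a normed space has a bounded ball and isometries fix $0$ up to translation — more precisely $\Iso(X) \cong X \rtimes \mathrm{Isom}_0(X)$ with $\mathrm{Isom}_0(X)$ the compact group of \emph{linear} isometries) Lie group acting smoothly on $X^V$. Then I would verify that the action of $\Iso(X)$ on $X_G^{IF}$, hence on $M$, is free (immediate from the definition of $X_G^{IF}$) and proper (automatic because $\mathrm{Isom}_0(X)$ is compact and translations act properly, so the semidirect product acts properly on $X^V$, and properness restricts to the invariant subset). Next, for a fixed $p \in M$, I would produce a local slice: choose a linear complement in $T_p M$ to the tangent space $T_p(\tilde p \cap M) = T_p(\Iso(X)\cdot p)$ of the orbit, exponentiate/embed it as a submanifold $S \subset M$ transverse to the orbit through $p$, and check using freeness plus properness that, after shrinking $S$, the map $\Iso(X) \times S \to M$, $(g,s) \mapsto g \circ s$, is a diffeomorphism onto a saturated open neighbourhood of the orbit. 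This is the slice theorem. Then $\pi|_S : S \to \pi(M)$ is a bijection onto an open neighbourhood of $\tilde p$ in $\pi(M)$, giving the chart; compatibility of charts built from different slices is the usual computation. The dimension count $\dim \pi(M) = \dim S = \dim M - \dim(\Iso(X)\cdot p) = \dim M - \dim \Iso(X)$ uses freeness (so all orbits have the full dimension $\dim \Iso(X)$). Finally, in these charts $\pi$ corresponds to the projection $\Iso(X) \times S \to S$ followed by $\pi|_S$, which is manifestly a submersion; hence $\pi|_M$ is a submersion onto $\pi(M)$.

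The main obstacle — or at least the point requiring the most care — is establishing the slice theorem in this setting, specifically the claim that $\Iso(X) \times S \to M$ is a diffeomorphism onto a \emph{saturated} open set. This needs (a) that the derivative at $(I,p)$ is an isomorphism, which follows from the transversality of $S$ to the orbit and the fact that the orbit map $g \mapsto g\circ p$ is an immersion (again freeness), and (b) a properness-plus-freeness argument to upgrade the resulting local diffeomorphism near $(I,p)$ to an injective one on all of $\Iso(X) \times S$ after shrinking $S$; without properness one could have distant group elements carrying points of $S$ back into $S$. Since $\mathrm{Isom}_0(X)$ is compact this is standard, but it is the step where the hypothesis ``$g \circ p \in M$ for all $g \in \Iso(X)$'' (i.e.\ $M$ is $\Iso(X)$-invariant) is genuinely used, together with $M \subseteq X_G^{IF}$ (freeness). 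Everything else — smoothness of the action, the affine/Mazur--Ulam input, the bookkeeping of chart transitions — is routine.
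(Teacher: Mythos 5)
Your proposal is correct and follows essentially the same route as the paper: the paper's proof consists of exactly the observation that the action is free on $X^{IF}_G$ (the map $g \mapsto g\circ p$ is injective) followed by a citation of the standard quotient-manifold theorem for free and proper actions (\cite[Proposition 4.1.23]{mechanics} together with \cite[Lemma 3.3]{D19}), which is precisely the slice-theorem argument you write out in full. The only blemish is your opening parenthetical calling $\Iso(X)$ compact — it is not, because of translations — but you immediately correct this to the semidirect-product description $X \rtimes \mathrm{Isom}_0(X)$ with compact linear part, which is what your properness argument actually uses.
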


\begin{proof}
	For each point $p \in M$, since $p \in X^{IF}_{G}$ the map $g \mapsto g \circ p$ is injective. 
	The result now follows from \cite[Lemma 3.3]{D19} and \cite[Proposition 4.1.23]{mechanics}.
\end{proof}

We note that if $p \sim q$ then $f_G(p)=f_G(q)$ and hence the \emph{quotient rigidity map} 
\begin{align*}
\tilde{f}_G : X^{IF}_{G}/\sim \rightarrow \mathbb{R}^{E}, ~ \tilde{p} \mapsto \left( \frac{1}{2}\|p_v - p_w \|^2 \right)_{vw \in E}
\end{align*}
is well-defined.
Define the set 
\begin{align*}
\config := \tilde{f}_G^{-1} \left( \tilde{f}_G(\pi(p)) \right).
\end{align*}
If $f_G$ is differentiable at a point $p$, it follows that $\tilde{f}_G$ is differentiable at $\pi (p)$.

We now focus on $d$-dimensional smooth normed spaces with only finitely many linear isometries. Since all isometries are a combination of a linear isometry and a translation \cite{mazurulam}, the isometry group is $d$-dimensional.

\begin{lem}\label{l:config}
	Let $X$ be a $d$-dimensional smooth normed space with finitely many linear isometries,
	$(G,p)$ be a strongly regular framework in $X$ and $k := \dim \ker df_G(p) - d$.
	If $k < |V|-1$ and $G$ is connected, then $\config$ is a compact differentiable submanifold of $X^{IF}_{G}/\sim$ of dimension $k$.
\end{lem}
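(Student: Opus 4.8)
The plan is to establish that $\config$ is compact and then, separately, that it is a differentiable submanifold of $X^{IF}_G/\!\sim$ of the claimed dimension. For compactness, since $G$ is connected the edge lengths $f_G(p)$ bound the diameter of the vertex set in any equivalent placement; after quotienting by the isometry group (in particular by translations) we may fix, say, $p_{v_0} = 0$ for some root vertex $v_0$, and then connectivity together with the triangle inequality confines every $p_v$ to a closed ball of fixed radius in $X$. Thus a set of representatives for $\config$ lies in a compact subset of $X^V$, and since $\tilde f_G$ is continuous (being induced by the continuous map $f_G$) the fibre $\tilde f_G^{-1}(\tilde f_G(\pi(p)))$ is closed; a closed subset of a compact set is compact, and $\pi$ is continuous, so $\config$ is compact. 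One must be slightly careful that $X^{IF}_G/\!\sim$ carries the quotient topology and that the relevant representatives genuinely lie in $X^{IF}_G$: here the hypothesis $k < |V|-1$, i.e. $\dim\ker df_G(p) < d - 1 + |V|$, together with Lemma~\ref{l:quotcount} guarantees $p \in X^{IF}_G$, and more generally one checks equivalent placements with generic (maximal-rank) behaviour also land in $X^{IF}_G$.

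For the manifold structure, I would use the hypothesis that $(G,p)$ is strongly regular. By Remark~\ref{rem:diffman}, $f_G(p)$ is a regular value of $f_G$, so $f_G^{-1}(f_G(p))$ is a differentiable submanifold of $X^V$; intersecting with the open conull set $X^{IF}_G$ (Lemma~\ref{l:quotopenconull}, valid since $|V| \geq d+1$, which follows from $k < |V|-1$ with $k \geq 0$) keeps it a differentiable submanifold, call it $M$. Every point of $f_G^{-1}(f_G(p)) \cap X^{IF}_G$ is strongly regular by definition (every equivalent framework has maximal rank), and the isometry group preserves both $f_G^{-1}(f_G(p))$ and $X^{IF}_G$, so the hypotheses of Lemma~\ref{l:quotman} are met: $\pi(M)$ is a differentiable submanifold of $X^V/\!\sim$ with $\dim \pi(M) = \dim M - \dim\Iso(X) = \dim M - d$, and $\pi|_M$ is a submersion. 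It remains to identify $\pi(M)$ with $\config$ and to compute $\dim M$.

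To compute $\dim M$: since $f_G(p)$ is a regular value, $\dim M = \dim X^V - \rank df_G(p) = d|V| - \rank df_G(p)$. By the rank–nullity theorem $\rank df_G(p) = d|V| - \dim\ker df_G(p)$, so $\dim M = \dim\ker df_G(p) = k + d$. Hence $\dim\pi(M) = (k+d) - d = k$, matching the claim. For the identification $\pi(M) = \config$: by definition $\config = \tilde f_G^{-1}(\tilde f_G(\pi(p)))$, and $\tilde f_G \circ \pi = f_G$ on $X^{IF}_G$, so $\pi(M) = \pi\big(f_G^{-1}(f_G(p)) \cap X^{IF}_G\big) \subseteq \tilde f_G^{-1}(\tilde f_G(\pi(p)))$; conversely if $\tilde f_G(\tilde q) = \tilde f_G(\tilde p)$ then any representative $q$ of $\tilde q$ satisfies $f_G(q) = f_G(p)$ (the value of $\tilde f_G$ is computed from any representative), and $q \in X^{IF}_G$ because $\tilde q \in X^{IF}_G/\!\sim$, so $q \in M$ and $\tilde q = \pi(q) \in \pi(M)$. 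Thus equality holds.

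The main obstacle I anticipate is not any single step but the bookkeeping needed to verify that all equivalent placements in question actually lie in $X^{IF}_G$ so that the quotient constructions are legitimate — in particular that strong regularity plus the kernel-dimension bound propagates to the whole fibre via Lemma~\ref{l:quotcount}, and that one correctly uses $|V| \geq d+1$ (forced by $0 \le k < |V|-1$) when invoking Lemma~\ref{l:quotopenconull}. Everything else is a routine assembly of Remark~\ref{rem:diffman}, Lemma~\ref{l:quotman}, rank–nullity, and the triangle-inequality compactness argument.
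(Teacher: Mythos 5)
Your proof is correct and takes essentially the same route as the paper's: strong regularity plus Lemma \ref{l:quotcount} to show the whole fibre $f_G^{-1}(f_G(p))$ lies in $X^{IF}_{G}$, Remark \ref{rem:diffman} and Lemma \ref{l:quotman} together with rank--nullity for the manifold structure and the dimension count, and the rooted-at-$v_0$ connectivity argument for compactness. One small quibble: $|V|\geq d+1$ does not follow merely from $0\leq k<|V|-1$ (that only gives $|V|\geq 2$), but your appeal to Lemma \ref{l:quotopenconull} is superfluous anyway, since the entire fibre is already contained in $X^{IF}_{G}$, which is exactly how the paper proceeds.
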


\begin{proof}
	We first note that $f_G^{-1}(f_G(p))$  is a subset of $X_G^{IF}$;
	this follows from noting that, by our assumption that $(G,p)$ is strongly regular, for every $p' \in f_G^{-1}(f_G(p))$ we have $\dim \ker df_G(p') = \dim \ker df_G(p) < d -1 + |V|$ 
	and then applying Lemma \ref{l:quotcount}.
	As $(G,p)$ is strongly regular the set $f_G^{-1}(f_G(p))$ is a differentiable submanifold of $X_G^{IF}$ (see Remark \ref{rem:diffman}).
	Since $\config = \pi (f_G^{-1}(f_G(p)))$, the set $\config$ is a differentiable submanifold of $X^{IF}_{G}/\sim$ with dimension $k$ by Lemma \ref{l:quotman}.
	
	Choose any $v_0 \in V$ and define the closed set $S := \{ q \in X^V : f_G(q) = f_G(p), ~ q_{v_0} = 0 \}$.
	Since $G$ is connected,
	the set $S$ is compact.
	As $\pi (S) = \config$ and the map $\pi$ is continuous,
	the set $\config$ must also be compact.
\end{proof}

We are now ready to prove the following sufficient condition for global rigidity.

\begin{thm}\label{thm:red}
	Let $(G,p)$ be a completely strongly regular globally rigid framework in a smooth non-Euclidean $d$-dimensional normed space $X$ with finitely many linear isometries.
	If $G$ has at least two vertices,
	then $(G,p)$ is redundantly rigid.
\end{thm}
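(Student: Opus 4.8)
The plan is to argue by contradiction: suppose $(G,p)$ is completely strongly regular and globally rigid in $X$, but $(G-e,p)$ fails to be infinitesimally rigid for some edge $e=vw\in E$. The strategy is to show that removing $e$ creates a nontrivial manifold of equivalent placements of $G-e$ on which the single edge-length function of $e$ must then be constant, forcing a nearby placement equivalent to $(G,p)$ but not congruent to it.

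More precisely, first I would invoke Proposition \ref{prop:regcreg} (via complete strong regularity of $p$) to note that $(G-e,p)$ is also strongly regular, so by Lemma \ref{l:config} the configuration space $\mathcal{C}(G-e,p)$ is a compact differentiable submanifold of $X^{IF}_{G-e}/\!\sim$ whose dimension equals $k:=\dim\ker df_{G-e}(p)-d$. The key point is that since $(G-e,p)$ is not infinitesimally rigid, $\ker df_{G-e}(p)$ contains a nontrivial flex, so $\dim\ker df_{G-e}(p)>d$ (the trivial flexes form a $d$-dimensional space because $X$ has finitely many linear isometries), giving $k\geq 1$; one also needs $k<|V|-1$ to apply Lemma \ref{l:config}, which should follow since removing an edge drops the rank of $df_G(p)$ by exactly one (here using that $(G,p)$, being rigid because it is globally rigid, has $\dim\ker df_G(p)=d$, so $k=1$ — I would need to check $(G,p)$ is actually rigid; since global rigidity gives local rigidity on an open set and $p$ is regular, Theorem \ref{t:asimowroth}(ii) applies provided smooth points form an open set, which holds as $X$ is smooth). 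So $\mathcal{C}(G-e,p)$ is a compact $1$-manifold, hence a disjoint union of circles.

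Next I would consider the function on $\mathcal{C}(G-e,p)$ given by $\tilde q\mapsto \tfrac12\|q_v-q_w\|^2$, i.e. the length of the missing edge $e$. If this function were constant on the component containing $\pi(p)$, then every placement in that component would be equivalent to $(G,p)$; since the component is a circle it contains placements arbitrarily close to $p$ that are not congruent to $p$ (the isometry orbit is $d$-dimensional but complete strong regularity and $k=1$ ensure the quotient component is genuinely $1$-dimensional, so not a single congruence class), contradicting global rigidity of $(G,p)$. Hence the $e$-length function is non-constant on the circle, so it attains a maximum and a minimum at distinct points, and at an interior extremum its derivative vanishes — meaning at such a point $\tilde q$ the placement is a critical point of $\tilde f_{G}$ restricted along $\mathcal{C}(G-e,p)$, equivalently $\dim\ker df_{G}(q)>d$, contradicting strong regularity of $(G,p)$ (which forces every placement equivalent to $(G,p)$ to have maximal rank, i.e. $\dim\ker df_G=d$).

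I expect the main obstacle to be the careful bookkeeping of dimensions and the reduction to exactly $k=1$: one must confirm that $(G,p)$ globally rigid implies $(G,p)$ rigid with $\dim\ker df_G(p)=d$ (using Theorem \ref{t:asimowroth}(ii) and smoothness), that deleting $e$ raises the nullity by exactly one so $\dim\ker df_{G-e}(p)=d+1$ and $k=1<|V|-1$ (this last inequality needs $|V|\geq 3$; the case $|V|=2$ with $G=K_2$ must be handled separately, but $K_2$ has no edge whose removal leaves a rigid graph, and indeed $K_2$ is globally rigid in any non-Euclidean normed plane only trivially — one checks $K_2$ is not redundantly rigid so the theorem's hypothesis combined with $|V|=2$ would need $G$ to have a redundant edge, impossible, so there is nothing to prove or the statement is vacuous there). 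The other delicate point is justifying that a circle in $\mathcal{C}(G-e,p)$ on which the $e$-length is constant really does contain placements non-congruent to $p$ but close to it: this uses that $\pi$ restricted to the submanifold is a submersion with $d$-dimensional fibres (Lemma \ref{l:quotman}), so the preimage in $X^{IF}_{G-e}$ is a $(d+1)$-manifold properly containing the single congruence class $\tilde p$, and equivalent placements of $G$ arbitrarily near $p$ exist.
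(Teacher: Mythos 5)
Your setup matches the paper's: reduce to the case that $G$ is rigid with $|V|\geq 3$, observe that $\dim\ker df_{G-e}(p)=d+1$ so that Lemma \ref{l:config} makes $\mathcal{C}(G-e,p)$ a compact $1$-manifold whose component through $\tilde p$ is a circle, and study the length function of the deleted edge along that circle; your treatment of the constant case is also sound. The gap is in the non-constant case. You claim that at an extremum $\tilde q$ of the edge-length function the vanishing of the derivative forces $\dim\ker df_G(q)>d$, and that this contradicts strong regularity of $(G,p)$. But strong regularity of $(G,p)$ only controls the rank of $df_G$ at placements \emph{equivalent to $(G,p)$}, whereas the extremum point $q$ lies in $f_{G-e}^{-1}(f_{G-e}(p))$ and not, in general, in $f_G^{-1}(f_G(p))$: the length of $e=vw$ at $q$ is exactly the quantity varying along the circle, so $q$ is typically not equivalent to $(G,p)$ as a placement of $G$. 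Complete strong regularity of $p$ does tell you that $\rank df_{G-e}(q)=\rank df_{G-e}(p)$, but that only yields $\dim\ker df_G(q)\leq d+1$ and does not exclude equality; indeed one expects $(G,q)$ to be infinitesimally flexible precisely at such critical configurations, so no contradiction is available there.

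The paper repairs this by a case split you are missing. Parametrise the circle by $\phi:[0,1]\to\mathcal{C}(G-e,p)$ with $\phi(0)=\phi(1)=\tilde p$ and set $g=\tilde h_{vw}\circ\phi$. If $g$ attains its maximum or minimum at $t=0$, your derivative argument is valid \emph{at $p$ itself}: via the submersion of Lemma \ref{l:quotman} it lifts to a non-trivial infinitesimal flex $u$ of $(G-vw,p)$ with $\varphi_{p_v-p_w}(u_v-u_w)=0$, contradicting infinitesimal rigidity of $(G,p)$, which you do know at $p$. Otherwise $0<t_{\min}<t_{\max}<1$, so $g(0)$ lies strictly between the minimum and maximum values and the Intermediate Value Theorem produces $t_0\in(t_{\min},t_{\max})$ with $g(t_0)=g(0)$ and $\phi(t_0)\neq\tilde p$; any placement $q$ with $\pi(q)=\phi(t_0)$ is then equivalent to $(G,p)$ but not congruent to it, contradicting global rigidity. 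This Intermediate Value Theorem step, rather than a rank argument at interior extrema, is what closes the non-constant case.
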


\begin{proof}
	Assume $G$ is not redundantly rigid in $X$, 
	i.e.~there exists $vw \in E$ such that $G- vw$ is flexible.
	The case when $G$ is flexible trivially contradicts globally rigid by Theorem \ref{t:asimowroth}.
	Hence we may assume $G$ is rigid and so $|V| > 2$ (this follows since $K_2$ is always flexible, see \cite[Theorem 5.8]{D19}).
	Since $\dim \ker df_G(p) = d$,
	we have that $\dim \ker df_{G-vw}(p) - d = 1 < |V| - 1$.
	Hence, by Lemma \ref{l:config}, 
	the set $\config$ is finite and $\confige$ is a compact 1-dimensional differentiable manifold. Therefore the connected component of $\confige$ containing $\tilde{p}$ is diffeomorphic to a circle (see \cite[Theorem 5.27]{manifoldlee}). Thus
	there exists a continuously differentiable map $\phi : [0,1] \rightarrow \confige$ with $\phi(0)=\phi(1)=\tilde{p}$ and $\phi(t_1) \neq \phi(t_2)$ if $0<t_1 < t_2< 1$.
	Define the continuously differentiable function
	\begin{align*}
		h_{vw}: f^{-1}_{G-vw} \left( f_{G-vw} (p) \right) \rightarrow \mathbb{R}, ~ q \mapsto \|q_v - q_w \|^2.
	\end{align*}
	If $\pi(q) = \pi(q')$ then $h_{vw}(q) = h_{vw}(p)$ and
	hence the map
	\begin{align*}
		\tilde{h}_{vw}: \confige \rightarrow \mathbb{R}, ~ \tilde{q} \mapsto \|q_v - q_w \|^2
	\end{align*}
	is well-defined.
	Since both $h_{vw}$ and $\pi$ are continuously differentiable and $\tilde{h}_{vw} \circ \pi = h_{vw}$,
	the map $\tilde{h}_{vw}$ must also be continuously differentiable.
	With this,
	we can now define the continuously differentiable function 
	\begin{align*}
		g: [0,1] \rightarrow \mathbb{R}, ~ t \mapsto \tilde{h}_{vw}(\phi(t)).
	\end{align*}
	We will denote $t_{\max}$ and $t_{\min}$ to be the points where $g$ attains its maximum and minimum values respectively.
	Note that we cannot have $t_{\max}= t_{\min}$ (i.e., $g$ is constant),
	as this would imply the set $\config$ contains infinitely many points, 
	contradicting that $(G,p)$ is locally rigid and $p$ is strongly regular.
	
	First suppose that $t_{\max}=0$ or $t_{\min}=0$.
	As $0$ is a local minimum/maximum of $g$, we must have $g'(0)=0$,
	i.e., given $\tilde{u}$ is the unique (up to scaling) vector in the tangent space of $\confige$ at $\tilde{p}$, we have $d\tilde{h}_{vw}(\tilde{p})(\tilde{u}) = 0$.
	As $\pi$ restricted to $\confige$ and $\pi(\confige)$ is a submersion (Lemma \ref{l:quotman}),
	there exists a vector $u \in X^V$ in the tangent space of $f^{-1}_{G-vw} \left( f_{G-vw} (p) \right)$ at $p$ such that: (i) $dh_{vw}(p)(u) = 0$, and (ii) $d \pi (p)(u) \neq 0$.
	This corresponds to $u$ being a non-trivial infinitesimal flex of $(G-vw,p)$ where $\varphi_{p_v- p_w}(u_v-u_w) = 0$,
	contradicting that $(G,p)$ is infinitesimally rigid.
	
	Without loss of generality,
	we may now assume $0 < t_{\min} < t_{\max} < 1$.
	By the Intermediate Value Theorem, there exists $t_{\min} <t_0 < t_{\max}$ such that $g(t_0) = g(0)$.
	Hence $\tilde{f}_G(\phi(t_0)) = \tilde{f}_G(\tilde{p})$ but $\phi(t_0) \neq \tilde{p}$.
	It follows that for any placement $q$ of $G$ with $\pi(q)=\phi(t_0)$, 
	we have $(G,q) \sim (G,p)$ but $q \not\sim p$,
	thus $(G,p)$ is not globally rigid.
\end{proof}

\subsection{Global rigidity in analytic normed spaces}

It remains to show that these necessary conditions for frameworks to be globally rigid extend to necessary conditions for graphs to be globally rigid. To do this we
must also restrict attention to analytic normed spaces.

While any globally and infinitesimally rigid framework must be strongly regular,
it is not required to be completely strongly regular.
We also do not know if such a placement would even exist in a given normed space.
For example,
there exist no completely regular placements of any set with five or more elements in $\ell_\infty^2$,
as one of the monochromatic subgraphs of any $K_5$ subgraph must contain a cycle, and the framework restricted to that monochromatic subgraph will not be regular (see Remark \ref{rem:infnorm} for more details). Fortunately this not the case for analytic normed spaces (see Proposition \ref{prop:regcreg}), allowing us to obtain the main result of this section.

\begin{thm}\label{thm:hendrickson}
	Let $G = (V,E)$ be globally rigid in an analytic normed space $X$ with finitely many linear isometries.
	If $|V| \geq 2$, then $G$ is redundantly rigid and $2$-connected.
\end{thm}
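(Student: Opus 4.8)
The plan is to deduce Theorem~\ref{thm:hendrickson} from the framework-level results Theorem~\ref{thm:connectivity} and Theorem~\ref{thm:red} by passing from a single globally rigid framework to a statement about the graph. The key point that makes this work is Proposition~\ref{prop:regcreg}: in an analytic normed space the set $\csreg(V;X)$ of completely strongly regular placements is open and conull, hence dense, and global rigidity of the graph $G$ means by definition that $\grig(G;X)$ has nonempty interior. So first I would argue that there exists a placement $p$ lying simultaneously in the interior of $\grig(G;X)$ and in $\csreg(V;X)$: the interior of $\grig(G;X)$ is a nonempty open set, $\csreg(V;X)$ is conull (Proposition~\ref{prop:regcreg}), and a nonempty open set cannot be disjoint from a conull set, so the intersection is nonempty. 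For such a $p$, the framework $(G,p)$ is globally rigid, well-positioned, and completely strongly regular.

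Next I would feed this $(G,p)$ into the two structural theorems. Since $X$ is analytic it is in particular non-Euclidean and smooth (by the remarks in Section~2.1, all analytic normed spaces are smooth and strictly convex), and by hypothesis it has finitely many linear isometries. Theorem~\ref{thm:connectivity} applies directly (it only needs $X$ non-Euclidean, $|V|\geq 2$, and $G$ globally rigid) and yields that $G$ is $2$-connected. For redundant rigidity, I would apply Theorem~\ref{thm:red}: its hypotheses are exactly that $(G,p)$ is a completely strongly regular globally rigid framework in a smooth non-Euclidean finite-isometry normed space with $|V|\geq 2$, which we have arranged. It then gives that $(G,p)$ is redundantly (infinitesimally) rigid, i.e.\ $(G-e,p)$ is infinitesimally rigid for every $e\in E$. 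Since infinitesimal rigidity of some well-positioned placement is precisely the definition of the graph $G-e$ being rigid in $X$ (Section~\ref{subsec:defs}), and this holds for every edge $e$, the graph $G$ is redundantly rigid in $X$ by definition. This completes the proof.

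The main obstacle, and the only genuinely non-routine part, is the first step: ensuring that a single placement can be chosen that is simultaneously in the interior of $\grig(G;X)$ and good enough (completely strongly regular, well-positioned) to invoke the framework-level machinery. This is exactly where analyticity is essential and where the argument would break for, say, $\ell_\infty^2$ — as the paper notes, there one may have no completely regular placements at all once $|V|\ge 5$, so even though $\grig(G;X)$ could have nonempty interior there need be no regular representative inside it. With Proposition~\ref{prop:regcreg} in hand this obstacle dissolves to a one-line topological observation (open $\cap$ conull $\neq\emptyset$), but it is the conceptual heart of why the section is restricted to analytic normed spaces. One should also double-check the degenerate sub-cases: if $|V|=2$ then $G=K_2$, which is not redundantly rigid and, by the argument in the proof of Theorem~\ref{thm:connectivity}, not globally rigid in a non-Euclidean space, so the statement is vacuously consistent; in general the hypothesis $|V|\ge 2$ together with global rigidity already forces $|V|\ge 3$ by Theorem~\ref{thm:red}'s proof, so no separate small-case analysis is needed here.
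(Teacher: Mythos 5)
Your proposal is correct and follows essentially the same route as the paper: the paper's proof likewise picks a completely strongly regular placement inside the nonempty open set of globally rigid placements (possible by Proposition~\ref{prop:regcreg}, since an open set cannot miss a conull set), then invokes Theorem~\ref{thm:connectivity} for $2$-connectivity and Theorem~\ref{thm:red} for redundant rigidity. Your additional remarks on the degenerate cases and on why analyticity is needed are accurate but not required.
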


\begin{proof}
	By Theorem \ref{thm:connectivity},	$G$ is $2$-connected.
	As $G$ is globally rigid, there exists an open set $U \subset X^V$ of globally rigid placements of $G$.
	By Proposition \ref{prop:regcreg}, we may choose a completely strongly regular placement $p \in U$.
	Since all analytic normed spaces are smooth, Theorem \ref{thm:red} implies that $(G,p)$ is redundantly rigid, and hence $G$ is redundantly rigid in $X$.
\end{proof}

We conjecture that the theorem remains valid in a wider family of normed spaces including, for example, all smooth $\ell_p$-spaces.
The remainder of the paper will be devoted to showing that for analytic normed planes these conditions are sufficient as well as necessary.

\section{Sparse graphs and circuits} \label{sec:circuits}

In the next three sections we provide a detailed study of the graphs relevant to global rigidity in analytic normed planes.
We begin by describing the circuits in the following matroid.
Let $\mathcal{M}(2,2)$ denote the \emph{simple $(2,2)$-sparse matroid}, that is the matroid on $E(K_n)$\footnote{It is more common in the literature for the $(2,2)$-sparse matroid to be defined on the complete multigraph (i.e., the multigraph with two parallel edges between every pair of vertices) since parallel edges may be independent. Hence, while all graphs in this article are simple, we use the word simple for the $(2,2)$-sparse matroid to emphasise this distinction.} in which a set $E\subseteq E(K_n)$ is independent if $E = \emptyset$ or $E \neq \emptyset$ and the subgraph induced by $E$ is $(2,2)$-sparse. 
We will slightly abuse notation by using \emph{$(2,2)$-circuit} to refer both to a circuit in the matroid $\mathcal{M}(2,2)$ as well as the graph induced by a circuit in the matroid $\mathcal{M}(2,2)$.
In graph theoretic terms, $G = (V,E)$ is a $(2,2)$-circuit if and only if $|E|=2|V|-1$ and $|E'|\leq 2|V'|-2$ for all proper subgraphs $(V',E')$ of $G$. 
Three examples of $(2,2)$-circuits are illustrated in Figure \ref{fig:smallgraphs1}.
 
\begin{figure}[htp]
\begin{center}
\begin{tikzpicture}[scale=.4]
\filldraw (-.5,0) circle (3pt);
\filldraw (0,3) circle (3pt);
\filldraw (3.5,0) circle (3pt);
\filldraw (3,3) circle (3pt);
\filldraw (1.5,-1.5) circle (3pt);

\draw[black,thick]
(1.5,-1.5) -- (-.5,0) -- (0,3) -- (3.5,0) -- (3,3) -- (-.5,0) -- (3.5,0);

\draw[black,thick]
(1.5,-1.5) -- (0,3) -- (3,3) -- (1.5,-1.5);

\node [rectangle, draw=white, fill=white] (b) at (1.5,-3) {$K_5^-$};
        \end{tikzpicture}
         \hspace{0.5cm}
     \begin{tikzpicture}[scale=.4]
\filldraw (0,0) circle (3pt);
\filldraw (0,3.5) circle (3pt);
\filldraw (3.5,0) circle (3pt);
\filldraw (3.5,3.5) circle (3pt);
\filldraw (7,0) circle (3pt);
\filldraw (7,3.5) circle (3pt);

\draw[black,thick]
(0,0) -- (0,3.5) -- (3.5,0) -- (3.5,3.5) -- (0,0) -- (3.5,0) -- (7,3.5);

\draw[black,thick]
(0,3.5) -- (3.5,3.5) -- (7,3.5) -- (7,0) -- (3.5,3.5);

\draw[black,thick]
(7,0) -- (3.5,0);

\node [rectangle, draw=white, fill=white] (b) at (3.5,-2) {$B_{1}$};
\end{tikzpicture}
       \hspace{0.5cm}
    \begin{tikzpicture}[scale=.4]
\filldraw (0,-1) circle (3pt);
\filldraw (-1.5,2) circle (3pt);
\filldraw (3,0) circle (3pt);
\filldraw (1.5,3) circle (3pt);
\filldraw (6,-1) circle (3pt);
\filldraw (4.5,3) circle (3pt);
\filldraw (7.5,2) circle (3pt);

\draw[black,thick]
(0,-1) -- (-1.5,2) -- (3,0) -- (1.5,3) -- (0,-1) -- (3,0) -- (4.5,3);

\draw[black,thick]
(-1.5,2) -- (1.5,3) -- (4.5,3) -- (6,-1) -- (7.5,2) -- (4.5,3);

\draw[black,thick]
(6,-1) -- (3,0) -- (7.5,2);

\node [rectangle, draw=white, fill=white] (b) at (3,-3) {$B_{2}$};
         \end{tikzpicture}
\end{center}
\vspace{-0.3cm}
\caption{Three small $(2,2)$-circuits.}
\label{fig:smallgraphs1}
\end{figure}
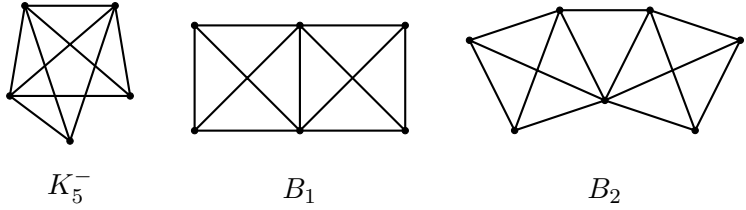

\subsection{Recursive constructions}

A recursive construction of $(2,2)$-circuits was first proved in \cite{Nix}. In order to describe it we require decomposition results for $(2,2)$-circuits, taken from \cite{Nix}, which use the graph operations defined in Figure \ref{fig:sums}.
These operations are as follows for a pair of graphs $G_1=(V_1,E_1)$ and $G_2=(V_2,E_2)$ that possibly share edges or vertices.
We denote the neighbourhood of a vertex $v$ in a graph $G$ by $N_G(v)$, and we denote the degree of $v$ in $G$ by $d_G(v)$; when the graph in question is clear we shall denote the neighbourhood and degree of $v$ by $N(v)$ and $d(v)$ respectively.
\begin{enumerate}[(i)]
    \item Suppose that for each $i \in \{1, 2\}$ there exists a subgraph $A_{i}=(U_i,F_i)$ of $G_{i}$ such that $A_1$ is the complete graph with $U_1 = \{a,b\}$, and $A_{2}$ is the complete graph with $U_2 =  \{a,b,c,d\}$.
    If $V_1 \cap V_2 = \{a,b\}$ and $d_{G_{2}}(c) = d_{G_{2}}(d) = 3$,
    then the \emph{1-join} of $(G_{1}, G_{2})$ is the graph $G=(V,E)$ where 
    \begin{align*}
        V = V_{1} \cup (V_{2} \setminus \{c, d\}), \qquad E = (E_{1} \setminus F_{1}) \cup (E_{2} \setminus F_{2}).
    \end{align*}
    See the top of Figure \ref{fig:sums} for a diagram of this operation.
    
    \item Suppose that for each $i \in \{1, 2\}$ there exists a subgraph $A_{i} = (U_{i}, F_{i})$ of $G_{i}$ such that $A_{1}$ is the complete graph with $U_1 = \{a,b,c_1,d_1\}$, and $A_{2}$ is the complete graph with $U_2 = \{a,b,c_2,d_2\}$.
    If $V_1 \cap V_2 = \{a,b\}$ and $d_{G_{2}}(c_i) = d_{G_{2}}(d_i) = 3$ for each $i\in \{1,2\}$,
    then the \emph{2-join} of $(G_{1}, G_{2})$ is the graph $G=(V,E)$ where 
    \begin{align*}
        V = (V_{1} \setminus \{c_1, d_1\}) \cup (V_{2} \setminus \{c_2, d_2\}), \qquad E = (E_{1} \setminus F_{1}) \cup (E_{2} \setminus F_{2}) + ab.
    \end{align*}
    See the middle of Figure \ref{fig:sums} for a diagram of this operation.
    
    \item Suppose that for each $i \in \{1,2\}$,
    there exists a vertex $v_i$ with neighbourhood $\{a_i,b_i,c_i\}$ in $G_i$,
    and set $F_i= \{a_iv_i,b_iv_i, c_i v_i\}$.
    If $V_1 \cap V_2 = \emptyset$,
    then the \emph{3-join} of $(G_{1}, G_{2})$ is the graph $G=(V,E)$ where 
    \begin{align*}
        V = (V_1 - v_1) \cup (V_2 - v_2), \qquad E = (E_{1} \setminus F_{1}) \cup (E_{2} \setminus F_{2}) \cup \{a_1a_2, b_1b_2, c_1c_2\}.
    \end{align*}
     See the bottom of Figure \ref{fig:sums} for a diagram of this operation.
\end{enumerate}

For every join operation, we can also define a separation operation that acts an inverse.
To define these operations, we recall two types of separation for a graph $G=(V,E)$. 
A \emph{$k$-vertex-separation} is a pair of induced subgraphs $G_1=(V_1,E_1)$, $G_2=(V_2,E_2)$ that share exactly $k$ vertices where $G=G_1 \cup G_2$, and neither $V_1\sm V_2$ nor $V_2 \sm V_1$ are empty;
the set $V_1 \cap V_2$ is then called a \emph{vertex cut}.
A \emph{$k$-edge-separation} is a pair of vertex-disjoint induced subgraphs $G_1=(V_1,E_1)$, $G_2=(V_2,E_2)$ where $G_1 \cup G_2 = G-S$ for some edge set $S \subseteq E$ with $|S| = k$;
the set $S$ is then called an \emph{edge cut}.
We will mainly be interested in 2-vertex-separations and 3-edge-separations.
We define a 2-vertex-separation $(G_1,G_2)$ to be {\em non-trivial} if neither $G_1$ nor $G_2$ are isomorphic to $K_4$,
and we define a 3-edge-separation $(G_1, G_2)$ to be {\em non-trivial} if the corresponding edge cut $S$ is independent (i.e., no two edges in $S$ share a vertex).
Using these definitions, we now define the following operations for a graph $G=(V,E)$ and pair of subgraphs $H_1=(U_1,F_1), H_2 = (U_2,F_2)$.
For any disjoint non-empty sets $S,S' \subseteq V$,
we denote the complete graph with vertex set $S$ by $K[S]$,
and the complete bipartite graph with parts $S,S'$ by $K[S,S']$.
\begin{enumerate}[(i)]
    \item Suppose that $(H_1,H_2)$ is a 2-vertex-separation with $U_1 \cap U_2 = \{a,b\}$ and $F_1 \cap F_2 = \emptyset$.
    A \emph{1-separation of $G$ (with respect to $(H_1,H_2)$)} is an ordered pair $(G_1,G_2)$ with $G_1 = H_1 \cup K[\{a,b\}]$ and $G_2 = H_2 \cup K[\{a,b,c,d\}]$ for two new vertices $c,d$.
    
    \item Suppose that $(H_1,H_2)$ is a 2-vertex-separation with $U_1 \cap U_2 = \{a,b\}$ and $F_1 \cap F_2 = \{ab\}$.
    A \emph{2-separation of $G$ (with respect to $(H_1,H_2)$)} is an ordered pair $(G_1,G_2)$ with $G_1 = H_1 \cup K[\{a,b,c_1,d_1\}]$ and $G_2 = H_2 \cup K[\{a,b,c_2,d_2\}]$ for four new vertices $c_1,c_2,d_1,d_2$.
    
    \item Suppose that $(H_1,H_2)$ is a non-trivial 3-edge-separation where the edges $a_1a_2,b_1b_2,c_1c_2$ connect $H_1$ and $H_2$.
    A \emph{3-separation of $G$ (with respect to $(H_1,H_2)$)} is an ordered pair $(G_1,G_2)$ with $G_1 = H_1 \cup K[\{v_1\}, \{a_1,b_1,c_1\}]$ and $G_2 = H_2 \cup K[\{v_2\}, \{a_2,b_2,c_2\}]$ for two new vertices $v_1,v_2$.
\end{enumerate}

\begin{center}
\begin{figure}[ht]
\centering
\begin{tikzpicture}[scale=0.8]

\filldraw (0,3.5) circle (2pt) node[anchor=south]{$a$};
\filldraw (0,2.5) circle (2pt) node[anchor=north]{$b$};

\filldraw (-4.5,2.5) circle (2pt) node[anchor=east]{$b$};
\filldraw (-4.5,3.5) circle (2pt) node[anchor=east]{$a$};
\filldraw (4.5,2.5) circle (2pt) node[anchor=west]{$b$};
\filldraw (4.5,3.5) circle (2pt) node[anchor=west]{$a$};
\filldraw (3.5,2.5) circle (2pt) node[anchor=east]{$d$};
\filldraw (3.5,3.5) circle (2pt) node[anchor=east]{$c$};

\draw[thick] plot[smooth, tension=1] coordinates{(-1.5,3.48) (-1.5,2.5) (-.5,2.3) (.3,2.4) (-.3,3) (.3,3.6)(-.5,3.7) (-1.5,3.48)};

\draw[thick] plot[smooth, tension=1] coordinates{(1.5,3.48) (1.5,2.5) (.5,2.3) (-.3,2.4) (.3,3) (-.3,3.6)(.5,3.7) (1.5,3.48)};

\draw[thick] plot[smooth, tension=1] coordinates{(-6,3.48) (-6,2.5) (-5,2.3) (-4.2,2.4) (-4.8,3) (-4.2,3.6)(-5,3.7) (-6,3.48)};

\draw[thick] plot[smooth, tension=1] coordinates{(6,3.48) (6,2.5) (5,2.3) (4.2,2.4) (4.8,3) (4.2,3.6)(5,3.7) (6,3.48)};

\draw[black] (-4.5,3.5) -- (-4.5,2.5);

\draw[black] (4.5,3.5) -- (4.5,2.5) -- (3.5,2.5) -- (3.5,3.5) -- (4.5,3.5) -- (3.5,2.5);

\draw[black] (4.5,2.5) -- (3.5,3.5);

\draw[black] (-3,3) -- (-2.5,3) -- (-2.6,3.1);

\draw[black] (-2.5,3) -- (-2.6,2.9);

\draw[black] (3,3) -- (2.5,3) -- (2.6,2.9);

\draw[black] (2.5,3) -- (2.6,3.1);

\filldraw (0,.5) circle (2pt) node[anchor=south]{$a$}; 
\filldraw (0,-.5) circle (2pt) node[anchor=north]{$b$};

\filldraw (-4.5,-.5) circle (2pt) node[anchor=east]{$b$};
\filldraw (-4.5,.5) circle (2pt) node[anchor=east]{$a$};
\filldraw (-3.5,-.5) circle (2pt) node[anchor=west]{$d_1$};
\filldraw (-3.5,.5) circle (2pt) node[anchor=west]{$c_1$};
\filldraw (4.5,-.5) circle (2pt) node[anchor=west]{$b$};
\filldraw (4.5,.5) circle (2pt) node[anchor=west]{$a$};
\filldraw (3.5,-.5) circle (2pt) node[anchor=east]{$d_2$};
\filldraw (3.5,.5) circle (2pt) node[anchor=east]{$c_2$};

\draw[thick] plot[smooth, tension=1] coordinates{(-1.5,0.48) (-1.5,-.5) (-.5,-.7) (.3,-.6) (-.3,0) (.3,.6)(-.5,.7) (-1.5,.48)};

\draw[thick] plot[smooth, tension=1] coordinates{(1.5,0.48) (1.5,-.5) (.5,-.7) (-.3,-.6) (.3,0) (-.3,.6)(.5,.7) (1.5,.48)};

\draw[thick] plot[smooth, tension=1] coordinates{(-6,0.48) (-6,-.5) (-5,-.7) (-4.2,-.6) (-4.8,0) (-4.2,.6)(-5,.7) (-6,.48)};

\draw[thick] plot[smooth, tension=1] coordinates{(6,0.48) (6,-.5) (5,-.7) (4.2,-.6) (4.8,0) (4.2,.6)(5,.7) (6,.48)};

\draw[black] (-4.5,.5) -- (-4.5,-.5) -- (-3.5,-.5) -- (-3.5,.5) -- (-4.5,.5) -- (-3.5,-.5);

\draw[black] (-4.5,-.5) -- (-3.5,.5);

\draw[black] (4.5,.5) -- (4.5,-.5) -- (3.5,-.5) -- (3.5,.5) -- (4.5,.5) -- (3.5,-.5);

\draw[black] (4.5,-.5) -- (3.5,.5);

\draw[black] (0,.5) -- (0,-.5);

\draw[black] (-3,0) -- (-2.5,0) -- (-2.6,.1);

\draw[black] (-2.5,0) -- (-2.6,-.1);

\draw[black] (3,0) -- (2.5,0) -- (2.6,-.1);

\draw[black] (2.5,0) -- (2.6,.1);

\draw (-5,-3) circle (27pt); 
\draw (5,-3) circle (27pt); 
\draw (1.2,-3) circle (27pt); 
\draw (-1.2,-3) circle (27pt);

\filldraw (-3.4,-3) circle (2pt) node[anchor=south]{$v_1$};
\filldraw (3.4,-3) circle (2pt) node[anchor=south]{$v_2$};
\filldraw (-4.2,-3) circle (2pt) node[anchor=east]{$b_1$};
\filldraw (4.2,-3) circle (2pt) node[anchor=west]{$b_2$};
\filldraw (-4.5,-3.5) circle (2pt) node[anchor=east]{$c_1$};
\filldraw (-4.5,-2.5) circle (2pt) node[anchor=east]{$a_1$};
\filldraw (4.5,-3.5) circle (2pt) node[anchor=west]{$c_2$};
\filldraw (4.5,-2.5) circle (2pt)node[anchor=west]{$a_2$};
\filldraw (0.7,-2.5) circle (2pt)node[anchor=west]{$a_2$};
\filldraw (.7,-3.5) circle (2pt) node[anchor=west]{$c_2$};
\filldraw (.4,-3) circle (2pt) node[anchor=west]{$b_2$};
\filldraw (-.7,-2.5) circle (2pt) node[anchor=east]{$a_1$};
\filldraw (-.7,-3.5) circle (2pt) node[anchor=east]{$c_1$};
\filldraw (-.4,-3) circle (2pt) node[anchor=east]{$b_1$};

\draw[black] (-.7,-2.5) -- (.7,-2.5);

\draw[black] (-.7,-3.5) -- (.7,-3.5);

\draw[black] (-.4,-3) -- (.4,-3);

\draw[black] (-4.5,-3.5) -- (-3.4,-3) -- (-4.2,-3);

\draw[black] (-3.4,-3) -- (-4.5,-2.5);

\draw[black] (4.5,-3.5) -- (3.4,-3) -- (4.2,-3);

\draw[black] (3.4,-3) -- (4.5,-2.5);

\draw[black] (-3,-3) -- (-2.5,-3) -- (-2.6,-3.1);

\draw[black] (-2.5,-3) -- (-2.6,-2.9);

\draw[black] (3,-3) -- (2.5,-3) -- (2.6,-3.1);

\draw[black] (2.5,-3) -- (2.6,-2.9);
\end{tikzpicture}
\caption{The 1-, 2- and 3-join operations.}
\label{fig:sums}
\end{figure}
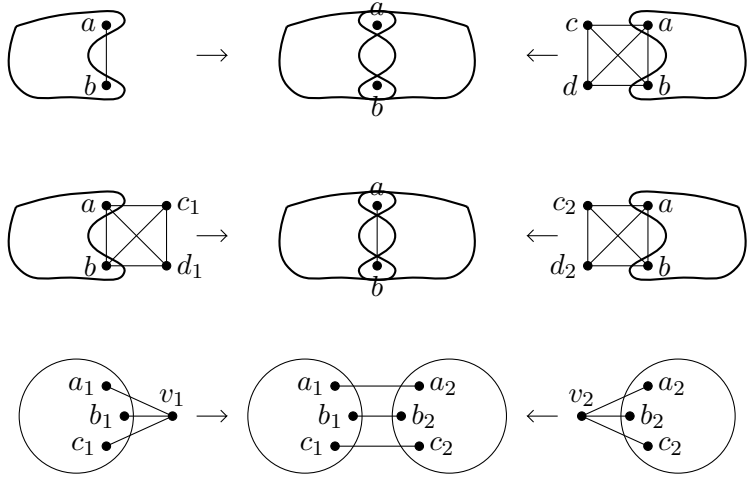
\end{center}

The join and separation operations are useful for constructing and deconstructing $(2,2)$-circuits.\footnote{For $(2,2)$-circuits, the 1-join operation can be defined based on the density of each `side' of the 2-vertex-separation. However in the next section we will apply these operations to a broader family of graphs where that convenience is unavailable. For this reason we state the next three lemmas differently to how they appeared in \cite{Nix}.}

\begin{lem}[{\cite[Lemmas 3.1, 3.2, 3.3]{Nix}}]\label{lem:circuit_dec}
Let $G_1$ and $G_2$ be graphs and suppose that $G$ is a $j$-join of $G_1$ and $G_2$ for some $1\leq j\leq 3$. If $G_1$ and $G_2$ are $(2,2)$-circuits then $G$ is a $(2,2)$-circuit.
\end{lem}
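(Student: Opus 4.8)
The plan is to verify directly the two defining properties of a $(2,2)$-circuit for $G$: \textbf{(a)} $|E(G)| = 2|V(G)| - 1$, and \textbf{(b)} every proper subgraph $(V',E')$ of $G$ with $E' \neq \emptyset$ satisfies $|E'| \leq 2|V'| - 2$. Property (a) is a routine count in each case, using that the complete graphs $A_1,A_2$ (resp.\ the stars $F_1,F_2$ in the $3$-join) are deleted and that the degree hypotheses force these deleted subgraphs to carry \emph{all} edges incident to the removed vertices. For the $1$-join, $|V(G)| = |V_1| + |V_2| - 4$ and $|E(G)| = (|E_1| - 1) + (|E_2| - 6) = (2|V_1| - 2) + (2|V_2| - 7) = 2|V(G)| - 1$; the $2$-join count ($|V(G)|=|V_1|+|V_2|-6$, two $K_4$'s removed and one edge added) and the $3$-join count ($|V(G)|=|V_1|+|V_2|-2$, two $K_{1,3}$'s removed and three edges added) are entirely analogous.

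For (b) I argue the contrapositive: let $(V',E')$ be a subgraph of $G$ with $E'\neq\emptyset$ and $|E'|\geq 2|V'|-1$, and show $(V',E')=(V(G),E(G))$; we may assume $(V',E')$ has no isolated vertices, since deleting them preserves the hypothesis. Decompose along the join: write $E' = E_1' \cup E_2' \cup S'$, where $E_i'$ collects the edges of $E'$ coming from $E_i$ minus the deleted gadget, and $S'$ is the set of edges of $E'$ created by the join ($S'=\emptyset$ for the $1$-join, $S'\subseteq\{ab\}$ for the $2$-join, $S'\subseteq\{a_1a_2,b_1b_2,c_1c_2\}$ for the $3$-join); let $W_i = V'\cap V(G_i)$ (among the surviving vertices). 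The degree hypotheses guarantee that $(W_i,E_i')$ is a subgraph of $G_i$ from which (at least) the gadget at the deleted vertices is missing; in particular it is a \emph{proper} subgraph of the $(2,2)$-circuit $G_i$, so $|E_i'| \leq 2|W_i| - 2$ whenever $E_i'\neq\emptyset$.

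Let $t := |V'\cap V(G_1)\cap V(G_2)|$, so $|V'| = |W_1| + |W_2| - t$ (with $t=0$ forced for the $3$-join). The $3$-join is then immediate: $|E'| = |E_1'| + |E_2'| + |S'| \leq 2|V'| - 4 + |S'| \leq 2|V'| - 1$, and equality forces $|S'| = 3$ and $|E_i'| = 2|W_i| - 2$; since $|S'|=3$ puts $a_i,b_i,c_i$ into $W_i$, re-attaching $v_i$ with its three edges turns $(W_i,E_i')$ into a subgraph of $G_i$ with exactly $2(|W_i|+1)-1$ edges, which by the circuit property of $G_i$ must equal $G_i$; hence $W_i = V(G_i)-v_i$ and $E_i' = E_i\setminus F_i$ for both $i$, so $(V',E')=(V(G),E(G))$. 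The $1$- and $2$-joins follow the same template once one splits on $t$: if $t\leq 1$ the deleted gadget vertices cannot lie in $V'$, and the crude bounds already give $|E'| \leq 2|V'| + 2t - 4 < 2|V'| - 1$; if $t=2$ one instead re-attaches the entire removed complete graph ($K_4$, or the single edge $ab$ on the $G_1$-side of the $1$-join) to $(W_i,E_i')$ to obtain a subgraph of $G_i$, and for each $i$ this subgraph is either a proper subgraph of $G_i$ — yielding a strictly sharper bound on $|E_i'|$ — or all of $G_i$ — pinning $W_i$ and $E_i'$ down exactly. Running through the (at most four) combinations, every mixed alternative contradicts $|E'|\geq 2|V'|-1$ by a one-unit margin, leaving only ``reconstructed piece $=G_i$ for both $i$'', i.e.\ $(V',E')=(V(G),E(G))$.

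The main obstacle — indeed the only step needing care — is the $t=2$ subcase of the $1$- and $2$-joins: there $|E_i'|\leq 2|W_i|-2$ is too weak, and one must track exactly which reconstructed piece equals $G_i$ versus is proper, keeping the vertex/edge arithmetic precise, since each contradiction hinges on a single unit and an off-by-one error is fatal. This is also where the hypotheses $d_{G_i}(c)=d_{G_i}(d)=3$ (resp.\ $d_{G_i}(c_i)=d_{G_i}(d_i)=3$) do real work: they ensure the removed $K_4$ contains every edge incident to the deleted vertices, so that $E_i'$ genuinely is the edge set of a subgraph of $G_i$ missing only the gadget, which is what licenses the sparsity bound applied to $(W_i,E_i')$.
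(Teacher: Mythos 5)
Your proof is correct: the edge counts in (a) and the case analysis in (b) --- including the four-way split at $t=2$ for the $1$- and $2$-joins, where each mixed alternative indeed falls short of $2|V'|-1$ by exactly one edge, and the reattachment argument for the $3$-join --- all check out, and the degenerate subcases you gloss over (where some $E_i'$ or $W_i$ is empty) are harmless given your reduction to subgraphs without isolated vertices. The paper does not prove this lemma but imports it from \cite{Nix}, where the argument is the same direct counting verification you give.
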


\begin{lem}[{\cite[Lemmas 3.2, 3.3]{Nix}}]\label{lem:circuit_dec2}
Let $G$ be a graph and suppose that $(G_1, G_2)$ is a $j$-separation of $G$
for some $j \in \{2, 3\}$. If $G$ is a $(2,2)$-circuit then $G_1$ and $G_2$ are $(2,2)$-circuits.
\end{lem}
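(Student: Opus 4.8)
I would carry the whole argument in terms of the count $\mathrm{val}(H):=2|V(H)|-|E(H)|$ (defined for any graph with at least one vertex, and set to $0$ on the empty graph), recalling that a graph $H$ is $(2,2)$-sparse precisely when $\mathrm{val}(J)\geq 2$ for every subgraph $J$ of $H$ with at least one vertex, that $H$ is $(2,2)$-tight if moreover $\mathrm{val}(H)=2$, and that $H$ is a $(2,2)$-circuit precisely when $\mathrm{val}(H)=1$ while $\mathrm{val}(J)\geq 2$ for every proper subgraph $J$; in particular every subgraph of a $(2,2)$-circuit has $\mathrm{val}\geq 1$, with equality only for the whole graph. The only identity needed is $\mathrm{val}(H'\cup H'')=\mathrm{val}(H')+\mathrm{val}(H'')-\mathrm{val}(H'\cap H'')$.

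First I would record that the two sides of the underlying separation are $(2,2)$-tight. Say $(G_1,G_2)$ is the $j$-separation of $G$ taken with respect to the separation $(H_1,H_2)$ of $G$. In both cases $H_1$ and $H_2$ are proper subgraphs of $G$, so $\mathrm{val}(H_i)\geq 2$; and expressing $G$ through $H_1,H_2$ --- overlapping in just the edge $ab$ when $j=2$, disjoint and joined by the three independent cut edges $a_1a_2,b_1b_2,c_1c_2$ when $j=3$ --- gives $\mathrm{val}(G)=\mathrm{val}(H_1)+\mathrm{val}(H_2)-3$. Since $\mathrm{val}(G)=1$ this forces $\mathrm{val}(H_1)=\mathrm{val}(H_2)=2$.

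The crux of the proof, and the step I expect to be the main obstacle, is to show that the ``attachment set'' of each side is not over-constrained: with $A_1:=\{a,b\}$ when $j=2$ and $A_1:=\{a_1,b_1,c_1\}$ when $j=3$, the graph $H_1$ has no proper $(2,2)$-tight subgraph $J$ with $A_1\subseteq V(J)$, and symmetrically for $H_2$. Given such a $J$, I would reattach it to $H_2$ exactly as $H_1$ sits inside $G$, forming $J^{\ast}:=J\cup H_2$ when $j=2$ and $J^{\ast}:=J\cup H_2$ together with the three cut edges when $j=3$. Since $A_1\subseteq V(J)$, the graph $J$ meets $H_2$ in the same vertices as $H_1$ does, and the computation of the previous paragraph gives $\mathrm{val}(J^{\ast})=\mathrm{val}(J)+\mathrm{val}(H_2)-3=1$ (when $j=2$ and $ab\notin E(J)$ the count is even $0$, which is already impossible as every subgraph of $G$ has $\mathrm{val}\geq 1$). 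As $G$ is a $(2,2)$-circuit, a subgraph with $\mathrm{val}=1$ must be all of $G$, so $J^{\ast}=G$; but then $V(J)=V(H_1)$ and hence $E(J)=E(H_1)$, contradicting that $J$ is proper.

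The rest is bookkeeping with $\mathrm{val}$. I would isolate the following extension lemma: if $H$ is $(2,2)$-tight with distinct $x,y,z\in V(H)$ such that no proper $(2,2)$-tight subgraph of $H$ contains $\{x,y,z\}$, then the graph $G^{+}$ obtained by adjoining one new vertex $w$ joined exactly to $x,y,z$ is a $(2,2)$-circuit; indeed $\mathrm{val}(G^{+})=\mathrm{val}(H)+2-3=1$, and for $J\subseteq G^{+}$ with $\mathrm{val}(J)\leq 1$ one checks $w\in V(J)$ (else $J\subseteq H$ is $(2,2)$-sparse) and then $\mathrm{val}(J)=\mathrm{val}(J-w)+2-\deg_{J}(w)\geq 4-\deg_{J}(w)\geq 1$, with equality only if $\deg_{J}(w)=3$ and $J-w$ is $(2,2)$-tight and contains $\{x,y,z\}$, forcing $J-w=H$ and $J=G^{+}$. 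For $j=3$ this applies directly to $G_1=H_1+v_1$ using the crux, and symmetrically to $G_2$. For $j=2$, note that $G_1$ is built from $H_1$ by first adding a degree-$2$ vertex $c_1$ adjacent to $a,b$ and then a degree-$3$ vertex $d_1$ adjacent to $a,b,c_1$; a short check (adding a degree-$2$ vertex preserves $(2,2)$-tightness, and any $(2,2)$-tight subgraph of $H_1+c_1$ meeting $c_1$ must contain both edges $ac_1,bc_1$ and so restrict to a $(2,2)$-tight subgraph of $H_1$ through $\{a,b\}$, which by the crux is $H_1$) lets the extension lemma apply to $G_1=(H_1+c_1)+d_1$, and symmetrically to $G_2$. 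This reproduces \cite[Lemmas 3.2, 3.3]{Nix} in the present formulation.
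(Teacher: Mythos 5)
Your proof is correct. The paper does not prove this lemma itself --- it is quoted from \cite{Nix} --- and your argument via the count $\mathrm{val}(H)=2|V(H)|-|E(H)|$ (first showing each side $H_i$ of the underlying separation is $(2,2)$-tight, then using the reattachment trick to show no proper $(2,2)$-tight subgraph of $H_i$ contains the attachment vertices, and finally verifying the circuit condition for the completed graphs $G_i$ via the degree-$2$/degree-$3$ vertex additions) is essentially the standard counting argument used in that reference.
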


\begin{lem}[{\cite[Lemma 3.1]{Nix}}]\label{lem:circuit_dec3}
Let $G$ be a graph with 2-vertex-separation $(H_1,H_2)$.
Suppose that $(G_{1}, G_{2})$ is a 1-separation of $G$ with respect to $(H_{1}, H_{2})$, and $(G'_2, G'_1)$ is a 1-separation of $G$ with respect to $(H_{2}, H_{1})$.
If $G$ is a $(2,2)$-circuit then either $G_{1}$ and $G_{2}$ are $(2,2)$-circuits
and $G'_{1}$ and $G'_{2}$ are not $(2,2)$-circuits,
or $G'_{1}$ and $G'_{2}$ are $(2,2)$-circuits
and $G_{1}$ and $G_{2}$ are not $(2,2)$-circuits.
\end{lem}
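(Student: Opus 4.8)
\emph{Proof idea.} Write $H_i=(U_i,F_i)$, $n_i=|U_i|$, $m_i=|F_i|$, and for $W\subseteq U_i$ let $F_i(W)$ denote the set of edges of $H_i$ with both ends in $W$. The plan is first to use a global edge count to decide which of the two $1$-separations yields circuits, and then to verify the positive claims using a local sparsity estimate inherited from $G$. Since a $1$-separation is in play we have $U_1\cap U_2=\{a,b\}$ and $F_1\cap F_2=\emptyset$, so $G=H_1\cup H_2$ gives $n_1+n_2=|V(G)|+2$ and $m_1+m_2=|E(G)|$, and moreover $ab\notin E(G)$ (otherwise $ab$ would lie in both $F_1$ and $F_2$). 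As $G$ is a $(2,2)$-circuit we have $|E(G)|=2|V(G)|-1$, while each $H_i$ is a proper induced subgraph of $G$ and hence $(2,2)$-sparse, so $m_i\le 2n_i-2$. These relations give $(m_1-(2n_1-2))+(m_2-(2n_2-2))=-1$ with both summands $\le 0$, so one summand is $0$ and the other is $-1$; that is, exactly one of $H_1,H_2$ is $(2,2)$-tight and the other has one fewer edge. Swapping $H_1\leftrightarrow H_2$ interchanges the two $1$-separations, so we may assume $H_1$ is $(2,2)$-tight and $m_2=2n_2-3$, and it suffices to prove that $G_1=H_1+ab$ and $G_2=H_2\cup K[\{a,b,c,d\}]$ are $(2,2)$-circuits, while $G_1'=H_1\cup K[\{a,b,c',d'\}]$ and $G_2'=H_2+ab$ are not.

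The negative claims are immediate: $G_2'$ has $n_2$ vertices and $m_2+1=2n_2-2\ne 2n_2-1$ edges, and $G_1'$ has $n_1+2$ vertices and $m_1+6=2n_1+4\ne 2(n_1+2)-1$ edges, so neither can be a $(2,2)$-circuit. For the positive claims the key observation is the following estimate. Since $G$ has no edge between $U_1\sm\{a,b\}$ and $U_2\sm\{a,b\}$ and $ab\notin E(G)$, for any $W$ with $\{a,b\}\subseteq W\subsetneq U_1$ the edge set of $G$ induced on $W\cup(U_2\sm\{a,b\})$ is exactly $F_1(W)\cup F_2$; applying $(2,2)$-sparsity of $G$ to this proper vertex set gives $|F_1(W)|+m_2\le 2(|W|+n_2-2)-2$, hence $|F_1(W)|\le 2|W|-3$. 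The symmetric computation, using $m_1=2n_1-2$, gives $|F_2(W)|\le 2|W|-4$ whenever $\{a,b\}\subseteq W\subsetneq U_2$.

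With these in hand, $G_1=H_1+ab$ has $n_1$ vertices and $2n_1-1$ edges; any proper subgraph $(V',E')$ either omits $ab$, so is a subgraph of $H_1$ and is $(2,2)$-sparse, or contains $ab$ with $\{a,b\}\subseteq V'$, in which case $|E'|\le|F_1(V')|+1\le 2|V'|-2$ by the estimate when $V'\subsetneq U_1$, and $|E'|\le m_1=2n_1-2$ when $V'=U_1$ (a proper subgraph on all of $U_1$ must omit an edge of $H_1$). Hence $G_1$ is a $(2,2)$-circuit. Similarly $G_2=H_2\cup K[\{a,b,c,d\}]$ has $n_2+2$ vertices and $m_2+6=2(n_2+2)-1$ edges; for a proper subgraph $(V',E')$ put $W=V'\cap U_2$ and bound $|E'|$ by $|F_2(W)|$, plus $1$ if $ab\in E'$, plus the at most five $K_4$-edges meeting $\{c,d\}\cap V'$. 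A short case analysis on $|V'\cap\{c,d\}|\in\{0,1,2\}$ and on $|\{a,b\}\cap W|$ --- using $|F_2(W)|\le 2|W|-4$ for $\{a,b\}\subseteq W\subsetneq U_2$, $|F_2(W)|\le 2n_2-3$ for $W=U_2$, and $(2,2)$-sparsity of $H_2$ otherwise --- yields $|E'|\le 2|V'|-2$ throughout. Thus $G_2$ is a $(2,2)$-circuit, and the symmetric case (where $H_2$ is tight) follows by relabelling.

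The main obstacle is this last case analysis for $G_2$ (equivalently $G_1'$ in the symmetric case): one has to check that the single unit of slack in the non-tight side $H_2$ is exactly enough to absorb the edges of the glued copy of $K_4$, with equality occurring only in the expected degenerate configurations. Once the local estimate above is established, everything else is routine bookkeeping.
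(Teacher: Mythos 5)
Your proposal is correct. Note that the paper itself offers no proof of this lemma: it is imported verbatim as \cite[Lemma 3.1]{Nix}, so there is nothing internal to compare against. Your argument is the natural self-contained one, and all the key steps check out: the global count $(m_1-(2n_1-2))+(m_2-(2n_2-2))=-1$ with both summands nonpositive correctly forces exactly one side to be $(2,2)$-tight and determines which of the two $1$-separations can produce circuits; the edge counts immediately rule out $G_1'$ and $G_2'$; and the transferred sparsity estimates $|F_1(W)|\le 2|W|-3$ and $|F_2(W)|\le 2|W|-4$ (valid because $G[W\cup U_2]$, resp.\ $G[U_1\cup W]$, is a proper induced subgraph of the circuit $G$ and all of $F_2$, resp.\ $F_1$, survives) are exactly what is needed. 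I verified the deferred case analysis for $G_2$ over $|V'\cap\{c,d\}|\in\{0,1,2\}$ and $|\{a,b\}\cap W|$: the extra $K_4$-edges number at most $1$, $3$, or $6$ in the relevant subcases and the slack of one unit in $H_2$ absorbs them with equality only when $\{a,b\}\subseteq W$, as you predicted. One microscopic imprecision: for a proper subgraph of $G_1$ with $V'=U_1$, the omitted edge could be $ab$ itself rather than an edge of $H_1$, but the bound $|E'|\le|E(G_1)|-1=2n_1-2$ holds either way, so nothing is affected.
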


The recursive construction of $(2,2)$-circuits in \cite{Nix} begins with the three $(2,2)$-circuits shown in Figure \ref{fig:smallgraphs1} and uses the $j$-join operations as well as the well-known 1-extension operation. Given a graph $G=(V,E)$, a \emph{1-extension} forms a new graph by deleting an edge $xy\in E$ and adding one new vertex $v$ and three new edges $xv,yv,zv$ for some $z\in V\sm \{x,y\}$. Conversely, a \emph{1-reduction} deletes a vertex $v$ of degree 3 and adds an edge between any two non-adjacent neighbours of $v$.

\begin{thm}[{\cite[Theorem 1.1]{Nix}}]\label{thm:recurse}
Suppose $G$ is a $(2,2)$-circuit. Then $G$ can be obtained from disjoint copies of $K_5^-$, $B_1$, and $B_2$ by recursively applying $1$-extension operations within connected components and $1$-, $2$- and $3$-join operations of connected components.
\end{thm}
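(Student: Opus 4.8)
The plan is to induct on $|V(G)|$, reducing a $(2,2)$-circuit either by a $1$-reduction (when it is "highly connected") or by one of the separation operations (when it has a small cut). It is convenient first to record the converse, which shows the construction is sound and hence that it suffices to prove the reduction always exists: the $1$-extension of a $(2,2)$-circuit is again a $(2,2)$-circuit (a routine count-matroid argument, essentially identical to the classical $(2,3)$-sparse case), and Lemma~\ref{lem:circuit_dec} says the same for the $j$-joins. For the base of the induction we use two standard facts. First, since a $(2,2)$-circuit $G$ on $n$ vertices has $2n-1$ edges, deleting any vertex $u$ must leave a $(2,2)$-sparse graph on $n-1$ vertices, which forces $d_G(u)\geq 3$; and $\sum_v d_G(v)=4n-2<4n$ then forces some vertex to have degree exactly $3$. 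Second, a short direct check shows the unique $(2,2)$-circuit on $5$ vertices is $K_5^-$, while $B_1$ and $B_2$ are $(2,2)$-circuits. These three graphs (which will turn out to resist all the reductions below) form the base cases.

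\textbf{Disposing of small cuts.} Let $G$ be a $(2,2)$-circuit with $|V(G)|\geq 6$ and $G\notin\{K_5^-,B_1,B_2\}$, and assume the result for all smaller $(2,2)$-circuits. If $G$ has a non-trivial $2$-vertex-separation $(H_1,H_2)$ with vertex cut $\{a,b\}$, then: if $ab\in E(G)$ we take the $2$-separation $(G_1,G_2)$ of $G$ with respect to $(H_1,H_2)$, and Lemma~\ref{lem:circuit_dec2} shows $G_1,G_2$ are $(2,2)$-circuits; if $ab\notin E(G)$ we consider the two $1$-separations of $G$ (with respect to $(H_1,H_2)$ and to $(H_2,H_1)$) and invoke Lemma~\ref{lem:circuit_dec3} to select the one whose two pieces are both $(2,2)$-circuits. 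In each case the two pieces are strictly smaller than $G$, the inductive hypothesis applies to each, and the appropriate $j$-join of connected components reconstructs $G$. If instead $G$ has no non-trivial $2$-vertex-separation but does have a non-trivial $3$-edge-separation $(H_1,H_2)$, we take the associated $3$-separation $(G_1,G_2)$; by Lemma~\ref{lem:circuit_dec2} both are $(2,2)$-circuits, both smaller, and induction again closes the case.

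\textbf{The highly connected case.} We are left with the case that $G$ has no non-trivial $2$-vertex-separation and no non-trivial $3$-edge-separation. Here I would show $G$ has a degree-$3$ vertex on which a $1$-reduction yields a smaller $(2,2)$-circuit, so that $G$ is a $1$-extension of it and we are done by induction; the hypotheses $|V(G)|\geq 6$ and $G\notin\{K_5^-,B_1,B_2\}$ are used precisely to rule out the configurations where no such reduction exists. Pick $v$ with $d_G(v)=3$ and $N_G(v)=\{a,b,c\}$. The workhorse is submodularity: writing $i_G(X)$ for the number of edges spanned by $X$, one has $i_G(X)+i_G(Y)+e_G(X\sm Y,\,Y\sm X)=i_G(X\cup Y)+i_G(X\cap Y)$, and since every proper subset of $V$ spans a $(2,2)$-sparse subgraph it follows that any two \emph{tight} proper sets (sets $T\subsetneq V$ with $i_G(T)=2|T|-2$) whose intersection has at least two vertices have tight union and tight intersection and \emph{no} edge joining their private parts. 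Now if some pair of $N_G(v)$, say $ab$, is a non-edge, form $G'=(G-v)+ab$; then $|E(G')|=2|V(G')|-1$, and a moment's inspection shows $G'$ is a $(2,2)$-circuit unless there is a tight set $T$ with $a,b\in T\subseteq V\sm\{v\}$ and $|T|\leq|V|-2$ (such a $T$ necessarily has $|T|\geq 4$), a "blocking set". Assuming every admissible $1$-reduction at $v$ is blocked, one combines the blocking sets $T_{ab},T_{ac},T_{bc}$ coming from the (at least one) non-edges in $N_G(v)$: their pairwise intersections each contain a vertex of $N_G(v)$, so the closure lemma applies and forces an edge-free split of their private parts, which contradicts the absence of non-trivial $2$-vertex- and $3$-edge-separations once $|V|\geq 6$, or else pins $G$ down to $K_5^-$. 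The remaining configuration is when $N_G(v)$ spans an edge; the extreme subcase is $N_G(v)$ a triangle, so that $\{v,a,b,c\}$ induces $K_4$. Here a counting argument (the $K_4$ together with everything outside it would otherwise produce an over-tight proper set, or a cut vertex) shows at most one of $a,b,c$ can have degree $3$, hence the $K_4$ meets the rest of $G$ in enough edges that one can either find another degree-$3$ vertex admitting an unblocked $1$-reduction, or exhibit a forbidden $2$-vertex-cut, or conclude $G\in\{B_1,B_2\}$.

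\textbf{Main obstacle.} The delicate part is this last step: showing that in the highly connected case a blocked $1$-reduction, or a $K_4$-neighbourhood of a degree-$3$ vertex, forces either a forbidden separation or one of the three sporadic graphs. The tight-set closure lemma does most of the structural work, but one must carefully track how the blocking sets of the different non-adjacent pairs at $v$ overlap, and the small cases $|V|=6,7$ need to be checked essentially by hand; guaranteeing that the exceptional list is exactly $\{K_5^-,B_1,B_2\}$, and no larger, is where the real effort lies. Everything else — the reductions through $j$-separations and the soundness of the operations — is routine given Lemmas~\ref{lem:circuit_dec}, \ref{lem:circuit_dec2} and \ref{lem:circuit_dec3}.
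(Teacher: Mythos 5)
First, a point of orientation: the paper does not prove Theorem \ref{thm:recurse} at all --- it is imported verbatim from \cite{Nix}, so there is no in-paper argument to compare against line by line. That said, your outline is recognisably the strategy of the cited proof: induct, split off non-trivial $2$-vertex- and $3$-edge-separations using the $j$-separation/$j$-join machinery (Lemmas \ref{lem:circuit_dec}, \ref{lem:circuit_dec2}, \ref{lem:circuit_dec3}), and in the remaining ``highly connected'' case produce an admissible $1$-reduction. The separation half of your argument is essentially complete (one small thing to verify is the induction measure: the pieces of a $1$-, $2$- or $3$-separation acquire new vertices, so ``strictly smaller than $G$'' in vertex count needs the minimum-degree argument showing each side of a non-trivial separation contains at least two, and in the relevant cases at least three, private vertices; this does go through, but it is not automatic).

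The genuine gap is the highly connected case. What you need there is exactly Theorem \ref{thm:circnodes}: a $(2,2)$-circuit with no non-trivial $2$-vertex-separation and no non-trivial $3$-edge-separation, other than $K_5^-$, $B_1$, $B_2$, has an admissible node. This is the entire technical content of the theorem, and your paragraph does not establish it --- it names the right tools (the critical-set closure property of Lemma \ref{lem:union}, blocking sets as in Lemma \ref{lem:nix3}, counting around a $K_4$-neighbourhood) and then asserts that combining the blocking sets ``forces an edge-free split of their private parts, which contradicts the absence of non-trivial separations \dots or else pins $G$ down to $K_5^-$.'' The difficulty is that the union of two blocking sets typically swallows all of $V-v$ (all three neighbours of $v$ land in it, and an over-tight proper subgraph is the only alternative), so the contradiction does not come from a single application of the closure lemma; one has to track maximal node-critical sets, use the fact that the degree-$3$ vertices induce a forest (Lemma \ref{lem:forest}), locate a second node outside a maximal critical set (Lemma \ref{lem:nix1}), and run a delicate iteration --- this occupies several pages in \cite{Nix} and is strengthened further in \cite{JNglobal}. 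Your own ``Main obstacle'' paragraph concedes as much. So the proposal is a correct and faithful plan, but as written it proves the reduction through separations and \emph{assumes} the admissible-node lemma rather than proving it; to be a proof it would need that lemma supplied in full (or cited, as the paper itself does).
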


An alternative recursive construction of $(2,2)$-circuits was provided in \cite{JNglobal}. To describe this construction we first define two local graph operations on $G = (V,E)$. The first operation, the {\em $K_4^-$-extension}, deletes an edge $v_1v_2$, adds two new vertices $u_1$ and $u_2$, and adds five new edges $v_1 u_1, v_1u_2 , v_2 u_1, v_2 u_2, u_1 u_2$.
Conversely, a {\em $K_4^-$-reduction} deletes two adjacent vertices $u_1,u_2$ of degree 3, where $N_G(u_1) \cap N_G(u_2) = \{v_1,v_2\}$ and $v_1v_2 \notin E$, and then adds the edge $v_1v_2$.
The second operation, the {\em generalised vertex split}, is defined as follows: choose $v \in V$ and a partition $N_1,N_2$ of the neighbours of $v$; then delete $v$ from $G$ and add two new vertices $v_1$ and $v_2$, joined to $N_1$ and $N_2$ respectively; finally add two new edges $v_1v_2$ and $v_1x$ for some $x \in V \setminus N_1$.
Conversely, a \emph{edge-reduction} contracts an edge to a vertex and then deletes the resulting loop and one other neighbour of the new vertex.
These operations are illustrated in Figures \ref{fig:types_a} and \ref{fig:vsplit} respectively.

\begin{center}
\begin{figure}[ht]
\centering
\begin{tikzpicture}[scale=0.8]

\draw (-4,10) circle (27pt);
\draw (4,10) circle (27pt);

\filldraw (-2,10.5) circle (2pt) node[anchor=south]{$v_1$};
\filldraw (-2,9.5) circle (2pt) node[anchor=north]{$v_2$};

\draw[black] (-3.5,10.6) -- (-2,10.5) -- (-2,9.5) -- (-3.5,9.7);

\draw[black] (-3.5,10.4) -- (-2,10.5);

\draw[black] (-3.7,9.5) -- (-2,9.5) -- (-3.6,9.3);

\draw[black] (0,10) -- (1,10) -- (0.9,9.9);

\draw[black] (1,10) -- (0.9,10.1);

\filldraw (6,10.5) circle (2pt) node[anchor=south]{$v_1$};
\filldraw (6,9.5) circle (2pt) node[anchor=north]{$v_2$};
\filldraw (7,10.5) circle (2pt) node[anchor=south]{$u_1$};
\filldraw (7,9.5) circle (2pt) node[anchor=north]{$u_2$};

\draw[black] (4.5,10.6) -- (6,10.5) -- (7,10.5) -- (7.,9.5) -- (6,9.5) -- (7,10.5);

\draw[black] (6,9.5) -- (4.5,9.7);

\draw[black] (4.5,10.4) -- (6,10.5) -- (7,9.5);

\draw[black] (4.3,9.5) -- (6,9.5) -- (4.4,9.3);
\end{tikzpicture}
\caption{The $K_4^-$-extension.
The $K_4^-$-reduction is the inverse of the $K_4^-$-extension.} \label{fig:types_a}
\end{figure}
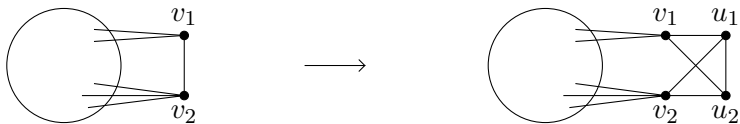

\begin{figure}[ht]
\begin{tikzpicture}[scale=0.7]
\draw (-3,-5) circle (35pt);
\draw (4,-5) circle (35pt);

\filldraw (-3,-2.5) circle (2pt) node[anchor=south]{$v$};
\filldraw (3.5,-2.5) circle (2pt) node[anchor=south]{$v_1$};
\filldraw (4.5,-2.5) circle (2pt) node[anchor=south]{$v_2$};

\filldraw (-3,-5) circle (2pt) node[anchor=north]{$x$};
\filldraw (4,-5) circle (2pt) node[anchor=north]{$x$};

\filldraw (-4,-4.5) circle (2pt);
\filldraw (-3.5,-4.5) circle (2pt);
\filldraw (-2.5,-4.5) circle (2pt);
\filldraw (-2,-4.5) circle (2pt);
\filldraw (3,-4.5) circle (2pt);
\filldraw (3.5,-4.5) circle (2pt);
\filldraw (4.5,-4.5) circle (2pt);
\filldraw (5,-4.5) circle (2pt);

\draw[black] (0,-5) -- (1,-5) -- (0.9,-5.1);

\draw[black] (1,-5) -- (0.9,-4.9);

\draw[black] (-4,-4.5) -- (-3,-2.5) -- (-3.5,-4.5);

\draw[black] (-2.5,-4.5) -- (-3,-2.5) -- (-2,-4.5);

\draw[black] (3,-4.5) -- (3.5,-2.5) -- (3.5,-4.5);

\draw[black] (4.5,-4.5) -- (4.5,-2.5) -- (5,-4.5);

\draw[black] (4,-5) -- (3.5,-2.5) -- (4.5,-2.5);
\end{tikzpicture}
\caption{Generalised vertex split. An edge-reduction is the inverse of the generalised vertex split.} \label{fig:vsplit}
\end{figure}
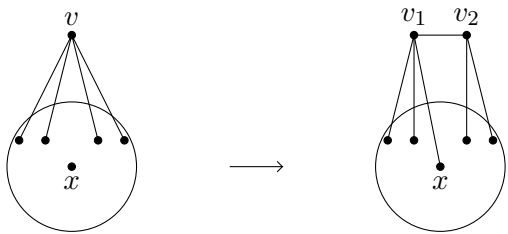
\end{center}

Note that the usual vertex splitting operation (see \cite{Whi}) is the special case when $x$ is chosen to be a neighbour of $v_2$. Also, the special case when $v_1$ has degree 3 (and $v_2=v$) is a 1-extension.

\begin{thm}[{\cite[Theorem 2.1]{JNglobal}}]\label{thm:construction}
Every $(2,2)$-circuit $G$ can be obtained from $K_5^-$ or $B_1$ (see Figure \ref{fig:smallgraphs1}) by recursively applying the operations of $K_4^-$-extension and generalised vertex splitting such that each intermediate graph is a $(2,2)$-circuit.
\end{thm}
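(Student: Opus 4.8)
The plan is to argue by induction on $|V(G)|$, proving the equivalent statement that every $(2,2)$-circuit other than $K_5^-$ or $B_1$ admits a $K_4^-$-reduction or an edge-reduction to a $(2,2)$-circuit; since a $1$-extension is a special case of a generalised vertex split, and $K_4^-$-extension and generalised vertex split are by definition the inverses of the corresponding reductions, iterating such reductions yields the desired construction. For the base of the induction one checks the finitely many $(2,2)$-circuits on a bounded number of vertices directly; the one substantive point is to exhibit $B_2$ as a generalised vertex split of $B_1$ (the vertex and edge counts match), so that $B_2$ need not be retained as a base graph. A standard counting argument shows every $(2,2)$-circuit has minimum degree exactly $3$, so in the inductive step I would fix a vertex $v$ of degree $3$ with $N(v)=\{a,b,c\}$ and note that $G-v$ is $(2,2)$-tight.

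\textbf{Inductive step.} For a non-edge $xy$ with $x,y\in\{a,b,c\}$, the $1$-reduction at $v$ adding $xy$ produces a $(2,2)$-circuit unless $G-v$ has a $(2,2)$-tight proper subgraph (a \emph{blocker}) containing $\{x,y\}$. If some such $1$-reduction is admissible we are done. Otherwise, if there are two adjacent degree-$3$ vertices $u_1,u_2$ with $N(u_1)\cap N(u_2)=\{w_1,w_2\}$ and $w_1w_2\notin E$, perform the $K_4^-$-reduction deleting $u_1,u_2$ and adding $w_1w_2$, verifying the result is a $(2,2)$-circuit by pulling any violating subgraph back through the inverse $K_4^-$-extension (which preserves $(2,2)$-circuits). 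In the remaining case I would analyse the blockers: by supermodularity of the induced-edge-count function, two $(2,2)$-tight subgraphs of $G-v$ sharing a vertex have $(2,2)$-tight intersection and union, so the blockers are organised by the maximal $(2,2)$-tight sets through $a$, $b$ and $c$. Either these maximal tight sets are arranged so that one can read off a partition $N_1,N_2$ of $N(v)$ and a vertex $x\notin N_1$ for which the edge-reduction of $G$ inverting the generalised vertex split with data $(v;N_1,N_2;x)$ lands on a $(2,2)$-circuit --- here the extra edge $v_1x$ of the split is exactly what defeats a blocker that obstructs a bare $1$-reduction --- or else the tight sets force a small vertex-cut (e.g.\ $\{a,b,c\}$ separates $v$, or some $\{v,a\}$ is a $2$-cut), whence by Lemmas \ref{lem:circuit_dec2} and \ref{lem:circuit_dec3} the graph $G$ is a $2$- or $3$-join of two smaller $(2,2)$-circuits. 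In that last situation I would apply the inductive hypothesis to a summand, chosen so that an admissible reduction of it can be found away from the bounded join interface, and invoke Lemma \ref{lem:circuit_dec} to see that the reassembled graph is again a $(2,2)$-circuit. (An alternative route to the whole theorem is to start from Nix's Theorem \ref{thm:recurse} and reduce to simulating each $1$-, $2$- and $3$-join of $(2,2)$-circuits, on one summand, by a sequence of $K_4^-$-extensions and generalised vertex splits through $(2,2)$-circuits; the difficulties below reappear there as the need to perform an operation on one summand near the glued interface.)

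\textbf{The main obstacle.} The hard part is the final case, where neither a $1$-reduction nor a $K_4^-$-reduction is available: one must show that a suitable generalised-vertex-split reduction nevertheless exists \emph{and} preserves the $(2,2)$-circuit property. This requires a detailed study of how the blocking $(2,2)$-tight subgraphs overlap --- in particular, which of them can contain two, rather than just one, of $a,b,c$ --- together with a clean treatment of the degenerate configurations (a $K_4$ on $\{v\}\cup N(v)$, two adjacent degree-$3$ vertices whose common neighbours are already adjacent, and so on), in which $G$ splits across a small separation and the decomposition Lemmas \ref{lem:circuit_dec}--\ref{lem:circuit_dec3} must be combined with the inductive hypothesis and an analysis of the interface. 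By contrast, the base cases, the identification of $B_2$ as a vertex split of $B_1$, the $1$-extension observation, and the fact that $K_4^-$-extensions preserve $(2,2)$-circuits are all routine.
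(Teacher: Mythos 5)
This statement is not proved in the paper at all: it is imported verbatim as \cite[Theorem 2.1]{JNglobal}, so there is no internal proof to compare against. The closest in-paper analogue of the argument you sketch is the (much longer) proof of Theorem \ref{thm:recurse2}, which generalises the construction to $\mathcal{M}(2,2)$-connected graphs and whose circuit case simply \emph{invokes} Theorem \ref{thm:construction}. Judged on its own terms, your proposal is a reasonable roadmap that matches the actual strategy of \cite{JNglobal} --- induction on $|V|$, reduction to finding an admissible $1$-reduction, $K_4^-$-reduction or circuit-preserving edge-reduction, with the separable cases handled via the join/separation machinery of Lemmas \ref{lem:circuit_dec}--\ref{lem:circuit_dec3} --- and several of your individual observations are correct (a $1$-extension is a special case of a generalised vertex split; $B_2$ is an edge-reduction away from $B_1$, as the caption of Figure \ref{fig:sequence} confirms; a $K_4^-$-reduction adding a non-edge to a circuit yields a circuit, by the dichotomy of Lemma \ref{lem:circuit_dec3} applied with one side isomorphic to $K_4$).

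However, there is a genuine gap: the entire mathematical content of the theorem is concentrated in precisely the step you defer. You write that the final case ``requires a detailed study of how the blocking $(2,2)$-tight subgraphs overlap'' and ``a clean treatment of the degenerate configurations,'' but you do not carry out that study, and it is not routine. Concretely, (a) the claim that a circuit with no admissible $1$-reduction and no admissible $K_4^-$-reduction either admits a circuit-preserving generalised vertex split or has a non-trivial $2$-vertex- or $3$-edge-separation is exactly \cite[Theorem 2.2]{JNglobal} (quoted here as Theorem \ref{thm:circnodes}), whose proof is a long analysis of node-critical sets of the kind appearing in Lemmas \ref{lem:nix3}--\ref{lem:nix4} and Claims \ref{claim:6.6}--\ref{claim:6.8}; and (b) in the separable case your plan to ``apply the inductive hypothesis to a summand, chosen so that an admissible reduction of it can be found away from the bounded join interface'' is not justified: Lemma \ref{lem:circuit_dec3} shows that for a $1$-join only one of the two orderings of the separation yields circuits, the admissible reduction of the chosen summand may be forced to touch the interface vertices, and the summand may itself be $K_5^-$, $B_1$ or $B_2$ and admit no reduction at all, in which case $G$ must be handled by an ad hoc operation near the interface (this is exactly what Cases 1--3 of the proof of Theorem \ref{thm:recurse2} do, at considerable length). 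As written, the proposal is an accurate outline of the difficulty rather than a proof of the theorem.
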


Later we will show that the operations in the previous theorem preserve global rigidity in any analytic normed plane $X$. It then quickly follows that every $(2,2)$-circuit is globally rigid in $X$. It is tempting to think that combining this with a simple gluing argument suffices to prove that 2-connected and redundantly rigid graphs\footnote{In Section \ref{sec:mcon} we will show that all such graphs are obtained by an `ear decomposition' into $(2,2)$-circuits.} are globally rigid in $X$. We can easily show that gluing globally rigid frameworks, when done correctly, will create a globally rigid of framework.

\begin{prop}\label{prop:union}
	Let $H = (U,F)$ and $H' = (U',F')$ be graphs with $|U\cap U'| \geq d$ and let $X$ be a $d$-dimensional normed space.
	Let $(G,p)$ be a completely regular framework in $X$ where $G=H \cup H'$,
	and define the placements $q:=p|_{U}$, $q':= p|_{U'}$ and $q \cap q' := p|_{U \cap U'}$. 
	Suppose that both $(H,q)$ and $(H',q')$ are globally rigid in $X$ and $q \cap q'$ is isometrically full. 
	Then $(G,p)$ is globally rigid in $X$.
\end{prop}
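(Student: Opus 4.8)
The plan is to take an arbitrary framework $(G,q')$ equivalent to $(G,p)$ and show that it is congruent to $(G,p)$, by first applying the global rigidity of the two pieces and then using the isometrically full hypothesis on the overlap to glue the two isometries obtained into a single one. Concretely, suppose $(G,\bar{q})\sim(G,p)$ for some placement $\bar q\in X^{V(G)}$. Restricting to the edge sets, we have $(H,\bar q|_U)\sim (H,q)$ and $(H',\bar q|_{U'})\sim(H',q')$, since $F,F'\subseteq E(G)$ and the edge-length data agree on each subset. Because $(H,q)$ and $(H',q')$ are globally rigid in $X$, there are isometries $g,g'\in\Iso(X)$ with $\bar q|_U=g\circ q$ and $\bar q|_{U'}=g'\circ q'$.

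Now restrict both equalities to the common vertex set $W:=U\cap U'$. On $W$ we get $g\circ(q\cap q') = \bar q|_W = g'\circ(q\cap q')$, hence $(g'^{-1}\circ g)\circ (q\cap q') = q\cap q'$. Since $q\cap q'$ is isometrically full by hypothesis, this forces $g'^{-1}\circ g = I$, i.e.\ $g=g'$. Therefore the single isometry $g$ satisfies $\bar q|_U = g\circ p|_U$ and $\bar q|_{U'}=g\circ p|_{U'}$, and since $U\cup U' = V(G)$ (as $G=H\cup H'$), we conclude $\bar q = g\circ p$, so $\bar q\sim p$. This shows $(G,p)$ is globally rigid in $X$.

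The role of the ``completely regular'' hypothesis on $(G,p)$ is mainly to ensure we are in a setting where the restricted frameworks $(H,q)$ and $(H',q')$ are well-positioned and where global rigidity of $(G,p)$ as a single framework (rather than of the graph $G$) is the meaningful statement; it is not strictly needed for the gluing logic above, but it guarantees $p$, and hence each restriction, lies in the regular locus so that the hypotheses ``$(H,q)$ and $(H',q')$ are globally rigid'' are not vacuous. I do not anticipate a genuine obstacle here: the only subtle point is making sure that equivalence of $(G,\bar q)$ and $(G,p)$ really does restrict to equivalence of the subframeworks, which is immediate since $f_H$ and $f_{H'}$ are coordinate projections of $f_G$, and that $|W|\ge d$ together with $q\cap q'$ isometrically full is exactly what is needed to pin down the isometry uniquely on the overlap. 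If one wants to be careful, one should note the hypothesis $|U\cap U'|\ge d$ is what makes it plausible for $q\cap q'$ to be isometrically full (an isometrically full placement must in particular be spanning, which needs at least $d+1$ points, or at least $d$ affinely independent points plus the no-fixed-point condition), but since being isometrically full is assumed outright, the cardinality condition is used only implicitly.
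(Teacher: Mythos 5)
Your proof is correct and follows essentially the same route as the paper's: restrict the equivalent framework to the two pieces, invoke global rigidity of $(H,q)$ and $(H',q')$ to obtain isometries $g,g'$, and use the isometrically full hypothesis on $q\cap q'$ to force $g=g'$. The only cosmetic issue is that you initially name the equivalent framework $(G,q')$, clashing with the notation $q'=p|_{U'}$ from the statement, before switching to $\bar q$.
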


\begin{proof}
	Let $(G,\tilde{p})$ be a framework in $X$ that is equivalent to $(G,p)$. 
	Since both $(H,q)$ and $(H',q')$ are globally rigid in $X$,
	there exist isometries $g,g'$ where $g \circ q= \tilde{p}|_{U}$ and $g' \circ q'= \tilde{p}|_{U'}$.
	In particular $\tilde{p}_v=(g \circ p)_v = (g' \circ p)_v$ for $v \in U\cap U'$. 
	Since $q \cap q'$ is isometrically full,
	we have that $g=g'$.
	Hence $\tilde{p} =g \circ p$ and $(G,\tilde{p})$ is congruent to $(G,p)$.
\end{proof}

Unfortunately,
this does not show that gluing globally rigid graphs over a sufficient number of vertices will preserve global rigidity.
This is a consequence of the fact that it is currently unknown whether global rigidity is a \emph{generic property}, i.e., a graph is globally rigid in a normed space $X$ if and only if $\grig(G;X)$ is an open and dense set. 
For example, suppose that $H,H'$ are globally rigid graphs in a normed space $X$,
and suppose that we can find isometrically full placements of their intersection.
Even knowing that both $H$ and $H'$ have open sets of globally rigid realisations in $X$ is not sufficient to apply Proposition \ref{prop:union},
as there may not exist globally rigid placements of both $H$ and $H'$ that agree on the vertices in their intersection.

\subsection{Properties of \texorpdfstring{$(2,2)$}{(2,2)}-circuits}

We require a number of technical results about $(2,2)$-circuits from \cite{Nix} and some mild extensions of these results.
It will be convenient to refer to vertices of degree 3 as \emph{nodes}. We say that a node $v$ (and the corresponding 1-reduction) in a $(2,2)$-circuit $G$ is \emph{admissible} if there exists a 1-reduction operation of $G$ at $v$ resulting in another $(2,2)$-circuit. 
A key technical theorem used in the proof of Theorem \ref{thm:recurse} establishes connectivity conditions which are sufficient to guarantee the existence of admissible nodes. That result was strengthened in \cite{JNglobal}, and we state the stronger version here.

\begin{thm}[{\cite[Theorem 2.2]{JNglobal}}]\label{thm:circnodes}
Suppose $G$ is a $(2,2)$-circuit which is distinct from $K_5^-$, $B_1$ and $B_2$. If $G$ has no non-trivial 2-vertex-separation and no non-trivial 3-edge-separation then $G$ has at least two admissible nodes.
\end{thm}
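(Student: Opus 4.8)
The plan is to mirror the strategy used to prove Theorem \ref{thm:recurse} in \cite{Nix}, but to argue more carefully so as to recover \emph{two} admissible nodes rather than one. First I would set up the counting machinery: since $G$ is a $(2,2)$-circuit, $|E| = 2|V|-1$, so the average degree is slightly below $4$, and in fact a handshake count forces $G$ to have at least three nodes (vertices of degree $3$); if a node $v$ had degree less than $3$ then deleting $v$ would leave a subgraph violating the circuit count, so every vertex has degree at least $3$. The standard obstruction to a $1$-reduction at a node $v$ with $N(v) = \{x,y,z\}$ is that for \emph{every} pair of non-adjacent neighbours, say $x$ and $y$, the graph $(G-v)+xy$ fails to be a $(2,2)$-circuit; since $G-v$ is obtained from a $(2,2)$-circuit by deleting a degree-$3$ vertex it is $(2,2)$-sparse with $|E|-3 = 2(|V|-1)-2$ edges, i.e.\ $(2,2)$-tight, so $(G-v)+xy$ is a circuit \emph{unless} there is a $(2,2)$-tight subgraph $H$ of $G-v$ containing both $x$ and $y$. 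Thus a node is inadmissible precisely when its neighbours are pairwise ``blocked'' either by an edge or by a $(2,2)$-tight ``critical'' subgraph avoiding $v$.

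The core of the argument is then the analysis of such critical subgraphs. I would show that if $v$ is an inadmissible node, then $G$ contains a proper $(2,2)$-tight subgraph $H$ with $v \notin V(H)$ and $|N(v) \cap V(H)| \geq 2$; one then studies how $H$ sits inside $G$. Taking $H$ to be chosen maximal (or minimal, as convenient) among such critical subgraphs and using the standard submodularity of the function $X \mapsto 2|V(X)| - |E(X)|$ on subgraphs, intersections and unions of critical subgraphs are again $(2,2)$-tight or sparse, so one can take a well-behaved such $H$. The key point is that $(V(H), V(G)\setminus V(H))$ or the associated edge cut must then yield a $2$-vertex-separation or a $3$-edge-separation of $G$: because $v$ has degree $3$ and at least two of its neighbours lie in $H$, the ``attachment'' of the rest of the graph to $H$ is small, and the hypothesis that $G$ has no non-trivial $2$-vertex-separation and no non-trivial $3$-edge-separation forces $H$ to be very small — essentially $H$ can only be a $K_4$ or a triangle sitting in a constrained way, and $G$ itself must be small. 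This is exactly how the base cases $K_5^-$, $B_1$, $B_2$ enter: if $G$ has \emph{no} admissible node, every node is blocked by a tiny critical subgraph, and assembling these constraints forces $G \in \{K_5^-, B_1, B_2\}$, contradiction; so $G$ has at least one admissible node. To upgrade to two, I would argue that a \emph{single} admissible node cannot be isolated: if $v$ is the unique admissible node then every other node is inadmissible and hence lies in (or is blocked by) a small critical subgraph, and one shows these critical subgraphs, together with the non-separation hypotheses, still collapse $G$ to one of the three base graphs unless a second admissible node appears. Concretely, I would pick a second node $v'$ (which exists by the degree count), assume it is inadmissible, extract its critical subgraph $H'$, and derive that $(H', \text{rest})$ is either a forbidden separation or forces $|V(G)|$ to be so small that $G$ is one of $K_5^-, B_1, B_2$.

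I expect the main obstacle to be the bookkeeping in the critical-subgraph analysis: there are several cases according to how many of $v$'s three neighbours lie in the critical subgraph $H$ (two or three), whether the ``missing'' edge between two neighbours is present in $G$ or not, and whether distinct inadmissible nodes share critical subgraphs or have nested/overlapping ones. Handling overlaps cleanly requires repeated use of submodularity to replace a family of critical subgraphs by a single canonical one, and then a careful count to show that the complement of that subgraph, together with $v$ and the three cut edges at $v$, produces exactly a non-trivial $2$-vertex-separation (two shared vertices, both sides not $K_4$) or a non-trivial $3$-edge-separation (independent $3$-edge-cut). The delicate part is ruling out the ``degenerate'' separations that the hypotheses permit — e.g.\ separations where one side is $K_4$ or where the $3$-edge-cut is not independent — and showing that in all those permitted-but-degenerate situations one can still perform a $1$-reduction somewhere, or else $G$ is forced to be one of the three named circuits. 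Once that case analysis is complete, producing a second admissible node is a matter of running the same argument starting from a different node and observing that the constraints cannot all be satisfied simultaneously unless $G \in \{K_5^-, B_1, B_2\}$.
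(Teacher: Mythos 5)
The paper does not actually prove Theorem \ref{thm:circnodes}: it is imported verbatim from \cite[Theorem 2.2]{JNglobal} (a strengthening of the existence result in \cite{Nix}), so there is no in-paper argument to measure you against. Judged on its own terms, your proposal starts correctly: the obstruction to a $1$-reduction at $v$ adding $xy$ is exactly $xy\in E$ or a $v$-critical ($(2,2)$-tight) set containing $x,y$ but not $v$ (Lemma \ref{lem:nix3}), and unions/intersections of critical sets are controlled by submodularity (Lemma \ref{lem:union}). Two small slips first: handshaking gives $\sum_v(4-d(v))=2$, so a $(2,2)$-circuit is only guaranteed \emph{two} nodes, not three; and a node all of whose neighbour pairs are adjacent (i.e.\ a node inside a $K_4$) is inadmissible with no critical set to analyse, a case your scheme never addresses.

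The genuine gap is the central mechanism. You claim that the critical set $H$ blocking an inadmissible node must, because ``the attachment of the rest of the graph to $H$ is small,'' produce a non-trivial $2$-vertex- or $3$-edge-separation, so the hypotheses force $H$, and then $G$, to be tiny. This is false: a critical set $X$ in a $(2,2)$-circuit satisfies only $d(X,V\setminus X)=2|V\setminus X|+1-i(V\setminus X)\geq 3$, and in general has many more than three edges to its complement and induces no separation at all; the three edges at $v$ say nothing about the other vertices outside $X$. Worse, your mechanism would prove that under the hypotheses \emph{every} node (outside a $K_4$) is admissible, which is stronger than the theorem and contradicts the need for the machinery the paper itself deploys when handling non-admissible leaf and series nodes (Lemmas \ref{lem:forest} and \ref{lem:nix4}, and the maximal node-critical sets in the proof of Theorem \ref{thm:key}). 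The actual argument must instead \emph{choose} its candidate nodes carefully --- leaves of the forest $G[V_3^*]$, maximal node-critical sets and their complements, with $K_4$-nodes treated separately via Lemma \ref{lem:nix2} --- and derive a contradiction from the maximality of those sets, not from a separation. Your ``upgrade to two admissible nodes'' paragraph is not an argument but a restatement of the goal, and it inherits the same flaw.
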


We omit the proof of the next elementary result.

\begin{lem} \label{lem:1-ex(2,2)-circ}
    Let $G'$ be obtained from $G$ by a 1-extension. If $G$ is a $(2,2)$-circuit then so is $G'$.
\end{lem}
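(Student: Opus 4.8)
The plan is to verify directly that the $(2,2)$-circuit defining inequalities are preserved under a 1-extension. Write $G = (V,E)$ for the $(2,2)$-circuit and let $G' = (V',E')$ be obtained from $G$ by deleting an edge $xy \in E$ and adding a new vertex $v$ with $N_{G'}(v) = \{x,y,z\}$ for some $z \in V \setminus \{x,y\}$. Then $|V'| = |V|+1$ and $|E'| = |E| - 1 + 3 = |E|+2$, so $|E'| = 2|V|-1+2 = 2|V'|-1$, which is the correct edge count for a $(2,2)$-circuit on $|V'|$ vertices. It remains to check that $|E''| \leq 2|V''|-2$ for every proper subgraph $(V'',E'')$ of $G'$, and this is the content of the argument.

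First I would record the standard count-preservation fact: since $G$ is a $(2,2)$-circuit it is in particular $(2,2)$-sparse after deletion of any edge, and 1-extension is known to preserve $(2,2)$-sparsity; this is essentially Lemma \ref{lem:1-ex(2,2)-circ} as applied to the sparse matroid, but here we need the stronger circuit property. The key case split is on whether the proper subgraph $H'' = (V'',E'')$ of $G'$ contains the new vertex $v$. If $v \notin V''$, then $H''$ is a subgraph of $G - xy$, hence of $G$; since $G$ is a $(2,2)$-circuit, $|E''| \leq 2|V''|-2$ holds unless $H''$ equals $G$ itself, which is impossible as $v \notin V''$ forces $H'' \neq G'$ and in fact $H''$ is a proper subgraph of $G$ (or we use that $G-xy$ is $(2,2)$-sparse to get $|E''| \leq 2|V''|-2$ directly). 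If $v \in V''$, then $d_{H''}(v) \leq 3$; let $H$ be the graph obtained from $H''$ by performing the corresponding 1-reduction at $v$ when $d_{H''}(v) = 3$ (deleting $v$ and adding $xy$ if $x,y \in V''$), or simply $H = H'' - v$ otherwise. One checks that $H$ is a subgraph of $G$ with $|V(H)| = |V''|-1$ and $|E(H)| \geq |E''| - 2$ (we lose $v$, its at most $3$ incident edges, and possibly re-add one edge $xy$). If $H$ is a proper subgraph of $G$, the $(2,2)$-circuit property of $G$ gives $|E(H)| \leq 2|V(H)|-2$, hence $|E''| \leq |E(H)|+2 \leq 2|V(H)| = 2|V''|-2$, as required. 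The one delicate point is the boundary case $H = G$: this can only happen when $d_{H''}(v)=3$, both $x,y \in V''$, and $H'' - v + xy = G$, which forces $H'' = G'$ itself (not a proper subgraph), a contradiction; so this case does not arise.

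The main obstacle I anticipate is bookkeeping the edge-count changes carefully when $x,y \in V''$ but $xy$ is not actually an edge of $H$ before reduction, versus when only one or neither of $x,y$ lies in $V''$ — in the latter subcases $v$ has degree at most $2$ in $H''$, so deleting $v$ loses at most $2$ edges and we gain nothing back, giving $|E(H)| \geq |E''|-2$ with $|V(H)| = |V''|-1$, and the inequality still closes. I would organise these as three sub-cases ($|\{x,y\} \cap V''| = 0, 1, 2$) and in each verify the inequality $|E''| \leq 2|V''|-2$, being especially careful that equality in the $(2,2)$-circuit bound for $H$ is only attained when $H = G$, which we have excluded. No geometry or matroid theory beyond the definitions is needed; the whole proof is a short counting argument, which is presumably why the authors omit it.
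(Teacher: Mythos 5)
Your argument is correct: the edge count $|E'|=2|V'|-1$ is immediate, and your case analysis on $d_{H''}(v)$ (deleting $v$ and, when all three neighbours are present, re-adding $xy$ to land on a proper subgraph of $G$, with the boundary case $H=G$ correctly excluded) is the standard counting proof. The paper explicitly omits the proof of this lemma as elementary, so there is nothing to compare against; your write-up fills the gap in exactly the expected way.
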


Given a graph $G = (V,E)$ and a non-empty subset $X$ of $V$,
$G[X]$ is the subgraph of $G$ induced by $X$ and $i_G(X)$ is the number of edges of $G[X]$.
The set $X$ is said to be \emph{critical in $G$} (or simply \emph{critical} if the context is clear) if $i_G(X) = 2|X|-2$. 
For a non-empty subset $Y$ of $V$ where $X$ and $Y$ are disjoint, we define $d_G(X,Y):=|\{xy \in E: x \in X, y \in Y\}|$. 
When the graph $G$ is clear from the context, we abbreviate the above to $i(X)$ and $d(X,Y)$ respectively.

\begin{lem}[{\cite[Lemma 2.2]{Nix}}]\label{lem:union}
Let $G = (V,E)$ be a $(2,2)$-circuit and let $X,Y \subsetneq V$ be critical such that $|X\cap Y|\geq 1$ and $|X\cup Y|\leq |V|-1$. Then $X\cap Y$ and $X\cup Y$ are both critical, and $d(X \setminus Y, Y \setminus X) = 0$.
\end{lem}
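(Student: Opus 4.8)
The plan is to use the standard submodularity argument for the count function $i_G$ on the $(2,2)$-circuit $G$. Recall the well-known submodular inequality: for any two subsets $X, Y \subseteq V$ with $X \cap Y \neq \emptyset$,
\[
i_G(X) + i_G(Y) \leq i_G(X \cup Y) + i_G(X \cap Y) - d_G(X \setminus Y, Y \setminus X),
\]
which holds because every edge counted on the left is counted on the right, and edges between $X \setminus Y$ and $Y \setminus X$ are counted on the left but not on the right (they appear in neither $G[X \cup Y]$'s… wait, they do appear in $G[X\cup Y]$) — more precisely, the correct identity is $i_G(X) + i_G(Y) + d_G(X\setminus Y, Y\setminus X) \le i_G(X\cup Y) + i_G(X\cap Y)$. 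I would state this carefully first.

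Now I would plug in the hypotheses. Since $X$ and $Y$ are critical, $i_G(X) = 2|X| - 2$ and $i_G(Y) = 2|Y| - 2$. Since $X \cap Y$ is non-empty and $X, Y$ are proper subsets with $|X \cup Y| \leq |V| - 1$, both $X \cap Y$ and $X \cup Y$ are non-empty proper subsets of $V$, so the $(2,2)$-sparsity of proper subgraphs of the circuit $G$ gives $i_G(X \cap Y) \leq 2|X \cap Y| - 2$ and $i_G(X \cup Y) \leq 2|X \cup Y| - 2$. Combining with the submodular inequality and the identity $|X| + |Y| = |X \cup Y| + |X \cap Y|$, we get
\[
(2|X| - 2) + (2|Y| - 2) + d_G(X\setminus Y, Y\setminus X) \leq (2|X\cup Y| - 2) + (2|X\cap Y| - 2),
\]
and the linear terms cancel exactly, forcing $d_G(X\setminus Y, Y\setminus X) \leq 0$, hence $= 0$. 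Feeding this back, all three inequalities above must be equalities, so $i_G(X \cap Y) = 2|X \cap Y| - 2$ and $i_G(X \cup Y) = 2|X \cup Y| - 2$; that is, both $X \cap Y$ and $X \cup Y$ are critical.

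There is essentially no serious obstacle here — the only point that requires a moment of care is verifying that $X \cap Y$ and $X \cup Y$ are genuinely \emph{proper} non-empty subsets of $V$ so that the sparsity count (which for a $(2,2)$-circuit is $|E'| \le 2|V'| - 2$ for all proper subgraphs) applies to them; this is exactly why the hypotheses $|X \cap Y| \geq 1$ and $|X \cup Y| \leq |V| - 1$ are present. (One also notes $|X\cap Y|\ge 1$ ensures $i_G(X\cap Y)$ is defined in the convention where the count is over subgraphs with at least one vertex.) Since this is \cite[Lemma 2.2]{Nix}, I would simply reproduce this short argument for completeness.
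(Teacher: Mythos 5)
Your proof is correct, and it is the standard supermodularity counting argument for the function $i_G$ that is used in the cited source \cite[Lemma 2.2]{Nix}; the paper itself does not reprove this lemma but simply imports it. The only cosmetic point is that the relation $i_G(X)+i_G(Y)+d_G(X\setminus Y,Y\setminus X)=i_G(X\cup Y)+i_G(X\cap Y)$ is in fact an exact equality, so the inequality you use holds with room to spare and the rest of your deduction goes through exactly as written.
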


\begin{lem}[{\cite[Lemma 2.3]{Nix}}]\label{lem:connectivity}
Let $G=(V,E)$ be a $(2,2)$-circuit and let $X\subsetneq V$ be critical. Then $G$ is 2-connected and 3-edge-connected, and $G[X]$ is connected.
\end{lem}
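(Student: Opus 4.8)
The plan is to deduce all three claims from the single counting fact available for a $(2,2)$-circuit: $|E| = 2|V|-1$, while every proper induced subgraph satisfies $i_G(V') \le 2|V'| - 2$ for $V' \subsetneq V$. In each case I would assume the relevant connectivity fails, split $V$ into two \emph{proper} parts, sum the circuit inequality over the two parts (plus the few edges running between them), and contradict $|E| = 2|V|-1$.

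\emph{$2$-connectedness and $3$-edge-connectedness.} First I would check $G$ is connected: a partition $V = V_1 \sqcup V_2$ into nonempty parts with no edge between them would force $|E| = i_G(V_1) + i_G(V_2) \le 2|V| - 4$, impossible. For $2$-connectedness, assuming $v$ is a cut vertex I would group the components of $G-v$ to write $G = G_1 \cup G_2$ with $V_1 \cap V_2 = \{v\}$, $V_1 \cup V_2 = V$ and each $V_i$ proper; since every edge lies inside $V_1$ or inside $V_2$, summing gives $|E| \le 2(|V|+1) - 4 = 2|V| - 2$, a contradiction (and a $(2,2)$-circuit has at least five vertices, so $2$-connectedness is the right notion here). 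For $3$-edge-connectedness, an edge cut of size at most $2$ gives a partition $V = V_1 \sqcup V_2$ into nonempty (hence proper) parts with $d_G(V_1, V_2) \le 2$, whence $|E| = i_G(V_1) + i_G(V_2) + d_G(V_1, V_2) \le 2|V| - 4 + 2 = 2|V| - 2$, again contradicting $|E| = 2|V|-1$; one only has to be mildly careful to allow the two cut edges to share an endpoint, which does not affect the bound.

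\emph{Connectedness of $G[X]$.} Here $X \subsetneq V$ is critical, so $X \ne \emptyset$ and $i_G(X) = 2|X| - 2$. If $G[X]$ were disconnected I would group its components into a partition $X = X_1 \sqcup X_2$ with both parts nonempty; each $X_i \subseteq X$ is a proper subset of $V$, so the circuit inequality applies to each, and since $G[X]$ has no edge between $X_1$ and $X_2$ we get $i_G(X) = i_G(X_1) + i_G(X_2) \le 2|X| - 4 < 2|X| - 2$, contradicting criticality.

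I do not anticipate a real obstacle: the lemma is three instances of the same ``split and count'' argument. The only genuine care required is bookkeeping — checking in each failure scenario that the edge set decomposes exactly as claimed, and that both sides of the separation are proper vertex subsets, so that the defining inequality of a $(2,2)$-circuit may legitimately be invoked.
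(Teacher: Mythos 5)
Your proof is correct, and each of the three counting arguments is sound (including the bookkeeping points you flag: both sides of every split are nonempty proper subsets of $V$, so the sparsity bound $i_G(V')\le 2|V'|-2$ applies, and the crossing edges are accounted for exactly once). The paper itself gives no proof of this lemma — it is quoted from \cite{Nix} — and the ``split and count'' argument you give is the standard one used there, so there is nothing substantive to compare.
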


Note that the last lemma immediately implies that the minimum degree of a $(2,2)$-circuit $G$, denoted by $\delta(G)$, is at least 3. Since $|E|=2|V|-1$, it follows from the handshaking lemma that $\delta(G) = 3$.

\begin{lem}[{\cite[Lemma 2.4]{Nix}}]\label{lem:nix1}
Let $G=(V,E)$ be a $(2,2)$-circuit and let $X \subsetneq V$ be critical. Then $V \setminus X$ contains a node in $G$.
\end{lem}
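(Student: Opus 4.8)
The plan is to argue by contradiction: suppose $X \subsetneq V$ is critical but every vertex of $V \setminus X$ has degree at least $4$ in $G$. Set $Y := V \setminus X$, so $Y$ is non-empty, and write $n_Y := |Y|$, $n_X := |X|$, $|V| = n_X + n_Y$. The edges of $G$ split into three groups: the $i(X) = 2n_X - 2$ edges inside $X$, the $i(Y)$ edges inside $Y$, and the $d(X,Y)$ edges between. Since $G$ is a $(2,2)$-circuit we have $|E| = 2|V| - 1 = 2n_X + 2n_Y - 1$, so $i(Y) + d(X,Y) = 2n_Y + 1$.

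The key step is to bound $i(Y) + d(X,Y)$ from the other side using the degree hypothesis. Each vertex $v \in Y$ contributes $d_G(v) \geq 4$ to the sum $\sum_{v \in Y} d_G(v) = 2 i(Y) + d(X,Y)$, so $2 i(Y) + d(X,Y) \geq 4 n_Y$. Combining with $i(Y) + d(X,Y) = 2n_Y + 1$ gives $i(Y) = \big(2i(Y) + d(X,Y)\big) - \big(i(Y) + d(X,Y)\big) \geq 4n_Y - (2n_Y + 1) = 2n_Y - 1$. But $Y$ is a proper subgraph of the $(2,2)$-circuit $G$ (it is proper because $X \neq \emptyset$), so by the defining sparsity condition for circuits we have $i(Y) \leq 2 n_Y - 2$ whenever $n_Y \geq 1$ — provided $Y$ induces at least one edge. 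This already contradicts $i(Y) \geq 2n_Y - 1$ once we know $i(Y) \geq 1$.

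It remains to handle the degenerate possibility that $G[Y]$ has no edges, i.e.\ $i(Y) = 0$; but then the inequality $i(Y) \geq 2 n_Y - 1$ forces $n_Y \leq 0$, contradicting $Y \neq \emptyset$. (Alternatively, if $i(Y) = 0$ then every vertex of $Y$ sends all of its $\geq 4$ edges into $X$, so $d(X,Y) \geq 4 n_Y$, while $d(X,Y) = 2 n_Y + 1$ forces $2 n_Y \leq 1$, again impossible.) Either way we reach a contradiction, so $V \setminus X$ must contain a vertex of degree $3$, i.e.\ a node of $G$.

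I do not expect any serious obstacle here; the only thing to be careful about is the bookkeeping of which subsets are proper (so that the circuit sparsity bound $i(Y) \leq 2|Y| - 2$ applies to $Y$, using $X \neq \emptyset$) and the edge-case $i(Y) = 0$ where that bound is vacuous and must be treated directly via the between-edge count. Everything else is the handshaking lemma plus the two linear relations on edge counts.
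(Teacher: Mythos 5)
Your proof is correct. Note that the paper itself gives no proof of this lemma -- it is imported verbatim from [Nix, Lemma 2.4] -- so your argument serves as a self-contained replacement, and the counting is sound: from $i(X)=2|X|-2$ and $|E|=2|V|-1$ you get $i(Y)+d(X,Y)=2|Y|+1$, while the degree hypothesis gives $2i(Y)+d(X,Y)\ge 4|Y|$, whence $i(Y)\ge 2|Y|-1$, contradicting the sparsity bound $i(Y)\le 2|Y|-2$ for the proper non-empty subset $Y$ (with your edge case $i(Y)=0$ handled directly). The one step you should make explicit is the very first one: the negation of ``$V\setminus X$ contains a node'' is ``every vertex of $V\setminus X$ has degree $\ne 3$'', and to upgrade this to ``degree $\ge 4$'' you need the fact that $\delta(G)\ge 3$ for any $(2,2)$-circuit. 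This is standard (and is recorded in the paper immediately after Lemma \ref{lem:connectivity}; it also follows from applying the sparsity count to $V-v$), but as written your contradiction hypothesis silently assumes it.
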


\begin{lem}[{\cite[Lemma 2.5]{Nix}}]\label{lem:nix3}
Let $G = (V,E)$ be a $(2,2)$-circuit and let $v$ be a node in $G$ with neighbourhood $\{x,y,z\}$. 
Then the 1-reduction at $v$ adding $xy$ is not admissible if and only if either $xy \in E$, or there is a critical set $X\subsetneq V$ with $x,y\in X$ and $v,z\notin X$.
\end{lem}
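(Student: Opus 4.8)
The plan is to translate admissibility of the $1$-reduction into a sparsity count and then do all the work inside $G$. Write $G'$ for the graph obtained from $G$ by the $1$-reduction at $v$ adding $xy$. I would first dispatch the case $xy\in E$: then no genuinely new edge is added, so $G'=G-v$ has $|E(G')|=2|V|-4=2|V(G')|-2$ edges; every subgraph of $G-v$ is a proper subgraph of $G$ and hence $(2,2)$-sparse, so $G-v$ is independent in $\M(2,2)$ and in particular not a $(2,2)$-circuit, i.e.\ the reduction is not admissible. So from now on I assume $xy\notin E$, in which case $|V(G')|=|V|-1$ and $|E(G')|=|E|-2=2|V(G')|-1$. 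By the combinatorial characterisation of $(2,2)$-circuits recalled above, $G'$ is then a $(2,2)$-circuit if and only if $i_{G'}(W)\le 2|W|-2$ for every $W$ with $\emptyset\neq W\subsetneq V(G')$; equivalently, the reduction is \emph{not} admissible if and only if some proper $W\subsetneq V(G')$ has $i_{G'}(W)\ge 2|W|-1$.

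For the implication ``$\Leftarrow$'' (assuming $xy\notin E$), suppose there is a critical set $X\subsetneq V$ with $x,y\in X$ and $v,z\notin X$. Since $v\notin X$, the induced subgraphs $G[X]$ and $G'[X]$ differ only by the single edge $xy$, so $i_{G'}(X)=i_G(X)+1=2|X|-1$. Because $z\in V(G')\setminus X$, the set $X$ is a proper subset of $V(G')$, so $G'$ is not a $(2,2)$-circuit and the reduction is not admissible.

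For ``$\Rightarrow$'' I argue the contrapositive. Assume $xy\notin E$ and that there is no critical $X\subsetneq V$ with $x,y\in X$ and $v,z\notin X$, and suppose the reduction is not admissible. Then there is $W$ with $\emptyset\neq W\subsetneq V(G')=V\setminus\{v\}$ and $i_{G'}(W)\ge 2|W|-1$; note $v\notin W$ and, since $W$ is proper in $V(G')$, also $|W|\le |V|-2$. If $\{x,y\}\not\subseteq W$ then $i_G(W)=i_{G'}(W)\ge 2|W|-1$, contradicting that $W$ induces a proper subgraph of the $(2,2)$-circuit $G$. Hence $\{x,y\}\subseteq W$, so $i_G(W)=i_{G'}(W)-1\ge 2|W|-2$; combined with $(2,2)$-sparsity of proper subgraphs of $G$ this gives $i_G(W)=2|W|-2$, so $W$ is critical. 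If $z\notin W$, then $W$ is a forbidden critical set (it has $x,y\in W$, $v,z\notin W$, $W\subsetneq V$), a contradiction. So $z\in W$, i.e.\ $N_G(v)=\{x,y,z\}\subseteq W$, and therefore $i_G(W\cup\{v\})=i_G(W)+d_G(v)=2|W|+1=2|W\cup\{v\}|-1$. Since $|W\cup\{v\}|\le |V|-1<|V|$, this means $W\cup\{v\}$ induces a proper subgraph of $G$ violating $(2,2)$-sparsity, a contradiction. Hence the reduction is admissible.

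The only delicate point --- what I would flag as the crux --- is bookkeeping about which vertex set a subset is proper in: one must take $W$ proper in $V(G')=V\setminus\{v\}$ rather than merely in $V$, since it is exactly the resulting bound $|W|\le|V|-2$ that keeps $W\cup\{v\}$ a \emph{proper} subset of $V$, and hence still governed by $(2,2)$-sparsity in $G$. The degenerate case $W=V\setminus\{v\}$ is genuinely excluded because $i_{G'}(V(G'))=2|V(G')|-1$ is the tightness a circuit is supposed to have, not a violation. Everything else is elementary counting.
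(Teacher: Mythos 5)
Your proof is correct. The paper does not reprove this lemma --- it is imported verbatim from \cite[Lemma 2.5]{Nix} --- but your argument is exactly the standard sparsity-counting proof given there: translate admissibility into the condition $i_{G'}(W)\le 2|W|-2$ for all proper $W\subsetneq V(G')$, and trace any violation back to a critical set of $G$ containing $x,y$ but not $v,z$. You also correctly identify and handle the one genuinely delicate point, namely that $W$ must be proper in $V\setminus\{v\}$ so that $W\cup\{v\}$ remains a proper subset of $V$ and is therefore still constrained by the $(2,2)$-sparsity of the circuit $G$.
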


For a graph $G=(V,E)$, let $V_3 =\{v\in V: v \mbox{ is a node}\}$. 
Let $V_3^*\subseteq V_3$ be the set of nodes which are not contained in copies of $K_4$ in $G$. Later we will use the elementary fact that if $v\in V_3^*$ and $u$ is a neighbour of $v$ then $u \in V_3$ if and only if $u \in V_3^*$.

We call a node $v$ with $d_{G[V_3^*]}(v) \leq 1$ a \emph{leaf} node and a node with $d_{G[V_3^*]}(v) = 2$ a \emph{series} node. 
Any node $v$ with $d_{G[V_3^*]}(v) = 3$ can never be admissible (and so are of no use to us) since any 1-reduction will reduce the minimum degree.

\begin{lem}\label{lem:forest}
Let $G = (V,E)$ be a $(2,2)$-circuit. Then $G[V_3]$ contains no cycles. 
\end{lem}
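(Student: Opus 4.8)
The plan is to argue by contradiction: suppose $G=(V,E)$ is a $(2,2)$-circuit and $G[V_3]$ contains a cycle $C = v_1 v_2 \cdots v_k v_1$ on the node set $\{v_1,\dots,v_k\}\subseteq V_3$. First I would set $W := \{v_1,\dots,v_k\}$ and count edges incident to $W$. Each $v_i$ has degree exactly $3$ in $G$, so $\sum_{v\in W} d_G(v) = 3k$. On the other hand $\sum_{v\in W} d_G(v) = 2\,i_G(W) + d_G(W, V\setminus W)$, so $2\,i_G(W) + d_G(W,V\setminus W) = 3k$. Since $G[W]$ contains the cycle $C$, we have $i_G(W)\ge k$, hence $d_G(W,V\setminus W) = 3k - 2\,i_G(W) \le 3k - 2k = k$.

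Next I would use the edge-connectivity supplied by Lemma~\ref{lem:connectivity}, namely that $G$ is $3$-edge-connected. The key subtlety is that $W$ could equal $V$, in which case there is no nontrivial edge cut; but if $W=V$ then $|E| = i_G(W) \ge k = |V|$ while a $(2,2)$-circuit has $|E| = 2|V|-1 > |V|$ forces $i_G(W)\ge |V|$, which is consistent, so I must rule out $W=V$ by a sparsity count instead: summing degrees gives $2|E| = 3|V|$ when $W=V$, i.e.\ $2(2|V|-1) = 3|V|$, giving $|V| = 2$, which is impossible for a $(2,2)$-circuit (the smallest is $K_5^-$). So $W\subsetneq V$, and $d_G(W,V\setminus W)$ is a genuine edge cut; $3$-edge-connectivity gives $d_G(W,V\setminus W)\ge 3$. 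Combined with the bound above, $3 \le d_G(W,V\setminus W) \le k$, so $k\ge 3$, which is no contradiction yet — so I need to extract more from the cycle.

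The refinement: to force a contradiction I should instead pick $C$ to be a \emph{shortest} cycle in $G[V_3]$ (so $C$ is chordless in $G[W]$, where $W = V(C)$), and observe that then $i_G(W) = |E(C)| = k$ exactly, hence $d_G(W, V\setminus W) = 3k - 2k = k$. Now consider whether $W$ can be critical. We have $i_G(W) = k = 2|W| - 2$ precisely when $k = 2$, impossible; for $k\ge 3$, $i_G(W) = k < 2k - 2 = 2|W|-2$, so $W$ is not critical and in fact $W$ is independent in $\M(2,2)$ when $k = |W| \le 2|W|-2$, which holds for $k\ge 2$. The cleaner route is: since $G$ is a $(2,2)$-circuit, $G$ minus any edge is $(2,2)$-sparse; take an edge $e = v_1v_2$ of the chordless cycle $C$. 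Then in $G - e$ the set $W$ still induces the path $v_2 v_3\cdots v_k v_1$ plus nothing else, so $i_{G-e}(W) = k - 1$. But each $v_i$ still has degree $3$ in $G$, so in $G$, summing, $2k = 2\,i_G(W \text{ minus } e\text{ contributions})$… — here I would instead directly contradict $(2,2)$-sparsity of $G - e$ by exhibiting a \emph{denser} subset: take $W$ together with, say, the cut edges. Actually the cleanest contradiction: the cut $d_G(W, V\setminus W) = k$ together with $3$-edge-connectivity of each "side"; but rather than chase this, I anticipate the \textbf{main obstacle} is precisely pinning down which density inequality to violate, and I expect the intended argument is: a chordless cycle $C$ in $G[V_3]$ has $i_G(V(C)) = |V(C)|$ and each vertex degree $3$, so deleting one edge $e$ of $C$ leaves every vertex of $C$ with at least $2$ incident edges outside $W\setminus\{e\}$-structure; contract $W$ — the contracted cut has size $k$ but feeds $k$ vertices each... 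Concretely I would finish by: letting $H = G[W]$ (a chordless cycle, $k$ vertices, $k$ edges) and noting $2k = 2|E(H)| \le 2(2k - 2)$ is vacuous, so the real contradiction must come from combining $i_G(W) = k$ with the fact that $G$ has no subgraph $(V',E')$ with $|E'| > 2|V'| - 2$ other than $G$ itself: take $W' = W \cup \{$one endpoint of each of the $k$ cut edges... $\}$ — too loose. I therefore expect the actual proof uses Lemma~\ref{lem:nix1}/Lemma~\ref{lem:connectivity} more cleverly, perhaps: a chordless cycle among nodes, take a maximal critical set avoiding part of it and apply Lemma~\ref{lem:union} to merge critical neighbourhoods until a contradiction with $|X\cup Y|\le |V|-1$ arises. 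The hardest part is identifying this bookkeeping; the degree count $3k = 2i_G(W) + d_G(W,V\setminus W)$ and $3$-edge-connectivity are the load-bearing inputs, and once the right auxiliary subset is chosen the violation of $(2,2)$-sparsity (equivalently, of being a circuit, via minimality) should be immediate.
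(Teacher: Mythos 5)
There is a genuine gap: you assemble exactly the right counting data but never close the argument, and you say so yourself ("no contradiction yet", "too loose", "I therefore expect the actual proof uses \dots more cleverly"). The one missing idea is to count the edges induced on the \emph{complement} of the cycle rather than trying to violate sparsity on the cycle itself or on $G-e$. Concretely: take $W\subseteq V_3$ to be the vertex set of an (induced, chordless) cycle, so $i_G(W)=k=|W|$, and since every vertex of $W$ has degree exactly $3$ in $G$ with two of those edges on the cycle, $d_G(W,V\setminus W)=3k-2k=k$ (your own computation). Since a $(2,2)$-circuit has $2|V|-1>|V|$ edges it is not itself a cycle, so $V\setminus W\neq\emptyset$, and then
\[
i_G(V\setminus W)=|E|-i_G(W)-d_G(W,V\setminus W)=(2|V|-1)-2k=2|V\setminus W|-1>2|V\setminus W|-2,
\]
which violates the $(2,2)$-sparsity of the proper subgraph $G[V\setminus W]$. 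That is the entire proof in the paper; the $3$-edge-connectivity of Lemma~\ref{lem:connectivity}, the criticality discussion, and the excursion through $G-e$ are all unnecessary. Your degree-sum identity $3k=2i_G(W)+d_G(W,V\setminus W)$ and the reduction to a chordless cycle are correct load-bearing inputs, but without the step of testing $V\setminus W$ against the sparsity count the proposal does not constitute a proof.
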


\begin{proof}
Suppose $C\subseteq V_3$ is a cycle in $G[V_3]$, and hence induces a cycle in $G$.
As G is not a cycle, $V \setminus C \neq \emptyset$. 
Also, $d(C, V \setminus C) = |C|$ since $C \subseteq V_3$. 
Now
$$i(V \setminus C)=2|V|-1-i(C)-d(C,V \setminus C)=2|V|-1-2|C|=2(|V|-|C|)-1=2|V \setminus C|-1, $$
a contradiction. Hence $G[V_3]$ contains no cycles.
\end{proof}

For a node $v$ in a $(2,2)$-circuit $G=(V,E)$ with $N(v) =\{x,y,z\}$, we say that a critical set $X$ in $G$ is \emph{$v$-critical} if $x,y \in X$ and $v,z \notin X$.
If $X$ is a $v$-critical set containing $\{x,y\}$ for some node $v$ with neighbourhood $\{x,y,z\}$ and $d_G(z) \geq 4$, then $X$ is \emph{node-critical}.
It follows from a quick counting argument that if $X$ is a $v$-critical set in $G$ containing $\{x,y\}$,
then $xy \notin E$.

The next lemma is a strengthening of \cite[Lemma 2.10]{Nix}.

\begin{lem} \label{lem:nix4}
    Let $G = (V,E)$ be a $(2,2)$-circuit containing a node $v$ with neighbourhood $\{x, y, z\}$. 
    Suppose $d_{G}(z) \geq 4$ and there exists a $v$-critical set $X$ in $G$ containing $x, y$. 
    Further suppose that there exists a non-admissible node $u \in V \setminus (X + v)$ such that either:
    \begin{enumerate}[(i)]
        \item \label{lem:nix4.1} $u$ is a series node with precisely one neighbour $w$ in $X$, and $w$ is a node, or
        \item \label{lem:nix4.2} $u$ is a leaf node with no edges between its neighbours.
    \end{enumerate}
    Then there is a node-critical set $X'$ in $G$ such that $X \subsetneq X'$.
\end{lem}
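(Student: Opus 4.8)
The plan is to prove Lemma~\ref{lem:nix4} by analysing the interaction between the hypothesised $v$-critical set $X$ and the critical set forced to exist by the non-admissibility of the node $u$, using Lemma~\ref{lem:union} as the main combinatorial engine. First I would record the setup: since $u$ is non-admissible, for \emph{each} pair of neighbours of $u$ whose edge is absent, Lemma~\ref{lem:nix3} gives either that edge already present or a critical set containing that pair but not $u$ nor the third neighbour. In case \eqref{lem:nix4.2}, $u$ is a leaf node with no edges among $N(u) = \{a,b,c\}$, so \emph{every} 1-reduction at $u$ is inadmissible, and since none of the three edges $ab, ac, bc$ is present, we obtain critical sets $Y_{ab}, Y_{ac}, Y_{bc}$ with the obvious inclusion/exclusion pattern. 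In case \eqref{lem:nix4.1}, $u$ is a series node with neighbourhood $\{w, a, b\}$, say, where $w \in X$ is a node; the relevant inadmissible reductions are the two that add an edge incident to $w$ (the third, adding $ab$, may or may not be relevant), yielding critical sets through $\{w,a\}$ and $\{w,b\}$.

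Next I would feed these critical sets through Lemma~\ref{lem:union} together with $X$. The key point is that $u \in V \setminus (X+v)$, so $u \notin X$, which means at least one neighbour of $u$ lies outside $X$; combined with the fact that the critical sets from Lemma~\ref{lem:nix3} contain a pair of $u$'s neighbours and exclude the third, some $Y$ among them will satisfy $|X \cap Y| \geq 1$ (this is where case \eqref{lem:nix4.1}'s hypothesis ``$w \in X$ is a node'' does work — $w$ lies in both $X$ and the relevant $Y$). Provided $|X \cup Y| \leq |V| - 1$, Lemma~\ref{lem:union} gives that $X \cup Y$ is critical and that $d(X \setminus Y, Y \setminus X) = 0$. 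I would then argue that $X' := X \cup Y$ is strictly larger than $X$ — it contains a neighbour of $u$ not in $X$, or $u$ itself, depending on how the $Y$'s combine — and that $X'$ is still $v$-critical, i.e.\ $v, z \notin X'$: here I need $v \notin Y$ (true since $Y$ comes from a reduction at $u \neq v$ and $v \notin X$), and crucially $z \notin X'$, which requires ruling out $z \in Y$. The condition $d_G(z) \geq 4$ is what makes $X'$ \emph{node}-critical once we know it is $v$-critical, matching the conclusion.

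The structure I envisage is: (1) extract critical sets from non-admissibility of $u$ via Lemma~\ref{lem:nix3}; (2) choose the right one to union with $X$, checking the two hypotheses of Lemma~\ref{lem:union}; (3) verify $X \subsetneq X \cup Y$ (strictness); (4) verify $z \notin X \cup Y$ and $v \notin X \cup Y$ so that $X' := X \cup Y$ is $v$-critical; (5) conclude $X'$ is node-critical from $d_G(z) \geq 4$. Several of these steps will likely need the auxiliary consequence $d(X \setminus Y, Y \setminus X) = 0$ from Lemma~\ref{lem:union}, e.g.\ to control where the edges at $u$ or at $z$ can go, and Lemma~\ref{lem:connectivity} (that $G[X]$ and $G[Y]$ are connected) to prevent degenerate configurations.

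The main obstacle I expect is step (4), specifically ruling out $z \in Y$ while simultaneously ensuring strictness in step (3); these pull in opposite directions. If the only critical set $Y$ available from $u$'s non-admissibility happens to contain $z$, the union $X \cup Y$ would no longer be $v$-critical, so one must exploit the structural hypotheses on $u$ (series vs.\ leaf, the node $w$, the absence of edges among neighbours) to produce a $Y$ avoiding $z$ — possibly by taking an intersection or a second union among the several $Y$'s, or by a parity/counting argument on $i_G$ showing $z \in Y$ forces a contradiction with $Y \subsetneq V$ being critical. I anticipate the leaf case \eqref{lem:nix4.2} is cleaner because all three reductions are inadmissible, giving three critical sets to play with, whereas the series case \eqref{lem:nix4.1} is tighter and is probably where the delicate argument lives; handling the boundary condition $|X \cup Y| = |V|$ (when Lemma~\ref{lem:union} does not directly apply) may require a separate short argument, perhaps observing that $|X \cup Y| = |V|$ would contradict $V \setminus (X+v)$ containing $u$ together with $v \notin X \cup Y$.
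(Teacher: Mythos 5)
Your overall machinery is the right one --- Lemma \ref{lem:nix3} to extract critical sets from the non-admissibility of $u$, then Lemma \ref{lem:union} to combine them with $X$ --- and that is indeed how the paper proceeds. But your step (4) rests on a misreading of the conclusion, and it is precisely the step you flag as the main obstacle. The lemma only asks for a \emph{node-critical} set $X'$ strictly containing $X$, i.e.\ a set that is $v'$-critical for \emph{some} node $v'$ whose excluded third neighbour has degree at least $4$; it does not ask for $X'$ to be $v$-critical for the original $v$. The paper never attempts to show $v,z\notin X'$. Instead it makes $X'$ node-critical \emph{as a $u$-critical set}: in case (i), with $N(u)=\{w,a,b\}$, the series hypothesis gives (WLOG) $d(a)=3$, $d(b)\geq 4$, Lemma \ref{lem:forest} gives $wa\notin E$, Lemma \ref{lem:nix3} gives a $u$-critical $Y\supseteq\{w,a\}$ excluding $u,b$, and then $X\cup Y$ is critical (note $u,b\notin X\cup Y$, since $w$ is the only neighbour of $u$ in $X$, which also settles your worry about $|X\cup Y|=|V|$), contains $w,a$, excludes $u$ and the degree-$\geq 4$ vertex $b$, and properly contains $X$ because $a\in Y\setminus X$. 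Insisting on $v$-criticality, as your plan does, leads to a dead end: there is no mechanism to rule out $z\in Y$, and none is needed.

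Two further points your sketch does not reach. In case (ii) you need the subcase split on $|N(u)\cap X|$: if it equals $3$ you get a density contradiction, if it equals $2$ then $X+u$ itself is the answer (this is the one place $d_G(z)\geq 4$ is actually used), and only for $|N(u)\cap X|\leq 1$ do you run the union argument. And in that last subcase your claim that ``some $Y$ will satisfy $|X\cap Y|\geq 1$'' needs justification when $N(u)\cap X=\emptyset$: the paper first shows $Y_1\cup Y_2=V-u$ (because $Y_1\cup Y_2$ is critical and contains all of $N(u)$), which forces some $Y_i$ to meet $X$; one must then still handle the possibility that the degree-$\geq 4$ neighbour $c$ lands in $Y_1\cup X$, in which case you switch to $Y_2\cup X$. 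So the skeleton is right, but the argument as planned would fail at the point you yourself identify as critical, and the fix is to change the target, not to fight for $z\notin Y$.
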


\begin{proof}
Suppose that (\ref{lem:nix4.1}) holds and let $N_{G}(u) = \{w, a, b\}$. 
As $u$ is a series node we may suppose without loss of generality that $d_{G}(a) = 3$ and $d_{G}(b) \geq 4$. 
Lemma \ref{lem:forest} implies that $G[V_3]$ contains no cycles, so $wa \notin E$.
As $u$ is a non-admissible node and $wa \notin E$, Lemma \ref{lem:nix3} implies there exists a $u$-critical set $Y$ in $G$ containing $\{w, a\}$. 
As $w \in X \cap Y$ and $u, b \notin X \cup Y$, Lemma \ref{lem:union} implies that $X \cup Y$ is a $u$-critical set in $G$
containing $\{w, a\}$. 
Moreover, $X \cup Y$ is node-critical as $d_G(b) \geq 4$.
Finally, as $a \in Y \setminus X$, we have $X \subsetneq X \cup Y$.

Now suppose that (\ref{lem:nix4.2}) holds and let $N_{G}(u) = \{a, b, c\}$. 
As $v \notin X + u$ we must have $|N_{G}(u) \cap X| \leq 2$, since
otherwise $G[X +u]$ is a proper subgraph in $G$ with $2|X +u|-1$ edges. 
If $|N_{G}(u) \cap X| = 2$ then $X + u$ is a node-critical $v$-critical set in $G$ properly containing $X$ and we are done.
So we may suppose that $|N_{G}(u) \cap X| \leq 1$.
As $u$ is a leaf node we may suppose without loss of generality that $d_{G}(a), d_{G}(c) \geq 4$. 
As $u$ is non-admissible and there are no edges between the neighbours of $u$, Lemma \ref{lem:nix3} implies there exist $u$-critical sets, $Y_{1}$ and $Y_{2}$, in $G$ on $\{a, b\}$ and $\{b,c\}$ respectively. As $b \in Y_{1} \cap Y_{2}$ and $u \notin Y_{1} \cup Y_{2}$, Lemma \ref{lem:union} implies $Y_{1} \cup Y_{2}$ is a critical set. Moreover, as $N_{G}(u) \subseteq Y_{1} \cup Y_{2}$, it follows that $Y_{1} \cup Y_{2} = V -u$. Hence $Y_{1} \cap X \neq \emptyset$ or $Y_{2} \cap X \neq \emptyset$. We may suppose, without loss of generality, that $Y_{1} \cap X \neq \emptyset$.

As $Y_{1} \cap X \neq \emptyset$ and $u \notin Y_{1} \cup X$, Lemma \ref{lem:union} implies $Y_{1} \cup X$ is critical in $G$. Moreover, as $a, b \in Y_{1} \cup X$ and $|N_{G}(u) \cap X| \leq 1$ we have that $X \subsetneq Y_{1} \cup X$. If $c \notin Y_{1} \cup X$ then $Y_{1} \cup X$ is node-critical in $G$ and we are done. On the other hand, if $c \in Y_{1} \cup X$ then $c \in X$. So $Y_{2} \cap X \neq \emptyset$ and $N_{G}(u) \cap X = \{c\}$. As $u, a \notin Y_{2} \cup X$, the set $Y_{2} \cup X$ is a node-critical $u$-critical set in $G$ by Lemma \ref{lem:union}. Finally, as $b \in Y_{2} \setminus X$, we have $X \subsetneq Y_{2} \cup X$.
\end{proof}

The final result in this section strengthens \cite[Lemma 2.8]{Nix}.
Similarly to 1-reductions,
we define a $K_4^-$-reduction of a $(2,2)$-circuit to be {\em admissible} if the resulting graph is also a $(2,2)$-circuit.

\begin{lem}\label{lem:nix2}
    Let $G = (V,E)$ be a $(2,2)$-circuit containing a node $v$ with neighbourhood $\{x, y, z\}$. If $yz \notin E$ and $xz \in E$, then either:
    \begin{enumerate}[(i)]
        \item \label{lem:nix2.1} $xy \in E$, $\{y, z\}$ is not a vertex cut of $G$, and the $1$-reduction of $G$ at $v$ adding $yz$ is admissible,
        \item \label{lem:nix2.2} $xy \in E$, $\{y, z\}$ is a vertex cut of $G$,  and either the $1$-reduction of $G$ at $v$ adding $yz$ is admissible or the $K_4^-$-reduction of $G$ deleting $v$ and $x$ and adding $yz$ is admissible, or
        \item \label{lem:nix2.3} $xy \notin E$ and there is an admissible $1$-reduction of $G$ at $v$.
    \end{enumerate}
\end{lem}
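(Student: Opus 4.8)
The plan is to analyse the node $v$ via Lemma~\ref{lem:nix3} applied to the $1$-reduction adding $yz$. Since $yz \notin E$ by hypothesis, the non-admissibility of this reduction is equivalent to the existence of a critical set $X \subsetneq V$ with $y,z \in X$ and $v,x \notin X$. (If no such $X$ exists, then that $1$-reduction \emph{is} admissible, and we are in case~(\ref{lem:nix2.1}) if $xy\in E$ or case~(\ref{lem:nix2.3}) if $xy\notin E$, noting that when $\{y,z\}$ is not a vertex cut we are directly in~(\ref{lem:nix2.1}).) So we may assume such an $X$ exists and must produce the conclusions of~(\ref{lem:nix2.2}) or~(\ref{lem:nix2.3}).

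First I would record some forced consequences of the existence of $X$. Since $X$ is critical with $i_G(X)=2|X|-2$ and $G[X]$ is connected (Lemma~\ref{lem:connectivity}), and since the edge $xz\in E$ has $z\in X$, $x\notin X$, this contributes to the count $d_G(X, V\setminus X)$. A counting argument using $i_G(V\setminus X)$, together with the fact that $v\notin X$ has all three of its neighbours accounted for ($x\notin X$, $y,z\in X$), should pin down $d_G(X,V\setminus X)$ tightly; in particular I expect $\{y,z\}$ (or the relevant small cut) to behave like a vertex cut separating $X$ from the rest. Concretely, the edge $xy$ is the key dichotomy: if $xy\notin E$, I would aim to show directly using Lemma~\ref{lem:nix3} (now for one of the other two $1$-reductions at $v$, namely adding $xy$ or adding $xz$—but $xz\in E$ so only $xy$ is available) that the $1$-reduction adding $xy$ is admissible, giving~(\ref{lem:nix2.3}); the obstruction set for that reduction would be a critical set containing $x,y$ but avoiding $v,z$, and I would derive a contradiction with the existence of $X$ via Lemma~\ref{lem:union} (the two critical sets would be disjoint-ish but their interaction, using that $x\notin X$ and $z\notin$ the new set, forces a density violation somewhere, e.g.\ on $X\cup(\text{new set})$ or on the complement).

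The harder case is $xy\in E$. Here the $K_4^-$ on $\{v,x,y,z\}$ is present except for the edge $yz$: we have $vx,vy,vz,xy,xz\in E$ and $yz\notin E$, so $v$ and $x$ both have the pattern needed for a $K_4^-$-reduction deleting $v,x$ and adding $yz$ — provided $d_G(x)=3$. If $d_G(x)\geq 4$, I would argue instead that the $1$-reduction at $v$ adding $yz$ must in fact be admissible after all (using that $X$, if it exists, would have to contain $x$ by a density argument exploiting $d_G(x)\ge 4$, contradicting $x\notin X$), landing in case~(\ref{lem:nix2.1}) or~(\ref{lem:nix2.2}) according to whether $\{y,z\}$ is a vertex cut. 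If $d_G(x)=3$, then $x$ is itself a node and $\{v,x\}$ with common neighbourhood $\{y,z\}$, $yz\notin E$, is exactly the configuration for a $K_4^-$-reduction; I would verify it is admissible by checking the resulting graph $G'$ is a $(2,2)$-circuit: $|E(G')| = |E|-5+1 = 2(|V|-2)-1 = 2|V(G')|-1$ has the right count, and for properness one must rule out a subgraph of $G'$ with $\ge 2|\cdot|-1$ edges, which pulls back to a forbidden dense subgraph of $G$ unless it interacts with the deleted $K_4^-$ in a controlled way — and the only danger is a critical set of $G$ containing $\{y,z\}$ and avoiding $\{v,x\}$, i.e.\ precisely the set $X$. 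So when $X$ exists the $K_4^-$-reduction might fail, but then $\{y,z\}$ is a vertex cut (separating $X$ from the component of $v$) and I would show the $1$-reduction adding $yz$ is admissible by a fresh application of Lemma~\ref{lem:nix3}: any obstructing critical set $X'$ with $y,z\in X'$, $v,x\notin X'$ could be combined with $X$ via Lemma~\ref{lem:union}, and since both avoid the now-degree-3 vertex $x$ and contain $y,z$, the union $X\cup X'$ would be critical and still avoid $\{v,x\}$, iterating up to a maximal such set and eventually contradicting either $2$-connectivity (Lemma~\ref{lem:connectivity}) or the node-existence lemma (Lemma~\ref{lem:nix1}) applied to the complement. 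Thus in all sub-cases of $xy\in E$ we obtain~(\ref{lem:nix2.1}) or~(\ref{lem:nix2.2}), completing the proof.

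The main obstacle I anticipate is the bookkeeping in the $xy\in E$, $d_G(x)=3$ sub-case: one must carefully track how a critical obstruction set to the $K_4^-$-reduction in $G'$ relates to a critical obstruction set to the $1$-reduction in $G$, and show that at least one of the two reductions always survives. This is where the hypothesis "$\{y,z\}$ is a vertex cut" in~(\ref{lem:nix2.2}) is doing real work — it is not an extra assumption but a case-split produced by the argument — and getting the logic of "if the $K_4^-$-reduction fails then the $1$-reduction succeeds" exactly right, via the Lemma~\ref{lem:union} union trick on critical sets avoiding $\{v,x\}$, is the technical heart.
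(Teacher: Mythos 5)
Your overall strategy --- run Lemma \ref{lem:nix3} on the $1$-reduction at $v$ adding $yz$, and when an obstructing critical set $X$ (with $y,z\in X$, $v,x\notin X$) exists, exploit the edges from $v$ and $x$ into $X$ --- is the right one and matches the paper's in spirit. Your treatment of case (\ref{lem:nix2.3}) is fine: two obstruction sets $X_1\ni y,z$ (avoiding $v,x$) and $X_2\ni x,y$ (avoiding $v,z$) meet in $y$ and jointly avoid $v$, so Lemma \ref{lem:union} gives $d(X_1\setminus X_2,\,X_2\setminus X_1)=0$, contradicting $xz\in E$ with $z\in X_1\setminus X_2$ and $x\in X_2\setminus X_1$; this is a clean direct argument for a case the paper simply cites from \cite{Nix}. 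Your counting observation in the $xy\in E$ case is also correct and is exactly the paper's: $i_G(X\cup\{v,x\})\geq i_G(X)+5=2|X\cup\{v,x\}|-1$ forces $X\cup\{v,x\}=V$ and $N_G(x)=\{v,y,z\}$, so $d_G(x)=3$ is automatic (your $d_G(x)\geq 4$ sub-case is vacuous once $X$ exists) and $\{y,z\}$ is a vertex cut.

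The genuine gap is in your final sub-case. You write that ``when $X$ exists the $K_4^-$-reduction might fail, but then \dots\ the $1$-reduction adding $yz$ is admissible,'' and you describe your technical heart as proving ``if the $K_4^-$-reduction fails then the $1$-reduction succeeds.'' This implication is backwards and its conclusion is impossible: the set $X$ is, by Lemma \ref{lem:nix3}, precisely a witness that the $1$-reduction at $v$ adding $yz$ is \emph{not} admissible, so no amount of combining obstruction sets $X'$ with $X$ via Lemma \ref{lem:union} can rescue that reduction ($X'=X$ is itself a valid obstruction and yields no contradiction). What you need --- and what your own counting argument already delivers --- is the opposite implication: the existence of $X$ forces $X=V\setminus\{v,x\}$, so $X$ is the \emph{entire} vertex set of the reduced graph $G'$ and therefore cannot serve as a proper dense subgraph of $G'$; since any critical set of $G$ containing $y,z$ and avoiding $v,x$ must equal $V\setminus\{v,x\}$ by the same count, $G'$ has no obstruction at all and the $K_4^-$-reduction is admissible. (The paper reaches the same conclusion by forming the $1$-separations at the cut $\{y,z\}$ and invoking Lemma \ref{lem:circuit_dec3}, noting one side is $K_4$.) So the correct logic is ``if the $1$-reduction adding $yz$ fails then the $K_4^-$-reduction succeeds,'' and you should delete the proposed fallback entirely.
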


\begin{proof}
First suppose that $xy \in E$. We will prove (\ref{lem:nix2.1}) and (\ref{lem:nix2.2}) simultaneously.
If there does not exist a $v$-critical set $X$ containing $\{y, z\}$ in $G$,
then the $1$-reduction of $G$ at $v$ adding $yz$ is admissible by Lemma \ref{lem:nix3}.
So suppose there exists a $v$-critical set $X$ containing $\{y, z\}$ in $G$. The induced subgraph on the set $X \cup \{v,x\}$ has at least $2|X \cup \{v,x\}| - 1$ edges,
it follows that $V = X \cup \{v,x\}$ and $N_G(x) = \{ v,y,z\}$.
This implies $\{y, z\}$ is a vertex cut of $G$.
Define $H_1,H_2$ to be the induced subgraphs of $G$ on the vertex sets $N_G(v)+v$ and $V \setminus \{v, x\}$ respectively.
Let $(G_1, G_2)$ be the 1-separation of $G$ on $(H_1,H_2)$ and let $(G'_1, G'_2)$ be the 1-separation of $G$ on $(H_2,H_1)$. 
As $G_1 \cong K_4$, Lemma \ref{lem:circuit_dec3} implies that $G'_1$ is a $(2,2)$-circuit.
The $K_4^-$-reduction of $G$ that deletes $v$ and $x$ and adds $yz$ gives $G'_{1}$ and hence is admissible.

The final case, i.e. when $xy \notin E$, was proved in \cite{Nix}.
\end{proof}

In the next two sections we will show that the characterisation in Theorem \ref{thm:construction} essentially remains true for arbitrary 2-connected and redundantly rigid graphs in analytic normed planes. 
This is a non-trivial extension, since redundantly rigid graphs in analytic normed planes need not contain spanning $(2,2)$-circuits, and we will need some additional combinatorial concepts and results to prove it.
A simple example showing that redundantly rigid graphs in analytic normed planes may not contain spanning $(2,2)$-circuits is the complete bipartite graph $K_{3,6}$. Note that $K_{3,6}$ has twice as many edges as it has vertices and so is not a $(2,2)$-circuit. Since every vertex in the part of size 6 has degree 3 and every edge is incident to some such vertex, $K_{3,6}$ does not contain a spanning $(2,2)$-circuit. Nevertheless it is redundantly rigid in any analytic normed plane by Theorem \ref{thm:rigidne}.

\section{\texorpdfstring{$\mathcal{M}(2,2)$}{M(2,2)}-connected graphs} \label{sec:mcon}

Recall that a matroid $M=(E,r)$ with ground set $E$ and rank function $r$ is \emph{connected} if every pair $e,f \in E$ is contained in a common circuit.
Equivalently, we may define a relation on $E$ by saying that $e, f \in E$ are
related if $e = f$ or there is a circuit $C$ in $M$ with $e, f \in C$. It is well-known that this
is an equivalence relation. The equivalence classes are called the components of $M$. 
If $M$ has at least two elements, then $M$ is connected if and only if there is only one component of $M$.
A graph $G = (V,E)$ is {\em $\mathcal{M}(2,2)$-connected} if $G$ has no isolated vertices, $|E| \geq 2$ and every pair of elements of $E$ belongs to a common $(2,2)$-circuit in $G$.

It is easy to see that any $\mathcal{M}(2,2)$-connected graph contains a spanning $(2,2)$-tight subgraph and hence is rigid in any non-Euclidean normed plane (Theorem \ref{thm:rigide}). In fact more is true.

\begin{lem}\label{lem:Mcon}
    Let $(G,p)$ be a completely regular framework in a non-Euclidean normed plane $X$. 
    Then $G$ is $\mathcal{M}(2,2)$-connected if and only if $G$ is 2-connected and $(G,p)$ is redundantly rigid.
\end{lem}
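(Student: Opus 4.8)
The plan is to recast everything in terms of the matroid $\mathcal{M}(2,2)$, dispose of the easy implication, and then prove the hard one by induction using the join/separation operations of Section~\ref{sec:circuits}. Write $r$ for the rank function of $\mathcal{M}(2,2)$. Since $p$ is completely regular, $(G-e,p)$ is a regular framework for every $e\in E$, and hence is infinitesimally rigid if and only if $G-e$ is rigid in $X$, which by Theorem~\ref{thm:rigidne} holds if and only if $G-e$ contains a spanning $(2,2)$-tight subgraph, i.e.\ if and only if $r(E\setminus\{e\})=2|V|-2$. Consequently ``$(G,p)$ is redundantly rigid'' is equivalent to ``$\mathcal{M}(2,2)|_E$ has rank $2|V|-2$ and no $e\in E$ is a coloop'', while ``$G$ is $\mathcal{M}(2,2)$-connected'' is by definition ``$\mathcal{M}(2,2)|_E$ is a connected matroid'' (with $|E|\ge 2$ and $G$ without isolated vertices). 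So the task is to show that $\mathcal{M}(2,2)|_E$ is connected if and only if $G$ is $2$-connected and $\mathcal{M}(2,2)|_E$ is a coloop-free matroid of rank $2|V|-2$.

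For the easy direction, if $\mathcal{M}(2,2)|_E$ is connected then it has no coloops (a connected matroid on at least two elements has none), and since every $\mathcal{M}(2,2)$-connected graph contains a spanning $(2,2)$-tight subgraph we get $r(E)=2|V|-2$; hence $(G,p)$ is redundantly rigid. For $2$-connectivity, any two edges of $G$ lie in a common $(2,2)$-circuit $C$, which is $2$-connected by Lemma~\ref{lem:connectivity} and has at least five vertices, so the two edges lie on a common cycle of $C$, hence of $G$; as $G$ is connected with no isolated vertex, $G$ is $2$-connected.

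For the converse I would argue by induction on $|E|$. Suppose $G$ is $2$-connected and $\mathcal{M}(2,2)|_E$ is coloop-free of rank $2|V|-2$ yet disconnected, with $|E|$ minimal. Pick a matroid component $E_1\subsetneq E$, set $E_2=E\setminus E_1$ and $G_i=(V_i,E_i)$ with $V_i$ the set of endpoints of $E_i$, so $r(E_1)+r(E_2)=2|V|-2$. Since circuits of $\mathcal{M}(2,2)$ induce connected subgraphs (Lemma~\ref{lem:connectivity}), $G_1$ is connected and each $\mathcal{M}(2,2)|_{E_i}$ is again coloop-free, so $\delta(G_i)\ge 3$ and $|E_i|\ge r(E_i)+1$. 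Feeding these into the count bounds $r(E_i)\le 2|V_i|-2$ and the rank identity controls the overlap $W=V_1\cap V_2$ and forces $G$ to admit a non-trivial $2$-vertex-separation or a non-trivial $3$-edge-separation realising the matroid separation (coloop-freeness rules out the $K_4$-type degenerate sides and the low-order cases); completing it as in Section~\ref{sec:circuits} exhibits $G$ as a $1$-, $2$- or $3$-join of strictly smaller graphs $G_1',G_2'$, with $E_i$ carried onto the appropriate side. A further check — rebuilding $2$-connectivity of the pieces from that of $G$, and transporting coloop-freeness and full rank across the join — shows each $G_i'$ is $2$-connected and coloop-free of full rank, so by minimality each is $\mathcal{M}(2,2)$-connected. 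A gluing lemma, obtained from the circuit-decomposition results (Lemmas~\ref{lem:circuit_dec}--\ref{lem:circuit_dec3}) via circuit exchange, then shows that a $j$-join of $\mathcal{M}(2,2)$-connected graphs is $\mathcal{M}(2,2)$-connected, so $G$ is $\mathcal{M}(2,2)$-connected — a contradiction. In the base case, where $G$ has no non-trivial $2$-vertex-separation and no non-trivial $3$-edge-separation, the same count together with coloop-freeness leaves no room for a matroid $1$-separation, so $\mathcal{M}(2,2)|_E$ is connected directly.

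The main obstacle is exactly the hard direction: turning an abstract matroid $1$-separation of $\mathcal{M}(2,2)|_E$ into a genuine low-order vertex- or edge-separation of $G$, and then verifying that both pieces inherit $2$-connectivity \emph{and} the coloop-free, full-rank condition so that the induction closes. The two surplus units in the $(2,2)$-count (as opposed to the $(2,3)$-count underlying Euclidean rigidity) are precisely what stop a $2$-vertex-cut across a non-edge from inducing a matroid separation, so redundant rigidity has to be used with care to decide which separations can actually occur; and the gluing step requires a circuit-exchange argument routing a single $(2,2)$-circuit through a prescribed pair of edges lying on opposite sides of the join.
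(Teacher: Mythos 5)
Your opening reduction is exactly the paper's: complete regularity plus Theorem \ref{thm:rigidne} converts the statement into the purely combinatorial claim that $\mathcal{M}(2,2)|_E$ is connected if and only if $G$ is $2$-connected and $\mathcal{M}(2,2)|_E$ is coloop-free of rank $2|V|-2$, and your proof of the forward implication (connected $\Rightarrow$ $2$-connected and redundantly rigid) is complete and correct. The difference is that at this point the paper simply cites the combinatorial equivalence: it invokes \cite[Theorem 5.4]{NOP} and \cite[Theorem 5.4]{Nix}, the latter being precisely the statement you set out to reprove, whereas you attempt to prove it from scratch by induction.

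That attempt has a genuine gap in the hard direction. The pivotal step --- that ``the rank identity controls the overlap $W=V_1\cap V_2$ and forces $G$ to admit a non-trivial $2$-vertex-separation or a non-trivial $3$-edge-separation realising the matroid separation'' --- is asserted rather than argued, and it is where essentially all the work lies. The identity $r(E_1)+r(E_2)=2|V|-2$ together with $r(E_i)\le 2|V_i|-2$ yields only $|V_1\cap V_2|\ge 1$ (so $2$-connectivity rules out $|W|=1$); it does not bound the overlap from above, does not by itself produce a $3$-edge-cut, and does not exclude degenerate configurations such as $V_2\subseteq V_1$ or $|W|\ge 3$ with the slack of $2$ split between the two sides. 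Classifying which separations can actually occur, and checking that each piece of the resulting join inherits $2$-connectivity, coloop-freeness and full rank (delicate for the $1$-join, where only one of the two orderings of the separation behaves well --- compare Lemmas \ref{lem:circuit_dec3} and \ref{lem:connected_1-sep}), is the substance of \cite[Theorem 5.4]{Nix} and is not supplied here. Finally, the gluing lemma you need --- that a $j$-join of $\mathcal{M}(2,2)$-connected graphs is $\mathcal{M}(2,2)$-connected --- is Lemma \ref{lem:connected_i-join} of the paper, whose proof rests on the circuit-exchange argument of Lemma \ref{lem:CircuitOver2VerSep}; you correctly flag that such an argument is required but do not give it. As written, the converse direction is a plan rather than a proof.
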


\begin{proof}
    Since $p$ is a completely regular placement,
    $(G,p)$ is redundantly rigid if and only if $G-e$ is rigid in $X$ for every edge $e \in E$.
    When combined with Theorem \ref{thm:rigidne},
    it follows that $(G,p)$ is redundantly rigid if and only if $G$ contains a $(2,2)$-tight spanning subgraph and every edge is contained in a $(2,2)$-circuit.
    We now apply the following two equivalences:
    (i) $G$ is redundantly rigid on the cylinder if and only $G$ contains a $(2,2)$-tight spanning subgraph and every edge is contained in a $(2,2)$-circuit (see \cite[Theorem 5.4]{NOP}),
    and (ii) $G$ is redundantly rigid on the cylinder and 2-connected if and only if $G$ is $\mathcal{M}(2,2)$-connected (see \cite[Theorem 5.4]{Nix}).
\end{proof}

\subsection{Graph operations preserving \texorpdfstring{$\mathcal{M}(2,2)$}{M(2,2)}-connectivity}
We begin with two standard graph operations; 1-extensions and edge additions.

\begin{lem}\label{lem:add}
Let $G'=(V',E')$ be an $\mathcal{M}(2,2)$-connected graph and suppose that $G'$ is obtained from $G=(V,E)$ by a 1-extension or an edge addition. Then $G'$ is $\mathcal{M}(2,2)$-connected.
\end{lem}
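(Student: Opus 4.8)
The plan is to argue that both operations---edge addition and $1$-extension---map $\M(2,2)$-circuits to $\M(2,2)$-circuits in a way that preserves the property that every pair of edges lies in a common circuit. I would split into the two cases. First consider the edge addition: suppose $G' = G + e_0$ with $e_0 = uv$, and $G$ is $\M(2,2)$-connected. Since $G$ is rigid in a non-Euclidean normed plane (it contains a $(2,2)$-tight spanning subgraph), adding the edge $e_0$ creates a dependency, so $e_0$ lies in a $(2,2)$-circuit $C_0 \subseteq G'$. For any edge $f \in E$, pick a $(2,2)$-circuit $C \subseteq G$ with $f, e_1 \in C$ for some edge $e_1 \in C_0 \cap E$ (such $e_1$ exists because $|C_0 \cap E| = |E(C_0)| - 1 \geq 1$, and by $\M(2,2)$-connectivity of $G$ one can connect $f$ to this edge). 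Then $f$ and $e_1$ lie in a common circuit $C$, and $e_1$ and $e_0$ lie in a common circuit $C_0$; since the ``lies in a common circuit'' relation is an equivalence relation on the edges of the matroid $\M(2,2)$ restricted to $E(G')$ (a standard matroid fact recalled at the start of Section~\ref{sec:mcon}), transitivity gives that $f$ and $e_0$ lie in a common circuit. As $f$ was arbitrary in $E$, and $e_0$ is handled, every pair of edges of $G'$ is related, so $G'$ is $\M(2,2)$-connected.

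For the $1$-extension: suppose $G'$ is obtained from $G$ by deleting $xy \in E$, adding a vertex $w$ and edges $wx, wy, wz$. The key observations are: (a) by Lemma~\ref{lem:1-ex(2,2)-circ}, applying a $1$-extension (at the same vertices/edge) to any $(2,2)$-circuit of $G$ containing $xy$ produces a $(2,2)$-circuit of $G'$; (b) the three new edges $wx, wy, wz$ all lie together in a common $(2,2)$-circuit of $G'$ --- indeed take any $(2,2)$-circuit $C$ of $G$ containing $xy$ (one exists since $G$ is $\M(2,2)$-connected so $xy$ is in some circuit) together with a path from $x$ to $z$; more cleanly, the $1$-extension of a circuit $C \ni xy$ contains all of $wx, wy, wz$ by construction. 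Now for any edge $f$ of $G$ other than $xy$, choose a $(2,2)$-circuit $C$ of $G$ containing both $f$ and $xy$ (using $\M(2,2)$-connectivity of $G$); its $1$-extension is a $(2,2)$-circuit of $G'$ containing $f$ and all of $wx, wy, wz$. Thus every surviving old edge is related to each new edge, and by transitivity of the equivalence relation every pair of edges of $G'$ is related. Hence $G'$ is $\M(2,2)$-connected.

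I would also need to dispense with the trivial degenerate cases ($|E(G')| \geq 2$ is automatic since $G'$ is obtained by an operation that only adds edges/vertices to a graph with $|E| \geq 1$; no isolated vertices since the new vertex $w$ in a $1$-extension has degree $3$ and edge addition creates none), and note $G$ is $\M(2,2)$-connected is not actually assumed --- rereading the statement, \emph{$G'$} is given $\M(2,2)$-connected and we want... no: the statement gives $G'$ obtained from $G$ and asserts $G'$ is $\M(2,2)$-connected, so in fact the hypothesis must be that $G$ is $\M(2,2)$-connected and the conclusion is about $G'$; I would read it that way, matching the pattern of the surrounding results. The main obstacle, and the only place requiring genuine care, is verifying claim (b) for the $1$-extension: that the three new edges can be placed in a single $(2,2)$-circuit, which I would establish by exhibiting the $1$-extension of a concrete circuit of $G$ through $xy$ (such a circuit exists because $xy$ is not a bridge of any spanning $(2,2)$-tight subgraph --- equivalently $xy$ lies in a $(2,2)$-circuit by $\M(2,2)$-connectivity of $G$) and invoking Lemma~\ref{lem:1-ex(2,2)-circ}; all remaining steps are routine applications of the transitivity of the matroid-connectivity equivalence relation.
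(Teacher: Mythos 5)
Your edge-addition case is correct and essentially identical to the paper's argument (and you are right that the hypothesis must be read as ``$G$ is $\mathcal{M}(2,2)$-connected''). The problem is in the $1$-extension case, at exactly the step you single out as claim (b), and again at the main step. You take an arbitrary $(2,2)$-circuit $C$ of $G$ containing $xy$ (and $f$) and assert that ``its $1$-extension'' is a $(2,2)$-circuit of $G'$ containing $f$, $wx$, $wy$ and $wz$. But Lemma~\ref{lem:1-ex(2,2)-circ} only applies when the operation performed on $G[C]$ really is a $1$-extension \emph{of $G[C]$}, which requires the third neighbour $z$ of the new vertex to lie in $V(C)$. Nothing forces $z\in V(C)$: $z$ is an arbitrary vertex of $V\setminus\{x,y\}$, and $\mathcal{M}(2,2)$-connectivity lets you prescribe two edges of a circuit, not additionally a third vertex. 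If $z\notin V(C)$, the graph $G'[(C-xy)\cup\{wx,wy,wz\}]$ has $|C|+2=2|V(C)|+1=2(|V(C)|+2)-3$ edges and a degree-one vertex $z$, so it is not a $(2,2)$-circuit, and your construction yields nothing.

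The paper closes this hole by anchoring the transitivity argument differently: since $\delta(G)\geq 3$, it fixes one edge $f$ of $G$ incident to $u_3=z$, so that \emph{every} circuit of $G$ containing $f$ automatically contains $z$, and then relates every other edge of $G'$ to this particular $f$. With that choice, whenever the circuit through $f$ and $f'$ happens to use the deleted edge, the $1$-extension of its induced subgraph is a genuine $1$-extension and Lemma~\ref{lem:1-ex(2,2)-circ} applies; when it does not, the circuit survives unchanged in $G'$. Your argument can be repaired along the same lines, but as written the key step fails for any edge $f$ such that every circuit of $G$ through $f$ and $xy$ avoids $z$.
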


\begin{proof}
First suppose that $G'$ is obtained from $G$ by adding a new edge $e$. By Theorem \ref{thm:rigidne} and Lemma \ref{lem:Mcon}, $G$ is spanned by a $(2,2)$-tight subgraph. Hence there exists a $(2,2)$-circuit $C$ in $G'$ with $e\in C$. The $\mathcal{M}(2,2)$-connectivity of $G'$ now follows from the transitivity of the relation that defines $\mathcal{M}(2,2)$-connectivity.

Next suppose that $G'$ is obtained from $G$ by a 1-extension operation that deletes an edge $e$ and adds a new vertex $v$ with three incident edges $e_1, e_2, e_3$. For $1 \leq i \leq 3$, let $e_i = vu_i$ and suppose that $e = u_{1}u_{2}$. 
Since $G$ is $\mathcal{M}(2,2)$-connected, $\delta(G)\geq  3$ and hence $d_{G}(u_3) \geq 3$. Thus we can choose $f \in E \cap E'$ such that $f$ is incident to $u_3$. 
Now, take $f' \in E'-f$. If $f' \in E$ then there exists a $(2,2)$-circuit $C$ in $G$ containing $f$ and $f'$. If $e$ is not contained in $C$ then $C$ is also in $G'$, so there exists a $(2,2)$-circuit $C' = C$ in $G'$ containing $f$ and $f'$. If $e \in C$ then, by Lemma \ref{lem:1-ex(2,2)-circ}, there exists a $(2,2)$-circuit $C'$ in $G'$ containing $f$ and $f'$. So, for all $f' \in (E \cap E')-f$ there exists a $(2,2)$-circuit $C'$ in $G'$ containing $f$ and $f'$. 
If $f' \notin E$ then $f' \in \{e_1, e_2, e_3\}$. As there exists a $(2,2)$-circuit $C$ in $G$ containing $e$ and $f$, Lemma \ref{lem:1-ex(2,2)-circ} implies there exists a $(2,2)$-circuit $C'$ in $G'$ containing $f, e_1, e_2$, and $e_3$. Therefore, for all $f' \in E'-f$ there exists a $(2,2)$-circuit $C'$ in $G'$ containing $f$ and $f'$. Hence, by the transitivity of the relation that defines $\mathcal{M}(2,2)$-connectivity, $G'$ is $\mathcal{M}(2,2)$-connected.
\end{proof}

We next extend Lemmas \ref{lem:circuit_dec}, \ref{lem:circuit_dec2} and \ref{lem:circuit_dec3} to $\mathcal{M}(2,2)$-connected graphs.
To prove these extensions, we use two technical results.
For a graph $G=(V,E)$ and a non-empty subset $F \subseteq E$,
define $V[F] = \{ v \in V : vw \in F \}$ and $G[F] = (V[F],F)$. 

\begin{lem} \label{lem:CircuitOver2VerSep}
Let $G=(V,E)$ be a graph with subgraphs $H_1=(U_1,F_1)$ and $H_2=(U_2,F_2)$.
Suppose that $(H_1,H_2)$ is a 2-vertex-separation of $G$ and $F_1 \cap F_2 = \{f\}$.
If $G$ is $\mathcal{M}(2,2)$-connected then 
for all $e_{1} \in F_1 \setminus F_2$ and all $e_{2} \in F_2 \setminus F_1$, there exists a $(2,2)$-circuit $C \subseteq E$ containing $\{f,e_{1}, e_{2}\}$.
\end{lem}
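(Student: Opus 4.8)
Write $U_1\cap U_2=\{a,b\}$. Any edge lying in both $H_1$ and $H_2$ has both endpoints in $U_1\cap U_2$, so $f=ab$; also $E=F_1\cup F_2$. Fix $e_1\in F_1\setminus\{f\}$ and $e_2\in F_2\setminus\{f\}$, and use $\mathcal{M}(2,2)$-connectivity to pick a $(2,2)$-circuit $C$ with $e_1,e_2\in E(C)$. If $f\in E(C)$ then $C$ already contains $\{f,e_1,e_2\}$ and we are done, so assume $f\notin E(C)$. Since $e_1\in F_1\setminus F_2$ has an endpoint in $U_1\setminus U_2$ and $e_2\in F_2\setminus F_1$ has an endpoint in $U_2\setminus U_1$, and $\{a,b\}$ separates these endpoints in $G$ while $C$ is $2$-connected (Lemma~\ref{lem:connectivity}), two internally disjoint paths in $C$ between them force $a,b\in V(C)$.

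The plan is to produce the required circuit in the form $(C-g)+f$ for a well-chosen edge $g\in E(C)\setminus\{e_1,e_2\}$. As $C$ is $2$-connected, $C-g$ still spans $V(C)$, so $(C-g)+f$ has $|V(C)|$ vertices and $2|V(C)|-1$ edges; hence it is a $(2,2)$-circuit exactly when all of its proper subgraphs are $(2,2)$-sparse. A proper subgraph avoiding $f$ lies in the $(2,2)$-sparse graph $C$, so it is sparse; a proper subgraph containing $f$ has the form $E''+f$ with $E''\subsetneq E(C-g)$, and it fails to be $(2,2)$-sparse precisely when $E''$ is $(2,2)$-tight with $a,b\in V(E'')$, in which case $V(E'')$ is a critical set of $C$, properly contained in $V(C)$, containing $\{a,b\}$, and inducing in $C$ a subgraph that omits $g$; conversely each such critical set yields such a violating subgraph. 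Therefore $(C-g)+f$ is a $(2,2)$-circuit if and only if $g\in E(C[X])$ for every critical set $X\subsetneq V(C)$ of $C$ with $\{a,b\}\subseteq X$.

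It remains to exhibit such a $g$. If no critical set $X\subsetneq V(C)$ contains $\{a,b\}$, then any $g\in E(C)\setminus\{e_1,e_2\}$ works, and one exists because $|E(C)|=2|V(C)|-1\geq 9$. Otherwise choose a critical set $X_0\subsetneq V(C)$ with $\{a,b\}\subseteq X_0$ of minimum cardinality; noting that $\{a,b\}$ is not critical (as $i_C(\{a,b\})=0\neq 2$), we get $|X_0|\geq 3$, so $i_C(X_0)=2|X_0|-2\geq 4$ and $E(C[X_0])$ contains an edge $g\notin\{e_1,e_2\}$. For any critical $X\subsetneq V(C)$ with $\{a,b\}\subseteq X$ we aim for $X_0\subseteq X$ (then $g\in E(C[X_0])\subseteq E(C[X])$): since $\{a,b\}\subseteq X_0\cap X$, Lemma~\ref{lem:union} shows $X_0\cap X$ is critical whenever $|X_0\cup X|\leq |V(C)|-1$, and minimality of $X_0$ then forces $X_0\cap X=X_0$. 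The remaining case $X_0\cup X=V(C)$, where Lemma~\ref{lem:union} does not apply, is the main obstacle: I expect to handle it by unwinding the identity $i_C(X_0\cup X)=i_C(X_0)+i_C(X)-i_C(X_0\cap X)+d(X_0\setminus X,X\setminus X_0)$ together with $i_C(V(C))=2|V(C)|-1$ and the $2$-connectivity of $C$ (Lemma~\ref{lem:connectivity}) to bound $d(X_0\setminus X,X\setminus X_0)$, and then arguing directly that a suitable $g$ (lying inside every critical set through $\{a,b\}$) still exists. Granting this, $g\in E(C[X])$ for every relevant $X$, so $(C-g)+f$ is a $(2,2)$-circuit, and it contains $f$, $e_1$ and $e_2$ since $g\neq e_1,e_2$; this completes the proof. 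The steps preceding the critical-set analysis are routine counting.
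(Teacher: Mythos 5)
Your reduction is sound up to the point where you must actually exhibit the edge $g$: the opening observations, the fact that $a,b\in V(C)$, and the characterisation that $(C-g)+f$ is a $(2,2)$-circuit if and only if $g\in E(C[X])$ for every critical set $X\subsetneq V(C)$ of $C$ with $a,b\in X$, are all correct. But the case you defer at the end is not a technicality that the counting identity will absorb; it is where the argument breaks. The minimal critical set containing $\{a,b\}$ need not be unique, and two minimal ones need not intersect in a critical set. Take $C=B_1$: label its two degree-$5$ vertices $3,4$, the degree-$3$ vertices of one $K_4$ as $1,2$ and of the other as $5,6$, and put $a=1$, $b=5$ (so $f=15\notin E(C)$). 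The critical sets of $C$ properly contained in $V(C)$ and containing $\{1,5\}$ are exactly $X_1=\{1,2,3,4,5\}$ and $X_2=\{1,3,4,5,6\}$; both are minimal, neither contains the other, $X_1\cup X_2=V(C)$, and $X_1\cap X_2=\{1,3,4,5\}$ satisfies $i_C(X_1\cap X_2)=5=2|X_1\cap X_2|-3$, i.e.\ it is not critical --- this is precisely the $d=0$ case of your identity, and minimality then gives you nothing. With $X_0=X_1$ the conclusion $X_0\subseteq X$ fails for $X=X_2$, and an arbitrary $g\in E(C[X_0])$, say $g=12$, does not work: $(C-12)+15$ properly contains the circuit $E(C[X_2])+15$ and so is not a circuit. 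Any valid $g$ must lie in $E(C[X_1])\cap E(C[X_2])=E(C[X_1\cap X_2])$, and you give no argument that the analogous common intersection over \emph{all} critical sets through $\{a,b\}$ retains an edge outside $\{e_1,e_2\}$ in general.

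There is also a structural reason to be wary of the route: you insist on a circuit of the special spanning form $(C-g)+f$, which is strictly stronger than what the lemma asks for. The paper avoids this entirely. It passes to $G'=G[C+f]$, which is $\mathcal{M}(2,2)$-connected by Lemma \ref{lem:add}, and argues purely matroidally: if no circuit of $C+f$ contains all of $f,e_1,e_2$, then the circuit $C_1$ through $\{f,e_1\}$ is the unique circuit of $C+f$ containing $f$ but not $e_2$ (and symmetrically for $C_2$), a short count shows $C_1\cap C_2$ contains an edge $e\neq f$, and one application of circuit exchange at $e$ to $C$ and $C_1$ produces a circuit containing $f$ which, by the two uniqueness statements, must contain both $e_1$ and $e_2$. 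That circuit need not span $V(C)$. To salvage your approach you would have to prove that $\bigcap_X E(C[X])$, the intersection taken over all critical $X\subsetneq V(C)$ with $a,b\in X$, always contains an edge outside $\{e_1,e_2\}$; this is a genuinely new claim that would need its own proof, so as written the argument has a real gap.
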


\begin{proof}
Fix the edges $e_{1} \in F_1 \setminus F_2$, $e_{2} \in F_2 \setminus F_1$,
and choose a $(2,2)$-circuit $C$ such that $e_{1}, e_{2} \in C$.
As $G[C]$ is $2$-connected by Lemma \ref{lem:connectivity}, $U_1 \cap U_2 \subseteq V[C]$.
If $f \in C$ then we are done, so suppose that $f \notin C$.
Let $G' = G[C +f]$. 
Lemma \ref{lem:add} implies $G'$ is $\mathcal{M}(2,2)$-connected, so for each $i \in \{1, 2\}$ we fix a $(2,2)$-circuit $C_{i} \subseteq C + f$ such that $\{f, e_{i}\} \subseteq C_{i}$. If $e_1 \in C_2$ or $e_2 \in C_1$ then we are done, so we may suppose that $e_1 \notin C_2$ and $e_2 \notin C_1$. 

Take a $(2,2)$-circuit $C'_{1} \subseteq C + f$ such that $f \in C'_{1}$.
If $C_1'\neq C_1$ then the circuit exchange axiom implies there exists a $(2,2)$-circuit $D \subseteq (C_1 \cup C'_1)-f$. 
If $e_{2} \notin C'_{1}$ then $D \subseteq (C_1 \cup C'_1)-f \subsetneq C$, contradicting the fact that no proper subset of a circuit is a circuit. 
Hence $e_2\in C_1'$. 
Since $C_1'$ was arbitrary, $C_{1}$ is the unique circuit in $C + f$ containing $f$ but not $e_{2}$. 
Similarly $C_2$ is the unique circuit in $C + f$ containing $f$ but not $e_{1}$.

As $f \in C_{1} \cap C_{2}$ and $C_{1} \neq C_{2}$, there exists a $(2,2)$-circuit $C'$ such that $C' \subseteq (C_{1} \cup C_{2}) - f \subseteq C$. Hence $C' = (C_{1} \cup C_{2}) - f = C$ and $|C_{1} \cup C_{2}| = |C|+1 = 2|V[C]|$. Now, if $C_{1} \cap C_{2} = \{f\}$ then
\begin{align*}
2|V[C]| &= |C_{1} \cup C_{2}| \\
&= |C_{1}|+|C_{2}|-|C_{1} \cap C_{2}| \\
&= (2|V[C_1]|-1)+(2|V[C_2]|-1)-1 \\
&= 2(|V[C_1]|+|V[C_2]|+|V[C]|-|V[C]|)-3 \\
&= 2|V[C]|+2(|V[C_1]|+|V[C_2]|-|V[C]|)-3,
\end{align*}
a contradiction.
Hence there exists an edge $e \in (C_{1} \cap C_{2}) - f$.
Note that, as both $C_1 - f$ and $C_2 - f$ are contained in $C$, 
we have $e \in C$.

The circuit exchange axiom implies there exists a $(2,2)$-circuit $C'' \subseteq C + f$ such that $C'' \subseteq (C \cup C_{1})- e$.
As $e \notin C''$ we have $C'' \notin \{C, C_{1}, C_{2}\}$, and as $C'' \neq C$ and no proper subset of a circuit is a circuit we have $f \in C''$.
By the uniqueness of $C_{1}$ and $C_{2}$ it follows that $f, e_{1}, e_{2} \in C''$. 
Hence $C''$ is a $(2,2)$-circuit contained in $E$ such that $\{f,e_{1}, e_{2}\} \subseteq C''$.
\end{proof}

\begin{lem} \label{lem:circK_4}
Let $G = (V,E)$ be a graph, and suppose $H \cong K_{4}$ is a subgraph of $G$ containing exactly two vertices of degree 3 in $G$.
If $G$ is $\mathcal{M}(2,2)$-connected,
then for all $e \in E$ there exists a $(2,2)$-circuit $C \subseteq E$ such that $E(H) \cup \{e\} \subseteq C$.
\end{lem}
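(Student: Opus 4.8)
The plan is to reduce the statement to the preceding Lemma~\ref{lem:CircuitOver2VerSep} together with a short degree-forcing observation. First I would fix notation: write $V(H) = \{a,b,c,d\}$ with $c,d$ the two vertices of $H$ of degree $3$ in $G$, so that (since $H\cong K_4$) the vertices $c$ and $d$ are adjacent, $N_G(c) = \{a,b,d\}$ and $N_G(d) = \{a,b,c\}$; and since an $\mathcal{M}(2,2)$-connected graph has minimum degree at least $3$ (each vertex lies in some $(2,2)$-circuit) while $H$ has only $c,d$ as degree-$3$ vertices, both $d_G(a)$ and $d_G(b)$ are at least $4$. I would also note that $G\neq H$, since no subgraph of $K_4$ is a $(2,2)$-circuit and hence $K_4$ is not $\mathcal{M}(2,2)$-connected. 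The point of all this is that, with $H_1 := G[\{a,b,c,d\}] = H$ and $H_2 := G[V\setminus\{c,d\}]$, the pair $(H_1,H_2)$ is a $2$-vertex-separation of $G$ with $F_1\cap F_2 = \{ab\}$: the two sides meet exactly in $\{a,b\}$; $V\setminus\{a,b,c,d\}\neq\emptyset$; every edge of $G$ lies in one of the two sides, because an edge meeting $V\setminus\{a,b,c,d\}$ cannot be incident to $c$ or $d$; and $F_1 = E(H)$, so $F_1\cap F_2 = \{ab\}$. In particular $E\setminus E(H) = F_2\setminus F_1$, a set which is nonempty since $a$ has a neighbour outside $H$.

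The key step is the observation that any $(2,2)$-circuit $C$ of $G$ with $ab\in C$ that also contains at least one edge incident to $c$ must satisfy $E(H)\subseteq C$. This is because $c\in V[C]$, and a $(2,2)$-circuit is $3$-edge-connected (Lemma~\ref{lem:connectivity}), so $3\le d_{G[C]}(c)\le d_G(c)=3$ and hence every edge of $G$ at $c$ — in particular $cd$ — lies in $C$; then $d\in V[C]$ and the same count forces every edge of $G$ at $d$ into $C$, which together with $ab\in C$ gives all six edges of $E(H)$.

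Granting these, I would finish as follows. Given $e\in E\setminus E(H) = F_2\setminus F_1$, apply Lemma~\ref{lem:CircuitOver2VerSep} to $(H_1,H_2)$ with $f = ab$, $e_1 = ac\in F_1\setminus F_2$ and $e_2 = e$ to obtain a $(2,2)$-circuit $C\subseteq E$ with $\{ab,ac,e\}\subseteq C$; since $ac$ is incident to $c$, the observation gives $E(H)\cup\{e\}\subseteq C$. Given $e\in E(H)$, run the same argument with any fixed $e'\in F_2\setminus F_1$ in place of $e$ to obtain a $(2,2)$-circuit $C\supseteq E(H)$, whence $E(H)\cup\{e\} = E(H)\subseteq C$. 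I do not anticipate a genuine obstacle here; the only steps needing care are checking that $(H_1,H_2)$ literally satisfies the definition of a $2$-vertex-separation with $F_1\cap F_2$ a single edge, and making sure the edge $e_1$ fed into Lemma~\ref{lem:CircuitOver2VerSep} is chosen incident to $c$ (or $d$) so that the degree-forcing step applies.
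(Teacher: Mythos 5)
Your proposal is correct and is essentially the paper's own argument: the paper proves this lemma in one line by invoking Lemma~\ref{lem:CircuitOver2VerSep} (applied to the 2-vertex-separation over $\{a,b\}$ with shared edge $ab$) together with the fact that $(2,2)$-circuits have minimum degree $3$, which forces all edges at the two degree-$3$ vertices, and hence all of $E(H)$, into the circuit. You have simply written out the details that the paper leaves implicit, and all of them check out.
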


\begin{proof}
Since any $(2,2)$-circuit has minimum degree 3, this follows from Lemma \ref{lem:CircuitOver2VerSep}.
\end{proof}

\begin{lem}\label{lem:connected_i-join}
Let $G_{1}=(V_1,E_1)$ and $G_{2}=(V_2,E_2)$ be graphs and suppose that $G= (V,E)$ is the $j$-join of $(G_{1}, G_{2})$ for some $j \in \{1, 2, 3\}$. 
If $G_{1}$ and $G_{2}$ are $\mathcal{M}(2,2)$-connected then $G$ is $\mathcal{M}(2,2)$-connected.
\end{lem}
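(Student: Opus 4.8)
The plan is to verify the three defining conditions of $\mathcal{M}(2,2)$-connectivity for $G$. That $G$ has no isolated vertices and that $|E(G)|\geq 2$ are routine: each $G_i$ is $\mathcal{M}(2,2)$-connected, so has minimum degree at least $3$ and contains a $(2,2)$-circuit (hence at least nine edges), and a glance at the three join operations shows that no surviving vertex loses all of its edges. The substance is therefore to show that every pair of edges of $G$ lies in a common $(2,2)$-circuit of $G$. Since ``lying in a common circuit'' is an equivalence relation on $E(G)$ and $\mathcal{M}(2,2)$-connectivity of $G$ is exactly the statement that there is a single equivalence class, it suffices to prove: (a) any two edges of $G$ inherited from $G_1$ are related in $G$; (b) the same for $G_2$; and (c) some edge inherited from $G_1$ is related in $G$ to some edge inherited from $G_2$. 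For the $2$- and $3$-joins the newly created edges ($ab$, resp.\ $a_1a_2,b_1b_2,c_1c_2$) must also be related to the rest, and this will be exhibited by the circuit produced in (c).

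For (a), let $e,e'$ be edges of $G$ coming from $E(G_1)$, and use $\mathcal{M}(2,2)$-connectivity of $G_1$ to pick a $(2,2)$-circuit $C_1\subseteq E(G_1)$ with $e,e'\in C_1$. Write $A_1$ for the ``interface'' subgraph of the join on the $G_1$-side: the edge $ab$ for a $1$-join, the $K_4$ on $\{a,b,c_1,d_1\}$ for a $2$-join, and the star at $v_1$ for a $3$-join; recall that in $G_1$ the vertices $c_1,d_1$, resp.\ $v_1$, have degree exactly $3$. If $C_1$ contains none of $c_1,d_1,v_1$ (as appropriate) then $C_1\subseteq E(G)$ — for the $2$-join one uses that the shared edge $ab$ survives into $G$ — and, since being a $(2,2)$-circuit is an intrinsic property of an edge set, $C_1$ is a $(2,2)$-circuit of $G$ through $e,e'$ and we are done. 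Otherwise $C_1$ meets one of these degree-$3$ vertices: for a $1$-join the only possibility is $ab\in C_1$, and for a $2$- or $3$-join, since $(2,2)$-circuits have minimum degree at least $3$, $C_1$ contains all three edges at that vertex, which (iterating over $c_1$ and $d_1$ for a $2$-join) forces $E(A_1)\subseteq C_1$. Granting the existence of a companion $(2,2)$-circuit $C_2\subseteq E(G_2)$ with $E(A_2)\subseteq C_2$, the pair $C_1,C_2$ meets the hypotheses of the corresponding join operation — they contain $A_1,A_2$; the degree-$3$ conditions on $c_i,d_i,v_i$ are inherited; and $V(C_1)\cap V(C_2)=V_1\cap V_2$ because $V(C_i)\subseteq V_i$ and $C_1,C_2$ contain the vertices of $A_1,A_2$ — so by Lemma \ref{lem:circuit_dec} their $j$-join is a $(2,2)$-circuit. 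Its edge set lies in $E(G)$ and contains $E(C_1)$ with at most the interface edges removed, hence contains $e$ and $e'$. Part (b) is symmetric.

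It remains to produce the companion circuit $C_2\subseteq E(G_2)$ with $E(A_2)\subseteq C_2$, which is the only non-trivial point. For a $3$-join it is immediate: $v_2$ has degree $3$ in $G_2$, so any $(2,2)$-circuit of $G_2$ through an edge at $v_2$ — such a circuit exists since every edge of an $\mathcal{M}(2,2)$-connected graph lies in a $(2,2)$-circuit — contains all three edges at $v_2$. For a $1$- or $2$-join, $A_2$ is the $K_4$ on $\{a,b,c_2,d_2\}$ (with $c_2=c$, $d_2=d$ in the $1$-join), whose vertices $c_2,d_2$ have degree $3$ in $G_2$; hence $G_2$ admits the $2$-vertex-separation $\big(G_2-\{c_2,d_2\},\,A_2\big)$ sharing exactly the edge $ab$, and $G_2$ has a vertex outside $\{a,b,c_2,d_2\}$ because $G_2$ contains a $(2,2)$-circuit and so has at least five vertices. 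Applying Lemma \ref{lem:CircuitOver2VerSep} to $G_2$ with this separation and shared edge $f=ab$ gives a $(2,2)$-circuit $C_2\subseteq E(G_2)$ containing $ab$ together with a chosen edge incident to $c_2$; as in the previous paragraph the degree-$3$ constraints at $c_2$ and $d_2$ then force $E(A_2)\subseteq C_2$. The same argument on the $G_1$-side produces a $(2,2)$-circuit of $G_1$ containing $A_1$.

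Finally, for (c), take $(2,2)$-circuits $C_1\subseteq E(G_1)$ and $C_2\subseteq E(G_2)$ containing $A_1$ and $A_2$ respectively (for a $1$-join, $C_1$ is any $(2,2)$-circuit of $G_1$ through $ab$). By Lemma \ref{lem:circuit_dec} their $j$-join $C$ is a $(2,2)$-circuit contained in $E(G)$; since a $(2,2)$-circuit has at least nine edges while an interface has at most six, $C$ retains an edge of $E(C_1)$ inherited from $G_1$ and an edge of $E(C_2)$ inherited from $G_2$, and it also contains the newly created edges of the join. Together with (a), (b) and the transitivity of the relation, this shows $E(G)$ is a single $\mathcal{M}(2,2)$-component, so $G$ is $\mathcal{M}(2,2)$-connected. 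I expect the main obstacle to be the companion-circuit step of the third paragraph, namely recognising the $K_4$ interface of each $G_i$ as a $2$-vertex-separation with a shared edge so that Lemma \ref{lem:CircuitOver2VerSep} applies; the rest is bookkeeping with the join definitions and Lemma \ref{lem:circuit_dec}.
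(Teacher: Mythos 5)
Your overall strategy (transitivity of the circuit relation, gluing circuits of $G_1$ and $G_2$ via Lemma \ref{lem:circuit_dec}, and manufacturing circuits through the interface $K_4$ via Lemma \ref{lem:CircuitOver2VerSep} — in effect re-deriving Lemma \ref{lem:circK_4}) is the same as the paper's, and your parts (c) and the companion-circuit construction are fine. However, there is a genuine gap in step (a)/(b) for the $2$-join. You claim that an arbitrary $(2,2)$-circuit $C_1\subseteq E_1$ through $e,e'$ either avoids $c_1,d_1$ or satisfies $E(A_1)\subseteq C_1$. The minimum-degree argument only forces the five edges $ac_1,bc_1,ad_1,bd_1,c_1d_1$ into $C_1$; it does \emph{not} force $ab\in C_1$, and without $ab$ the pair $(G_1[C_1],G_2[C_2])$ does not satisfy the hypotheses of the $2$-join, so Lemma \ref{lem:circuit_dec} cannot be applied. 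This failure is not hypothetical: let $G_1$ be $K_5^-$ on $\{a,b,x,y,z\}$ (missing $xy$) together with a $K_4$ on $\{a,b,c_1,d_1\}$ glued along $ab$. Then $G_1$ is $\mathcal{M}(2,2)$-connected, $d_{G_1}(c_1)=d_{G_1}(d_1)=3$, and $E(G_1)-ab$ is a $(2,2)$-circuit (it has $13=2\cdot 7-1$ edges and one checks all proper subgraphs are $(2,2)$-sparse) which contains $c_1,d_1$ and all five of their incident edges but not $ab$. If your chosen $C_1$ through $e=ax$, $e'=yz$ happens to be this circuit, your case analysis has no branch that applies: $C_1\not\subseteq E(G)$ (it contains $ac_1\in F_1$, which is deleted by the join) and $E(A_1)\not\subseteq C_1$.

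The repair is exactly the paper's bookkeeping: rather than starting from an arbitrary circuit through $e,e'$ and hoping it interacts well with the interface, fix the surviving interface edge $ab$ as the reference edge and relate every $f\in E_1\setminus F_1$ to it directly, using Lemma \ref{lem:circK_4} (equivalently, your Lemma \ref{lem:CircuitOver2VerSep} argument) to produce a circuit $C_1\subseteq E_1$ with $F_1+f\subseteq C_1$; this circuit contains $ab$ by construction, so the $2$-join with a companion $C_2\supseteq F_2$ is always legal. Your $1$-join and $3$-join cases do not suffer from this problem (there the interface on the relevant side is a single edge or a degree-$3$ star, where the dichotomy genuinely holds), so the gap is confined to the $2$-join.
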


\begin{proof}
By the transitivity of the equivalence relation that defines $\mathcal{M}(2,2)$-connectivity, it suffices to show that there exists $e \in E$ such that for all $f \in E-e$ there exists a $(2,2)$-circuit $C \subseteq E$ such that $e, f \in C$.
We consider each of the three types of $j$-join in turn.

Firstly suppose $G$ is the 1-join of $(G_{1}, G_{2})$.
Let $a,b$ be the unique vertices shared by $G_1,G_2$ and let $c,d$ be the vertices of $G_2$ deleted by the 1-join.
Fix $H_2 = (U_2,F_2)$ to be the complete graph with $U_2 = \{a,b,c,d\}$,
and fix an edge $e \in E \cap E_1$.
Choose any edge $f \in E-e$.
Now, either $f \in E_1$ or $f \in E_2$.
If $f \in E_1$ then there exists a $(2,2)$-circuit $C_1 \subseteq E_{1}$ such that $e,f \in C_1$.
If $ab \notin C_1$ then we are done.
If $ab \in C_1$ then Lemma \ref{lem:circK_4} implies there exists a $(2,2)$-circuit $C_{2} \subseteq E_2$ such that $F_2 \subseteq C_2$.
The 1-join of $(G_{1}[C_{1}],G_{2}[C_{2}])$,
denoted by $G^*=(V^*,E^*)$, is a $(2,2)$-circuit by Lemma \ref{lem:circuit_dec}.
Moreover, $G^{*}$ is a subgraph of $G$ and $e, f \in E^*$.
If $f \in E_2$ then there exists a $(2,2)$-circuit $C_{3} \subseteq E_{1}$ such that $e,ab \in C_3$.
Lemma \ref{lem:circK_4} implies there exists a $(2,2)$-circuit $C_{4} \subseteq E_2$ such that $F_{2} +f \subseteq C_{4}$. 
The 1-join of $(G_{1}[C_{3}],G_{2}[C_{4}])$,
denoted by $G'=(V',E')$, is a $(2,2)$-circuit by Lemma \ref{lem:circuit_dec}.
Moreover, $G'$ is a subgraph of $G$ and $e, f \in E'$.

Next suppose $G$ is the 2-join of $(G_{1}, G_{2})$.
Let $a,b$ be the unique vertices shared by $G_1,G_2$, and for each $i \in \{1,2\}$ let $c_i,d_i$ be the vertices of $G_i$ deleted by the 2-join.
Fix $H_1 = (U_1,F_1)$ to be the complete graph with $U_1 =\{a,b,c_1,d_1\}$ and $H_2 = (U_2,F_2)$ to be the complete graph with $U_2 = \{a,b,c_2,d_2\}$.
Let $e = ab$ and choose any edge $f \in E$.
By relabelling if necessary, we may assume $f \in E_1$.
As $G_{1}$ and $G_{2}$ are $\mathcal{M}(2,2)$-connected, Lemma \ref{lem:circK_4} implies that there exist $(2,2)$-circuits $C_1 \subseteq E_1$ and $C_2 \subseteq E_2$ such that $F_1 +f \subseteq C_1$ and $F_2 \subsetneq C_2$.
The 2-join of $(G_{1}[C_{1}],G_{2}[C_{2}])$,
denoted by $G^*=(V^*,E^*)$, is a $(2,2)$-circuit by Lemma \ref{lem:circuit_dec}.
Moreover, $G^{*}$ is a subgraph of $G$ and $e, f \in E^*$.

Finally, suppose $G$ is the 3-join of $(G_{1}, G_{2})$.
For each $i \in \{1,2\}$ let $v_i$ be the node in $G_i$ that is deleted by the 3-join operation, and let $a_i,b_i,c_i$ be the neighbours of $v_i$ such that $a_1a_2,b_1b_2,c_1c_2$ are the edges added.
Let $e = a_{1}a_{2}$ and for $i \in \{1, 2\}$ take any $f_i \in E \cap E_i$.
As $G_{1}$ and $G_{2}$ are $\mathcal{M}(2,2)$-connected, for $i \in \{1, 2\}$
there exists a $(2,2)$-circuit $C_i \subseteq E_i$ such that $a_iv_i, f_i \in C_i$.
Since $\delta(G_i[C_i]) = 3$ for $i \in \{1, 2\}$, we have that $F:=\{a_iv_i,b_iv_i,c_iv_i\} \subseteq C_{i}$ for each $i \in \{1, 2\}$.
The 3-join of $(G_{1}[C_{1}],G_{2}[C_{2}])$,
denoted by $G^*$, is a $(2,2)$-circuit by Lemma \ref{lem:circuit_dec}. 
Moreover, $G^{*}$ is a subgraph of $G$ and $F \cup \{f_1, f_2\} \subseteq E^*$. As $E = (E \cap E_1) \cup (E \cap E_2) \cup F$ we have shown that for all $f \in E-e$ there exists a $(2,2)$-circuit $C \subseteq E$ such that $e, f \in C$.
\end{proof}

\begin{lem}\label{lem:connected_2/3-sep}
Let $(G_{1}, G_{2})$ be a $j$-separation of a graph $G = (V,E)$ for some $j \in \{2, 3\}$.
If $G$ is $\mathcal{M}(2,2)$-connected then $G_{1}=(V_1,E_1)$ and $G_{2}=(V_2,E_2)$ are $\mathcal{M}(2,2)$-connected.
\end{lem}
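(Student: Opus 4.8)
The plan is to mirror the proof of Lemma~\ref{lem:connected_i-join}, using the transitivity of the equivalence relation defining $\mathcal{M}(2,2)$-connectivity. By symmetry it suffices to show that $G_1$ is $\mathcal{M}(2,2)$-connected. First, since $G_1$ contains the gadget attached in the $j$-separation---a copy of $K[\{a,b,c_1,d_1\}]$ when $j=2$, or the three edges $v_1a_1,v_1b_1,v_1c_1$ when $j=3$---it automatically has at least two edges and no isolated vertices, so it is enough to fix one edge $e^*$ of this gadget and prove that every other edge $f$ of $G_1$ lies in a common $(2,2)$-circuit of $G_1$ with $e^*$. Second, the mechanism for producing such circuits is uniform: given a $(2,2)$-circuit $C$ of the ambient $\mathcal{M}(2,2)$-connected graph $G$ that passes through a chosen edge $e_1$ of $H_1$ and a chosen edge $e_2$ of $H_2$, I will argue that the two subgraphs of $C$ induced on $V[C]\cap V(H_1)$ and $V[C]\cap V(H_2)$ form a $j$-separation of $C$; then Lemma~\ref{lem:circuit_dec2} makes the $H_1$-side half $C_1$ a $(2,2)$-circuit, and after identifying its new vertices with $c_1,d_1$ (resp.\ $v_1$) the graph $C_1$ is a subgraph of $G_1$ containing every gadget edge together with $e_1$. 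Taking $e_1=f$ when $f\in E(H_1)$ and $e_1$ (and $e_2$) arbitrary otherwise then puts $e^*$ and $f$ in the common circuit $C_1$, and transitivity finishes the proof.

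The remaining work is to justify that the two halves of $C$ really do form a $j$-separation, and here the two cases diverge. When $j=3$, a $(2,2)$-circuit $C$ that meets both $V(H_1)$ and $V(H_2)$ is $3$-edge-connected (Lemma~\ref{lem:connectivity}), so the edge cut of $C$ between its two sides has at least three edges and hence equals $\{a_1a_2,b_1b_2,c_1c_2\}$; since this cut is independent, the two halves constitute a non-trivial $3$-edge-separation, exactly as required to form a $3$-separation. When $j=2$, however, a circuit through edges of both sides need not use the separating edge $ab$, so I cannot split $C$ directly. This is the one genuine obstacle, and it is resolved by invoking Lemma~\ref{lem:CircuitOver2VerSep}, which for any $e_1\in E(H_1)\setminus\{ab\}$ and $e_2\in E(H_2)\setminus\{ab\}$ yields a $(2,2)$-circuit $C$ of $G$ with $\{ab,e_1,e_2\}\subseteq C$; now $2$-connectivity of $C$ (Lemma~\ref{lem:connectivity}) forces the two halves of $C$ to share exactly the vertices $a,b$ and the edge $ab$, so they form a $2$-vertex-separation of $C$ suitable for a $2$-separation, and Lemma~\ref{lem:circuit_dec2} applies.

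Beyond this, only routine bookkeeping remains: that $E(H_i)\setminus\{ab\}$ when $j=2$ (resp.\ $E(H_i)$ when $j=3$) is non-empty, which holds because each side of the separation contains a vertex of $G$-degree at least $3$ not among $\{a,b\}$ (the minimum degree of an $\mathcal{M}(2,2)$-connected graph is $3$); that the vertices $c_1,d_1$ (resp.\ $v_1$) created in passing from $H_1$ to $G_1$ lie outside $V(G)\supseteq V[C]$, so may legitimately play the role of the new vertices of the $j$-separation of $C$; and the small case check on $f$ (an edge of $H_1$, the edge $ab$, or another gadget edge), all handled since $C_1$ contains every gadget edge. The same argument applied to the $H_2$ side gives $\mathcal{M}(2,2)$-connectivity of $G_2$.
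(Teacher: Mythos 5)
Your proposal is correct and follows essentially the same route as the paper: reduce to circuits via Lemma \ref{lem:CircuitOver2VerSep} (to force the circuit through $ab$ in the $j=2$ case) and 3-edge-connectivity from Lemma \ref{lem:connectivity} (to force the circuit through the whole edge cut in the $j=3$ case), then split the circuit with a $j$-separation and apply Lemma \ref{lem:circuit_dec2} before finishing by transitivity. The only cosmetic difference is that the paper fixes the hub edge to be $ab$ (resp.\ $a_iv_i$) rather than an arbitrary gadget edge.
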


\begin{proof}
First suppose $(G_{1}, G_{2})$ is a 2-separation of $G$ with respect to the 2-vertex-separation $(H_{1}, H_{2})$,
where $H_i = (U_i,F_i)$ for each $i \in \{1,2\}$.
By definition, for each $i \in \{1,2\}$ we have that $G_{i} = H_{i} \cup K[\{a, b, c_{i}, d_{i}\}]$ for some vertices $c_{i}, d_{i} \notin V$.
For each $i \in \{1, 2\}$ choose an edge $f_i \in F_i-ab$. As $G$ is $\mathcal{M}(2,2)$-connected, Lemma \ref{lem:CircuitOver2VerSep} implies there exists a $(2,2)$-circuit $C \subseteq E$ such that $ab, f_1, f_2 \in C$. For $i \in \{1, 2\}$ let $H'_i = H_i[C \cap F_i]$.
Then $(H'_1, H'_2)$ is a 2-vertex-separation of $G[C]$ and $ab \in E(H'_1) \cap E(H'_2)$.
Let $(G'_1, G'_2)$ be the 2-separation of $G[C]$ with respect to $(H'_1, H'_2)$. 
Then $E(K[\{a, b, c_i, d_i\}]) +f_i \subseteq E(G'_i)$ for each $i \in \{1,2\}$. Lemma \ref{lem:circuit_dec2} implies that $G'_i$ is a $(2, 2)$-circuit for each $i \in \{1, 2\}$.
Hence the following holds for each $i \in \{1,2\}$: for any edge $e_i \in E_i-ab$,
there exists a $(2,2)$-circuit $C_i \subseteq E_i$ such that $e_i, ab \in C_i$. By the transitivity of the equivalence relation that define $\mathcal{M}(2,2)$-connectivity it follows that both $G_1$ and $G_2$ are $\mathcal{M}(2,2)$-connected.

Now suppose $(G_{1}, G_{2})$ is a 3-separation of $G$ with respect to the non-trivial 3-edge-separation $(H_{1}, H_{2})$,
where $H_i = (U_i,F_i)$ for each $i \in \{1,2\}$.
By definition, for each $i \in \{1,2\}$ we have that $G_{i} = H_{i} \cup K[\{v_i\},\{a_i, b_i, c_{i}\}]$ for some vertices $a_i,b_i, c_{i} \in V$, $v_i \notin V$.
For each $i \in \{1, 2\}$ choose an edge $f_i \in F_i$. As $G$ is $\mathcal{M}(2,2)$-connected, there exists a $(2,2)$-circuit $C \subseteq E$ such that $f_1, f_2 \in C$. By Lemma \ref{lem:connectivity} we have that $a_1a_2, b_1b_2, c_1c_2 \in C$. For each $i \in \{1, 2\}$ let $H'_i = H_i[C \cap F_i]$.
Then $(H'_1, H'_2)$ is a non-trivial 3-edge-separation of $G[C]$ with corresponding edge cut $\{a_1a_2, b_1b_2, c_1c_2\}$.
Let $(G'_1, G'_2)$ be the 3-separation of $G[C]$ with respect to $(H'_1, H'_2)$.
Then $\{f_i, a_iv_i, b_iv_i, c_iv_i\} \subset E(G'_i)$ for each $i \in \{1,2\}$. 
Lemma \ref{lem:circuit_dec2} implies that $G'_i$ is a $(2, 2)$-circuit for each $i \in \{1, 2\}$.
Hence the following holds for each $i \in \{1,2\}$: for any edge $e_i \in E_i-a_iv_i$,
there exists a $(2,2)$-circuit $C_i \subseteq E_i$ such that $e_i, a_iv_i \in C_i$.
By the transitivity of the equivalence relation that defines $\mathcal{M}(2,2)$-connectivity it follows that $G_1$ and $G_2$ are $\mathcal{M}(2,2)$-connected.
\end{proof}

\begin{lem}\label{lem:connected_1-sep}
Let $G=(V,E)$ be a graph with a 2-vertex-separation $(H_{1}, H_{2})$. 
Suppose that $(G_{1}, G_{2})$ is a 1-separation of $G$ with respect to $(H_{1}, H_{2})$, and $(G'_2, G'_1)$ is a 1-separation of $G$ with respect to $(H_{2}, H_{1})$. 
If $G$ is $\mathcal{M}(2,2)$-connected then
\begin{enumerate}[(i)]
\item \label{lem:conn_1-sep.1} $G'_1$ and $G_2$ are $\mathcal{M}(2,2)$-connected, and
\item \label{lem:conn_1-sep.2} $G_1$ is $\mathcal{M}(2,2)$-connected or $G'_2$ is $\mathcal{M}(2,2)$-connected. 
\end{enumerate}
\end{lem}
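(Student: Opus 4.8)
The plan is to analyse one crossing $(2,2)$-circuit at a time: I will show that any $(2,2)$-circuit $C$ of $G$ meeting both $E(H_1)$ and $E(H_2)$ restricts to a $2$-vertex-separation of the $(2,2)$-circuit $G[C]$ sharing no edge, so that Lemma \ref{lem:circuit_dec3} applies and produces a clean dichotomy. First record the basic facts: $F_1\cap F_2=\emptyset$ forces $ab\notin E$ and $E=E(H_1)\sqcup E(H_2)$; since $\mathcal{M}(2,2)$-connected graphs have no isolated vertices and $V_2\setminus V_1\neq\emptyset$, $H_2$ contains an edge (symmetrically $H_1$), and by $\mathcal{M}(2,2)$-connectivity every edge of $H_1$ (resp.\ $H_2$) lies in a common $(2,2)$-circuit with some edge of the other side, hence in a \emph{crossing} circuit. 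The key observation is: if $C$ is a crossing $(2,2)$-circuit then $\{a,b\}\subseteq V[C]$ — otherwise a single vertex of $\{a,b\}$ (or no such vertex) would be a cut vertex of the $2$-connected graph $G[C]$ (Lemma \ref{lem:connectivity}) — and then $H_1^C:=G[C][V[C]\cap V_1]$, $H_2^C:=G[C][V[C]\cap V_2]$ form a $2$-vertex-separation of $G[C]$ with $E(H_i^C)=C\cap E(H_i)$, no shared edge, and both exclusive parts nonempty (an $E(H_i)$-edge of $C$ cannot equal $ab$). Applying Lemma \ref{lem:circuit_dec3} to $G[C]$, exactly one of the following holds: \textbf{type A}: $(C\cap E(H_1))+ab$ and $(C\cap E(H_2))\cup K[\{a,b,c,d\}]$ are $(2,2)$-circuits; \textbf{type B}: $(C\cap E(H_1))\cup K[\{a,b,c,d\}]$ and $(C\cap E(H_2))+ab$ are $(2,2)$-circuits. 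Note $(C\cap E(H_1))+ab$ and $(C\cap E(H_1))\cup K[\{a,b,c,d\}]$ are subgraphs of $G_1$ and of $G'_1$ respectively, and symmetrically on the other side.

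For part (\ref{lem:conn_1-sep.1}) I would show $E(G'_1)=E(H_1)\sqcup E(K[\{a,b,c,d\}])$ is a single $\mathcal{M}(2,2)$-equivalence class (it has no isolated vertex and at least two edges, so this yields $\mathcal{M}(2,2)$-connectivity). For $f_1\in E(H_1)$ choose a crossing circuit $C$ through $f_1$: whichever type $C$ is, one of $(C\cap E(H_1))+ab$, $(C\cap E(H_1))\cup K[\{a,b,c,d\}]$ is a $(2,2)$-circuit in $G'_1$ containing $f_1$ and $ab$. For the edges of $K[\{a,b,c,d\}]$: if some crossing circuit $C$ is type B then $(C\cap E(H_1))\cup K[\{a,b,c,d\}]$ is a $(2,2)$-circuit in $G'_1$ containing all six $K[\{a,b,c,d\}]$-edges and an $E(H_1)$-edge; if every crossing circuit is type A then for a fixed such $C$ the set $C\cap E(H_1)$ is $(2,2)$-tight, a short count shows $(C\cap E(H_1))\cup(K[\{a,b,c,d\}]-ab)$ is a $(2,2)$-circuit in $G'_1$ containing $ac,ad,bc,bd,cd$ and an $E(H_1)$-edge, while $(C\cap E(H_1))+ab$ is a $(2,2)$-circuit containing $ab$ and the same $E(H_1)$-edge. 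Either way all of $E(K[\{a,b,c,d\}])$ joins the class of $ab$, so $G'_1$ is $\mathcal{M}(2,2)$-connected; $G_2$ follows by interchanging $H_1\leftrightarrow H_2$ and type A $\leftrightarrow$ type B.

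For part (\ref{lem:conn_1-sep.2}) I would argue: \textbf{Claim 1:} if every edge of $E(H_1)$ lies in a type-A crossing circuit then $G_1$ is $\mathcal{M}(2,2)$-connected, since each such circuit gives a $(2,2)$-circuit $(C\cap E(H_1))+ab\subseteq E(G_1)$ through the given edge and $ab$, making $E(G_1)$ one class. \textbf{Claim 2 (symmetric):} if every edge of $E(H_2)$ lies in a type-B crossing circuit then $G'_2$ is $\mathcal{M}(2,2)$-connected. \textbf{Claim 3:} at least one of these hypotheses holds — otherwise there exist $f_1\in E(H_1)$ in no type-A crossing circuit and $f_2\in E(H_2)$ in no type-B crossing circuit, yet $\mathcal{M}(2,2)$-connectivity of $G$ gives a $(2,2)$-circuit $C^*$ containing both; as $f_1\in E(H_1)$, $f_2\in E(H_2)$ and these sets are disjoint, $C^*$ is crossing, hence type A or type B, contradicting the choice of $f_1$ or of $f_2$ respectively (using the mutual exclusivity of the two types). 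Claims 1--3 give (\ref{lem:conn_1-sep.2}).

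I expect the only delicate points to be routine: checking the $2$-vertex-separation hypotheses carefully enough that Lemma \ref{lem:circuit_dec3} genuinely applies to $G[C]$, and the self-contained sparsity count verifying that $(C\cap E(H_1))\cup(K[\{a,b,c,d\}]-ab)$ is a $(2,2)$-circuit in the ``all crossing circuits are type A'' subcase of (\ref{lem:conn_1-sep.1}). The decisive idea — and the point on which the ``or'' in (\ref{lem:conn_1-sep.2}) hinges — is the mutual exclusivity of types A and B exploited in Claim 3.
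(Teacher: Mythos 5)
Your argument is correct; part (ii) is essentially the paper's proof, while part (i) takes a genuinely different (more hands-on) route. For (i) the paper simply observes that $ab\notin E$, adds $ab$ (Lemma \ref{lem:add}), notes that the 2-separation of $G+ab$ with respect to $(H_1+ab,H_2+ab)$ is exactly the pair $(G'_1,G_2)$, and quotes Lemma \ref{lem:connected_2/3-sep}; that lemma in turn rests on the circuit-exchange argument of Lemma \ref{lem:CircuitOver2VerSep}, which manufactures one circuit through $ab$, $e_1$ and $e_2$ simultaneously. You instead classify each crossing circuit via Lemma \ref{lem:circuit_dec3} and link everything to $ab$ one circuit at a time; this bypasses Lemma \ref{lem:CircuitOver2VerSep} entirely and is self-contained modulo Lemmas \ref{lem:circuit_dec} and \ref{lem:circuit_dec3}, at the cost of the extra bookkeeping for the six edges of $K[\{a,b,c,d\}]$. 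One presentational caveat there: in the ``all type A'' subcase, the edge count alone does not certify that $(C\cap E(H_1))\cup(K[\{a,b,c,d\}]-ab)$ is a $(2,2)$-circuit (one must also rule out overfull proper subgraphs); the clean justification is that this graph is the 1-join (equivalently, the $K_4^-$-extension on $ab$) of the type-A circuit $(C\cap E(H_1))+ab$ with $B_1$, so Lemma \ref{lem:circuit_dec} applies directly. Your Claims 1--3 for part (ii) --- extracting a bad edge on each side, taking a circuit $C^*$ through both, and invoking the exclusive-or in Lemma \ref{lem:circuit_dec3} --- are exactly the paper's argument in different notation, and your preparatory checks (both $a$ and $b$ lie on every crossing circuit by 2-connectivity of $G[C]$ from Lemma \ref{lem:connectivity}, and the induced separation of $G[C]$ is a genuine 2-vertex-separation with disjoint edge sets because $ab\notin E$) all go through.
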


\begin{proof}
Let $H_i=(U_i,F_i)$ for each $i\in \{1,2\}$,
and let $a,b$ be the unique vertices contained in both $U_1$ and $U_2$.
By the definition of 1-separation, $ab \notin E$.
By Lemma \ref{lem:add}, $G + ab$ is $\mathcal{M}(2,2)$-connected. 
Now we may take the 2-separation of $G$ with respect to the 2-vertex-separation $(H_{1}+ab, H_{2}+ab)$, which gives the ordered pair $(G'_1, G_{2})$. Theorem \ref{lem:connected_2/3-sep} gives us that $G'_1$ and $G_{2}$ are $\mathcal{M}(2,2)$-connected. 

Now let us suppose, in pursuit of a contradiction, that neither $G_{1}$ nor $G'_2$ are $\mathcal{M}(2,2)$-connected.
By the transitivity of the equivalence relation that defines $\mathcal{M}(2,2)$-connectivity, there exists $e_{1} \in E_1 \cap E$ such that for each $(2,2)$-circuit $C \subseteq E_1$, if $ab \in C$ then $e_{1} \notin C$. Similarly, there exists $e_2 \in E'_2 \cap E$ such that for all $(2,2)$-circuits $C \subseteq E'_2$, if $ab \in C$ then $e_2 \notin C$. 
As $G$ is $\mathcal{M}(2,2)$-connected, there exists a $(2,2)$-circuit $C^* \subseteq E$ such that $e_{1}, e_2 \in C^*$. 
As $e_{1} \in E_1 - ab$ and $e_2 \in E'_2 - ab$, 
it follows that $e_{1} \in F_1 \cap C^*$ and $e_2 \in F_2 \cap C^*$.
Fix $G^*= G[C^*]$.
As $G^*$ is $2$-connected (Lemma \ref{lem:connectivity}) and contains vertices in both $U_1\sm\{a,b\}$ and $U_2\sm\{a,b\}$, we have that $\{a,b\}$ is a vertex cut of $G^*$ and $(H_{1} \cap G^*, H_{2} \cap G^*)$ is the corresponding 2-vertex-separation.
Hence we may take the 1-separations of $G^*$ with respect to both $(H_{1} \cap G^*, H_{2} \cap G^*)$ and $(H_{2} \cap G^*, H_{1} \cap G^*)$, and we denote the resulting ordered pairs of graphs by $(G^*_1, G^*_2)$ and $(G^{*\prime}_2, G^{*\prime}_1)$ respectively.

By our choice of $G^*$, we have that $G^*_1$ is a subgraph of $G_1$ containing $e_{1}, ab$ and $G^{*\prime}_2$ is a subgraph of $G_2'$ containing $e_2, ab$.
However, Lemma \ref{lem:circuit_dec3} implies that one of $G^*_1$, $G^{*\prime}_2$ must be a $(2,2)$-circuit which gives a contradiction.
Hence $G_{1}$ is $\mathcal{M}(2,2)$-connected or $G'_2$ is $\mathcal{M}(2,2)$-connected and we are done.
\end{proof}

We next consider the effect of generalised vertex splits and $K_4^-$-extensions on $\mathcal{M}(2,2)$-connected graphs.
If $G'$ is obtained from a $\mathcal{M}(2,2)$-connected graph $G$ by a generalised vertex split, then $G'$ may not be $\mathcal{M}(2,2)$-connected. (This can fail in many ways, for instance the new graph could have a vertex of degree less than 3.)
Later when analysing the reduction step of our recursive construction, we will need to take care to only apply edge-reductions such that the starting graph is the result of a generalised vertex split on the reduced graph that does preserve $\mathcal{M}(2,2)$-connectivity.
However, we next deduce from applying Lemmas \ref{lem:connected_i-join} and \ref{lem:connected_1-sep} that this complication does not arise for $K_4^-$-extensions and $K_4^-$-reductions. 

\begin{lem}\label{l:k4-moves}
    Let $G=(V,E)$ be a $\mathcal{M}(2,2)$-connected graph.
    Then any $K_4^-$-extension of $G$ is also $\mathcal{M}(2,2)$-connected. Conversely, any $K_4^-$-reduction of $G$ that adds an edge $e \notin E$ is $\mathcal{M}(2,2)$-connected.
\end{lem}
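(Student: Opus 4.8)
The plan is to realise the $K_4^-$-extension as a $1$-join and the $K_4^-$-reduction as a $1$-separation, and then quote Lemmas~\ref{lem:connected_i-join} and~\ref{lem:connected_1-sep}. The one extra ingredient needed is the trivial observation that $K_4$ is not $\mathcal{M}(2,2)$-connected, since the smallest $(2,2)$-circuit $K_5^-$ already has five vertices and so $K_4$ contains no $(2,2)$-circuit at all.

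First I would treat the extension. Suppose $G'$ arises from $G$ by deleting an edge $v_1v_2$ and adding new vertices $u_1,u_2$ with new edges $v_1u_1,v_1u_2,v_2u_1,v_2u_2,u_1u_2$. Pick two further new vertices $c,d$ and let $G_2$ be the graph on $\{v_1,v_2,u_1,u_2,c,d\}$ with edge set $E(K[\{v_1,v_2,c,d\}])\cup\{v_1u_1,v_1u_2,v_2u_1,v_2u_2,u_1u_2\}$; this is two copies of $K_4$ glued along the edge $v_1v_2$, i.e.\ the $(2,2)$-circuit $B_1$ of Figure~\ref{fig:smallgraphs1}, and it satisfies $d_{G_2}(c)=d_{G_2}(d)=3$. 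Taking $A_1=K[\{v_1,v_2\}]$ inside $G$ and $A_2=K[\{v_1,v_2,c,d\}]$ inside $G_2$, one checks directly from the definition that the $1$-join of $(G,G_2)$ has vertex set $V\cup\{u_1,u_2\}$ and edge set $(E\setminus\{v_1v_2\})\cup\{v_1u_1,v_1u_2,v_2u_1,v_2u_2,u_1u_2\}$, so it is precisely $G'$. As $G$ is $\mathcal{M}(2,2)$-connected and the $(2,2)$-circuit $G_2$ is $\mathcal{M}(2,2)$-connected, Lemma~\ref{lem:connected_i-join} shows $G'$ is $\mathcal{M}(2,2)$-connected.

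For the converse, let $G'$ be obtained from $G$ by a $K_4^-$-reduction deleting adjacent degree-$3$ vertices $u_1,u_2$ with $N_G(u_1)\cap N_G(u_2)=\{v_1,v_2\}$ and adding the edge $v_1v_2\notin E$. Then $N_G(u_1)=\{u_2,v_1,v_2\}$ and $N_G(u_2)=\{u_1,v_1,v_2\}$, so $G$ is exactly the $K_4^-$-extension of $G'$ at the edge $v_1v_2$. Put $H_1:=G[V\setminus\{u_1,u_2\}]$, which equals $G'-v_1v_2$, and $H_2:=G[\{v_1,v_2,u_1,u_2\}]$, which is a copy of $K_4^-$ in which the edge $v_1v_2$ is missing. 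Since $G$ contains a $(2,2)$-circuit it has at least five vertices, so $(H_1,H_2)$ is a $2$-vertex-separation of $G$ with $E(H_1)\cap E(H_2)=\emptyset$; the $1$-separation of $G$ with respect to $(H_1,H_2)$ has first component $H_1\cup K[\{v_1,v_2\}]=G'$, while the $1$-separation of $G$ with respect to $(H_2,H_1)$ has first component $H_2\cup K[\{v_1,v_2\}]\cong K_4$. Since $G$ is $\mathcal{M}(2,2)$-connected, the second conclusion of Lemma~\ref{lem:connected_1-sep} gives that $G'$ or $K_4$ is $\mathcal{M}(2,2)$-connected; as $K_4$ is not, we conclude $G'$ is $\mathcal{M}(2,2)$-connected.

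The only point requiring care is the combinatorial bookkeeping that identifies the $K_4^-$-extension with this particular $1$-join: one must track which of the four ``small'' vertices of $G_2$ are kept ($u_1,u_2$) versus deleted ($c,d$), verify the degree hypotheses $d_{G_2}(c)=d_{G_2}(d)=3$ needed to invoke the $1$-join definition, and confirm that $G_2$ really is the circuit $B_1$ rather than a denser graph. Everything else is a direct citation of Lemmas~\ref{lem:connected_i-join} and~\ref{lem:connected_1-sep} together with the fact that $K_4$ is not $\mathcal{M}(2,2)$-connected, so I do not expect any genuinely new difficulty.
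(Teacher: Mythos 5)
Your proposal is correct and follows the paper's own argument exactly: the extension is realised as a $1$-join of $G$ with $B_1$ (Lemma~\ref{lem:connected_i-join}), and the reduction is handled via the two $1$-separations of $G$ across the cut $\{v_1,v_2\}$, one of whose components is $K_4$ and hence not $\mathcal{M}(2,2)$-connected, so Lemma~\ref{lem:connected_1-sep}(ii) forces $G'$ to be $\mathcal{M}(2,2)$-connected. The only difference is that you spell out the bookkeeping (identifying $G_2\cong B_1$, checking the degree conditions and the nonemptiness of $V\setminus\{u_1,u_2,v_1,v_2\}$) that the paper leaves implicit.
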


\begin{proof}
    A $K_4^-$-extension is a special case of Lemma \ref{lem:connected_i-join} with $G_1 = G$ and $G_2 \cong B_1$.
    Suppose that $G'$ is formed from $G$ by the $K_4^-$-reduction that adds the edge $e$.
    The possible 1-separations of $G$ are either $(G',H_1)$ or $(H_2,G+e)$, 
    where $H_1 \cong B_1$ and $H_2 \cong K_4$.
    Since $K_4$ is not $\mathcal{M}(2,2)$-connected,
    we have that $G'$ is $\mathcal{M}(2,2)$-connected by Lemma \ref{lem:connected_1-sep}.
\end{proof}

\subsection{Ear decompositions}

Given a non-empty sequence of circuits $C_1,\dots,C_t$ in a matroid $M=(E,r)$, we define the sets $D_i=C_1\cup \dots \cup C_i$ and $\tilde C_i=C_i \sm D_{i-1}$ for each $1\leq i \leq m$.
The sequence $C_1,\dots,C_m$ is a \emph{partial ear decomposition} of $M$ if, for all $2\leq i \leq m$,
\begin{enumerate}
\item[(E1)] $C_i\cap D_{i-1}\neq \emptyset$,
\item[(E2)] $C_i \sm D_{i-1} \neq \emptyset$, and
\item[(E3)] no circuit $C_i'$ satisfying (E1) and (E2) has $C_i'\sm D_{i-1}\subsetneq C_i \sm D_{i-1}$. 
\end{enumerate}
A partial ear decomposition $C_1,\dots,C_t$ is an \emph{ear decomposition} of $M$ if $D_t=E$.
The following standard result \cite{C&H} shows the close relationship between matroid connectivity and ear decompositions.

\begin{lem}\label{lem:ear}
Let $M=(E,r)$ be a matroid with $|E|\geq 2$. Then:
\begin{enumerate}[(i)]
\item \label{lem:ear.1} $M$ is connected if and only if it has an ear decompostion.
\item \label{lem:ear.2} If $M$ is connected then every partial ear decomposition is extendable to an ear decomposition of $M$.
\item \label{lem:ear.3} If $C_1,C_2,\dots,C_t$ is an ear decomposition of $M$ then $r(D_i)-r(D_{i-1})=|\tilde C_i|-1$ for all $2\leq i \leq t$.
\end{enumerate}
\end{lem}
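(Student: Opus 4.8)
The plan is to prove (ii) first, derive (i) from it, and treat (iii) separately. For (ii): given a connected $M$ and a partial ear decomposition $C_1,\dots,C_t$ with $D_t\neq E$, I would pick $e\in E\sm D_t$ and $f\in D_t$ (the latter is possible since circuits are nonempty, so $D_t\neq\emptyset$), use connectedness of $M$ to obtain a circuit through $\{e,f\}$, and then among all circuits $C$ with $C\cap D_t\neq\emptyset$ and $C\sm D_t\neq\emptyset$ choose $C_{t+1}$ minimising $|C\sm D_t|$. Minimality is precisely condition (E3), so $C_1,\dots,C_{t+1}$ is again a partial ear decomposition, and since $|D_{t+1}|>|D_t|$ and $E$ is finite, iterating reaches $D=E$. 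The forward direction of (i) then follows at once: a connected $M$ with $|E|\geq 2$ has every element lying in a circuit, so a single circuit is a partial ear decomposition, which (ii) extends to an ear decomposition.

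For the converse direction of (i), I would show by induction on $i$ that any two elements of $D_i$ lie in a common circuit of $M$. The base case $D_1=C_1$ is immediate. For the step, write $D_i=D_{i-1}\cup C_i$ and fix $z\in C_i\cap D_{i-1}$, which is nonempty by (E1); then for $x\in D_{i-1}$ and $y\in C_i$, the pair $\{x,z\}$ is covered by a circuit by the inductive hypothesis and $\{z,y\}$ is covered by $C_i$, so $\{x,y\}$ is covered by transitivity of the equivalence relation defining matroid connectivity. Taking $i=t$ gives that $M$ is connected.

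For (iii), the inequality $r(D_i)\leq r(D_{i-1})+|\tilde C_i|-1$ follows from submodularity applied to $D_{i-1}$ and $C_i$, using $D_{i-1}\cup C_i=D_i$, $r(C_i)=|C_i|-1$, and the fact that $D_{i-1}\cap C_i$ is a proper (hence independent) subset of $C_i$ of size $|C_i|-|\tilde C_i|$. For the reverse inequality I would argue by contradiction: assume $r(D_i)\leq r(D_{i-1})+|\tilde C_i|-2$, take a basis $I$ of $M|D_{i-1}$, enumerate $\tilde C_i=\{x_1,\dots,x_k\}$, and greedily extend $I$ through $x_1,\dots,x_k$ to a basis of $M|D_i$. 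The rank assumption forces at least two rejections; let $x_p$ be the first and $x_q$ the second. The rejection of $x_p$ yields a circuit $C'$ with $x_p\in C'\subseteq I\cup\{x_1,\dots,x_p\}$, whence $C'\sm D_{i-1}\subseteq\{x_1,\dots,x_p\}\subsetneq C_i\sm D_{i-1}$ (since $x_q$ is omitted). Because $\tilde C_i$ is independent (by (E2) for $C_i$) while $C'$ is dependent, $C'$ meets $I\subseteq D_{i-1}$, giving (E1), and $x_p\in C'\sm D_{i-1}$ gives (E2); this contradicts (E3) for $C_i$.

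I expect the main obstacle to be this reverse inequality in (iii): one has to invoke the minimality clause (E3) through the greedy/fundamental-circuit construction, and the subtle point is checking that the auxiliary circuit $C'$ genuinely satisfies (E1), which is exactly where the independence of $\tilde C_i$ (a consequence of (E2) for $C_i$) enters.
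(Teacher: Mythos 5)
The paper does not prove this lemma: it is quoted as a standard result from Coullard and Hellerstein \cite{C&H}, so there is no in-paper argument to compare against. Your proof is correct and is essentially the standard one — greedy extension with an (inclusion-)minimal choice of the next ear for (i) and (ii), and submodularity plus a fundamental-circuit argument against (E3) for (iii). Choosing $C_{t+1}$ to minimise the cardinality $|C\sm D_t|$ in (ii) is slightly stronger than the inclusion-minimality that (E3) literally demands, but it implies it, so that step is fine; the induction for the converse of (i) and both inequalities in (iii) all check out.

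One small correction: you twice attribute the independence of $\tilde C_i$ to (E2). In fact (E2) only guarantees $\tilde C_i\neq\emptyset$; what makes $\tilde C_i=C_i\sm D_{i-1}$ a \emph{proper} subset of the circuit $C_i$ (and hence independent) is (E1), i.e.\ $C_i\cap D_{i-1}\neq\emptyset$. The fact you need is true and follows from hypotheses you have, so this is a mislabelling rather than a gap, but it is worth fixing since you single out exactly this point as the crux of verifying (E1) for the auxiliary circuit $C'$.
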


We say that a graph $G$ with no isolated vertices has a \emph{(partial) ear decomposition} if there is a (partial) ear decomposition of the matroid $\mathcal{M}(2,2)$ restricted to the edge set of $G$. The lack of explicit reference to the matroid in the terminology should not cause confusion, as the only matroid we consider is $\mathcal{M}(2,2)$.

\begin{example}
    Recall that $K_{3,6}$ is redundantly rigid in any analytic normed plane but does not contain a spanning $(2,2)$-circuit. Let the part of size 6 be denoted $\{v_1,v_2,\dots, v_6\}$ and the part of size 3 be denoted $\{u_1,u_2,u_3\}$. An ear decomposition for $K_{3,6}$ is given by $(C_1, C_2)$, where $C_1$ is the edge set of the $(2,2)$-circuit $K_{3,6}-v_1 \cong K_{3,5}$ and $C_2$ is the the edge set of $K_{3,6}-v_2$. Then $\tilde C_2=\{v_1u_1,v_1u_2,v_1u_3\}$ and $D_2=E(K_{3,6})$.
\end{example}

\section{A recursive construction of 2-connected, redundantly rigid graphs} \label{sec:comb}

The purpose of this section is to derive the following recursive construction of 2-connected graphs that are redundantly rigid in analytic normed planes. 
This, combined with the geometric results of Section \ref{sec:proof}, will be used to prove 
our characterisation of global rigidity in analytic normed planes (Theorem \ref{thm:grne}).

\begin{thm}\label{thm:1}
A graph $G$ is 2-connected and redundantly rigid in an analytic normed plane $X$ if and only if $G$ can be generated from $K_5^-$ or $B_1$ (see Figure \ref{fig:smallgraphs1}) by $K_4^-$-extensions, edge additions and generalised vertex splits such that each intermediate graph is 2-connected and redundantly rigid in $X$.
\end{thm}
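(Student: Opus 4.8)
The plan is to prove the equivalence in two directions, with the forward direction (``redundantly rigid and 2-connected $\Rightarrow$ constructible'') being the combinatorial heart of the matter, and the backward direction (``constructible $\Rightarrow$ redundantly rigid and 2-connected'') being a routine assembly of results already in place. For the backward direction, I would first observe that $K_5^-$ and $B_1$ are $(2,2)$-circuits, hence 2-connected and redundantly rigid in $X$ (by Lemma~\ref{lem:connectivity} and Theorem~\ref{thm:rigidne}, together with the fact that a $(2,2)$-circuit remains rigid after deleting any edge). Then I would check that each of the three operations preserves both properties: $\mathcal{M}(2,2)$-connectivity is preserved under $K_4^-$-extensions (Lemma~\ref{l:k4-moves}), edge additions (Lemma~\ref{lem:add}), and under those generalised vertex splits we restrict to (which is exactly the content we impose by requiring the intermediate graph to be 2-connected and redundantly rigid); and by Lemma~\ref{lem:Mcon}, $\mathcal{M}(2,2)$-connectivity is equivalent to being 2-connected and redundantly rigid for a completely regular placement, which exists by Proposition~\ref{prop:regcreg}. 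So once an intermediate graph is stipulated to be 2-connected and redundantly rigid in $X$, it is $\mathcal{M}(2,2)$-connected, and the operations keep it so. (Strictly, one only needs that the terminal graph $G$ is 2-connected and redundantly rigid, and this follows since the last graph in the sequence has these properties by hypothesis.)

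For the forward direction, let $G$ be 2-connected and redundantly rigid in $X$; by Lemma~\ref{lem:Mcon}, $G$ is $\mathcal{M}(2,2)$-connected. The strategy is induction on $|E(G)|$, using an ear decomposition of $\mathcal{M}(2,2)|_{E(G)}$ (which exists by Lemma~\ref{lem:ear}\eqref{lem:ear.1}). The base case is when $G$ itself is a $(2,2)$-circuit, in which case Theorem~\ref{thm:construction} gives a construction from $K_5^-$ or $B_1$ by $K_4^-$-extensions and generalised vertex splits with every intermediate graph a $(2,2)$-circuit---and $(2,2)$-circuits are exactly the 2-connected redundantly rigid graphs that are also minimally ``circuit-like,'' so all intermediate graphs are indeed 2-connected and redundantly rigid in $X$. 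For the inductive step, I would take an ear decomposition $C_1,\dots,C_t$ with $t\geq 2$, set $D_{t-1}=D$ and $\tilde C_t = C_t\setminus D$, and aim to find an edge-reduction or $K_4^-$-reduction at a vertex ``inside'' the last ear $\tilde C_t$ that yields a smaller graph $G'$ which is still 2-connected and redundantly rigid (equivalently $\mathcal{M}(2,2)$-connected) in $X$, and such that $G$ is recovered from $G'$ by the corresponding generalised vertex split or $K_4^-$-extension; then apply induction to $G'$. This is where the technical machinery of Section~\ref{sec:circuits} enters: the graph $G[D\cup C_t]$ restricted appropriately, or an auxiliary $(2,2)$-circuit containing $C_t$ together with one edge of $D$, behaves like a $(2,2)$-circuit, so the node-finding results---Theorem~\ref{thm:circnodes}, and the strengthened Lemmas~\ref{lem:nix3}, \ref{lem:nix4}, \ref{lem:nix2}---can be invoked to locate an admissible node or an admissible $K_4^-$-reduction lying in $\tilde C_t$ (or, if the last ear is too small, in a region where the reduction removes the whole last ear). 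One has to handle the cases where $G[D\cup C_t]$ admits a non-trivial $2$-vertex-separation or $3$-edge-separation separately, using the join/separation lemmas (Lemmas~\ref{lem:connected_i-join}, \ref{lem:connected_2/3-sep}, \ref{lem:connected_1-sep}) to split $G$, reduce on the appropriate piece, and glue back; the $K_4^-$ case is clean because of Lemma~\ref{l:k4-moves}, while the $1$-separation case requires the asymmetric conclusion of Lemma~\ref{lem:connected_1-sep}\eqref{lem:conn_1-sep.2} to guarantee that at least one side of the split remains $\mathcal{M}(2,2)$-connected after reduction.

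The main obstacle is the inductive step for $\mathcal{M}(2,2)$-connected graphs that are \emph{not} $(2,2)$-circuits: one must show that some reduction inside the last ear $\tilde C_t$ can be performed so that (a) the reduced graph is still $\mathcal{M}(2,2)$-connected, and (b) the reduction is the inverse of a \emph{legal} generalised vertex split or $K_4^-$-extension (recall from the discussion before Lemma~\ref{l:k4-moves} that an arbitrary generalised vertex split need not preserve $\mathcal{M}(2,2)$-connectivity, so one cannot simply run the circuit argument blindly---the edge-reduction must be chosen with care). Concretely, the difficulty is to guarantee that, whenever the node-finding lemmas for circuits hand us an admissible node of an auxiliary circuit, that node still sits in a position within $G$ where reducing it leaves $G$ (or the relevant side of a separation) $\mathcal{M}(2,2)$-connected and re-expansible; this is exactly the kind of statement for which the strengthened forms of Lemmas~\ref{lem:nix4} and \ref{lem:nix2} were prepared, and marrying them to the ear-decomposition bookkeeping (using Lemma~\ref{lem:ear}\eqref{lem:ear.3} to control $|\tilde C_t|$ and hence the size and shape of the last ear) is the crux. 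I expect the bulk of the real work to be a careful case analysis on $|\tilde C_t|$ and on the connectivity of $G[D\cup C_t]$, mirroring---but genuinely extending---the circuit argument of \cite{JNglobal}.
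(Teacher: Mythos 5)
Your proposal follows essentially the same route as the paper: reduce to $\mathcal{M}(2,2)$-connectivity via Lemma \ref{lem:Mcon}, invoke Theorem \ref{thm:construction} for the circuit case, and otherwise use an ear decomposition to locate an admissible reduction in the last ear, splitting off atoms of the last circuit with the join/separation lemmas when it has a non-trivial $2$-vertex- or $3$-edge-separation (this is precisely the content of the paper's Theorems \ref{thm:key} and \ref{thm:recurse2}, packaged as Corollary \ref{cor:combmain}). The outline is correct; the one detail to make explicit is that the inductive step must also allow admissible \emph{edge-deletions} (needed when $Y=\emptyset$ and $|\tilde C_t|=1$), which your sketch only alludes to in passing.
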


We illustrate the theorem by describing a construction sequence for a specific example in Figure \ref{fig:sequence}.

  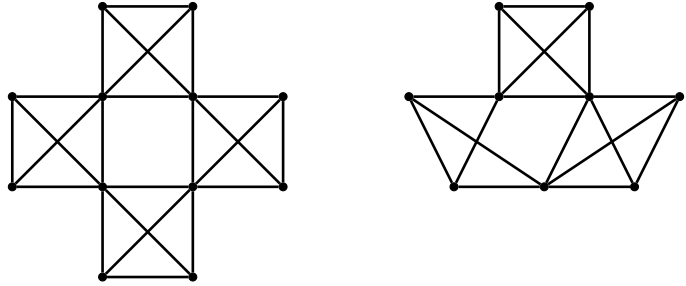
\begin{figure}[htp]
\begin{center}
    \begin{tikzpicture}[scale=0.6]
    \node[vertex] (a) at (-2,-1) {};
   \node[vertex] (b) at (0,-1) {};
   \node[vertex] (c) at (-2,1) {};
   \node[vertex] (d) at (0,1) {};
   
   \node[vertex] (e) at (2,-1) {};
   \node[vertex] (f) at (4,-1) {};
   \node[vertex] (g) at (2,1) {};
   \node[vertex] (h) at (4,1) {};
   
   \node[vertex] (i) at (0,3) {};
   \node[vertex] (j) at (2,3) {};
   
   \node[vertex] (k) at (0,-3) {};
   \node[vertex] (l) at (2,-3) {};

	\draw[edge] (a)edge(b);
\draw[edge] (a)edge(c);
\draw[edge] (a)edge(d);
\draw[edge] (b)edge(c);
\draw[edge] (b)edge(d);
\draw[edge] (c)edge(d);

\draw[edge] (e)edge(f);
\draw[edge] (e)edge(g);
\draw[edge] (e)edge(h);
\draw[edge] (f)edge(g);
\draw[edge] (f)edge(h);
\draw[edge] (g)edge(h);

\draw[edge] (d)edge(g);
\draw[edge] (d)edge(i);
\draw[edge] (d)edge(j);
\draw[edge] (g)edge(i);
\draw[edge] (g)edge(j);
\draw[edge] (i)edge(j);

\draw[edge] (b)edge(e);
\draw[edge] (b)edge(k);
\draw[edge] (b)edge(l);
\draw[edge] (e)edge(k);
\draw[edge] (e)edge(l);
\draw[edge] (k)edge(l);

\begin{scope}[xshift=250]
    \node[vertex] (a) at (-1,-1) {};
   \node[vertex] (b) at (1,-1) {};
   \node[vertex] (c) at (-2,1) {};
   \node[vertex] (d) at (0,1) {};
  
   \node[vertex] (f) at (3,-1) {};
   \node[vertex] (g) at (2,1) {};
   \node[vertex] (h) at (4,1) {};
   
   \node[vertex] (i) at (0,3) {};
   \node[vertex] (j) at (2,3) {};

	\draw[edge] (a)edge(b);
\draw[edge] (a)edge(c);
\draw[edge] (a)edge(d);
\draw[edge] (b)edge(c);
\draw[edge] (c)edge(d);

\draw[edge] (b)edge(f);
\draw[edge] (b)edge(g);
\draw[edge] (b)edge(h);
\draw[edge] (f)edge(g);
\draw[edge] (f)edge(h);
\draw[edge] (g)edge(h);

\draw[edge] (d)edge(g);
\draw[edge] (d)edge(i);
\draw[edge] (d)edge(j);
\draw[edge] (g)edge(i);
\draw[edge] (g)edge(j);
\draw[edge] (i)edge(j);

\end{scope}
\end{tikzpicture}
\end{center}
\vspace{-0.3cm}
\caption{The graph on the left is both 2-connected and redundantly rigid in any analytic normed plane. To reduce this graph, the first operation must be an edge-deletion on one of the four `inner' edges, then we may apply a $K_4^-$-reduction on the $K_4$ we deleted the edge from. This can be followed by an edge-reduction that contracts the edge added by the $K_4^-$-reduction and deletes one of the three remaining `inner' edges resulting in the graph on the right. Now a $K_4^-$-reduction gives $B_2$ and finally an edge-reduction gives $B_1$. Hence to construct this graph using Theorem \ref{thm:1}, we simply reverse this sequence of operations.}
\label{fig:sequence}
\end{figure}

It follows from Lemma \ref{lem:Mcon} that it suffices to consider $\mathcal{M}(2,2)$-connected graphs.
Given an $\mathcal{M}(2,2)$-connected graph $G$, we say that an edge-reduction of $G$ is \emph{admissible} if the resulting graph is $\mathcal{M}(2,2)$-connected;
if the edge-reduction is equivalent to a 1-reduction on a node, then we say that the node is \emph{admissible}.
Similarly, we say that an edge-deletion of $G$ is \emph{admissible} if the resulting graph is $\mathcal{M}(2,2)$-connected, and we describe the removed edge to be \emph{admissible}.
A $K_4^-$-reduction adding the edge $e$ is \emph{admissible} if the resulting graph is $\mathcal{M}(2,2)$-connected. 
Lemma \ref{l:k4-moves} implies that a $K_4^-$-reduction adding the edge $e$ is admissible if and only if $e$ is not an edge of $G$.
We will show that every $\mathcal{M}(2,2)$-connected graph has an admissible edge-reduction, an admissible edge deletion or an admissible $K_4^-$-reduction.
We require three technical lemmas.

\begin{lem}\label{lem:basic}
    Let $G = (V,E)$ be $\mathcal{M}(2,2)$-connected and let $H_i=(V_i,C_i)$, where $1 \leq i \leq t$, be the $(2,2)$-circuits in $G$ induced by an ear decomposition $C_1,C_2,\dots,C_t$ of $G$ with $t\geq 2$. Put $Y=V_t \sm \bigcup_{i=1}^{t-1}V_i$ and $X=V_t \sm Y$. The following statements hold.
    \begin{enumerate}[(i)]
        \item \label{lem:basic.1} Either $Y=\emptyset$ and $|\tilde C_t|=1$ or $Y\neq\emptyset$ and every edge $e\in \tilde C_t$ is incident to $Y$.
        \item \label{lem:basic.2} $|\tilde C_t|=2|Y|+1$.
        \item \label{lem:basic.3} If $Y\neq \emptyset$ then $X$ is critical in $H_t$.
        \item \label{lem:basic.4} If $Y\neq\emptyset$ then $G[Y]$ is connected.
        \item \label{lem:basic.5} $|X| \geq 4$.
    \end{enumerate}
\end{lem}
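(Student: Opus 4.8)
The plan is to extract everything from the matroid rank formula in Lemma \ref{lem:ear}\eqref{lem:ear.3} together with the definition of a $(2,2)$-circuit and the union lemma (Lemma \ref{lem:union}). Throughout, recall that $\tilde C_t = C_t \setminus D_{t-1}$, that $H_t = (V_t, C_t)$ satisfies $|C_t| = 2|V_t| - 1$, and that every proper subgraph of $H_t$ spans at most $2|\cdot| - 2$ edges. Write $r$ for the rank function of $\mathcal{M}(2,2)$; since each $D_i$ induces a rigid (in fact $(2,2)$-spanning) subgraph, $r(D_i) = 2|V(D_i)| - 2$ once $|V(D_i)| \geq 2$. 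I would prove the parts in the order (i), (ii), (iii), (iv), (v), since later parts lean on earlier ones.

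For \eqref{lem:basic.1} and \eqref{lem:basic.2}: by Lemma \ref{lem:ear}\eqref{lem:ear.3}, $|\tilde C_t| - 1 = r(D_t) - r(D_{t-1}) = (2|V(D_t)| - 2) - (2|V(D_{t-1})| - 2) = 2|V(D_t) \setminus V(D_{t-1})| = 2|Y|$, which is precisely \eqref{lem:basic.2}; and if $Y = \emptyset$ this forces $|\tilde C_t| = 1$. When $Y \neq \emptyset$, the incidence claim in \eqref{lem:basic.1} follows because any edge of $C_t$ not incident to $Y$ has both endpoints in $X \subseteq V(D_{t-1})$; such an edge lies in $D_{t-1}$ iff it already appears in some $C_i$, $i<t$ --- but if $e \in \tilde C_t$ had both endpoints in $V(D_{t-1})$ one argues via (E3): the circuit $C_t$ could be replaced contradicting minimality of $\tilde C_t$. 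More cleanly, I would count: $C_t \setminus \tilde C_t \subseteq E(H_t[X])$, so $|E(H_t[X])| \geq |C_t| - |\tilde C_t| = (2|V_t| - 1) - (2|Y| + 1) = 2|X| - 2$; since $X \subsetneq V_t$ (as $Y\neq\emptyset$) the $(2,2)$-sparsity of $H_t$ forces $|E(H_t[X])| = 2|X|-2$, i.e.\ \eqref{lem:basic.3}, and simultaneously every edge of $\tilde C_t$ is incident to $Y$, giving \eqref{lem:basic.1}.

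For \eqref{lem:basic.4}, suppose $Y \neq \emptyset$ but $G[Y]$ is disconnected, say $Y = Y_1 \sqcup Y_2$ with no edges between. By \eqref{lem:basic.1} each edge of $\tilde C_t$ meets $Y = Y_1 \cup Y_2$; partition $\tilde C_t = F_1 \cup F_2$ by which part an edge meets (edges meeting both are impossible). Then $X \cup Y_1$ induces, within $H_t$, the edge set $E(H_t[X]) \cup F_1$, of size $(2|X|-2) + |F_1|$; applying \eqref{lem:basic.2} to the sub-ear-decomposition, or rather directly to $H_t$, gives $|F_1| = 2|Y_1|+1$, so $H_t[X\cup Y_1]$ has $2|X\cup Y_1| - 1$ edges, forcing $X \cup Y_1 = V_t$, i.e.\ $Y_2 = \emptyset$. (One should double-check that $F_1 = 2|Y_1|+1$ rather than deriving it; the cleanest route is to sum: $|F_1| + |F_2| = 2|Y_1| + 2|Y_2| + 1$, and $H_t[X\cup Y_j]$ proper would give $|E(H_t[X])| + |F_j| \leq 2(|X|+|Y_j|) - 2$, i.e.\ $|F_j| \leq 2|Y_j|$; both $j$ together contradict the sum. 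Hence some $Y_j = \emptyset$.)

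For \eqref{lem:basic.5}: $X \subsetneq V_t$ is critical in the $(2,2)$-circuit $H_t$ when $Y\neq\emptyset$ (part \eqref{lem:basic.3}), so $|E(H_t[X])| = 2|X| - 2 \geq 0$ forces $|X| \geq 1$, but a critical set with $|X| \in \{1,2,3\}$ would have $0, 2, 4$ edges respectively. By Lemma \ref{lem:connectivity}, $H_t[X]$ is connected, ruling out $|X| = 1$ (it has $0$ edges but is critical only if $|X|=1$ gives $2|X|-2=0$ --- so $|X|=1$ is formally consistent; instead exclude it because then $\tilde C_t$ has all $2|Y|+1$ edges incident to a single vertex of $X$ and to $Y$, and the single vertex $x$ has degree at least $2|Y|+1$ there, but more to the point $X$ critical with $|X|=1$ means the lone vertex is isolated in $H_t[X]$, and then $H_t$ restricted to $V_t \setminus \{x\}$... ) --- here I would instead invoke Lemma \ref{lem:nix1}: $V_t \setminus X = Y$ contains a node of $H_t$, combined with the fact that $H_t$ is $3$-edge-connected (Lemma \ref{lem:connectivity}), to rule out the tiny cases by a direct degree count. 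When $Y = \emptyset$ we have $X = V_t$ and $|V_t| \geq 5$ since every $(2,2)$-circuit has at least $5$ vertices (the smallest being $K_5^-$), so $|X| \geq 5 \geq 4$. The main obstacle I anticipate is part \eqref{lem:basic.4}: making the "no edges between $Y_1,Y_2$ forces one side empty" count fully rigorous requires carefully justifying the split of $\tilde C_t$ and the sparsity bound $|F_j| \leq 2|Y_j|$ for each proper side, and checking that $E(H_t[X\cup Y_j])$ is exactly $E(H_t[X]) \cup F_j$ (no stray edges). Parts \eqref{lem:basic.1}--\eqref{lem:basic.3} are essentially bookkeeping off Lemma \ref{lem:ear}\eqref{lem:ear.3}, and \eqref{lem:basic.5} reduces to the known structure of small $(2,2)$-circuits.
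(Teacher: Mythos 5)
Your treatment of parts (i)--(iv) is correct and follows essentially the same route as the paper: (ii) comes straight from Lemma \ref{lem:ear}(iii) together with $r(D_i)=2|\bigcup_{j\le i}V_j|-2$, and your counting argument $C_t\setminus\tilde C_t\subseteq E(H_t[X])$ combined with $(2,2)$-sparsity of the proper subset $X$ delivers (i) and (iii) simultaneously (the paper gets (i) from (E3) and then (iii) from (i) and (ii), but the content is identical). Your summed sparsity bound $|F_j|\le 2|Y_j|$ versus $\sum_j|F_j|=2|Y|+1$ for (iv) is exactly the paper's argument, written for components $Y_1,\dots,Y_k$ rather than just two; your worry about stray edges is harmless since any edge of $G$ with both endpoints in $Y$ must lie in $C_t$ (it cannot lie in $C_i$ for $i<t$), so $G[Y]=H_t[Y]$.

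The one genuine gap is in part (v), in excluding $|X|=1$ when $Y\neq\emptyset$. Your proposed mechanism --- Lemma \ref{lem:nix1} (a node in $Y$) plus $3$-edge-connectivity and ``a direct degree count'' --- does not work: a $(2,2)$-circuit attached to the rest of $G$ at a single vertex is internally a perfectly healthy $(2,2)$-circuit, so no property of $H_t$ alone can rule this out; you need to use how $H_t$ meets $D_{t-1}$. The paper's fix is one line: by (E1), $C_t\cap D_{t-1}\neq\emptyset$, and since $C_t\cap D_{t-1}\subseteq E(H_t[X])$ this gives $i_{H_t}(X)\ge 1$, whereas $|X|=1$ would force $i_{H_t}(X)=2|X|-2=0$. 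Note that you already established $i_{H_t}(X)=|C_t\cap D_{t-1}|$ in your own proof of (iii), so the repair is immediate from material you have in hand; you just reached for the wrong lemma. (The cases $|X|\in\{2,3\}$ are fine: a critical set would need $2$ or $4$ induced edges, impossible in a simple graph, which is the implicit content of ``$|X|=1$ or $|X|\ge 4$'' in both your sketch and the paper.)
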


\begin{proof}
    Part (\ref{lem:basic.1}) is an easy consequence of (E3).
    Part (\ref{lem:basic.2}) follows from Lemma \ref{lem:ear}(\ref{lem:ear.3}) and the observation that each set $D_i$ has rank $2|\bigcup_{j=1}^i V_j|-2$ in $\mathcal{M}(2,2)$ 
    as $\bigcup_{j=1}^{i} H_j$ is
    $\mathcal{M}(2,2)$-connected. 
    For part (\ref{lem:basic.3}), 
    as $Y\neq\emptyset$ (\ref{lem:basic.1}) and (\ref{lem:basic.2}) together imply that
    \[i_{H_t}(X)=|C_t|-|\tilde C_t|=2|V_t|-1-(2|Y|+1)=2|X|-2.\]
    Hence $X$ is critical in $H_t$.
    
    For part (\ref{lem:basic.4}), as $Y \neq \emptyset$ the induced subgraph $G[Y]$ exists. 
    Let $Y_1, \dots, Y_k$ be the vertex sets of the components of $G[Y]$. 
    As $H_t$ is a $(2,2)$-circuit, $|E'| \leq 2|V'|-2$ for every proper subgraph $(V', E')$ of $H_t$. 
    So, since $X$ is critical, if $k \geq 2$ then $i_{H_t}(Y_i)+d(X,Y_i) \leq 2|Y_i|$ for all $1 \leq i \leq k$. Hence, part (\ref{lem:basic.2}) implies that
    \begin{align*}
        2|Y|+1 = |\tilde{C}_t| = \sum_{i = 1}^k (i_{H_t}(Y_i)+d(X,Y_i)) \leq \sum_{i = 1}^k 2|Y_i| = 2|Y|
    \end{align*}
    which is a contradiction. Therefore $k = 1$ and so $G[Y]$ is connected.
    
    Finally, for part (\ref{lem:basic.5}), if $Y = \emptyset$ then since $H_t$ is a $(2,2)$-circuit, $|X| = |V_t| \geq 5$. If $Y \neq \emptyset$ then part (\ref{lem:basic.3}) implies that $X$ is critical in $H_t$, so $|X| = 1$ or $|X| \geq 4$. As $C_1, \dots, C_t$ is an ear decomposition of $G$, we have $C_t \cap D_{t-1} \neq \emptyset$ which implies $i_{H_t}(X) \neq 0$ and hence $|X| \neq 1$.
\end{proof}

Before the next rather technical result, we make the following observation about 1-reductions.
Let $H = (V,E)$ be a $(2,2)$-circuit, let $v \in V$ be a node with $N(v) = \{x, y, z\}$, and suppose that $xy \notin E$. Since $H-vz$ is $(2,2)$-tight, $H-v$ is $(2,2)$-tight and so $H-v+xy$ contains a unique $(2,2)$-circuit $J$. We have $J = H-v+xy$ if and only if the $1$-reduction of $H$ at $v$ adding $xy$ is admissible. If $J \neq H-v+xy$ then $V(J)$ is the minimal $v$-critical set in $H$ containing $\{x, y\}$.

\begin{lem}\label{lem:splitinG}
Let $G = (V,E)$ be $\mathcal{M}(2,2)$-connected and let $H_i = (V_i, C_i)$, where $1 \leq i \leq t$, be the subgraphs of $G$ induced by an ear decomposition $C_1, C_2, \dots , C_t$ of $G$ with $t\geq 2$. Let $Y = V_t \sm \bigcup_{i=1}^{t-1} V_i$ and $X = V_t \sm Y$. Let $v$ be a node in $G$ contained in $Y$, take $x,y
\in N_G(v)$, and suppose $xy \notin E$. Let $J$ be the unique $(2,2)$-circuit in $H_t-v+xy$ and $C=E(J)$. If $E(H_t-v+xy)\sm E(H_t[X])\subsetneq C$ then $v$ is admissible in $G$. 
\end{lem}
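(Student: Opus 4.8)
The plan is to show that the hypothesis $E(H_t - v + xy) \setminus E(H_t[X]) \subsetneq C$ forces the $1$-reduction of $G$ at $v$ adding $xy$ to be admissible, i.e.\ that the resulting graph $G - v + xy$ is still $\mathcal{M}(2,2)$-connected, and to identify this reduction as an admissible edge-reduction in the sense required for the recursive construction. The first observation to record is that, by Lemma~\ref{lem:basic}(\ref{lem:basic.3}), since $v \in Y \neq \emptyset$ the set $X$ is critical in $H_t$; in particular $X \subsetneq V_t$, and the minimal $v$-critical set in $H_t$ containing $\{x,y\}$ (should it exist) is well-defined. Recall from the observation preceding the lemma that $J = H_t - v + xy$ exactly when the $1$-reduction of $H_t$ at $v$ adding $xy$ is admissible, and otherwise $V(J)$ is precisely the minimal $v$-critical set in $H_t$ containing $\{x,y\}$.

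First I would split into two cases according to whether $J = H_t - v + xy$ or not. If $J = H_t - v + xy$, then $C = E(H_t - v + xy)$, and the hypothesis $E(H_t - v + xy) \setminus E(H_t[X]) \subsetneq C$ is automatic (it just says $E(H_t[X]) \neq \emptyset$, which holds since $X$ is critical and $|X| \geq 4$ by Lemma~\ref{lem:basic}(\ref{lem:basic.5}), using $i_{H_t}(X) = 2|X|-2 > 0$). In this case the $(2,2)$-circuit $J$ spans $V_t \setminus \{v\} \cup \{$nothing new$\}$, and I claim $G - v + xy$ has the ear decomposition $C_1, \dots, C_{t-1}, E(J)$: conditions (E1) and (E2) hold because $E(J) \cap D_{t-1} \supseteq E(H_t[X] - (\text{edges at }v))\cap D_{t-1} \neq \emptyset$ (since $X \subseteq \bigcup_{i<t} V_i$ meets $D_{t-1}$ appropriately — this needs the ear-decomposition bookkeeping of Lemma~\ref{lem:basic}) and $\tilde{C}$ is nonempty as $Y \setminus \{v\}$ together with $xy$ contributes new edges; and minimality (E3) may need replacing $E(J)$ by a suitable sub-ear, but since we only need \emph{some} ear decomposition of $G - v + xy$ and Lemma~\ref{lem:ear}(\ref{lem:ear.2}) lets us extend a partial ear decomposition, it suffices to check $C_1,\dots,C_{t-1}$ followed by \emph{any} circuit meeting both $D_{t-1}$ and its complement exists, which it does. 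Then Lemma~\ref{lem:ear}(\ref{lem:ear.1}) gives $\mathcal{M}(2,2)$-connectivity.

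Second, the harder case is $J \neq H_t - v + xy$, so $Z := V(J)$ is the minimal $v$-critical set in $H_t$ containing $\{x,y\}$, with $v, z \notin Z$ where $N_G(v) = \{x,y,z\}$. Here the hypothesis $E(H_t - v + xy)\setminus E(H_t[X]) \subsetneq C$ is a genuine restriction: it says every edge of $H_t$ not incident to $v$ and not inside $X$, together with $xy$, lies in $E(J)$; equivalently, all of $Y \setminus \{v\}$ and all edges leaving $X$ (other than those at $v$) are captured by $J$. The plan is to use this to build an ear decomposition of $G - v + xy$ from $C_1, \dots, C_{t-1}$ by appending the circuit $E(J)$: since $J$ is a $(2,2)$-circuit in $G - v + xy$, (E1) holds because $J$ must meet $D_{t-1}$ (as $x \in X$ or the edge-to-$X$ hypothesis forces overlap), (E2) holds since $xy$ or the remaining vertices of $Y$ are new, and I would then argue $D_{t-1} \cup E(J)$ already covers $E(G - v + xy)$: every edge of $G - v + xy$ is either in $D_{t-1}$, or is an edge of $H_t$ not at $v$, or is $xy$; the last two are in $E(J)$ by the hypothesis and the definition of $J$. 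This gives an ear decomposition, hence $\mathcal{M}(2,2)$-connectivity by Lemma~\ref{lem:ear}(\ref{lem:ear.1}). Finally, since $v$ is a node, this $1$-reduction is the special case of an edge-reduction, so $v$ is admissible.

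\textbf{Main obstacle.} The delicate point is verifying (E1)/(E2)/(E3) — or rather, that $C_1, \dots, C_{t-1}, E(J)$ really is a \emph{valid} ear decomposition (not merely a sequence of circuits), and in particular that $D_{t-1} \cup E(J) = E(G - v + xy)$. This hinges on a careful accounting of which edges of $H_t$ lie in $X$, which leave $X$, and which are incident to $v$, and on the precise meaning of the containment hypothesis $E(H_t - v + xy) \setminus E(H_t[X]) \subsetneq C$. I expect the bulk of the work to be this edge-partition bookkeeping, combined with invoking Lemma~\ref{lem:basic}(\ref{lem:basic.1})--(\ref{lem:basic.3}) to control the structure of $\tilde{C}_t$ and $X$. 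The strictness of the inclusion (the $\subsetneq$) should not actually be needed for admissibility — it looks like it is there so that the lemma dovetails with a companion lemma handling the complementary case $C = E(H_t-v+xy)\setminus E(H_t[X])$ — so I would double-check whether the proof only uses $\subseteq$, and flag that if so.
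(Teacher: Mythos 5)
Your overall strategy --- showing that $E(G-v+xy)$ decomposes as $D_{t-1}\cup C$ with $\bigcup_{i=1}^{t-1}H_i$ and $J$ each $\mathcal{M}(2,2)$-connected, and then gluing --- is exactly the paper's, and your edge accounting for the covering claim (using Lemma \ref{lem:basic}(\ref{lem:basic.1}) to get $E(H_t[X])\subseteq D_{t-1}$) is correct. But there is a genuine gap at the one step that makes the gluing work: you need $C$ and $D_{t-1}$ to share an \emph{edge}, and your two proposed reasons both fail. The fallback ``$x\in X$'' is not a hypothesis (the lemma only assumes $v\in Y$; all of $N_G(v)$ may lie in $Y$), and even if $x\in X$ this would give a shared vertex, not a shared matroid element. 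The correct reason is the one you explicitly propose to discard: since every edge of $\tilde C_t$ is incident to $Y$, one has $C_t\cap D_{t-1}=E(H_t[X])$, so $C\cap D_{t-1}=C\cap E(H_t[X])$, and this is non-empty precisely because the inclusion $E(H_t-v+xy)\setminus E(H_t[X])\subseteq C$ is \emph{strict} (together with $C\subseteq E(H_t-v+xy)$). Dropping the strictness, as you suggest in your final paragraph, leaves open the possibility that $C$ and $D_{t-1}$ are disjoint, in which case the two $\mathcal{M}(2,2)$-connected pieces cannot be glued and the argument collapses. So the $\subsetneq$ is not bookkeeping for a companion case; it is the engine of the proof.

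A secondary issue: your appeal to Lemma \ref{lem:ear}(\ref{lem:ear.2}) to repair a possible failure of (E3) is circular, since that statement presupposes the matroid is already connected. The fix --- and what the paper actually does --- is to bypass ear decompositions entirely at this point: fix $e\in C\cap E(H_t[X])\subseteq C\cap D_{t-1}$ and observe that every $f\in D_{t-1}$ lies in a common $(2,2)$-circuit with $e$ (because $C_1,\dots,C_{t-1}$ is an ear decomposition of $\bigcup_{i=1}^{t-1}H_i$, which is therefore $\mathcal{M}(2,2)$-connected by Lemma \ref{lem:ear}(\ref{lem:ear.1})), while every $f\in C$ lies in a common circuit with $e$ (namely $C$ itself); transitivity of the component relation then gives $\mathcal{M}(2,2)$-connectivity of $G-v+xy$. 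With that replacement, and with the shared-edge point above supplied, your plan becomes the paper's proof.
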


\begin{proof}
We show that any pair of edges of $G-v+xy$ is contained in a $(2,2)$-circuit. Note $E(H_t[X])\subseteq D_{t-1}$  by Lemma \ref{lem:basic}(\ref{lem:basic.1}). 
Since $E(H_t-v+xy)\sm E(H_t[X])\subsetneq C$, $E(G-v+xy) = E(H_t-v+xy) \cup D_{t-1}$ and $D_{t-1} \cup C \subseteq E(G-v+xy)$, we have $D_{t-1} \cup C=E(G-v+xy)$.
Hence $G-v+xy= (\bigcup_{i=1}^{t-1}H_i)\cup J$.
As $C \subseteq E(H_t-v+xy)$ and $E(H_t-v+xy)\sm E(H_t[X])\subsetneq C$,
we have $C\cap E(H_t[X])\neq \emptyset$.
Fix an edge $e\in C\cap E(H_t[X])$ and choose any edge $f \in E(G-v+xy)-e$. 
If $f \in D_{t-1}$ then, since Lemma \ref{lem:ear}(i) implies that $\bigcup_{i=1}^{t-1}H_i$ is $\mathcal{M}(2,2)$-connected, there exists a $(2,2)$-circuit contained in $G-v +xy$ and containing $e$ and $f$. 
If $f \in C$ then, since $J$ is clearly $\mathcal{M}(2,2)$-connected, there exists a $(2,2)$-circuit containing $e$ and $f$. So, there exists $e \in E(G-v+xy)$ such that for all $f \in E(G-v+xy)-e$ there exists a $(2,2)$-circuit containing $e$ and $f$. The result now follows by the transitivity of the equivalence relation defining $\mathcal{M}(2,2)$-connectivity.
\end{proof}

\begin{lem}\label{lem:m223notink4}
Let $G = (V,E)$ be $\mathcal{M}(2,2)$-connected and let $H_i = (V_i, C_i)$, where $1 \leq i \leq t$, be the subgraphs of $G$ induced by an ear decomposition $C_1, C_2, \dots , C_t$ of $G$ with $t\geq 2$.
Let $Y=V_t\sm \bigcup_{i=1}^{t-1}V_i$ and $X=V_t\sm Y$. Suppose $Y \neq \emptyset$ and no 3-edge-separation $(F_1, F_2)$ of $H_t$ has the property that $|V(F_1)|, |V(F_2)| \geq 2$ and $F_i\subset H_t[Y]$ for some $i\in\{1,2\}$. Choose a set $X_{0}$ that contains $X$ and is critical in $H_{t}$\footnote{As $Y \neq \emptyset$, Lemma \ref{lem:basic}(\ref{lem:basic.3}) implies $X$ is critical and hence such an $X_{0}$ exists.}. Let $X_1,\dots , X_n$ be critical sets in $H_t$ with $|X_i|\geq 2$ for $1\leq i \leq n$ and let $\mathcal{Y} = V_t \setminus \bigcup^n_{i=0} X_i$. If $|\mathcal{Y}|\geq 2$ or $\bigcup^n_{i=0} H_t[X_i]$ is disconnected, then
$\mathcal{Y}$ contains at least two nodes of $G$.
\end{lem}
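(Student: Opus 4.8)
The plan is to prove this by a counting argument that shows $\mathcal{Y}$ cannot contain too many edges, and therefore must contain many low-degree vertices, all of which will turn out to be nodes.

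First I would set up the counting. Write $H_t = (V_t, C_t)$ with $|C_t| = 2|V_t| - 1$. Each $X_i$ with $0 \le i \le n$ is critical in $H_t$, so $i_{H_t}(X_i) = 2|X_i| - 2$. I would let $W = \bigcup_{i=0}^n X_i$ and use Lemma \ref{lem:union} repeatedly: since all the $X_i$ pairwise intersect in at least a vertex (this will need a short argument, using that $X \subseteq X_0$ and that the $H_t[X_i]$ being connected by assumption in one case, or replacing the collection by one whose union graph is connected in the other case — actually the cleanest route is to handle the two hypotheses, $|\mathcal{Y}| \geq 2$ versus $\bigcup H_t[X_i]$ disconnected, by reducing both to a situation where we can apply Lemma \ref{lem:union} to conclude $W$ is critical, or alternatively bound $i_{H_t}(W)$ directly). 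The upshot I want is a bound of the form $i_{H_t}(W) \le 2|W| - 2$, possibly with a strict improvement $i_{H_t}(W) \le 2|W| - 3$ when $\bigcup_{i=0}^n H_t[X_i]$ is disconnected (since a disconnected critical-ish union loses at least one more edge). Then since $H_t[X]$ is connected and contains at least one edge (as $C_t \cap D_{t-1} \ne \emptyset$), and $W \supseteq X$ with $|X| \ge 4$ by Lemma \ref{lem:basic}(\ref{lem:basic.5}), the complement $\mathcal{Y} = V_t \setminus W$ satisfies
\[
i_{H_t}(\mathcal{Y}) + d_{H_t}(W, \mathcal{Y}) = |C_t| - i_{H_t}(W) \ge (2|V_t| - 1) - (2|W| - 2) = 2|\mathcal{Y}| + 1.
\]

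Next I would translate this edge count into degree information about vertices of $\mathcal{Y}$ inside the graph $G$ (not just $H_t$). The key point is that every edge of $G$ incident to $\mathcal{Y}$ lies in $C_t$: indeed $\mathcal{Y} \subseteq Y = V_t \setminus \bigcup_{i<t} V_i$, so by Lemma \ref{lem:basic}(\ref{lem:basic.1}) every edge touching $Y$ is in $\tilde{C}_t \subseteq C_t$; hence for $v \in \mathcal{Y}$, $d_G(v) = d_{H_t}(v)$. Now suppose for contradiction that $\mathcal{Y}$ contains at most one node of $G$, i.e. at most one vertex of degree $3$; then every other vertex of $\mathcal{Y}$ has $d_G(v) \ge 4$, so $d_{H_t}(v) \ge 4$. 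Summing degrees over $\mathcal{Y}$ in $H_t$:
\[
2 i_{H_t}(\mathcal{Y}) + d_{H_t}(\mathcal{Y}, W) = \sum_{v \in \mathcal{Y}} d_{H_t}(v) \ge 4(|\mathcal{Y}| - 1) + 3 = 4|\mathcal{Y}| - 1.
\]
Combining with the lower bound $i_{H_t}(\mathcal{Y}) + d_{H_t}(W,\mathcal{Y}) \ge 2|\mathcal{Y}| + 1$, i.e. $d_{H_t}(W, \mathcal{Y}) \ge 2|\mathcal{Y}| + 1 - i_{H_t}(\mathcal{Y})$, substitute to get $2 i_{H_t}(\mathcal{Y}) + 2|\mathcal{Y}| + 1 - i_{H_t}(\mathcal{Y}) \ge 4|\mathcal{Y}| - 1$, so $i_{H_t}(\mathcal{Y}) \ge 2|\mathcal{Y}| - 2$, meaning $\mathcal{Y}$ is critical in $H_t$ (it cannot be $(2,2)$-dense since $H_t$ is a circuit, and $|\mathcal{Y}| \ge 2$ rules out $|\mathcal{Y}| = 1$). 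But a critical set $\mathcal{Y}$ in the circuit $H_t$ has $d_{H_t}(W, \mathcal{Y}) = |C_t| - i_{H_t}(W) - i_{H_t}(\mathcal{Y}) = (2|V_t| - 1) - i_{H_t}(W) - (2|\mathcal{Y}| - 2)$; plugging the critical value forces equalities throughout, and in particular $i_{H_t}(W) = 2|W| - 2$ and $\mathcal{Y}$ critical with $i_{H_t}(\mathcal{Y}) = 2|\mathcal{Y}| - 2$ and $d_{H_t}(W,\mathcal{Y}) = 3$.

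At this point the $3$-edge-separation hypothesis enters. The equalities say $(H_t[W], H_t[\mathcal{Y}])$ — after possibly noting $W \cup \mathcal{Y} = V_t$ — is a $3$-edge-separation of $H_t$ with both sides having $\ge 2$ vertices and $H_t[\mathcal{Y}] \subseteq H_t[Y]$ (since $\mathcal{Y} \subseteq Y$), contradicting the standing assumption. The one remaining wrinkle is the sub-case where $\mathcal{Y}$ contains no node at all and every vertex has degree $\ge 4$, or where $\bigcup_{i=0}^n H_t[X_i]$ is disconnected giving $i_{H_t}(W) \le 2|W| - 3$; in the disconnected case the improved bound immediately contradicts $i_{H_t}(\mathcal{Y}) \le 2|\mathcal{Y}| - 2$ unless we again get a forced $3$-edge-separation, handled identically. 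The main obstacle I anticipate is the bookkeeping in the first paragraph: establishing that $i_{H_t}(W) \le 2|W| - 2$ (with the strict version under disconnectedness) requires iterating Lemma \ref{lem:union} along a carefully chosen order of the $X_i$'s so that consecutive unions keep intersecting, and dealing with $X_i$'s that are disjoint from the running union by absorbing them via the disconnectedness clause — this is where a clean inductive formulation is needed rather than a one-line estimate.
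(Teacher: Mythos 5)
Your overall strategy --- an edge/degree count over $\mathcal{Y}$ combined with the forbidden $3$-edge-separation hypothesis --- is in the same spirit as the paper's proof, but there are two genuine gaps, and they are linked. First, the bound you establish on $W=\bigcup_{i=0}^n X_i$ points the wrong way. Your first paragraph works to show $i_{H_t}(W)\leq 2|W|-2$, which is automatic from $(2,2)$-sparsity whenever $W\subsetneq V_t$ and carries no information; what your final step actually requires is the lower bound $i_{H_t}(W)\geq 2|W|-2$, i.e.\ that $W$ is critical. As written, you combine $2i_{H_t}(\mathcal{Y})+d_{H_t}(W,\mathcal{Y})\geq 4|\mathcal{Y}|-1$ with the lower bound $d_{H_t}(W,\mathcal{Y})\geq 2|\mathcal{Y}|+1-i_{H_t}(\mathcal{Y})$ by substituting the latter into the former; replacing a summand by something smaller does not preserve a ``$\geq$'', so the conclusion $i_{H_t}(\mathcal{Y})\geq 2|\mathcal{Y}|-2$ does not follow. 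The step is repaired exactly when $i_{H_t}(W)=2|W|-2$, which turns the edge count into the equality $i_{H_t}(\mathcal{Y})+d_{H_t}(W,\mathcal{Y})=2|\mathcal{Y}|+1$; establishing that criticality (by ordering the $X_i$ so that consecutive partial unions intersect and iterating Lemma \ref{lem:union}) is where the real work lies, and it only succeeds when $\bigcup_{i=0}^n H_t[X_i]$ is connected and the partial unions stay proper.

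Second, the disconnected case is not handled, and it cannot be handled by your single-blob count. When $\bigcup_{i=0}^n H_t[X_i]$ is disconnected, $W$ need not be critical, and your proposed ``improved bound'' $i_{H_t}(W)\leq 2|W|-3$ is both unjustified (disconnectedness of the union of the induced subgraphs says nothing about how many edges $H_t$ induces on the vertex set $W$) and in the wrong direction (an upper bound on $i_{H_t}(W)$ only strengthens a lower bound on $i_{H_t}(\mathcal{Y})+d_{H_t}(W,\mathcal{Y})$, which contradicts nothing). Moreover $\mathcal{Y}$ may a priori have size $0$ or $1$ in this case, so your final $3$-edge-separation contradiction is unavailable. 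The paper's proof avoids all of this by working with the connected components $Z_1,\dots,Z_m$ of $\bigcup_{i=0}^n H_t[X_i]$: each $Z_j$ is critical by Lemma \ref{lem:union}, each satisfies $Z_j\subseteq Y$ or $V_t\sm Z_j\subseteq Y$ with both sides of size at least $2$, so the $3$-edge-separation hypothesis together with $3$-edge-connectivity (Lemma \ref{lem:connectivity}) forces $d_{H_t}(Z_j,V_t\sm Z_j)\geq 4$, whence $\sum_{v\in Z_j}(4-d_{H_t}(v))=4-d_{H_t}(Z_j,V_t\sm Z_j)\leq 0$ for every $j$; since $\sum_{v\in V_t}(4-d_{H_t}(v))=2$, the set $\mathcal{Y}$ must absorb a surplus of at least $2$, and $\delta(H_t)=3$ then yields two nodes. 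If you want to salvage your route, you need this component-wise accounting; your ``$W$ versus $\mathcal{Y}$'' version only works in the sub-case where the union is connected and $|\mathcal{Y}|\geq 2$.
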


\begin{proof}
    Let $Z_1,\dots, Z_m$ be the vertex sets of the connected components of $\bigcup_{i=0}^n H_t[X_i]$.
    By Lemma \ref{lem:connectivity} each $X_i$ is contained in exactly one set $Z_j$.
    By reordering if necessary we may assume that $X_0 \subseteq Z_1$,
    which implies $Z_j \subseteq Y$ for each $2 \leq j \leq m$.
    Lemma \ref{lem:basic}(\ref{lem:basic.5}) implies that $|X_0| \geq |X| \geq 4$. So, as $X_i$ is critical in $H_t$ for each $1 \leq i \leq n$, we have $|X_i| \geq 4$ for all $0 \leq i \leq n$.
    As $|\mathcal{Y}| \geq 2$ or $\bigcup_{i = 0}^{n}H_{t}[X_{i}]$ is disconnected, we have $|V_t \setminus Z_j| \geq 2$ for all $1 \leq j \leq m$.
    Since no 3-edge-separation $(F_1,F_2)$ of $H_t$ has the property that $|V(F_1)|, |V(F_2)| \geq 2$ and $F_i\subset H_t[Y]$ for some $i\in\{1,2\}$ and since $V_t \setminus Z_1 \subseteq Y$ and $Z_j \subseteq Y$ for each $2 \leq j \leq m$,
    we have $d_{H_{t}}(Z_j , V_t \sm Z_j) \geq 4$ for each $1 \leq j \leq m$.
    By Lemma \ref{lem:union}, $Z_j$ is critical in $H_t$ for all $1 \leq j \leq m$. So for all $1 \leq j \leq m$,
    \[\sum_{v \in Z_{j}}(4-d_{H_{t}[Z_{j}]}(v)) = 4|Z_j|-2i_{H_{t}}(Z_j) = 4.\]
    Hence
    \begin{eqnarray*}\sum_{v \in Z_{j}}(4-d_{H_{t}}(v)) &=& 4|Z_j|-(2i_{H_{t}}(Z_j)+d_{H_{t}}(Z_j, V_t \sm Z_j))\\ &=& 4-d_{H_{t}}(Z_j, V_t \sm Z_j) \leq 0.\end{eqnarray*}
    Therefore $\sum_{j = 1}^m \sum_{v \in Z_j}(4-d_{H_{t}}(v)) \leq 0$.
    
    Since $|C_t| = 2|V_t|=1$ we have $\sum_{v \in V_t}(4 - d_{H_t}(v)) = 4|V_t|-2|C_t|= 2$. Combining this with the previously inequality gives us that
    \[2 = \sum_{v \in V_t} (4-d_{H_{t}}(v)) \leq \sum_{v \in \mathcal{Y}} (4-d_{H_{t}}(v)).\]
    As $\delta(H_t)=3$ it follows
    that $\mathcal{Y}$ contains at least two nodes of $H_{t}$. 
    As $\mathcal{Y} \subseteq Y$,
    these are also nodes of $G$.
\end{proof}

We can now prove our first main combinatorial result which establishes that admissible reductions always exist under a technical connectivity hypothesis. 

\begin{thm}\label{thm:key} 
Suppose $G = (V,E)$ is an $\mathcal{M}(2,2)$-connected graph 
with an ear decomposition $C_1,C_2,\dots,C_t$ such that $t \geq 2$, and for all $1 \leq i \leq t$ let $H_i=(V_i,C_i)$ be the subgraph of $G$ induced by $C_i$.
Let $Y=V_t\sm \bigcup_{i=1}^{t-1}V_i$ and let $X=V_t\sm Y$.
Suppose 
no 3-edge-separation $(F_1, F_2)$ of $H_t$ has the property that $|V(F_1)|, |V(F_2)| \geq 2$ and $F_i\subset H_t[Y]$ for some $i\in\{1,2\}$.
Then there is an admissible edge-reduction, edge-deletion or $K_4^-$-reduction of $G$.
\end{thm}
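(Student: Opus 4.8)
The plan is to argue by contradiction: suppose $G$ has no admissible edge-reduction, edge-deletion or $K_4^-$-reduction, and split according to whether $Y=\emptyset$. If $Y=\emptyset$, then Lemma \ref{lem:basic}(\ref{lem:basic.1}) gives $\tilde C_t=\{e\}$ for a single edge $e$, so $E-e=D_{t-1}$. Since $C_1,\dots,C_{t-1}$ is an ear decomposition of $\mathcal M(2,2)$ restricted to $D_{t-1}$ and $|D_{t-1}|\geq 2$, Lemma \ref{lem:ear}(\ref{lem:ear.1}) shows this restriction is connected, so every two edges of $G-e$ lie in a common $(2,2)$-circuit; moreover $G-e$ has no isolated vertex, because $Y=\emptyset$ forces every vertex of $G$ into some $H_i$ with $i<t$, where its degree is at least $3$ and which is a subgraph of $G-e$ (as $e\notin C_i$ for $i<t$). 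Hence $G-e$ is $\mathcal M(2,2)$-connected and deleting $e$ is admissible, a contradiction. So from now on $Y\neq\emptyset$; then by Lemma \ref{lem:basic} the set $X$ is critical in $H_t$ with $|X|\geq 4$ and $G[Y]$ is connected, and by Lemma \ref{lem:connectivity} the graph $H_t[X]$ is connected, so $E(H_t[X])\neq\emptyset$. The crucial observation is that every vertex of $Y$ has all of its incident $G$-edges inside $C_t$; in particular a node of $H_t$ lying in $Y$ is a node of $G$, and Lemma \ref{lem:nix1} applied to the critical set $X$ in $H_t$ guarantees that at least one such node exists.

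The mechanism that promotes a reduction of $H_t$ supported on $Y$ to an admissible reduction of $G$ is Lemma \ref{lem:splitinG} (for $1$-reductions), and Lemma \ref{l:k4-moves} together with Lemmas \ref{lem:connected_i-join} and \ref{lem:connected_1-sep} (for $K_4^-$-reductions). The key point: if $v$ is a node of $H_t$ in $Y$, $x,y\in N_G(v)$, $xy\notin E$, and the $1$-reduction of $H_t$ at $v$ adding $xy$ is admissible as a $(2,2)$-circuit operation, then, writing $J=H_t-v+xy$, we have $E(J)=E(H_t-v+xy)$ and $E(H_t[X])$ is a non-empty subset of $E(J)$, so $E(H_t-v+xy)\sm E(H_t[X])\subsetneq E(J)$ and Lemma \ref{lem:splitinG} makes the $1$-reduction of $G$ at $v$ admissible. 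Under the contradiction hypothesis this cannot happen; hence for every node $v$ of $H_t$ in $Y$, every $1$-reduction of $H_t$ at $v$ adding a non-edge on $N_{H_t}(v)$ is blocked. By Lemma \ref{lem:nix3} this means that either the three neighbours of $v$ are pairwise adjacent, or every such $1$-reduction is obstructed by a $v$-critical set in $H_t$; in the intermediate situation governed by Lemma \ref{lem:nix2} a second $1$-reduction or a $K_4^-$-reduction of $H_t$ is produced, which we lift to $G$ by the same devices after checking that the relevant vertices lie in $Y$. Thus each node of $H_t$ in $Y$ is ``stuck'' only via a $v$-critical set or a $K_4$, and whenever its third neighbour has degree at least $4$ in $H_t$ the obstructing set may be taken node-critical.

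To derive a contradiction we count nodes. Collecting the $v$-critical sets attached to the stuck nodes of $H_t$ in $Y$, we use the strengthened Lemma \ref{lem:nix4} to enlarge them into node-critical sets $X_1,\dots,X_n$, each critical of size at least $4$, and set $X_0:=X$. If $\mathcal Y:=V_t\sm\bigcup_{i=0}^n X_i$ contains at least two vertices, or $\bigcup_{i=0}^n H_t[X_i]$ is disconnected, then the hypothesis that $H_t$ has no $3$-edge-separation $(F_1,F_2)$ with $|V(F_1)|,|V(F_2)|\geq 2$ and one side contained in $H_t[Y]$ is exactly what allows Lemma \ref{lem:m223notink4} to apply, yielding two further nodes of $G$ inside $\mathcal Y\subseteq Y$; since these lie outside every collected critical set, the analysis of the previous paragraph (applied to them, after a short additional argument ruling out the $K_4$ and series-node subcases) forces an admissible $1$- or $K_4^-$-reduction of $G$, a contradiction. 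In the complementary case the node-critical sets cover $V_t$ up to at most one vertex and with connected union, which constrains $H_t$ so severely---via the facts that $G[V_3]$ is a forest (Lemma \ref{lem:forest}), that the $X_i$ are node-critical, and that $\delta(H_t)=3$---that an admissible $1$-reduction, $K_4^-$-reduction or edge-deletion can be exhibited directly; the small exceptions $H_t\in\{K_5^-,B_1,B_2\}$ (where $|Y|$ is small since $|X|\geq 4$) are checked by hand, using when necessary that $G$ with a suitable edge of $\tilde C_t$ deleted remains $\mathcal M(2,2)$-connected.

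I expect the main obstacle to be this last step, together with keeping the enlargement process of Lemma \ref{lem:nix4} compatible with the lifting hypothesis of Lemma \ref{lem:splitinG}. Concretely, the difficulty is in handling nodes of $H_t$ in $Y$ whose neighbourhood meets both $X$ and $Y$ (so that neither the obstructing critical sets nor the $K_4^-$-reductions produced by the circuit lemmas automatically respect the decomposition), and the interplay between $K_4$-subgraphs of $H_t$ and $v$-critical sets; this is where the circuit argument of \cite[Theorem 2.2]{JNglobal} must be genuinely extended rather than transcribed, and where the $3$-edge-separation hypothesis of the statement does its work.
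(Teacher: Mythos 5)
Your overall strategy matches the paper's: argue by contradiction, dispose of $Y=\emptyset$ via an admissible edge-deletion, use Lemma \ref{lem:splitinG} to lift $1$-reductions of $H_t$ at nodes of $Y$ up to $G$, and then run a node-counting argument with node-critical sets and Lemma \ref{lem:m223notink4}. However, there are two genuine gaps, both of which you flag as "expected obstacles" but neither of which your plan actually resolves. First, you only invoke Lemma \ref{lem:splitinG} when the $1$-reduction of $H_t$ at $v$ is itself admissible (so that $J=H_t-v+xy$). The crucial step in the paper is that Lemma \ref{lem:splitinG} applies even when the $1$-reduction of $H_t$ is \emph{blocked}: for a node $v\in Y$ with exactly two neighbours $x,y\in X$ and $xz\notin E$, the minimal $v$-critical set $X_1$ containing $\{x,z\}$ satisfies $X\cup X_1=V_t-v$ (by Lemma \ref{lem:union} and a degree count at $v$), the unique circuit $J$ in $H_t-v+xz$ has $V(J)=X_1$, and one checks $|X\cap X_1|\geq 2$ so that $C_t'\setminus E(H_t[X])=C\setminus E(H_t[X])\subsetneq C$. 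This is what eliminates nodes of $Y$ with two neighbours in $X$ and establishes $|N(v)\cap X|\leq 1$ for every node $v\in Y$ --- a property on which the subsequent claims (existence of a node in $Y$ outside every $K_4$, and existence of a node-critical set containing $X$) depend essentially. Without it your counting paragraph has no foothold.

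Second, your endgame --- "an admissible $1$-reduction, $K_4^-$-reduction or edge-deletion can be exhibited directly; the small exceptions \dots are checked by hand" --- is not an argument. The paper's conclusion requires choosing a node $v\in Y\cap V_3^*$ together with a node-critical $v$-critical set $X^*\supseteq X$ of \emph{maximum} size, proving (via the strengthened Lemma \ref{lem:nix4} and that maximality) that every node of $Z=V_t\setminus(X^*+v)$ lies in a $K_4$, and then showing that the $K_4$'s meeting $Z$ force $H_t$ into a single explicit configuration (Figure \ref{fig:wnode}), from which a specific admissible edge-reduction of $G$ is read off using a $1$-separation and Lemma \ref{l:k4-moves}. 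None of this structure is present in your sketch, and it is precisely where the $3$-edge-separation hypothesis and the maximality interact; "checked by hand" does not substitute for it.
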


\begin{proof}
We proceed by contradiction. Suppose that $G$ has no admissible edge-reductions, admissible edge-deletions or admissible $K_4^-$-reductions. 
If $Y=\emptyset$ then Lemma \ref{lem:basic}(\ref{lem:basic.1}) implies that $\tilde C_t =\{e\}$ and Lemma \ref{lem:ear}(\ref{lem:ear.1}) implies $G-e$ is $\mathcal{M}(2,2)$-connected, and so $e$ is admissible. 
Hence $Y\neq \emptyset$.
By Lemma \ref{lem:basic}(\ref{lem:basic.3}) and (\ref{lem:basic.5}), $X$ is critical in $H_{t}$ and $|X| \geq 4$. 
Lemma \ref{lem:nix1} now implies that $Y$ contains a node $v$ in $H_t$.
Note that since $C_t$ is the last $(2,2)$-circuit in the ear decomposition,
every node in $H_t$ that is contained in $Y$ is a node in $G$ also.

Label the vertices in $N(v)$ by $x,y,z$
(note that this is not ambiguous since $N_G(v) = N_{H_t}(v)$).
First suppose that $N(v)\subseteq X$.
By Lemma \ref{lem:basic}(\ref{lem:basic.4}) we have $Y=\{v\}$.
Lemma \ref{lem:basic}(\ref{lem:basic.1}) implies that $C_{1}, \dots, C_{t-1}$ is an ear decomposition of $G-v$, and so $G-v$ is $\mathcal{M}(2,2)$-connected by Lemma \ref{lem:ear}(\ref{lem:ear.1}).
If $xy \notin E$ then Lemma \ref{lem:add} implies that $G-v+xy$ is $\mathcal{M}(2,2)$-connected and hence $v$ is admissible in $G$, a contradiction.
If $xy \in E$ then $G-xy$ is a $1$-extension of $G-v$ and so Lemma \ref{lem:add} implies that $G-xy$ is $\mathcal{M}(2,2)$-connected and hence $xy$ is admissible in $G$, a contradiction.

Suppose next that $|N(v) \cap X| = 2$, say $x,y\in X$ and $z\in Y$.
If $xz, yz \in E$ then $xz, yz \in C_t$ and so, as $X$ is critical in $H_{t}$, we have $i_{H_{t}}(X \cup \{v, z\}) =
2|X \cup \{v, z\}|-1$. 
Hence $Y = \{v, z\}$ and $d_G(X,\{v, z\}) = 4$.
This in turn implies that $(G[V\sm\{v,z\}], G[\{v,x,y,z\}])$ is a 2-vertex-separation with vertex cut $\{x,y\}$.
If $xy\notin E$ then $G[V \sm \{v,z\}]+xy$ is an admissible $K_4^-$-reduction of $G$ by Lemma \ref{l:k4-moves},
a contradiction.
Hence $xy\in E$.
Note that $G[V \sm \{v,z\}] = \bigcup_{i=1}^{t-1} H_i$ by Lemma \ref{lem:basic}(\ref{lem:basic.1}),
and so $G[V \sm \{v,z\}]$ is $\mathcal{M}(2,2)$-connected by Lemma \ref{lem:ear}(\ref{lem:ear.1}).
As $G-xy$ is a $K_4^-$-extension of $G[V \sm \{v,z\}]$, Lemma \ref{l:k4-moves} implies $G-xy$ is $\mathcal{M}(2,2)$-connected which
contradicts the assumption that $G$ has no admissible edge-deletions.

Alternatively we have that $|\{xz, yz\} \cap E| \leq 1$. Then we may suppose, without loss of generality, that $xz \notin E$.
Let us denote the graph obtained by the $1$-reduction of $H_{t}$ at $v$ adding the edge $xz$ by $H'_t=(V'_t,C'_t)$.
Suppose first that this 1-reduction of $H_t$ is admissible.
Then $E(H_{t}[X]) \subsetneq C'_t$. 
As $X$ is critical in $H_{t}$ and $|X| \geq 4$, we have $E(H_{t}[X]) \neq \emptyset$. 
Therefore $E(H_{t}[X]) \cap C'_t \neq \emptyset$ and so $C'_t \sm E(H_{t}[X]) \subsetneq C'_t$. 
So $v$ is admissible in $G$ by Lemma \ref{lem:splitinG}, a contradiction.

Hence the $1$-reduction of $H_{t}$ at $v$ adding the edge $xz$ is not admissible.
By switching $x$ and $y$, we also have that if $yz \notin E$ then the $1$-reduction of $H_{t}$ at $v$ adding the edge $yz$ is not admissible. 
Since $xz \notin E$,
Lemma \ref{lem:nix3} implies that there exists a minimal $v$-critical set $X_{1}$ of $H_t$ containing $\{x, z\}$ (but not containing $v$ or $y$). 
Lemma \ref{lem:union} then implies that $X \cup X_{1}$ and $X \cap X_{1}$ are critical in $H_{t}$, and $d_{H_t}(X,X_{1}) = 0$. 
As $d_G(\{v\}, X \cup X_{1}) = d_{H_t}(\{v\}, X \cup X_{1}) = 3$,
it follows that $X \cup X_{1} = V_{t}-v$ and so $Y-v \subseteq X_{1}\setminus X$.
Let $J$ be the unique $(2,2)$-circuit in $H_t'$ and let $C=E(J)$. 
Note that the minimality of $X_1$ implies that $V(J)=X_1$. Hence $C_t' \sm E(H_t[X])\subseteq C$. 
Since $X\cap X_1$ is critical, the graph $H_t[X\cap X_1]$ is connected by Lemma \ref{lem:connectivity}. 
If $|X\cap X_1|=1$ then $X\cap X_1=\{x\}$, $yz \notin E$ and $\{v,x\}$ is a 2-vertex-separation of $H_t$. 
Consider the 1-reduction of $H_t$ that deletes $v$ and adds $yz$. 
As such a 1-reduction must be non-admissible, there exists a critical set $X_2$ containing $y$ and $z$ but not $x$ or $v$. 
However $\{x\}$ is 1-vertex-separation of $H_t[X\cup X_1]$,
contradicting Lemma \ref{lem:connectivity}.
Hence $|X\cap X_1|\geq 2$ and thus $E(H_t[X])\cap C \neq \emptyset$.
It now follows that
\begin{align*}
    C_t'\sm E(H_t[X]) = C \sm E(H_t[X]) \subsetneq C.
\end{align*}
Therefore $v$ is admissible in $G$ by Lemma \ref{lem:splitinG}, a contradiction.
Hence $|N(v) \cap X| \leq 1$,
i.e., $v$ has at most one neighbour in $X$.
Since $v$ was chosen arbitrarily,
this property holds for every node in $Y$.

\begin{claim}\label{claim:6.6}
There exists a node in $Y$ that is not contained in a subgraph of $G$ isomorphic to $K_4$.
\end{claim}

\begin{proof}[Proof of claim]
Let $X_{1}, \dots, X_{n}$ be all the subsets of $V_{t}$ that induce a $K_{4}$ subgraph in $H_{t}$ and let $X = X_{0}$. Let $\mathcal{Y} = V_t \setminus \bigcup^n_{i=0} X_i$. 
If $|\mathcal{Y}|\geq 2$ or $\bigcup^n_{i=0} H_t[X_i]$ is disconnected then, as no 3-edge-separation $(F_1, F_2)$ of $H_t$ has the property that $|V(F_1)|, |V(F_2)| \geq 2$ and $F_i\subset H_t[Y]$ for some $i\in\{1,2\}$, we can apply Lemma \ref{lem:m223notink4} to deduce the claim. So suppose otherwise.
As $\bigcup_{i = 0}^{n} H_{t}[X_{i}]$ is connected, by relabelling the subscripts from $1$ to $n$ (if necessary) we may assume that $X_{0}, \dots, X_{n}$ are ordered such that $X_{j} \cap (\bigcup_{i = 0} ^{j-1} X_{i}) \neq \emptyset$ for all $1 \leq j \leq n$. Fix $s = \min \{j: \bigcup_{i = 0}^{j}X_{i} = \bigcup_{i = 0}^{n} X_{i}\}$. 

If $s = 0$ then, trivally, the claim follows,
so we may suppose that $s \geq 1$. 
Then $\bigcup_{i = 0}^{s-1} X_{i}$ is a proper subset of $V_{t}$. 
Lemma \ref{lem:union} implies that, for each $1 \leq j \leq s-1$, the set $\bigcup_{i = 0}^{j}X_i$ is critical in $H_{t}$ and
\begin{equation}\label{eqclaim:6.6}
    d_{H_{t}}\left(\bigcup_{i = 0}^{j-1}X_{i} \sm X_j, ~ X_{j} \sm \bigcup_{i = 0}^{j-1}X_{i} \right) = 0.
\end{equation}
If $|(\bigcup_{i = 0}^{s-1} X_{i}) \cap X_{s}| \geq 2$ then, as $H_t[X_s] \cong K_4$, the induced subgraph of $H_t$ on the vertex set $(\bigcup_{i = 0}^{s-1} X_{i}) \cap X_{s}$ has at least one edge.
It follows from (\ref{eqclaim:6.6}) that any edge in the aforementioned induced subgraph must be induced by one of the sets $X_0,\dots,X_{s-1}$,
hence there exists $0 \leq j \leq s-1$ such that $|X_{j} \cap X_{s}| \geq 2$.
However, as $|X_{j} \cap X_{s}| < |X_s| = 4$, we note that $X_{j} \cap X_{s}$ is not critical in $H_{t}$ and hence Lemma \ref{lem:union} implies that $X_{j} \cup X_{s} = V_{t}$. 
So, $X_{j} = X$ and $Y = X_{s} \setminus X$ contains either 1 or two nodes. 
This contradicts our assumption that every node in $Y$ has at most one neighbour in $X$.
Hence $|(\bigcup_{i = 0}^{s-1} X_{i}) \cap X_{s}| = 1$. 
Fix $Z = \bigcup_{i = 0}^{s-1} X_{i}$, so $Z$ is critical in $H_t$.
Also fix $w$ to be the unique vertex in the set $Z \cap X_{s}$. 
As $|\mathcal{Y}| \leq 1$ we now have that $H_t$ is one of the two graphs shown in Figure \ref{fig:K_4ChainReduction}.

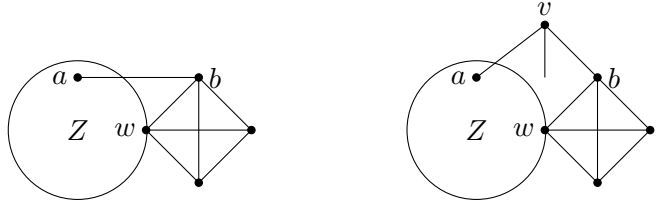
\begin{figure}[ht]
\begin{tikzpicture}[scale=0.7]
\begin{scope}[xshift=250]
    \draw (-5,10) circle (37.5pt);
    \node [rectangle, draw=white, fill=white] (b) at (-5,10) {$Z$};
    
    \filldraw (-3.7,10) circle (2pt) node[anchor=east]{$w$};
    \filldraw (-2.7,9) circle (2pt);
    \filldraw (-2.7,11) circle (2pt) node[anchor=west]{$b$};
    \filldraw (-1.7,10) circle (2pt);
    \filldraw (-5, 11) circle (2pt) node[anchor=east]{$a$};
    \filldraw (-3.7,12) circle (2pt) node[anchor=south]{$v$};
    
    \draw[black] (-3.7,10) -- (-2.7,9) -- (-1.7,10) -- (-2.7, 11) -- (-3.7, 10) -- (-1.7, 10);
    \draw[black] (-2.7, 9) -- (-2.7,11);
    \draw[black] (-5, 11) -- (-3.7, 12) -- (-2.7, 11);
    \draw[black] (-3.7, 12) -- (-3.7, 11);
\end{scope}
\begin{scope}[xshift=-250]
    \draw (5,10) circle (37.5pt);
    \node [rectangle, draw=white, fill=white] (b) at (5,10) {$Z$};
    
    \filldraw (6.3,10) circle (2pt) node[anchor=east]{$w$};
    \filldraw (7.3,9) circle (2pt);
    \filldraw (7.3,11) circle (2pt) node[anchor=west]{$b$};
    \filldraw (8.3,10) circle (2pt);
    \filldraw (5, 11) circle (2pt) node[anchor=east]{$a$};
    
    \draw[black] (6.3,10) -- (7.3,9) -- (8.3,10) -- (7.3, 11) -- (6.3, 10) -- (8.3, 10);
    \draw[black] (7.3,9) -- (7.3,11);
    \draw[black] (7.3, 11) -- (5, 11);
\end{scope}
\end{tikzpicture}
\caption{The two possibilities for the structure of $H_t$ in Claim \ref{claim:6.6} when $Z \cap X_{s} = \{w\}$; $\mathcal{Y} = \emptyset$ (left) and $\mathcal{Y} = \{v\}$ (right).} \label{fig:K_4ChainReduction}
\end{figure}

Suppose that $\mathcal{Y} = \emptyset$ (i.e., $H_t$ is the graph on the left in Figure \ref{fig:K_4ChainReduction}). Then $Z \cup X_s = \bigcup_{i = 0}^s X_{i} = V_{t}$.
As $H_t$ is a $(2,2)$-circuit, there exists $a \in Z$ and $b \in X_s \sm Z$ such that $ab \in C_t$.
Then $(H_{t}[Z], H_{t}[X_{s}+a])$ is a non-trivial 2-vertex-separation of $H_{t}$. Moreover, as $X_{s} -w \subseteq Y$, $(G[V \setminus (X_{s} - w)], G[X_{s} +a])$ is a non-trivial 2-vertex-separation of $G$.

Suppose that $aw \in E$. 
Let $(G_{1}, G_{2})$ be the 2-separation of $G$ with respect to the 2-vertex-separation $(G[V \setminus (X_{s} - w)], G[X_{s}+a])$. 
By Lemma \ref{lem:connected_2/3-sep}, $G_{1}$ and $G_{2}$ are $\mathcal{M}(2,2)$-connected. The edge-reduction of $G$ that deletes $bw$ and contracts $ab$ gives the graph $G_{1}$, so there is an admissible edge-reduction of $G$.
This is a contradiction, so
we must have $aw \notin E$. 
Let $(G_{1}, G_{2})$ be the 1-separation of $G$ with respect to the 2-vertex-separation $(G[V \setminus (X_{s} -w)], G[X_s +a])$ and let $( G'_{2},G'_{1})$ be the 1-separation of $G$ with respect to the 2-vertex-separation $(G[X_s +a], G[V \setminus (X_{s} -w)])$. 
As $\delta(G'_2) \leq 2$, Lemma \ref{lem:connected_1-sep} implies $G_{1}$ and $G_{2}$ are $\mathcal{M}(2,2)$-connected. 
The edge-reduction of $G$ that deletes $wb$ and contracts $ab$ gives a graph isomorphic to a $K_4^{-}$-extension of $G_1$. 
Lemma \ref{l:k4-moves} implies that this edge-reduction gives an $\mathcal{M}(2,2)$-connected graph.
However this also implies that there is an admissible edge-reduction of $G$,
another contradiction.

Hence $\mathcal{Y}=\{v\}$ (i.e., $H_t$ is the graph on the right in Figure \ref{fig:K_4ChainReduction}). Then $Z \cup X_s = \bigcup_{i=0}^s X_{i}$ is critical in $H_t$.
As $H_{t}$ is a $(2,2)$-circuit, it follows from Lemma \ref{lem:connectivity} that $d_{H_t}(v) =3$, and $v$ has neighbours $a,b$ in $H_t$ where $a \in Z$ and $b \notin Z$.
Since $a \in Z$, $b\in X_s \sm Z$ and $v \notin Z \cup X_s$, Lemma \ref{lem:union} implies $ab \notin C_t$.
As $b \in Y$, we have $ab \notin E$ also.
Hence $v$ is not contained in a subgraph of $G$ isomorphic to $K_4$.
\end{proof}

Recall that $V_3$ is the set of nodes in $G$\footnote{This is technically an abuse of notation, since the $(2,2)$-circuit $H_3$ has vertex set $V_3$. There is no ambiguity, however, since we only ever refer to arbitrary $(2,2)$-circuits (i.e., $H_i=(V_i,C_i)$) or $H_t$.} and $V_3^*$ is the set of nodes in $G$ that are not contained in a subgraph of $G$ isomorphic to $K_4$.
By Claim \ref{claim:6.6},
we have $Y \cap V_3^* \neq \emptyset$.
For an arbitrary node $v \in Y \cap V_3^*$, given that $N(v) = \{x,y,z\}$ and $|N(v) \cap X| \leq 1$,
we will assume without loss of generality that $y,z \in Y$.
If the $1$-reduction of $H_{t}$ at $v$ adding the edge $e \in \{xy, xz, yz\}$ is admissible then, by a similar argument as in the case that $|N(v) \cap X| = 2$, it follows that $v$ is admissible in $G$, a contradiction.
So $v$ is non-admissible in $H_{t}$. Note also that if there is an admissible $K_4^-$-reduction of $H_t$, say deleting $v,y$ and adding the edge $xz$, then the corresponding $K_4^-$-reduction of $G$ is also admissible (since $|N(v) \cap X| \leq 1$).
Hence, by Lemma \ref{lem:nix2},
$xy,xz,yz \notin C_t$,
and so $xy,xz,yz \notin E$ also.

\begin{claim}\label{claim:6.7}
There exists a node $v \in Y \cap V_3^*$ and a $v$-critical set $X^*$ in $H_t$ such that $X^*$ is node-critical in $H_t$ and $X\subseteq X^*$.
\end{claim}

\begin{proof}[Proof of claim]
Since $Y\cap V_3^*$ is non-empty,  Lemma \ref{lem:forest} implies that $H_t[Y\cap V_3^*]$ is a forest. 
Hence we may fix $v \in Y\cap V_3^*$ to be a leaf.
As $xy,xz,yz \notin C_t$ and $v$ is not admissible in $H_t$,
it follows from Lemma \ref{lem:nix3} that there exist minimal $v$-critical sets $X_1$, $X_2$ and $X_3$ in $H_t$ containing $\{x,z\}$, $\{y, z\}$ and $\{x,y\}$ respectively. 
If $x \in X$ then Lemma \ref{lem:union} implies that both $X\cup X_1$ and $X \cup X_3$ are $v$-critical sets containing $X$ in $H_t$.
As $v$ is a leaf of the forest $H_t[Y \cap V_3^*]$ and $y,z \in Y$,
we have $d_{H_t}(y) \geq 4$ or $d_{H_t}(z) \geq 4$.
Hence one of $X\cup X_1$ and $X \cup X_3$ is node-critical as required.
Suppose instead that $x \notin X$, i.e., $x,y,z\in Y$.
As $v$ is a leaf in $H_t[Y \cap V_3^*]$,
we may assume, without loss of generality, that $d_{H_t} (x)\geq 4$ and $d_{H_t} (y)\geq 4$. 
By Lemma \ref{lem:union},
$X_1 \cup X_2$ is a critical set in $H_t$.
The subgraph of $H_t$ induced by $X_1 \cup X_2 +v$ is a $(2,2)$-circuit,
and so $X_1\cup X_2 =V_t-v$.
Hence $X\cap X_i$ is non-empty for some $i \in \{1,2\}$ and therefore Lemma \ref{lem:union} implies that $X\cup X_i$ is the required $v$-critical, node-critical set in $H_{t}$ containing $X$.
\end{proof}

We may chose a node $v \in Y \cap V_3^*$ and a $v$-critical and node-critical set $X^*\subsetneq V_t$ 
such that $X \subseteq X^*$ and $|X^*|$ is as large as possible over all such choices of $v$ and $X^*$.
Fix $z$ to be the unique neighbour of $v$ that is not contained in $X^*$.
Since $X^*$ is node-critical, $d_{H_t}(z) \geq 4$,
and so $d_G(z) \geq 4$.
By applying Lemma \ref{lem:nix1} to the set $X^* +v$, which is critical in $H_t$, we deduce that the set $Z := V_t\sm (X^*+ v)$ contains a node.

\begin{claim}\label{claim:6.8}
    Every node in $Z$ is contained in a subgraph of $H_t$ isomorphic to $K_4$.
\end{claim}

\begin{proof}[Proof of claim]
    By Lemma \ref{lem:forest}, the graph $H_t[Z \cap  V_3]$ is a forest.
    Choose a leaf $w$ of $H_t[Z \cap V_3]$.
    The node $w$ has at most one neighbour in $X^*$;
    if $w$ had three neighbours in $X^*$ then the induced subgraph of $H_t$ on $X^* + w \subsetneq V_t$ would be a $(2,2)$-circuit, and if $w$ had two neighbours in $X^*$ then $X^*+w$ would be a larger $v$-critical and node-critical set than $X^*$.
    Hence either: (i) $w$ is a leaf node in $H_t$,
    (ii) $w$ is a series node in $H_t$ with exactly one neighbour in $X^*$,
    and said neighbour is also a node,
    or (iii) $w$ is contained in a subgraph of $H_t$ (and hence also $G$) isomorphic to $K_4$.
    If (i) or (ii) hold (i.e., $w \in Y \cap V_3^*$) then (as noted earlier) there are no edges of $H_t$ between the neighbours of $w$.
    By Lemma \ref{lem:nix4},
    there exists a node critical set in $H_t$ strictly containing $X^*$,
    contradicting the maximality of $X^*$.
    Hence every leaf of $H_t[Z \cap V_3]$ is contained in a subgraph of $H_t$ isomorphic to $K_4$.
    
    Suppose that $Z$ contains a node that is not a leaf of $H_t[Z \cap V_3]$.
    Then there exists a node $w \in Z$ that is not a leaf of $H_t[Z \cap V_3]$ with neighbours $a, b \in N(w) \cap Z \cap V_3$ where $a$ is a leaf of $H_t[Z \cap V_3]$. 
    Then $a$ is contained in a subgraph of $H_t$ isomorphic to $K_4$. Since $a$ and $w$ are nodes, $ab \in C_t$. 
    This contradicts the fact that $H_t[Z \cap V_3]$ is a forest. Therefore every node in $Z$ is a leaf of $H_t[Z \cap V_3]$ and hence is contained in a subgraph isomorphic to $K_4$.
\end{proof}

Let $X_0 = X^*$ and let $X_1, \dots, X_k$ be the critical sets in $H_t$ such that for $1 \leq i \leq k$, $|X_i| = 4$ and $X_i \nsubseteq X^*$. As $Z$ contains a node, we can fix a node $w \in Z$. Claim \ref{claim:6.8} then implies that $k \geq 1$.
Suppose that the induced subgraph $\bigcup_{i=0}^k H_t[X_i]$ is disconnected.
By Lemma \ref{lem:m223notink4}, the set $V_t \setminus (\bigcup_{i=0}^k X_i) \subseteq Y$ contains at least two nodes of $H_t$. 
So $V_t \setminus (\bigcup_{i=0}^k X_i + v) \subseteq Z$ contains a node, say $w'$, of $H_t$.
As $w' \notin \bigcup_{i = 0}^k X_i$, $w'$ is not contained in a subgraph of $H_t$ isomorphic to $K_4$, however this contradicts Claim \ref{claim:6.8}. 
Therefore $\bigcup_{i=0}^k H_t[X_i]$ is connected.

By reordering $X_1,\dots, X_k$, we may assume that the induced subgraph $\bigcup_{i=0}^s H_t[X_i]$ is connected for each $1 \leq s \leq k$.
By the maximality of $X^*$, the fact that $k \geq 1$, and Lemma \ref{lem:union},
it follows that $X^* \cup X_1$ must contain $z$.
As $H_t$ is a $(2,2)$-circuit and all three neighbours of $v$ are in $X^* \cup X_1$,
it follows that $X^* \cup X_1 = V_t -v$.
Hence $X_1=\{a,b,w,z\}$ and $N(w) = \{a,b,z\}$, where $a \in X^*$ and $b \notin X^*$.
Further,
Lemma \ref{lem:union} implies $N(z) = \{v,a,b,w\}$ and $d_{H_t}(b) = 3$ as depicted in Figure \ref{fig:wnode}.

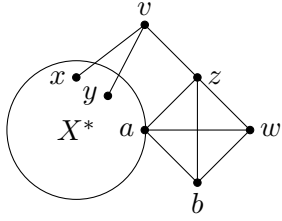
\begin{figure}[ht]
\begin{tikzpicture}[scale=0.7]

\draw (-5,10) circle (37.5pt);
\node [rectangle, draw=white, fill=white] (b) at (-5,10) {$X^*$};

\filldraw (-3.7,10) circle (2pt) node[anchor=east]{$a$};
\filldraw (-2.7,9) circle (2pt) node[anchor=north]{$b$};
\filldraw (-2.7,11) circle (2pt) node[anchor=west]{$z$};
\filldraw (-1.7,10) circle (2pt) node[anchor=west]{$w$};
\filldraw (-5, 11) circle (2pt) node[anchor=east]{$x$};
\filldraw (-4.4, 10.65) circle (2pt) node[anchor=east]{$y$};
\filldraw (-3.7,12) circle (2pt) node[anchor=south]{$v$};

\draw[black] (-3.7,10) -- (-2.7,9) -- (-1.7,10) -- (-2.7, 11) -- (-3.7, 10) -- (-1.7, 10);
\draw[black] (-2.7, 9) -- (-2.7,11);
\draw[black] (-5, 11) -- (-3.7, 12) -- (-2.7, 11);
\draw[black] (-3.7, 12) -- (-4.4, 10.65);

\end{tikzpicture}
\caption{The only possible configuration of $H_t$.} \label{fig:wnode}
\end{figure}

First note that, given $J_1 = G[V\setminus\{b,w,z\}]$ and $J_2 = G[X_1 +v]$, the pair $(J_1,J_2)$ is a 2-vertex separation of $G$.
Let $(G_{1}, G_{2})$ be the 1-separation of $G$ with respect to $(J_1,J_2)$,
and let $(G_{2}', G_{1}')$ be the 1-separation of $G$ with respect to $(J_2,J_1)$.
As $\delta(G'_2) \leq 2$, Lemma \ref{lem:connected_1-sep} implies that $G_1$ is $\mathcal{M}(2,2)$-connected. 
The edge-reduction of $G$ that deletes $za$ and contracts $vz$ gives a graph isomorphic to a $K_4^-$-extension of $G_1$.
Lemma \ref{l:k4-moves} implies that this edge-reduction gives an $\mathcal{M}(2,2)$-connected graph, 
so there is an admissible edge-reduction of $G$.
\end{proof}

An {\em atom} of $G$ is a subgraph $F$ such that $F$ is an element of a non-trivial 2-vertex-separation or non-trivial 3-edge-separation of $G$ and no proper subgraph of $F$ has this property.
Note that if $(F_1,F_2)$ is a non-trivial 2-vertex-separation or non-trivial 3-edge-separation of $G$,
then both $F_1$ and $F_2$ must contain atoms.

\begin{thm}\label{thm:recurse2}
If $G$ is an $\mathcal{M}(2,2)$-connected graph, distinct from $K_5^-$ and $B_{1}$, then there is an admissible edge-reduction, $K_{4}^{-}$-reduction, or edge-deletion of $G$.
\end{thm}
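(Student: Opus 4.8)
The plan is to reduce everything to Theorem~\ref{thm:key}, supplying it with a suitable ear decomposition, with one case handled separately by Theorem~\ref{thm:construction}. First I would dispose of the case that $G$ is itself a $(2,2)$-circuit: then, since $G\notin\{K_5^-,B_1\}$, Theorem~\ref{thm:construction} exhibits a $K_4^-$-reduction or an edge-reduction of $G$ whose result is a $(2,2)$-circuit, hence $\mathcal{M}(2,2)$-connected; for a $K_4^-$-reduction the re-added edge is necessarily absent from $G$, so Lemma~\ref{l:k4-moves} confirms admissibility, and for an edge-reduction the result being a $(2,2)$-circuit already gives $\mathcal{M}(2,2)$-connectivity. (An edge-deletion is never needed here since a $(2,2)$-tight graph is $(2,2)$-sparse, hence $\mathcal{M}(2,2)$-independent.) So from now on $G$ is not a $(2,2)$-circuit, and by Lemma~\ref{lem:ear}\eqref{lem:ear.1} every ear decomposition of $G$ has $t\geq 2$ ears.

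\textbf{The highly connected case.} Suppose $G$ has no non-trivial $2$-vertex-separation and no non-trivial $3$-edge-separation. Fix any ear decomposition $C_1,\dots,C_t$ of $G$, set $H_i=G[C_i]$, $Y=V_t\sm\bigcup_{i<t}V_i$, $X=V_t\sm Y$. I claim $H_t$ satisfies the hypothesis of Theorem~\ref{thm:key}, i.e.\ no $3$-edge-separation $(F_1,F_2)$ of $H_t$ has $|V(F_1)|,|V(F_2)|\geq 2$ with $F_i\subset H_t[Y]$. Indeed, if $F_1\subset H_t[Y]$ for such a separation with edge cut $S$, then since every $G$-edge incident to $V(F_1)\subseteq Y$ lies in $C_t$, the set $S$ is also an edge cut of $G$ separating $F_1$ from $G-V(F_1)$; a short count (as in the proofs above, $F_1+v_1$ being a $(2,2)$-circuit when $S$ is independent by Lemma~\ref{lem:circuit_dec2}) shows $F_1$ is $(2,2)$-tight, hence connected with minimum degree $\geq 1$ in $F_1$, and $|V(F_1)|\geq 4$. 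If $S$ is independent this is a non-trivial $3$-edge-separation of $G$, a contradiction; if $S$ is not independent, two edges of $S$ meet at a vertex $w$, which therefore has at most one edge inside its own side, and deleting $w$ together with the far endpoint of the remaining edge of $S$ disconnects a subgraph on $\geq 2$ vertices from the rest, yielding a non-trivial $2$-vertex-separation of $G$ — again a contradiction. Hence Theorem~\ref{thm:key} applies and produces an admissible edge-reduction, $K_4^-$-reduction, or edge-deletion.

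\textbf{The case with a non-trivial separation.} Suppose $G$ has a non-trivial $2$-vertex-separation or non-trivial $3$-edge-separation; choose one, $(H_1,H_2)$, with a \emph{minimal} side $F$ (an atom). Forming the associated $j$-separation $(G_1,G_2)$, Lemmas~\ref{lem:connected_2/3-sep} and~\ref{lem:connected_1-sep} give that the piece $G_1$ containing $F$ (and at least one of its two completions in the $1$-separation case) is $\mathcal{M}(2,2)$-connected; by minimality of $F$ this piece inherits no non-trivial separation of its own, so it is either a $(2,2)$-circuit or falls under the highly connected case — unless it is $K_5^-$ or $B_1$, in which case $F$ is one of a short explicit list of small graphs and an admissible operation of $G$ is exhibited by hand. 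In the generic situation we obtain an admissible edge-reduction, $K_4^-$-reduction, or edge-deletion of that piece, and recombine: $G$ is the corresponding $j$-join of the two pieces (Lemma~\ref{lem:connected_i-join}), so as long as the operation takes place in the interior of $F$ — away from the gluing vertices and the (at most three) gluing edges — performing the same operation in $G$ keeps both pieces $\mathcal{M}(2,2)$-connected, hence keeps $G$ $\mathcal{M}(2,2)$-connected by Lemma~\ref{lem:connected_i-join}, and we are done.

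\textbf{Main obstacle.} The delicate point is this last \emph{lifting} step: one must arrange that the admissible operation found on the reduced piece can be chosen to avoid the gluing gadget (the degree-$3$ vertices created by the separation and their incident edges). This requires some control over \emph{where} Theorems~\ref{thm:key} and~\ref{thm:construction} locate the reduction — for instance, by choosing the ear decomposition of the reduced piece so that the new gluing vertices lie in the ``old part'' $X$ of its final ear (so that the nodes produced in $Y$ by the proof of Theorem~\ref{thm:key} are genuinely interior to $F$), and by using that a $(2,2)$-circuit with no non-trivial separation has at least two admissible nodes (Theorem~\ref{thm:circnodes}) of which at most a bounded number can touch the gadget. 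Combined with the finitely many explicit base-case verifications when the reduced piece is $K_5^-$ or $B_1$, this completes the argument; the bookkeeping for these local moves is routine given Lemmas~\ref{lem:circuit_dec}, \ref{l:k4-moves}, \ref{lem:connected_i-join}, but it is where the real care is needed.
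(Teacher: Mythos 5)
Your high-level architecture coincides with the paper's: handle $(2,2)$-circuits via Theorem~\ref{thm:construction}, invoke Theorem~\ref{thm:key} when its hypothesis is available, and otherwise decompose along an atom, find an admissible operation in the small piece, and lift it back through the join lemmas. However, the step you yourself flag as the ``main obstacle'' --- arranging that the operation found in the reduced piece avoids the gluing gadget --- is not routine bookkeeping that can be deferred: it is where essentially the entire proof lives (the paper's Cases 1, 2, 3a and 3b occupy several pages), and your sketch of it fails at identifiable points. First, when the completed atom piece $G_1$ is a $(2,2)$-circuit with no non-trivial separation, Theorem~\ref{thm:circnodes} has \emph{three} exceptions, $K_5^-$, $B_1$ and $B_2$; you list only the first two. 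The $B_2$ exception is not cosmetic: in the paper's Case~\ref{case3b} with $G_1\cong B_2$ there is no admissible move available inside the atom, and the proof must instead locate an admissible operation in the \emph{other} piece $G_3$ and transfer it across the separation --- this forces a strong induction on $|V|+|E|$ which your proposal never sets up. Second, your fallback ``two admissible nodes, of which only a bounded number can touch the gadget'' breaks in the 1-separation case: there the gluing vertices $x,y$ are genuine vertices of $G_1$ and can be its \emph{only} admissible nodes, and the paper needs a separate argument (showing the edge added by the 1-reduction at $x$ must be $ya$ or $yb$ and is absent from $E$) to lift that move. Third, the claim that the completed piece ``inherits no non-trivial separation by minimality of $F$'' requires proof, since the gadget adjoined by a $j$-separation can itself create a 2-vertex-separation; the paper rules this out case by case (e.g.\ showing it would force $G_1\cong B_1$ and hence $F\cong K_4$, contradicting atomicity).

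A secondary structural issue: the paper does not split on whether $G$ is highly connected, but on whether the \emph{last ear} $H_t$ satisfies the hypothesis of Theorem~\ref{thm:key}; when it fails, the offending 3-edge-separation lies inside $Y$, and the atom is chosen inside $Y$ up to at most one vertex of $X$. That localization is what guarantees that nodes of the circuit piece are nodes of $G$ and that the edges added by the candidate reductions are not already edges of $G$ --- facts used repeatedly in the lifting. Taking an arbitrary atom of $G$, as you propose, forfeits this control and you would have to recover it by hand. (Minor further point: in your ``highly connected'' reduction, the 2-vertex-separation you build from a non-independent edge cut is not automatically non-trivial, since one side could induce a $K_4$.) As written, then, the proposal is a correct plan with the decisive case analysis missing.
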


\begin{proof}
The case where $G$ is a $(2,2)$-circuit is Theorem \ref{thm:construction}. 
Suppose $G$ is not a $(2,2)$-circuit. 
Furthermore, suppose that every $\mathcal{M}(2,2)$-connected graph $H$ for which $|E(H)| +|V(H)|<|E|+|V|$ is either $K_5^-$ or $B_1$, or contains an admissible edge-reduction, $K_{4}^{-}$-reduction, or edge-deletion (we will not require this assumption until later in Case \ref{case3b}).
By Theorem \ref{thm:key} and in the notation set up there, it remains to consider the case where there exists a 3-edge-separation $(F_1, F_2)$ of $H_t$ such that $|V(F_1)|, |V(F_2)| \geq 2$ and $F_1\subset H_t[Y]$. 
As $G$ is $\mathcal{M}(2,2)$-connected, $G$ is 2-connected by Lemma \ref{lem:Mcon} and so $G$ has the structure of one of the four cases shown in Figure \ref{fig:3-edge-seps}.

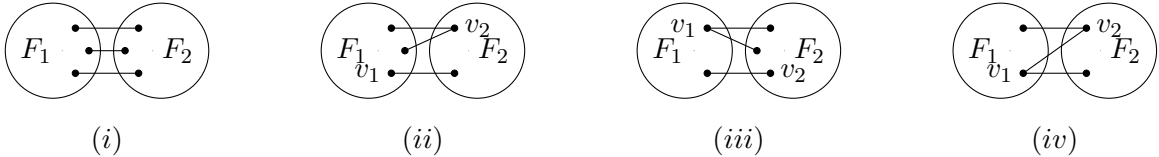
\begin{figure}[htp]
\begin{tikzpicture}[scale=0.6]

\draw (1.2,-3) circle (30pt); 
\draw (-1.2,-3) circle (30pt);

\filldraw (0.7,-2.5) circle (2pt);
\filldraw (.7,-3.5) circle (2pt);
\filldraw (.4,-3) circle (2pt); 
\filldraw (-.7,-2.5) circle (2pt);
\filldraw (-.7,-3.5) circle (2pt);
\filldraw (-.4,-3) circle (2pt);

\draw[black] (-.4,-3) -- (.4,-3);

\draw[black] (-.7,-2.5) -- (.7,-2.5);

\draw[black] (-.7,-3.5) -- (.7,-3.5);

\filldraw (-1,-3) circle (0pt)node[anchor=east]{$F_1$};
\filldraw (1,-3) circle (0pt)node[anchor=west]{$F_2$};

\node [rectangle, draw=white, fill=white] (b) at (0,-5) {$(i)$};

\draw (8.2,-3) circle (30pt); 
\draw (5.8,-3) circle (30pt);

\filldraw (7.7,-2.5) circle (2pt) node[anchor=west]{$v_2$}; 
\filldraw (7.7,-3.5) circle (2pt);
\filldraw (6.3,-2.5) circle (2pt);
\filldraw (6.3,-3.5) circle (2pt) node[anchor=east]{$v_1$}; 
\filldraw (6.6,-3) circle (2pt);

\draw[black] (6.6,-3) -- (7.7,-2.5);

\draw[black] (6.3,-2.5) -- (7.7,-2.5);

\draw[black] (6.3,-3.5) -- (7.7,-3.5);

\filldraw (6,-3) circle (0pt)node[anchor=east]{$F_1$};
\filldraw (8,-3) circle (0pt)node[anchor=west]{$F_2$};

\node [rectangle, draw=white, fill=white] (b) at (7,-5) {$(ii)$};

\draw (15.2,-3) circle (30pt); 
\draw (12.8,-3) circle (30pt);

\filldraw (14.7,-2.5) circle (2pt);
\filldraw (14.7,-3.5) circle (2pt) node[anchor=west]{$v_2$}; 
\filldraw (14.4,-3) circle (2pt);
\filldraw (13.3,-2.5) circle (2pt) node[anchor=east]{$v_1$}; 
\filldraw (13.3,-3.5) circle (2pt);
\draw[black] (13.3,-2.5) -- (14.4,-3);

\draw[black] (13.3,-2.5) -- (14.7,-2.5);

\draw[black] (13.3,-3.5) -- (14.7,-3.5);

\filldraw (13,-3) circle (0pt)node[anchor=east]{$F_1$};
\filldraw (15,-3) circle (0pt)node[anchor=west]{$F_2$};

\node [rectangle, draw=white, fill=white] (b) at (14,-5) {$(iii)$};

\draw (22.2,-3) circle (30pt); 
\draw (19.8,-3) circle (30pt);

\filldraw (21.7,-2.5) circle (2pt) node[anchor=west]{$v_2$}; 
\filldraw (21.7,-3.5) circle (2pt);
\filldraw (20.3,-2.5) circle (2pt);
\filldraw (20.3,-3.5) circle (2pt) node[anchor=east]{$v_1$}; 

\draw[black] (20.3,-3.5) -- (21.7,-2.5);

\draw[black] (20.3,-2.5) -- (21.7,-2.5);

\draw[black] (20.3,-3.5) -- (21.7,-3.5);

\filldraw (20,-3) circle (0pt)node[anchor=east]{$F_1$};
\filldraw (22,-3) circle (0pt)node[anchor=west]{$F_2$};

\node [rectangle, draw=white, fill=white] (b) at (21,-5) {$(iv)$};
\end{tikzpicture}
\caption{The four types of 3-edge-separation.} \label{fig:3-edge-seps}
\end{figure}

Firstly, we show there exists a non-trivial 2-vertex-separation or 3-edge-separation, $(F'_1, F'_2)$, of $H_t$ such that $|V(F'_1) \cap X| \leq 1$ and $F'_1$ is an atom of $H_t$ contained in $F_1$. 
In case (i), $F_1$ contains an atom of $H_t$ as $(F_1,F_2)$ is non-trivial.
For case (ii)--(iv),
label the following vertices:
for case (ii), let $v_2$ be the vertex of $F_2$ with two neighbours in $F_1$ and let $v_1$ be the vertex of $F_1$ not adjacent to $v_2$; 
for case (iii), let $v_1$ be the vertex of $F_1$ with two neighbours in $F_2$ and let $v_2$ be the vertex of $F_2$ not adjacent to $v_1$;
for case (iv), let $v_2$ be the vertex of $F_2$ with two neighbours in $F_1$ and let $v_1$ be the vertex of $F_1$ with two neighbours in $F_2$. 
Then, in cases (ii)--(iv), we note that $(H_{t}[V(F_1) + v_2], H_t[V(F_2) + v_1])$ is a non-trivial 2-vertex-separation of $H_t$. 
Hence $H_{t}[V(F_1) + v_2]$ contains an atom of $H_t$. 
In each of cases (i)--(iv), let $F'_1$ be the atom of $H_t$ contained in $F_1$ or $H_t[V(F_1) + v_2]$ respectively, and let $(F'_1, F'_2)$ be the corresponding 2-vertex-separation or 3-edge-separation of $H_t$.

Note that, in any of the cases (ii)--(iv), if $(F'_1, F'_2)$ is a non-trivial 3-edge-separation, then $v_2 \notin V(F'_1)$.
So, if $(F'_1, F'_2)$ is a non-trivial 3-edge-separation then $V(F'_1) \subseteq Y$.
If $(F'_1, F'_2)$ is a non-trivial 2-vertex-separation of $H_{t}$,
then $|V(F'_1) \setminus V(F_1)| \leq 1$, so $|V(F'_1) \cap X| \leq 1$.
Hence, whether $(F_1',F_2')$ is a non-trivial 2-vertex-separation or a non-trivial 3-edge-separation of $H_{t}$,
the subgraph $F_1'$ is also an atom of $G$.

We now split the proof into three cases.

\begin{case}{1}
$(F'_1, F'_2)$ is a non-trivial 3-edge-separation of $H_{t}$.
\end{case}

As $V(F'_1) \subseteq Y$, $(F'_1, G[V \setminus V(F'_1)])$ is a non-trivial 3-edge-separation of $G$. Let $F'_3 = G[V \setminus V(F'_1)]$ and $S = E \setminus (E(F'_1) \cup E(F'_3)) = \{xa, yb, zc\}$, where $x, y, z \in V(F'_1)$. Let $(G_1, G_2)$ be the 3-separation of $H_t$ with respect to $(F'_1, F'_2)$ and let $(G_1, G_3)$ be the 3-separation of $G$ with respect to $(F'_1, F'_3)$. 
Lemma \ref{lem:circuit_dec2} implies $G_1$ is a $(2,2)$-circuit and Lemma \ref{lem:connected_2/3-sep} implies $G_3$ is $\mathcal{M}(2,2)$-connected. As $F'_1$ is an atom of $H_t$, there are no non-trivial 2-vertex-separations or 3-edge-separations of $G_1$ and $G_{1} \ncong B_{2}$. Hence Theorem \ref{thm:circnodes} implies that $G_1 \cong K_5^-$, $G_1 \cong B_1$, or $G_1$ contains two admissible nodes. 

If $G_1 \cong K_5^-$ then $F'_1 \cong K_4$. Let $G'$ be the graph given by the edge-reduction of $G$ that contracts $xa$ and deletes $xy$. Then $G'$ is isomorphic to a graph given by applying two 1-extensions to $G_3$, so $G'$ is $\mathcal{M}(2,2)$-connected by Lemma \ref{lem:add}. Hence $G$ has an admissible edge-reduction.
If $G_1 \cong B_1$ then one of the vertices $x,y,z$ has degree 3 in $G_1$ and the other two have degree 5 in $G_1$.
Without loss of generality we may suppose that $d_{G_1}(x) = 3$, and so $d_{F'_{1}}(x) = 2$. 
Let $G'$ be the graph given by the edge-reduction of $G$ that contracts $xa$ and deletes $xy$. 
Then $G'$ is isomorphic to a 1-join of $(G_3, B_2)$, so $G'$ is $\mathcal{M}(2,2)$-connected by Lemma \ref{lem:connected_i-join}. 
Hence $G$ has an admissible edge-reduction.

Suppose that $G_1$ contains two admissible nodes.
Set $v$ to be the vertex added to $F_1'$ to form $G_1$ (i.e., $V(G_1) = V(F'_1) + v$ and $N_{G_1}(v) = \{x,y,z\}$).
Since $G_1$ contains two admissible nodes,
one of the admissible nodes is contained in $V(F'_1)$. 
Let $u$ be this admissible node, and set $N_{G_{1}}(u) = \{r, s, t\}$. 
Suppose the admissible 1-reduction of $G_1$ at $u$ adds the edge $rs$. 
The resulting graph is isomorphic to the one given by the edge-reduction of $G_1$ that contracts $ur$ and deletes $ut$, which we call $G'_1$. 
Since this 1-reduction of the $(2,2)$-circuit $G_1$ is admissible (and so cannot create a degree 2 vertex),
it follows that $v \neq t$.
Without loss of generality we shall suppose that $s \neq v$ also.
Let $N_{G}(u) = \{r', s, t\}$, where $r' = r$ if $r \neq v$, and $r' \in \{a,b,c\}$ if $r=v$. 
Let $G'$ be the graph given by the 1-reduction of $G$ that contracts $ur'$ and deletes $ut$. 
Then $G'$ is a 3-join of $(G'_1, G_3)$ and so $G'$ is $\mathcal{M}(2,2)$-connected by Lemma \ref{lem:connected_i-join}. 
Hence $G$ has an admissible edge-reduction.

\begin{case}{2}
$(F'_1, F'_2)$ is a non-trivial 2-vertex-separation of $H_{t}$ and $E(F'_1) \cap E(F'_2) \neq \emptyset$.
\end{case}

Let $V(F'_1) \cap V(F'_2) = \{x, y\}$ and $F'_3 = G[(V \setminus V(F'_1)) \cup \{x, y\}])$.
As $|V(F'_1) \cap X| \leq 1$, 
$(F_1', F_3')$ is a non-trivial 2-vertex-separation of $G$ and $E(F'_1) \cap E(F'_3) =\{xy\}$.
Let $(G_1, G_2)$ be the 2-separation of $H_t$ with respect to $(F'_1, F'_2)$ and let $(G_1, G_3)$ be the 2-separation of $G$ with respect to $(F'_1, F'_3)$. 
Lemma \ref{lem:circuit_dec2} implies $G_1$ is a $(2,2)$-circuit and Lemma \ref{lem:connected_2/3-sep} implies $G_3$ is $\mathcal{M}(2,2)$-connected. 

As $F'_1$ is an atom of $G$, there is no non-trivial 3-edge-separation of $G_1$.
If there exists a non-trivial 2-vertex-separation of $G_1$,
then $\{x,y\}$ must be the corresponding vertex cut.
As $F'_1$ is an atom of $G$, one of the components of the 2-vertex-separation must be a $K_4$ subgraph of $F'_1$ containing $xy$.
Hence $B_1$ is a subgraph of $G_1$ where $\{x,y\}$ is a vertex cut of $B_1$.
As $G_1$ is a $(2,2)$-circuit, this implies $G_1 \cong B_1$. 
However, then $F'_1 \cong K_4$, which contradicts the fact that $F'_1$ is an atom of $H_t$. Hence there exist no non-trivial 2-vertex-separations or 3-edge-separations of $G_1$ and moreover $G_1 \ncong B_1$. 
Also, $G_1 \ncong K_5^-$, since $G_1$ is not 3-connected. 
So, Theorem \ref{thm:circnodes} implies that $G_1 \cong B_2$ or $G_1$ contains two admissible nodes. 

If $G_1 \cong B_2$ then we may suppose without loss of generality that $d_{F'_{1}}(x) = 2$ and set $N_{F'_{1}}(x) = \{y, a\}$. Let $G'$ be the graph given by the edge-reduction of $G$ that contracts $xa$ and deletes $xy$. Then $G' \cong G_{3}$, so $G'$ is $\mathcal{M}(2,2)$-connected. Hence $G$ has an admissible edge-reduction.

If $G_1$ contains two admissible nodes then let $\{w, z\} = V(G_1) \setminus V(F'_1)$ and let $v$ be an admissible node in $G_1$.
Let $G'_1$ be the graph given by the admissible 1-reduction of $G_1$ that deletes $v$ and adds the edge $e$.
As $v \notin \{w, x, y, z\}$ and $v \in Y$,
$v$ is a node in $G$ and $e \notin E$. 
Now let $G'$ be the graph given by the 1-reduction of $G$ that deletes $v$ and adds the edge $e$. 
Then $G'$ is isomorphic to a 2-join of $(G'_1, G_3)$ and so $G'$ is $\mathcal{M}(2,2)$-connected by Lemma \ref{lem:connected_i-join}. 
Therefore $G$ has either an admissible 1-reduction or an admissible edge-reduction.

\begin{case}{3}
$(F'_1, F'_2)$ is a non-trivial 2-vertex-separation of $H_{t}$ and $E(F'_1) \cap E(F'_2) = \emptyset$.
\end{case}

Let $V(F'_1) \cap V(F'_2) = \{x, y\}$. Let $F'_3 = G[(V \setminus V(F'_1)) \cup \{x, y\}])$.
As $|V(F'_1) \cap X| \leq 1$, $(F'_1, F_3')$ is a non-trivial 2-vertex-separation of $G$ and $E(F'_1) \cap E(F'_3) = \emptyset$. 
Let $(G_1, G_2)$ be the 1-separation of $H_t$ with respect to $(F'_1, F'_2)$ and let $(\tilde{G}_{2}, \tilde{G}_{1})$ be the 1-separation of $H_t$ with respect to $(F'_2, F'_1)$. Lemma \ref{lem:circuit_dec3} implies either $G_1$ and $G_2$ are $(2,2)$-circuits
and $\tilde{G}_{1}$ and $\tilde{G}_{2}$ are not,
or $\tilde{G}_{1}$ and $\tilde{G}_{2}$ are $(2,2)$-circuits
and $G_{1}$ and $G_{2}$ are not.
We consider these possibilities as separate subcases.

\begin{case}{3a}
$\tilde{G}_{1}$ and $\tilde{G}_{2}$ are $(2,2)$-circuits.
\end{case}

Let $(\tilde{G}_{3}, \tilde{G}_{1})$ be the 1-separation of $G$ with respect to $(F'_{3}, F'_{1})$. 
By our assumption, $G_{1}$ is not a $(2,2)$-circuit. 
Since $\tilde{G}_1$ is a $(2,2)$-circuit that contains $G_1$,
it follows that $G_1$ is $(2,2)$-sparse and so not $\mathcal{M}(2,2)$-connected. 
Hence, Lemma \ref{lem:connected_1-sep} implies that $\tilde{G}_{3}$ is $\mathcal{M}(2,2)$-connected. 

As $F'_1$ is an atom of $G$, there is no non-trivial 3-edge-separation of $G_1$.
If there exists a non-trivial 2-vertex-separation of $G_1$,
then $\{x,y\}$ must be the corresponding vertex cut.
As $F'_1$ is an atom of $G$, one of the components of the 2-vertex-separation must be a $K_4$ subgraph of $F'_1$ containing $xy$,
however this contradicts that $xy \notin E$.
Hence there are no non-trivial 2-vertex-separations or 3-edge-separations of $\tilde{G}_{1}$. 
Also, as $\tilde{G}_{1}$ is not 3-connected, $\tilde{G}_{1} \ncong K_5^-$. 
Hence Theorem \ref{thm:circnodes} implies that $\tilde{G}_{1} \cong B_{1}$ or $B_{2}$, or $\tilde{G}_{1}$ contains two admissible nodes. 

If $\tilde{G}_{1} \cong B_{1}$ then $F'_{1} \cong K_{4}^{-}$ with $F_1'=\{x,y,r,s\}$ for some vertices $r,s$. 
Let $G'$ be the graph given by the $K_{4}^{-}$-reduction of $G$ that deletes $r$ and $s$ and adds the edge $xy$. Then $G' = \tilde{G}_{3}$, so $G'$ is $\mathcal{M}(2,2)$-connected and the $K_{4}^{-}$-reduction is admissible.
If $\tilde{G}_{1} \cong B_{2}$ then $F'_{1}$ is isomorphic to $K_4$ with an added vertex connected to exactly one vertex in the $K_4$.
Furthermore, the vertex with degree 1 in $F_1'$ must be either $x$ or $y$.
We may suppose without loss of generality that $d_{F'_{1}}(x) = 1$ and set $a$ to be the single neighbour of $x$ in $F_1'$. 
Let $G'$ be the graph given by the edge-reduction of $G$ that contracts $xa$ and deletes $ya$. 
Then $G'$ is isomorphic to a $K_4^-$-extension of $\tilde{G}_{3}$, 
so $G'$ is $\mathcal{M}(2,2)$-connected by Lemma \ref{l:k4-moves} and this edge-reduction is admissible.

If $\tilde{G}_1$ contains two admissible nodes then let $\{w, z\} = V(G_1) \setminus V(F'_1)$ and let $v$ be an admissible node in $\tilde{G}_1$.
Let $\tilde{G}'_1$ be the graph given by the admissible 1-reduction of $\tilde{G}_1$ that deletes $v$ and adds the edge $e$.
As $v \notin \{w, x, y, z\}$ and $v \in Y$,
$v$ is a node in $G$ and $e \notin E$. 
Now let $G'$ be the graph given by the 1-reduction of $G$ that deletes $v$ and adds the edge $e$. 
Then $G'$ is isomorphic to a 1-join of $(\tilde{G}_{3}, G'_{1})$ and so $G'$ is $\mathcal{M}(2,2)$-connected by Lemma \ref{lem:connected_i-join},
completing this subcase.

\begin{case}{3b}\label{case3b}
$G_{1}$ and $G_2$ are $(2,2)$-circuits.
\end{case}

Let $(G_{1}, G_{3})$ be a 1-separation of $G$ with respect to the non-trivial 2-vertex-separation $(F'_{1}, F'_{3})$.
By Lemma \ref{lem:connected_1-sep},
$G_{3}$ is $\mathcal{M}(2,2)$-connected.
As $F'_{1}$ is an atom of $H_{t}$, there are no non-trivial 2-vertex-separations or 3-edge-separations of $G_{1}$. 
Therefore Theorem \ref{thm:circnodes} implies that $G_{1} \cong K_5^-, B_{1}$, or $B_{2}$, or $G_{1}$ contains two admissible nodes.

If $G_{1} \cong K_5^-$ then $F'_{1}$ is isomorphic to one of two graphs (see Figure \ref{fig:K_5^-}). Let $V(F'_{1}) = \{x, y, a, b, c\}$. We may suppose without loss of generality that $d_{F'_{1}}(c) = 4$. If $ab \in E(F'_{1})$ then we may suppose without loss of generality that $ay \notin E(F'_{1})$. In either case, let $G'$ be the graph given by the edge-reduction of $G$ that contracts $by$ and deletes $bc$. Then $G' \cong G_{3}$, and hence this is an admissible edge-reduction.

\begin{figure}[htp]
\begin{tikzpicture}[scale=0.5]

\draw[draw=gray!30!white,fill=gray!30!white] plot[smooth, tension=1] coordinates{(0,0) (2.5,1.5) (0,3)} -- plot[smooth, tension=1] coordinates{(0,0) (2.5,1.5) (0,3)};
\draw[draw=white,fill=white] plot[smooth, tension=1] coordinates{(0,0) (1,1.5) (0,3)};
\draw[white,thick] (0,0) -- (0,3);

\filldraw (0,3) circle (3pt) node[anchor=south]{$x$};
\filldraw (0,0) circle (3pt) node[anchor=north]{$y$};
\filldraw (-2,3) circle (3pt) node[anchor=south]{$a$};
\filldraw (-4,1.5) circle (3pt) node[anchor=east]{$c$};
\filldraw (-2,0) circle (3pt) node[anchor=north]{$b$};

\draw[black,thick] (0,0) -- (-2,0) -- (-4,1.5) -- (-2,3) -- (0,3) -- (-4,1.5) -- (0,0);
\draw[black,thick] (-2,3) -- (0,0);
\draw[black,thick] (-2,0) -- (0,3);

\draw[thick] plot[smooth, tension=1] coordinates{(0,0) (1,1.5) (0,3)}; 
\draw[thick] plot[smooth, tension=1] coordinates{(0,0) (2.5,1.5) (0,3)};
\node [rectangle, draw=white, fill=white] (b) at (-1,-2) {$(i)$};

\filldraw (1.7,1) circle (0pt)node[anchor=south]{$F'_3$};

\draw[draw=gray!30!white,fill=gray!30!white] plot[smooth, tension=1] coordinates{(12,0) (14.5,1.5) (12,3)} -- plot[smooth, tension=1] coordinates{(12,0) (14.5,1.5) (12,3)};
\draw[draw=white,fill=white] plot[smooth, tension=1] coordinates{(12,0) (13,1.5) (12,3)};
\draw[white,thick] (12,0) -- (12,3);

\filldraw (12,3) circle (3pt) node[anchor=south]{$x$};
\filldraw (12,0) circle (3pt) node[anchor=north]{$y$};
\filldraw (10,3) circle (3pt) node[anchor=south]{$a$};
\filldraw (8,1.5) circle (3pt) node[anchor=east]{$c$};
\filldraw (10,0) circle (3pt) node[anchor=north]{$b$};

\draw[black,thick] (12,0) -- (10,0) -- (8,1.5) -- (10,3) -- (12,3) -- (8,1.5) -- (12,0);
\draw[black,thick] (10,3) -- (10,0) -- (12,3);
\draw[thick] plot[smooth, tension=1] coordinates{(12,0) (13,1.5) (12,3)}; \draw[thick] plot[smooth, tension=1] coordinates{(12,0) (14.5,1.5) (12,3)};
\node [rectangle, draw=white, fill=white] (b) at (11,-2) {$(ii)$};

\filldraw (13.7,1) circle (0pt)node[anchor=south]{$F'_3$};
\end{tikzpicture}
\caption{The graph $G$ in the subcase $G_1 \cong K_5^-$ of Case \ref{case3b}.} \label{fig:K_5^-}
\end{figure}
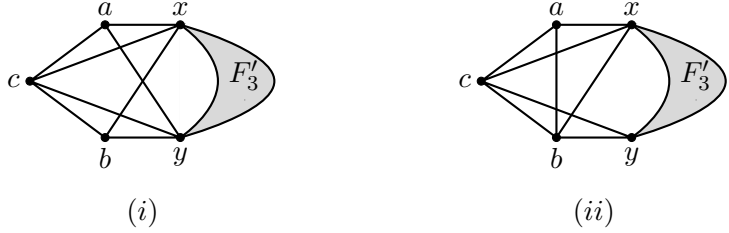

If $G_{1} \cong B_{1}$ then, as $F'_{1}$ is an atom of $H_{t}$, $F'_{1}$ is isomorphic to one of two graphs (see Figure \ref{fig:B_1new}). We may suppose without loss of generality that $d_{F'_{1}}(x) = 2$. Let $N_{F'_{1}}(x) = \{a, b\}$ such that $d_{F'_{1}}(a) \leq d_{F'_{1}}(b)$. In either case, let $G'$ be the graph given by the edge-reduction of $G$ that contracts $xa$ and deletes $xb$. Then $G'$ is isomorphic to a 2-join of $(B_{2}, G_{3})$ and hence $G'$ is $\mathcal{M}(2,2)$-connected by Lemma \ref{lem:connected_i-join}. So $G$ has an admissible edge-reduction.

\begin{figure}[htp]
\begin{tikzpicture}[scale=0.5]

\draw[draw=gray!30!white,fill=gray!30!white] plot[smooth, tension=1] coordinates{(0,0) (2.5,1.5) (0,3)} -- plot[smooth, tension=1] coordinates{(0,0) (2.5,1.5) (0,3)};
\draw[draw=white,fill=white] plot[smooth, tension=1] coordinates{(0,0) (1,1.5) (0,3)};
\draw[white,thick] (0,0) -- (0,3);

\filldraw (0,3) circle (3pt) node[anchor=west]{$y$};
\filldraw (0,0) circle (3pt) node[anchor=north]{$x$};
\filldraw (-2,3) circle (3pt) node[anchor=east]{$b$};
\filldraw (-2,0) circle (3pt) node[anchor=north]{$a$};
\filldraw (-2,5) circle (3pt);
\filldraw (0,5) circle (3pt);

\draw[black,thick] (0,0) -- (-2,0) -- (-2,3) -- (0,3);
\draw[black,thick] (-2,3) -- (0,0);
\draw[black,thick] (-2,0) -- (0,3) -- (0,5) -- (-2,5) -- (0,3);
\draw[black,thick] (0,5) -- (-2,3) -- (-2,5);
\draw[thick] plot[smooth, tension=1] coordinates{(0,0) (1,1.5) (0,3)}; 
\draw[thick] plot[smooth, tension=1] coordinates{(0,0) (2.5,1.5) (0,3)};
\node [rectangle, draw=white, fill=white] (b) at (-1,-2) {$(i)$};

\filldraw (1.7,1) circle (0pt)node[anchor=south]{$F'_3$};

\draw[draw=gray!30!white,fill=gray!30!white] plot[smooth, tension=1] coordinates{(12,0) (14.5,1.5) (12,3)} -- plot[smooth, tension=1] coordinates{(12,0) (14.5,1.5) (12,3)};
\draw[draw=white,fill=white] plot[smooth, tension=1] coordinates{(12,0) (13,1.5) (12,3)};
\draw[white,thick] (12,0) -- (12,3);

\filldraw (12,3) circle (3pt)node[anchor=south]{$y$};
\filldraw (12,0) circle (3pt)node[anchor=north]{$x$};
\filldraw (10,3) circle (3pt)node[anchor=south]{$a$};
\filldraw (10,0) circle (3pt)node[anchor=north]{$b$};
\filldraw (8,0) circle (3pt);
\filldraw (8,3) circle (3pt);

\draw[black,thick] (8,3) -- (10,0) -- (8,0) -- (8,3) -- (10,3) -- (12,0) -- (10,0) -- (10,3) -- (12,3);
\draw[black,thick] (8,0) -- (10,3);
\draw[black,thick] (10,0) -- (12,3);
\draw[thick] plot[smooth, tension=1] coordinates{(12,0) (13,1.5) (12,3)}; \draw[thick] plot[smooth, tension=1] coordinates{(12,0) (14.5,1.5) (12,3)};
\node [rectangle, draw=white, fill=white] (b) at (11,-2) {$(ii)$};

\filldraw (13.7,1) circle (0pt)node[anchor=south]{$F'_3$};
\end{tikzpicture}
\caption{The graph $G$ in the subcase $G_1 \cong B_1$ of Case \ref{case3b}.} \label{fig:B_1new}
\end{figure}
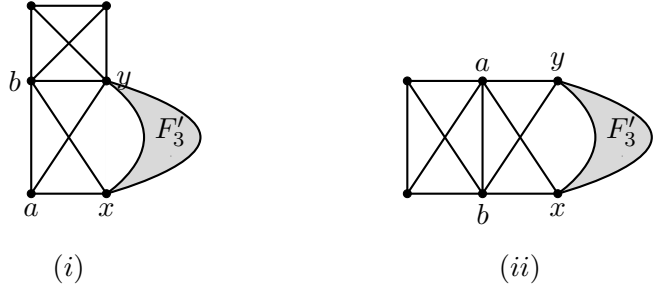

If $G_{1}$ contains two admissible nodes,
then either there exists $v \in V(F'_{1}) \setminus \{x, y\}$ such that $v$ is an admissible node in $G_{1}$, or $x$ and $y$ are the only admissible nodes of $G_{1}$. In the first case, let $G'_{1}$ be given by an admissible 1-reduction of $G_{1}$ at $v$ that adds the edge $e$. 
Importantly, $e \neq xy$, since $xy$ is an edge of $G_1$. 
As $v \notin \{x, y\}$, $v$ is a node in $G$ also. 
Let $G'$ be given by the 1-reduction of $G$ at $v$ that adds the edge $e$.
Then $G'$ is isomorphic to a 1-join of $(G'_{1}, G_{3})$, so $G'$ is $\mathcal{M}(2,2)$-connected by Lemma \ref{lem:connected_i-join}. 
Hence $G$ has an admissible edge-reduction. 
Alternatively, suppose that $x$ and $y$ are the only admissible nodes of $G_{1}$. Let $N_{G_{1}}(x) = \{a, b, y\}$ and let $G'_{1}$ be given by the admissible 1-reduction of $G_{1}$ that removes $x$ and adds the edge $e$ (see Figure \ref{fig:final}). 
As this 1-reduction is admissible, $d_{G'_{1}}(y) \geq 3$, so $e \in \{ya, yb\}$. 
Without loss of generality we may suppose that $e = ya$, so $ya \notin E$. 
Let $G'$ be the edge-reduction of $G$ that contracts $xa$ and deletes $xb$. Then, as $xy, ya \notin E$, $G'$ is isomorphic to a 1-join of $(G'_{1}, G_{3})$ and hence is $\mathcal{M}(2,2)$-connected by Lemma \ref{lem:connected_i-join}. Therefore $G$ has an admissible edge-reduction.

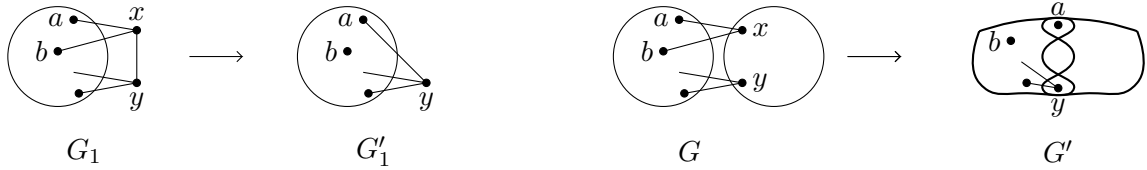
\begin{figure}[htp]
\begin{tikzpicture}[scale=0.7]

\draw (-4,10) circle (27pt);
\draw (1.5,10) circle (27pt);
\draw (7.5,10) circle (27pt);
\draw (9.6,10) circle (27pt);

\filldraw (-2.5,10.5) circle (2pt) node[anchor=south]{$x$};
\filldraw (-2.5,9.5) circle (2pt) node[anchor=north]{$y$};
\filldraw (-3.6,9.3) circle (2pt);
\filldraw (-4,10.1) circle (2pt) node[anchor=east]{$b$};
\filldraw (-3.7,10.7) circle (2pt) node[anchor=east]{$a$};

\draw[black] (-3.7,10.7) -- (-2.5,10.5) -- (-2.5,9.5);
\draw[black] (-4,10.1) -- (-2.5,10.5);
\draw[black] (-3.7,9.7) -- (-2.5,9.5) -- (-3.6,9.3);
\draw[black] (-1.5,10) -- (-.5,10) -- (-.6,9.9);
\draw[black] (-.5,10) -- (-.6,10.1);
\node [rectangle, draw=white, fill=white] (b) at (-3.5,8.2) {$G_1$};

\filldraw (3,9.5) circle (2pt) node[anchor=north]{$y$};
\filldraw (1.9,9.3) circle (2pt); 
\filldraw (1.5,10.1) circle (2pt) node[anchor=east]{$b$};
\filldraw (1.8,10.7) circle (2pt) node[anchor=east]{$a$};

\draw[black] (3,9.5) -- (1.8,10.7);
\draw[black] (1.8,9.7) -- (3,9.5) -- (1.9,9.3);
\node [rectangle, draw=white, fill=white] (b) at (2,8.2) {$G_1'$};

\filldraw (9,10.5) circle (2pt) node[anchor=west]{$x$};
\filldraw (9,9.5) circle (2pt) node[anchor=west]{$y$};
\filldraw (7.9,9.3) circle (2pt);
\filldraw (7.5,10.1) circle (2pt) node[anchor=east]{$b$};
\filldraw (7.8,10.7) circle (2pt) node[anchor=east]{$a$};

\draw[black] (7.5,10.1) -- (9,10.5) -- (7.8,10.7);
\draw[black] (7.9,9.3) -- (9,9.5) -- (7.8, 9.7);
\draw[black] (11,10) -- (12,10) -- (11.9,9.9);
\draw[black] (12,10) -- (11.9,10.1);
\node [rectangle, draw=white, fill=white] (b) at (8,8.2) {$G$};

\draw[thick] plot[smooth, tension=1] coordinates{(13.5,10.48) (13.5,9.5) (14.5,9.3) (15.3,9.4) (14.7,10) (15.3,10.6)(14.5,10.7) (13.5,10.48)};

\draw[thick] plot[smooth, tension=1] coordinates{(16.5,10.48) (16.5,9.5) (15.5,9.3) (14.7,9.4) (15.3,10) (14.7,10.6)(15.5,10.7) (16.5,10.48)};

\filldraw (15,9.4) circle (2pt) node[anchor=north]{$y$};
\filldraw (14.4,9.5) circle (2pt);
\filldraw (14.1,10.3) circle (2pt) node[anchor=east]{$b$};
\filldraw (15,10.6) circle (2pt) node[anchor=south]{$a$};

\draw[black] (14.4,9.5) -- (15,9.4) -- (14.3,9.9);
\draw[black] (11,10) -- (12,10) -- (11.9,9.9);
\draw[black] (12,10) -- (11.9,10.1);
\node [rectangle, draw=white, fill=white] (b) at (15,8.2) {$G'$};

\end{tikzpicture}
\caption{The subcase of Case \ref{case3b} when $G_1$ is not isomorphic to $K_5^-$, $B_1$ of $B_2$. Here $G'$ is the 1-join of $G_1'$ and $G_3$.} \label{fig:final}
\end{figure}

If $G_{1} \cong B_{2}$ then, as $F'_{1}$ is an atom of $H_{t}$, $G$ is isomorphic to the graph shown in Figure \ref{fig:B_2new}. 
Let $\{u\} = N_{F'_{1}}(x) \cap N_{F'_{1}}(y)$ and let
\begin{align*}
    G'_{1} = (V(G_{1}) \cup \{w, z\}, (E(G_{1}) \setminus \{xu\}) \cup \{xw, xz, yw, yz, wz\});
\end{align*}
see Figure \ref{fig:B_2new}.
Then $G'_{1}$ is isomorphic to a 1-join of $(B_{2}, B_{1})$ so $G'_{1}$ is $\mathcal{M}(2,2)$-connected by Lemma \ref{lem:connected_i-join}. 
We now turn our attention to $G_{3}$. 
As $|V(G_{3})| +|E(G_3)| = |V|+|E| - 9$,
it follows from our initial induction assumption (see the first paragraph of the proof) that either $G_3$ is isomorphic to $K_5^-$ or $B_1$, or there exists an admissible $K_{4}^{-}$-reduction, edge-reduction, or edge-deletion of $G_{3}$.
As $G_3$ is not 3-connected, $G_3 \not\cong K_5^-$.
If $G_3 \cong B_1$ then $G$ is isomorphic to the graph on the right of Figure \ref{fig:sequence}, and hence has an admissible $K_4^-$-reduction.
Suppose that there exists an admissible $K_{4}^{-}$-reduction, edge-reduction, or edge-deletion of $G_{3}$.
Label the two vertices in $V(G_{3}) = V(F'_{3})$ as $w',z'$.

\begin{figure}[htp]
\begin{tikzpicture}[scale=0.5]
\draw[draw=gray!30!white,fill=gray!30!white] plot[smooth, tension=1] coordinates{(0,0) (2.5,1.5) (0,3)} -- plot[smooth, tension=1] coordinates{(0,0) (2.5,1.5) (0,3)};
\draw[draw=white,fill=white] plot[smooth, tension=1] coordinates{(0,0) (1,1.5) (0,3)};

\draw[white,thick] (0,0) -- (0,3);

\filldraw (0,3) circle (3pt)node[anchor=south]{$y$};
\filldraw (0,0) circle (3pt)node[anchor=north]{$x$};
\filldraw (-1,4) circle (3pt);
\filldraw (-1,1.5) circle (3pt)node[anchor=east]{$u$};
\filldraw (-1,-1) circle (3pt);
\filldraw (-2,3) circle (3pt);
\filldraw (-2,0) circle (3pt);

\draw[black,thick] (-2,0) -- (-1,1.5) -- (0,3) -- (-1,4) -- (-2,3) -- (0,0) -- (-1,-1) -- (-2,0) -- (0,0);
\draw[black,thick] (0,3) -- (-2,3);
\draw[black,thick] (-1,4) -- (-1,1.5) -- (-1,-1);

\draw[thick] plot[smooth, tension=1] coordinates{(0,0) (1,1.5) (0,3)}; \draw[thick] plot[smooth, tension=1] coordinates{(0,0) (2.5,1.5) (0,3)};
\node [rectangle, draw=white, fill=white] (b) at (0,-2) {$G$};

\filldraw (1.7,1) circle (0pt)node[anchor=south]{$F'_3$};

\filldraw (10,3) circle (3pt)node[anchor=south]{$y$};
\filldraw (10,0) circle (3pt)node[anchor=north]{$x$};
\filldraw (9,4) circle (3pt);
\filldraw (9,1.5) circle (3pt)node[anchor=east]{$u$};
\filldraw (9,-1) circle (3pt);
\filldraw (8,3) circle (3pt);
\filldraw (8,0) circle (3pt);
\filldraw (12,0) circle (3pt)node[anchor=north]{$w$};
\filldraw (12,3) circle (3pt)node[anchor=south]{$z$};

\draw[black,thick] (8,0) -- (9,1.5) -- (10,3) -- (9,4) -- (8,3) -- (9,1.5);
\draw[black,thick] (10,0) -- (9,-1) -- (8,0) -- (10,0);
\draw[black,thick] (10,0) -- (12,3) -- (12,0) -- (10,0) -- (10,3) -- (8,3);
\draw[black,thick] (9,4) -- (9,1.5) -- (9,-1);
\draw[black,thick] (12,0) -- (10,3) -- (12,3);
\node [rectangle, draw=white, fill=white] (b) at (10,-2) {$G'_1$};

\filldraw (1.7,1) circle (0pt)node[anchor=south]{$F'_3$};
\end{tikzpicture}
\caption{The graphs $G$ and $G'_1$ in the subcase $G_1 \cong B_2$ of Case \ref{case3b}.} \label{fig:B_2new}
\end{figure}
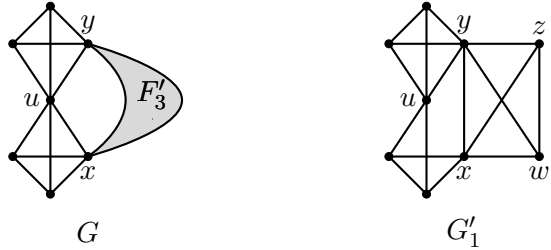

If there exists an admissible $K_{4}^{-}$-reduction of $G_{3}$ then the same $K_{4}^{-}$ is present in $G$ as an induced subgraph. Let $G'_{3}$ be the graph given by the $K_{4}^{-}$-reduction of $G_{3}$ and let $G'$ be the graph given by the $K_{4}^{-}$-reduction of $G$. Then $G'$ is isomorphic to a 1-join of $(G_{1}, G'_{3})$ and so $G'$ is $\mathcal{M}(2,2)$-connected by Lemma \ref{lem:connected_i-join}. Hence $G$ has an admissible $K_{4}^{-}$-reduction.

If there exists an admissible edge-reduction of $G_{3}$, let this contract $e$ and delete $f$ and call the resulting graph $G'_{3}$. As this edge-reduction is admissible, $e \notin \{xy, xw', xz', yw', yz', w'z'\}$ and $f \notin \{xw', xz, yw', yz', w'z'\}$.
Hence $e \in E$, and $f \in E$ if and only if $f \neq xy$.
Suppose that $f \neq xy$ and let $G'$ be given by the edge-reduction of $G$ that contracts $e$ and deletes $f$.
Then $G'$ is isomorphic to a 1-join of $(G_{1}, G'_{3})$ and so $G'$ is $\mathcal{M}(2,2)$-connected by Lemma \ref{lem:connected_i-join}. Hence $G$ has an admissible edge-reduction. 
Now suppose $f = xy$.
Without loss of generality we may suppose that $e = xa$ for some $a \in V(F'_{3})-y$ and denote the vertex resulting from the contraction of $e$ by $x'$.

Let $G'$ be given by the edge-reduction of $G$ that contracts $e$ and deletes $xu$.
If $ya \in E(G_3)$ then $G'$ is isomorphic to a 2-join of $(G'_1,G'_3)$ with $x'$ relabelled as $x$.
Since both $G_1'$ and $G_3'$ are $\mathcal{M}(2,2)$-connected,
$G'$ is $\mathcal{M}(2,2)$-connected also by Lemma \ref{lem:connected_i-join}.
If $ya \notin E(G_3)$ then $x'y \notin E(G'_3)$.
Let $(G_{4}, G_{5})$ be a 1-separation of $G'_{3}$ on $(G'_{3}[V(G'_{3}) \setminus \{w', z'\}], G'_{3}[\{x', y, w', z',\}])$ and let $(\tilde{G}_{5}, \tilde{G}_{4})$ be a 1-separation of $G'_{3}$ on $(G'_{3}[\{x', y, w', z'\}], G'_{3}[V(G'_{3}) \setminus \{w', z'\}])$. 
Then $G'$ is isomorphic to a 1-join of $(G_{4}, G'_{1})$. 
As $\tilde{G}_{5} \cong K_{4}$, Lemma \ref{lem:connected_1-sep} implies that $G_{4}$ is $\mathcal{M}(2,2)$-connected.
So Lemma \ref{lem:connected_i-join} gives that $G'$ is $\mathcal{M}(2,2)$-connected. Therefore $G$ has an admissible edge-reduction. 

Lastly, if there exists an admissible edge-deletion of $G_{3}$ that deletes the edge $e$,
then $e \notin \{xw', xz', yw', yz', w'z'\}$. 
If $e \neq xy$ then $G-e$ is isomorphic to a $1$-join of $(G_{1}, G_{3}-e)$ and so $G-e$ is $\mathcal{M}(2,2)$-connected by Lemma \ref{lem:connected_i-join}. 
Suppose $e = xy$.
Given $G_{3,1} := G_{3}[V(G'_{3}) \setminus \{w', z'\}]-e$ and $G_{3,2} := G_{3}[\{x, y, w', z'\}]-e$, let $(G_{4}, G_{5})$ be a 1-separation of $G_{3}-e$ on $(G_{3,1}, G_{3,2})$ and let $(\tilde{G}_{5}, \tilde{G}_{4})$ be a 1-separation of $G_{3}-e$ on $(G_{3,2}, G_{3,1})$.
As $\tilde{G}_{5} \cong K_{4}$, 
Lemma \ref{lem:connected_1-sep} implies that $G_{4}$ is $\mathcal{M}(2,2)$-connected.
Since $G-xu$ is isomorphic to a 1-join of $(G_{4}, G'_{1})$,
Lemma \ref{lem:connected_i-join} implies that $G-xu$ is $\mathcal{M}(2,2)$-connected. Therefore $G$ has an admissible edge-deletion. 
This concludes the proof.
\end{proof}

\begin{cor}\label{cor:combmain}
A graph $G$ is $\mathcal{M}(2,2)$-connected if and only if $G$ can be generated from $K_5^-$ or $B_1$ by generalised vertex splits, $K_4^-$-extensions and edge additions such that each generalised vertex split preserves $\mathcal{M}(2,2)$-connectivity.
\end{cor}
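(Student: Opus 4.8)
The plan is to read the corollary off Theorem~\ref{thm:recurse2} by an induction on $n(G):=|V(G)|+|E(G)|$, using Lemmas~\ref{lem:add} and~\ref{l:k4-moves} to handle the two operations that are always admissible.

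For the backward (``if'') direction, I would first note that $K_5^-$ and $B_1$ are $(2,2)$-circuits and hence $\mathcal{M}(2,2)$-connected, and then verify that each of the three construction moves preserves $\mathcal{M}(2,2)$-connectivity: edge additions by Lemma~\ref{lem:add}, $K_4^-$-extensions by Lemma~\ref{l:k4-moves}, and generalised vertex splits by the very restriction imposed in the statement. Hence every graph produced by such a sequence is $\mathcal{M}(2,2)$-connected.

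For the forward (``only if'') direction, I would argue by strong induction on $n(G)$. If $G\cong K_5^-$ or $G\cong B_1$ there is nothing to prove. Otherwise Theorem~\ref{thm:recurse2} yields an admissible edge-reduction, $K_4^-$-reduction, or edge-deletion of $G$; call the resulting graph $G'$, which is $\mathcal{M}(2,2)$-connected by the definition of admissibility. In each case $n(G')<n(G)$: an edge-deletion drops $n$ by $1$, a $K_4^-$-reduction by $6$ (it removes two vertices and, net, four edges), and an edge-reduction by $3$ (it removes one vertex and, net, two edges). So the inductive hypothesis applies to $G'$, exhibiting a construction of $G'$ from $K_5^-$ or $B_1$ by the permitted moves. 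Finally I would observe that $G$ is recovered from $G'$ by the inverse move, and that this inverse is again one of the permitted operations: the inverse of an edge-deletion is an edge addition; the inverse of a $K_4^-$-reduction, which by admissibility adds an edge absent from $G$, is a $K_4^-$-extension; and the inverse of an edge-reduction is, by definition, a generalised vertex split, which here preserves $\mathcal{M}(2,2)$-connectivity precisely because both $G'$ and $G$ are $\mathcal{M}(2,2)$-connected. Appending this move to a construction sequence for $G'$ gives one for $G$.

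The proof is essentially bookkeeping and I do not expect a genuine obstacle; the only points needing a moment's care are that each reduction strictly decreases $n(G)$ (so the induction is well founded) and that the inverse of an \emph{admissible} edge-reduction genuinely qualifies as a ``generalised vertex split preserving $\mathcal{M}(2,2)$-connectivity'' in the sense of the statement — both of which are immediate from the definitions recalled just before Theorem~\ref{thm:recurse2}.
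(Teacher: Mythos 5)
Your proposal is correct and follows essentially the same route as the paper: the ``if'' direction via the preservation lemmas (Lemma~\ref{lem:add} and Lemma~\ref{l:k4-moves}, the latter resting on Lemma~\ref{lem:connected_i-join}), and the ``only if'' direction by induction using the admissible reductions supplied by Theorem~\ref{thm:recurse2}. The paper's proof is merely a terser statement of the same argument, leaving the bookkeeping you spell out (the decrease of $|V|+|E|$ and the identification of each inverse reduction with a permitted construction move) implicit.
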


\begin{proof}
The case when $G$ is a $(2,2)$-circuit is Theorem \ref{thm:construction}. The case when $G$ is not a $(2,2)$-circuit, in one direction, follows from Lemmas \ref{lem:add} and \ref{lem:connected_i-join}. The converse follows from Theorem \ref{thm:recurse2}.
\end{proof}

Finally we can now easily obtain the main result of this section.

\begin{proof}[Proof of Theorem \ref{thm:1}]
    By Proposition \ref{prop:regcreg} and Lemma \ref{lem:Mcon},
    a graph is $M(2,2)$-connected if and only if it is redundantly rigid in $X$ and 2-connected.
    The result now follows from Corollary \ref{cor:combmain}.
\end{proof}

\section{Characterising globally rigid graphs in analytic normed planes}
\label{sec:proof}

The main result of the paper (Theorem \ref{thm:grne}) is obtained in this section. We begin by stating two results from \cite{DN}. The first concerns the base graphs from Theorem \ref{thm:1} (see Figure \ref{fig:smallgraphs1})\footnote{It is worth pointing out that for a given norm, global rigidity for such small graphs can be checked computationally. However, giving a rigorous proof for all analytic norms simultaneously is non-trivial.} and the second is an analogue of the ``averaging theorem'' of Connelly and Whiteley \cite{CW}.

\begin{lem}[{\cite[Theorems 5.3 \& 5.4]{DN}}]\label{lem:basegraphs}
    The graphs $K_5^-$ and $B_1$ are globally rigid in any analytic normed plane.
\end{lem}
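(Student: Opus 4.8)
The plan is to exploit that each of $K_5^-$ and $B_1$ is assembled from two copies of $K_4$: gluing two copies of $K_4$ along the triangle $\{1,2,3\}$ gives $K_5^-$, and gluing two copies of $K_4$ along a single edge $cd$ gives $B_1$ (with the labelling of Figure \ref{fig:smallgraphs1}). Since $K_4$ is minimally rigid in any non-Euclidean normed plane (Theorem \ref{thm:rigidne}), for a completely strongly regular placement its configuration space is a compact $0$-manifold by Lemma \ref{l:config}, so $K_4$ has only finitely many realisations with prescribed edge lengths up to congruence. Before the main argument I would record the easy half for sanity: both base graphs are $(2,2)$-circuits, so deleting any edge leaves a $(2,2)$-tight hence minimally rigid graph, so both are redundantly rigid and $2$-connected; by Theorem \ref{thm:hendrickson} this is exactly the \emph{necessary} structure, and the content is the converse. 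By Proposition \ref{prop:regcreg} it suffices to produce a single completely strongly regular $p$ — in fact an open subset of $\csreg(V;X)$ — for which $(G,p)$ is globally rigid.

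Next I would set up the reduction to a finite matching problem. Fix a completely strongly regular $p$ and let $(G,q)\sim(G,p)$. Translating, normalise $q$ so that it agrees with $p$ on one vertex of the shared subgraph; there is then no continuous isometry freedom left, only the finite linear isometry group fixing that vertex. The restriction of $q$ to each copy of $K_4$ is a realisation of that $K_4$ with the given edge lengths, and, $K_4$ being minimally rigid, once one vertex is fixed there are only finitely many such realisations; hence the shared subgraph (the triangle $\{1,2,3\}$ for $K_5^-$, the edge $cd$ for $B_1$) is placed by $q$ in one of finitely many positions. Thus $\mathcal{C}(G,p)$ embeds into the finite ``fibre product'' of the two finite sets of $K_4$-realisations taken over the finite set of positions of the shared subgraph, and it contains the trivial point $\tilde p$. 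Global rigidity now reduces to showing that this fibre product is the single point $\tilde p$, i.e.\ that no non-trivial realisation of one copy of $K_4$ can be glued to a non-trivial realisation of the other along their common subgraph.

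The hard part is precisely this last exclusion, and it is genuinely where analyticity is needed: over $\mathbb{E}^2$ it fails, since one may reflect one copy of $K_4$ across the line of $cd$ (or across a mirror of the triangle) to obtain a non-congruent equivalent framework. To kill every spurious matching on an open set of $p$ I would use three features of an analytic normed plane. Strict convexity and smoothness control how one ``sphere'' $\{x:\|x-p_c\|=r\}$ meets another, so each $K_4$ has a well-understood finite list of realisations depending analytically on $p$ (via the analytic implicit function theorem applied to the regular rigidity map of $K_4$). The theory of generalised rigid body motions from \cite{DN} lets one track, along this finite family of realisations, the induced motion of the shared edge/triangle and show that the non-trivial realisations displace it to positions that agree with those coming from the other $K_4$ only when the direction of $cd$ (or a direction determined by the triangle) lies in a proper analytic subset of directions. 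Finally, the fact that $X$ has only finitely many linear isometries (\cite[pg.\ 83]{minkowski}), hence finitely many reflection axes, is what turns this ``proper analytic subset'' into a nowhere-dense set whose complement is the desired open set of globally rigid placements; this is also what makes the one non-trivial isometry relating $q$ to $p$ unavailable for a generic edge direction, completing the match. I expect the explicit enumeration of the $K_4$-realisations and the verification that their gluing data never coincide non-trivially for an analytic norm to be the genuinely delicate point — this is the ``non-trivial'' step flagged in the footnote — with everything else being bookkeeping against the decomposition results already in hand; here I am reconstructing the argument carried out in detail in \cite[Theorems 5.3 \& 5.4]{DN}.
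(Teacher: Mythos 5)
First, a structural point: the paper does not prove this lemma at all --- it is imported verbatim from \cite[Theorems 5.3 \& 5.4]{DN} (the footnote before the lemma even flags that a rigorous proof ``for all analytic norms simultaneously is non-trivial''). So there is no internal argument to compare yours against, and your write-up has to be judged on its own terms. Judged that way, it is a proof \emph{strategy} rather than a proof, and the gap sits exactly where you flag it. The reduction to a finite matching problem is fine: both graphs are $(2,2)$-circuits, hence redundantly rigid at a completely strongly regular $p$, so $\mathcal{C}(G,p)$ is a compact $0$-dimensional manifold and therefore finite by Lemma \ref{l:config}, and each constituent $K_4$ has finitely many realisations once a vertex is pinned. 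But the entire content of the lemma is the step you defer. You do not identify the structural fact that makes the exclusion tractable, namely that in an analytic normed plane a point at prescribed distances from two pinned points $0$ and $z$ has exactly \emph{two} admissible positions, interchanged by the $z$-reflection $R_z$ of \cite[Theorem 4.3]{DN} (the map this paper recalls before Lemma \ref{l:zreflection} and deploys to prove Theorem \ref{thm:k4-}). With that fact the ``fibre product'' collapses to finitely many sign patterns, and excluding the non-trivial ones amounts to checking that certain analytic functions --- differences of edge lengths before and after reflecting a subset of vertices --- are not identically zero, which is exactly where non-Euclideanness enters, since $R_z$ preserves all distances precisely when it is an isometry. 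Without this, ``track the induced motion of the shared edge via generalised rigid body motions'' is an assertion, not an argument.

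A second, smaller imprecision: your stated reduction --- that no \emph{non-trivial} realisation of one copy of $K_4$ can be glued to a \emph{non-trivial} realisation of the other --- is not the right statement to prove. After normalising, the dangerous configurations include a trivially placed copy glued to a reflected, non-congruent copy agreeing on the shared edge or triangle; this is precisely the Euclidean counterexample you yourself cite, and it is the case that must be killed for $B_1$. You clearly know this, but the reduction as written would let it through. In summary: the skeleton is consistent with the method of \cite{DN} insofar as that method can be inferred from the machinery the present paper reuses in Section \ref{sec:proof}, but the decisive exclusion step is named rather than carried out, so the lemma is not established by the proposal.
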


\begin{thm}[{\cite[Theorem 3.10]{DN}}]\label{thm:ave}
	Let $(G,p)$ be a globally rigid and infinitesimally rigid framework in a smooth normed space $X$ with finitely many linear isometries.
	Then there exists an open neighbourhood $U$ of $p$ where $(G,q)$ is globally rigid and infinitesimally rigid in $X$ for all $q \in U$.
\end{thm}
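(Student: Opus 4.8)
The plan is to construct $U$ by combining three facts: that infinitesimal rigidity is an open property; that the quotient rigidity map $\tilde f_G$ is locally injective near $\tilde p:=\pi(p)$; and a compactness argument ruling out frameworks equivalent to $(G,q)$ that lie ``far away'' from $q$ when $q$ is near $p$. We may assume $|V|\geq 2$, and in fact $|V|\geq d+1$, since $(G,p)$ is rigid; thus $X^{IF}_G$ is open (Lemma \ref{l:quotopenconull}) and $\Iso(X)$-invariant, so by Lemma \ref{l:quotman} the quotient $X^{IF}_G/\sim$ is a differentiable manifold, $\pi$ restricts to a submersion onto it, and $\tilde f_G$ is differentiable on it.

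For the first fact: since $X$ is smooth the well-positioned placements of $G$ form the open set $X_G$, and as $(G,p)$ is regular while $\rank df_G$ is lower semicontinuous, $\rank df_G$ is maximal throughout some neighbourhood of $p$. Because $X$ has only finitely many linear isometries, the trivial infinitesimal flexes of any framework are exactly the $d$-dimensional space of constant flexes, which always lies in $\ker df_G$; so for $q$ near $p$, rank--nullity forces $\dim\ker df_G(q)=\dim\ker df_G(p)=d$, whence every flex of $(G,q)$ is trivial, $(G,q)$ is infinitesimally rigid, and moreover $q\in X^{IF}_G$ (Lemma \ref{l:quotcount}, using $d<d-1+|V|$). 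Fix an open $U_0\subseteq X^{IF}_G$ containing $p$ on which $(G,q)$ is infinitesimally rigid. For the second fact: differentiating $\tilde f_G\circ\pi=f_G$ at $p$, using that $d\pi(p)$ is onto with $\ker d\pi(p)$ the space of constant flexes (Lemma \ref{l:quotman}) and that $\ker df_G(p)$ equals the constant flexes (infinitesimal rigidity), shows $d\tilde f_G(\tilde p)$ is injective; hence $\tilde f_G$ is an immersion near $\tilde p$ and therefore locally an embedding, so there is an open $W\ni\tilde p$ in $X^{IF}_G/\sim$ --- which is open in $X^V/\sim$ since $X^{IF}_G$ is open and saturated --- on which $\tilde f_G$ is injective. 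Shrink $U_0$ to an open $U\ni p$ with $\pi(U)\subseteq W$.

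The third fact is where I expect the real work. I claim every $q\in U$ (after one final shrinking) is globally rigid. If not, there are $q_n\to p$ in $U$ and placements $q_n'$ with $(G,q_n')\sim(G,q_n)$ but $q_n'\not\sim q_n$. If $\pi(q_n')\in W$ then $q_n'\in X^{IF}_G$, so $\tilde f_G(\pi(q_n'))=f_G(q_n')=f_G(q_n)=\tilde f_G(\pi(q_n))$ with both arguments in $W$, and injectivity of $\tilde f_G|_W$ gives $\pi(q_n')=\pi(q_n)$, i.e. $q_n'\sim q_n$ --- a contradiction; so $\pi(q_n')\notin W$ for every $n$. After a translation we may assume $(q_n')_{v_0}=0$ for a fixed $v_0\in V$. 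Since $G$ is connected and the edge lengths of $(G,q_n')$ equal those of $(G,q_n)$, which converge to the positive edge lengths of $(G,p)$, the $q_n'$ lie in a fixed compact subset of $X^V$; pass to a subsequence $q_n'\to q^*$. Then $f_G(q^*)=\lim f_G(q_n')=\lim f_G(q_n)=f_G(p)$, so $(G,q^*)\sim(G,p)$, and global rigidity of $(G,p)$ gives $q^*\sim p$, hence $\pi(q^*)=\tilde p$. By continuity of $\pi$, $\pi(q_n')\to\tilde p\in W$, so $\pi(q_n')\in W$ for all large $n$ --- contradicting $\pi(q_n')\notin W$. This proves the claim, so $(G,q)$ is globally rigid (and infinitesimally rigid) for all $q$ in a neighbourhood of $p$, as required.

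The main obstacle is this last step: it is the analogue of the averaging mechanism behind the theorem of Connelly and Whiteley \cite{CW}, and the delicate points are getting the alternative solutions $q_n'$ into a common compact set, checking that the limit is genuinely equivalent to $(G,p)$, and then feeding that limit back into the global rigidity of $(G,p)$ itself. The secondary care needed --- specific to the non-Euclidean setting --- is the infrastructure used in the first two steps: that $X^{IF}_G$ is open with $X^{IF}_G/\sim$ a manifold near $\tilde p$, that $\tilde f_G$ is differentiable there with injective derivative, and that an immersion is locally an embedding, so that the reduction of ``global rigidity near $p$'' to ``no distant equivalent solutions'' is legitimate.
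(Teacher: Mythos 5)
The paper does not prove Theorem \ref{thm:ave}: it is imported verbatim from \cite[Theorem 3.10]{DN}, so there is no in-paper argument to compare against. Judged on its own, your proof is correct, and it runs on exactly the infrastructure that Section \ref{sec:hendrickson} sets up for related purposes (the set $X^{IF}_{G}$, the quotient map $\pi$, the quotient rigidity map $\tilde{f}_G$, and Lemmas \ref{l:quotopenconull} and \ref{l:quotman}). All three ingredients are sound: (a) since $X$ has only finitely many linear isometries the trivial flexes are exactly the constant vectors, so infinitesimal rigidity of $(G,q)$ is equivalent to $\dim\ker df_G(q)=d$, and lower semicontinuity of $\rank df_G$ (via continuity of the support map $\varphi$) makes this an open condition, which also puts $q$ in $X^{IF}_G$ by Lemma \ref{l:quotcount}; (b) the computation showing $d\tilde{f}_G(\tilde{p})$ is injective is right, because $\ker d\pi(p)$ and $\ker df_G(p)$ both equal the $d$-dimensional space of constant flexes; and (c) the compactness argument --- pinning $q_n'$ at a base vertex, using connectivity of $G$ (forced by infinitesimal rigidity) to trap the configurations in a fixed compact set, extracting a limit $q^*$ equivalent to $(G,p)$, and feeding $q^*$ back into the global rigidity of $(G,p)$ to force $\pi(q_n')$ into $W$ eventually --- is precisely the substitute for the Euclidean averaging trick that is needed here, and it closes the argument.

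The one step you should make explicit is the passage from ``$d\tilde{f}_G(\tilde{p})$ is injective'' to ``$\tilde{f}_G$ is injective on a neighbourhood $W$ of $\tilde{p}$''. Differentiability at a single point with injective derivative does not give local injectivity (already false for maps $\mathbb{R}\to\mathbb{R}$); you need $\tilde{f}_G$ to be $C^1$ near $\tilde{p}$ so that the local immersion theorem applies. This is available: for a smooth finite-dimensional normed space the support map $\varphi$ is continuous (as recorded in Section \ref{sec:background}), hence $q\mapsto df_G(q)$ is continuous on $X_G$, so $f_G$ is $C^1$ there, and $\tilde{f}_G$ is $C^1$ on the quotient because $\pi$ is a surjective submersion. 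With that sentence added, the proof is complete.
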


\subsection{\texorpdfstring{$K_4^-$}{K4-}-extensions}

In this subsection we prove that $K_4^-$-extensions preserve global rigidity in analytic normed planes. We first require the following lemma.

\begin{lem}\label{l:limitedisom}
	Let $X$ be a non-Euclidean normed plane.
	Then for all but a finite number of points $x \in X$ with $\| x\|=1$,
	if $g$ is an isometry and $g(x) = x$ then $g$ is the identity map.
\end{lem}

\begin{proof}
	Let $A$ be the set of points $x \in X$ where $\|x \| =1$ and there exists a non-trivial linear isometry $g_x$ with $g_x(x) = x$.
	We note that if $x,y \in A$ are linearly independent then $g_x(y) \neq y$ (as otherwise $g_x$ must be the identity map),
	thus the cardinality of $A$ is less than twice the cardinality of the group of linear isometries of $X$.
	The result now follows as all non-Euclidean normed planes have a finite amount of linear isometries (see \cite[Proposition 2.5(ii)]{dew2}).
\end{proof}

Let $X$ be an analytic normed plane, let $z \in X$ be any non-zero point,
and define $\H$ and $\H'$ to be the two open half-planes formed by the line through $0$ and $z$.
It was shown in \cite[Theorem 4.3]{DN} that there exists a unique map $R_z :X \rightarrow X$ (called the \emph{$z$-reflection}) defined by the following properties: (i) if $x \in X$ does not lie on the line through $0$ and $z$, then $R_z(x)$ is the unique point distinct from $x$ where $\|R_z(x)\| = \|x\|$ and $\|R_z(x) - z\| = \|x - z\|$, and (ii) if $x \in X$ lies on the line through $0$ and $z$, then $R_z(x) = x$.
The map $R_z$ has additional useful properties that we will require.  Firstly, it is a homeomorphism and an involution, and it is an analytic diffeomorphism when its domain and codomain are restricted to the set of points that do not lie on the line through $0$ and $z$. Secondly, any point in $\H$ will be mapped to a point in $\H'$ and vice versa.

\begin{lem}\label{l:zreflection}
	Let $X$ be an analytic normed plane,
	$z \in X$ be any non-zero point and $\H$ be an open half plane formed by the line through $0$ and $z$.
	Suppose that $R_z$ is not an isometry.
	Then for any two points $x,y$ that lie in the closure of $\mathcal{H}$ and for any $\varepsilon >0$,
	there exists $x',y' \in \H$ such that $\|x-x'\|<\varepsilon$, $\|y-y'\|<\varepsilon$ and
	\begin{align*}
	    \|R_z(x')- R_z(y')\| \neq \|x'-y'\|, \qquad \|x'- R_z(y')\| \neq \|x'-y'\|, \qquad \|R_z(x')- y'\| \neq \|x'-y'\|.
	\end{align*}
\end{lem}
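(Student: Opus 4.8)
The statement asserts that generic small perturbations $x',y'$ of $x,y$ inside the open half-plane $\H$ simultaneously avoid three "bad" equations. The natural strategy is to show each bad set is a closed set with empty interior (indeed a proper analytic subvariety of an appropriate open set) and then apply a Baire-category / finite-union argument: a finite union of nowhere-dense closed sets is nowhere dense, so the complement is dense, and since we only need one pair $(x',y')$ arbitrarily close to $(x,y)$ we are done. Concretely, I would work in the open set $\H \times \H \subset X \times X$ (which is connected and on which $R_z$ restricts to an analytic diffeomorphism onto $\H' \times \H'$, using the stated properties of $R_z$), and define the three functions
\[
F_1(a,b) := \|R_z(a)-R_z(b)\|^2 - \|a-b\|^2, \quad F_2(a,b) := \|a-R_z(b)\|^2 - \|a-b\|^2, \quad F_3(a,b) := \|R_z(a)-b\|^2 - \|a-b\|^2.
\]
Each $F_i$ is real analytic on $\H \times \H$: the norm is analytic off $0$, the differences $a-b$, $R_z(a)-R_z(b)$, etc.\ are nonzero on this domain (points of $\H$ are not collinear with $0$ and $z$, and $R_z$ maps $\H$ into $\H'$, so in particular $R_z(a) \neq b$ and $R_z(a) \neq R_z(b)$ for $a \neq b$; the diagonal $a=b$ can be handled separately since there $R_z(a)=R_z(b)$ forces $F_1 = 0$ trivially, so I will instead perturb off the diagonal first), and $R_z$ itself is analytic on $\H$.

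\textbf{Key steps.} First, reduce to the case $x \neq y$: if $x=y$, replace $y$ by a point $\tilde y \in \H$ with $\|\tilde y - y\| < \varepsilon/2$ and $\tilde y \neq x$, which is possible since $\H$ is open, and then run the argument with $x,\tilde y$ and tolerance $\varepsilon/2$. Second, show each zero set $Z_i := \{(a,b) \in \H \times \H : F_i(a,b) = 0\}$ has empty interior. For this it suffices to exhibit, for each $i$, a single point of $\H \times \H$ where $F_i \neq 0$, because $F_i$ is analytic on the connected open set $\H \times \H$ and an analytic function vanishing on a nonempty open subset of a connected domain vanishes identically. To produce such a point I would use the hypothesis that $R_z$ is \emph{not} an isometry: if $F_1 \equiv 0$ on $\H \times \H$ then $R_z$ preserves all distances between points of $\H$; since $R_z$ is a homeomorphism and $\overline{\H} \cup \overline{\H'}$ covers a neighbourhood of generic points, a standard continuity/extension argument forces $R_z$ to be a global isometry, contradicting the hypothesis. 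Similarly, if $F_3 \equiv 0$ then for all $a,b \in \H$ we get $\|R_z(a)-b\| = \|a-b\|$; fixing $b$ and letting $a \to b$ gives $\|R_z(b)-b\|=0$, i.e.\ $R_z(b)=b$ for all $b \in \H$, which is absurd since $R_z(\H) = \H' \cap (\text{stuff})$ is disjoint from $\H$. The case $F_2$ is symmetric to $F_3$ (swap the roles of $a,b$, or use $\|a - R_z(b)\| = \|R_z(b)-a\|$ and the same limiting argument $b \to a$ forcing $R_z(a)=a$). Third, having shown each $Z_i$ is a proper analytic subvariety, hence closed with empty interior, conclude $Z_1 \cup Z_2 \cup Z_3$ is closed with empty interior; therefore its complement is dense in $\H \times \H$, so within the $\varepsilon$-ball around $(x,y)$ (intersected with $\H \times \H$, which is a nonempty open set since $x,y \in \overline{\H}$) there is a pair $(x',y')$ avoiding all three, which is exactly the conclusion.

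\textbf{Main obstacle.} The genuinely delicate point is the argument that $F_1 \equiv 0$ (or $F_3 \equiv 0$) forces $R_z$ to be an isometry, i.e.\ ruling out that $R_z$ is "accidentally" distance-preserving on one half-plane without being globally an isometry. Here I would lean on the structural properties of $R_z$ from \cite[Theorem 4.3]{DN}: $R_z$ is an involution fixing the line $L$ through $0$ and $z$ pointwise and swapping $\H \leftrightarrow \H'$. If $R_z$ preserves distances among points of $\H$, then by continuity it preserves distances among points of $\overline{\H}$ (which includes $L$, where $R_z$ is the identity), and since every point of $\H'$ is $R_z$ of a point of $\H$ and $R_z$ is an involution, a short computation propagates distance-preservation to all pairs in $X = \H \cup L \cup \H'$: for $a \in \H$, $b \in \H'$ write $b = R_z(b')$ with $b' \in \H$, then $\|a - b\| = \|a - R_z(b')\|$ and one relates this to $\|R_z(a) - b'\|$ using that $R_z$ is an involution and preserves distances within $\overline{\H}$ after another reflection — this needs to be done carefully but is elementary. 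Then a distance-preserving bijection of a normed space is an isometry (Mazur–Ulam, already invoked in the paper), contradicting the hypothesis that $R_z$ is not an isometry. I expect the write-up of this propagation step to be the bulk of the work; everything else is the routine "analytic function, connected domain, nonempty open zero set implies identically zero" plus Baire category.
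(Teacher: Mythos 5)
Your proposal is correct and takes essentially the same route as the paper's proof: both work on the connected open set of distinct pairs in $\H\times\H$, use that the zero set of a real-analytic function on a connected domain is either the whole domain or a closed nowhere-dense (indeed null) set, rule out $\|x'-R_z(y')\|\equiv\|x'-y'\|$ (and its mirror) by letting $y'\to x'$, which would force $R_z$ to fix points of $\H$, and rule out $\|R_z(x')-R_z(y')\|\equiv\|x'-y'\|$ because it would make $R_z$ an isometry, contradicting the hypothesis. The one sub-step you rightly flag as delicate --- upgrading distance-preservation on $\overline{\H}$ to a global isometry, where the cross-distances between the two half-planes do not follow purely formally and need an extension argument such as Mankiewicz's theorem together with the involution and fixed-line properties --- is treated just as tersely in the paper (a one-line parenthetical), so your write-up is at least as complete as the original.
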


\begin{proof}
    Define $U \subset \H \times \H$ to be the connected subset of all points $(x,y)$ where $x \neq y$.
    It will suffice for us to show that the sets
    \begin{align*}
        D := \{(x,y) \in U : \|R_z(x)- R_z(y)\| \neq \|x-y\| \}, \qquad D' := \{(x,y) \in U : \|x- R_z(y)\| \neq \|x-y\| \}
    \end{align*}
    are both open conull subsets of $U$.
    
    Define $f : U \rightarrow \mathbb{R}$ to be the analytic function that maps $(x,y) \in U$ to $\|R_z(x)- R_z(y)\| - \|x-z\|$,
    and define $g : U \rightarrow \mathbb{R}$ to be the analytic function that maps $(x,y) \in U$ to $\|x- R_z(y)\| - \|x-z\|$.
    It is immediate that the zero sets of $f$ and $g$ are the sets $U \setminus D$ and $U \setminus D'$ respectively.
    As both $f$ and $g$ are analytic and the set $U$ is connected,
    their zero sets are either closed null sets or the entirety of $U$ (see \cite{mityagin});
    i.e., either $D$ (respectively $D'$) is empty or $D$ (respectively $D'$) is an open conull set.
    If $D = \emptyset$ then $R_z$ is an isometry (as it is an involution and invariant on the line through $0$ and $z$),
    hence $U \setminus D$ is a closed conull set.
    To see that $D'$ cannot be empty,
    note that $\|x - R_z(x)\| - \|x - x\| \neq 0$ and the map $R_z$ is continuous,
    hence we must have $g(x,y) \neq 0$ for sufficiently close $x$ and $y$.
\end{proof}

By using $z$-reflections, we can now prove that $K_4^-$-extensions preserve globally rigidity in analytic normed planes. We refer the reader to Section \ref{sec:redundant} for the definition of the compact differentiable manifold $\mathcal{C}(G,p)$.

\begin{thm}\label{thm:k4-}
	Let $G=(V,E)$ be a globally rigid graph in an analytic normed plane $X$ and let $G'=(V',E')$ be formed from $G$ by a $K_4^-$-extension.
	Then $G'$ is globally rigid in $X$ also.
\end{thm}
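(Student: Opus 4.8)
The plan is to deduce global rigidity of $G'$ from global rigidity of $G$ by exhibiting, for a suitable placement $p'$ of $G'$, that every equivalent framework must be congruent. Recall that a $K_4^-$-extension deletes an edge $v_1v_2\in E$ and adds two new vertices $u_1,u_2$ together with the five edges $v_1u_1,v_1u_2,v_2u_1,v_2u_2,u_1u_2$; thus $u_1,u_2$ each have degree $3$ in $G'$. Since $G$ is globally rigid, by Theorem \ref{thm:hendrickson} it is redundantly rigid, so $G-v_1v_2$ is rigid; by Theorem \ref{thm:ave} (the analogue of the averaging theorem) $G$ has an open set of placements that are simultaneously globally rigid and infinitesimally rigid, and by Proposition \ref{prop:regcreg} we may further assume any such placement is completely strongly regular. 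First I would fix such a placement $q$ of $G$, and extend it to a placement $p'$ of $G'$: place $v_1,v_2$ (and all other vertices) as in $q$, and place $u_1,u_2$ generically in the open half-plane $\H$ on one side of the line $\ell$ through $p'_{v_1}$ and $p'_{v_2}$ (after translating so that $p'_{v_1}=0$, say, so $\ell$ passes through $0$ and $z:=p'_{v_2}$). Using Proposition \ref{prop:regcreg} and the openness statements, $p'$ may be chosen completely strongly regular, and also so that the framework $(G,q)$ it restricts to is globally and infinitesimally rigid.

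Now let $(G',\tilde p)$ be equivalent to $(G',p')$. The edge set of $G'$ contains $E-v_1v_2$, and also the lengths $\|u_i-v_j\|$ ($i,j\in\{1,2\}$) and $\|u_1-u_2\|$. The four vertices $v_1,v_2,u_1,u_2$ span a copy of $K_4^-$ in $G'$ (missing the edge $v_1v_2$); in an analytic normed plane $K_4^-$ is minimally rigid (Theorem \ref{thm:rigidne}), so the equivalent framework on $\{v_1,v_2,u_1,u_2\}$ is determined up to isometry — and, crucially, the distance $\|\tilde p_{v_1}-\tilde p_{v_2}\|$ is determined by the four known edge lengths of this $K_4^-$, but it can realise this $K_4^-$ in (generically) two ways: one where $\tilde u_1,\tilde u_2$ lie on the ``same side'' and one obtained by the $z$-reflection $R_z$ of \cite[Theorem 4.3]{DN} fixing the line through $\tilde p_{v_1},\tilde p_{v_2}$. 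The key point is that in \emph{both} realisations the value $\|\tilde p_{v_1}-\tilde p_{v_2}\|$ is the same, call it $r$. I claim $r=\|p'_{v_1}-p'_{v_2}\|$: indeed $(G',p')$ restricted to $V$ is $(G,q)$, which is globally rigid; but we cannot yet conclude $\tilde p|_V$ is equivalent to $q$ because we do not a priori know $\|\tilde p_{v_1}-\tilde p_{v_2}\|=\|p'_{v_1}-p'_{v_2}\|$. To get this, observe that the $K_4^-$ on $\{v_1,v_2,u_1,u_2\}$ rigidly determines $\|\tilde p_{v_1}-\tilde p_{v_2}\|$ from the five given edge lengths, which are the same as those of $p'$; hence $\|\tilde p_{v_1}-\tilde p_{v_2}\|=\|p'_{v_1}-p'_{v_2}\|$ automatically. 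Therefore $(G,\tilde p|_V)\sim (G,q)$, and since $(G,q)$ is globally rigid there is an isometry $g$ with $g\circ q=\tilde p|_V$.

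After applying $g^{-1}$ we may assume $\tilde p|_V=q=p'|_V$; it remains to pin down $\tilde u_1,\tilde u_2$. These two points form, together with the (now fixed) points $p'_{v_1},p'_{v_2}$, a realisation of $K_4^-$ with the same five edge lengths as $(u_1,u_2,v_1,v_2)$ in $p'$. By the structure of $z$-reflections in \cite[Theorem 4.3]{DN}, any realisation of this $K_4^-$ agreeing with $p'$ on $v_1,v_2$ is either equal to $p'$ on $u_1,u_2$ — in which case $\tilde p=p'$ and we are done — or is obtained by replacing $(p'_{u_1},p'_{u_2})$ by $(R_z(p'_{u_1}),R_z(p'_{u_2}))$, which lie in the opposite half-plane $\H'$. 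To exclude the second possibility I would use Lemma \ref{l:zreflection}: either $R_z$ is an isometry, in which case the reflected framework is congruent to $p'$ (via $R_z$ composed with a suitable linear isometry) and there is nothing to prove; or $R_z$ is not an isometry, and then by Lemma \ref{l:zreflection}, applied to the pair $p'_{u_1},p'_{u_2}\in\H$ (generic in $\H$ by our choice), we may arrange $\|R_z(p'_{u_1})-R_z(p'_{u_2})\|\ne\|p'_{u_1}-p'_{u_2}\|$, contradicting that the reflected framework must reproduce the edge length $\|u_1-u_2\|$. Hence the second realisation cannot occur, so $\tilde p=p'$ up to isometry, proving $(G',p')$ globally rigid. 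Since $(G',p')$ is completely strongly regular, $\grig(G';X)$ contains an open set (by Theorem \ref{thm:ave}), so $G'$ is globally rigid in $X$.

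\textbf{Main obstacle.} The delicate step is the case analysis on the realisations of the added $K_4^-$: one must be certain that the only ambiguity in reconstructing $\tilde u_1,\tilde u_2$ from the known edge lengths, once $v_1,v_2$ are fixed, is precisely the $z$-reflection pair, and that a generic choice of $p'_{u_1},p'_{u_2}$ (invoking Lemma \ref{l:zreflection}, and if necessary Lemma \ref{l:limitedisom} to avoid the finitely many directions fixed by nontrivial isometries) breaks this symmetry. This is where the analytic-geometry machinery of \cite{DN} is essential, since in a general normed plane one has no clean classification of such realisations.
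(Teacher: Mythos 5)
Your argument has a genuine gap at its central step. You assert that the copy of $K_4^-$ on $\{v_1,v_2,u_1,u_2\}$ is minimally rigid in an analytic normed plane (citing Theorem \ref{thm:rigidne}) and hence that the five given edge lengths determine $\|\tilde p_{v_1}-\tilde p_{v_2}\|$. This is false: Theorem \ref{thm:rigidne} says minimally rigid means $(2,2)$-tight, i.e.\ $|E|=2|V|-2$, and $K_4^-$ has $5<2\cdot 4-2=6$ edges, so it is independent but \emph{flexible} in every non-Euclidean normed plane. (Figure \ref{fig:grexamplesone2} of the paper exhibits exactly this flex for $K_4^-$ in $\ell_4^2$.) Along that one-parameter flex the missing distance $\|\tilde p_{v_1}-\tilde p_{v_2}\|$ varies continuously, so it is \emph{not} recovered from the five edge lengths. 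Without that equality you cannot conclude $(G,\tilde p|_V)\sim (G,q)$, and the subsequent $z$-reflection analysis (which presupposes that $\tilde p_{v_1},\tilde p_{v_2}$ sit at the correct distance, indeed at $0$ and $z$ after normalising) never gets started. This is precisely the difficulty the paper's proof is engineered to overcome.

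The paper's resolution is quantitative rather than structural: it places $u_1$ within $\varepsilon$ of $v_1$ and $u_2$ within $\varepsilon$ of $v_2$ (your proposal places them generically in $\H$, which destroys this). Then in any equivalent framework the \emph{known} edge length $\|q_{u_1}-q_{u_2}\|$ forces $\|q_{v_1}-q_{v_2}\|$ to lie within $4\varepsilon$ of $\|p'_{v_1}-p'_{v_2}\|$; since $(G,p)$ is completely strongly regular and redundantly rigid, $\mathcal C(G-v_1v_2,p)$ is finite, so the values $\|q_{v_1}-q_{v_2}\|$ over non-congruent equivalent placements stay a definite distance $t>0$ away from $\|p_{v_1}-p_{v_2}\|$, and taking $\varepsilon<t/4$ forces exact equality. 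Only after this does the reflection argument (via Lemmas \ref{l:limitedisom} and \ref{l:zreflection}, which you do invoke correctly for the endgame) pin down $u_1,u_2$. A secondary point: before applying Theorem \ref{thm:ave} you must verify $(G',p')$ is infinitesimally rigid; complete strong regularity alone does not give this, and the paper supplies a separate argument using redundant rigidity of $(G,p)$ and strict convexity to kill the flexes at $u_1,u_2$.
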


\begin{proof}
	Suppose $G'$ is formed from $G$ by the $K_4^-$-extension applied to the edge $v_1v_2$ that adds vertices $u_1,u_2$.
	Since $G$ is globally rigid,
	we can choose a completely strongly regular framework $(G,p)$ that is globally rigid with $p_{v_1}=0$ and $p_{v_2} = z$ for some $\|z\|=1$;
	further,
	by Lemma \ref{l:limitedisom},
	we may assume we chose $z$ such that,
	given any non-trivial linear isometry $g$,
	we have $g(z) \neq z$.
	By Theorem \ref{thm:red},
	$(G,p)$ is redundantly rigid.
	Since $(G,p)$ is completely strongly regular, the set $\mathcal{C}(G - v_1 v_2,p)$ is finite.
	Furthermore, for any placement $q \in X^V$ where $f_{G-v_1v_2}(q) = f_{G-v_1v_2}(p)$, we have	$\|q_{v_1} - q_{v_2}\| = \|p_{v_1} - p_{v_2}\|$ if and only if $q \sim p$.
	For any equivalence class $\tilde{q} \in \mathcal{C}(G,p)$,
	we note that $\|q_{v_1} - q_{v_2}\| = \|q'_{v_1} - q'_{v_2}\|$ for all $q,q' \in \tilde{q}$.
	Since $\mathcal{C}(G - v_1 v_2,p)$ is a finite set,
	it follows that
	\begin{align*}
		t := \min \left\{ |\|q_{v_1} - q_{v_2}\| - \|p_{v_1} - p_{v_2}\| |: f_{G-v_1v_2}(q) = f_{G-v_1v_2}(p), ~ q \not\sim p \right\}.
	\end{align*}
	is well-defined and non-zero.
	
	Choose $\varepsilon < t/4$.
	Let $\H, \H'$ be the open hyperplanes defined by the line through $0,z$.
	By our choice of $z$,
	the $z$-reflection $R_z$ is not an isometry.
	By Lemma \ref{l:zreflection},
	there exists $x_1,x_2 \in \H$ such that $\|x_1\| < \varepsilon$, $\|x_2 - z\| < \varepsilon$, $\|R_z(x_1) - R_z(x_2)\| \neq \|x_1-x_2\|$, $\|x_1 - R_z(x_2)\| \neq \|x_1-x_2\|$ and $\|R_z(x_1) - x_2\| \neq \|x_1-x_2\|$.
	We will now choose $p'$ to be a placement of $G'$ where $p'_v=p_v$ for all $v \in V$ and $p'_{u_i} = x_i$ for each $i \in \{1,2\}$.
	As $\|p'_{v_i} - p'_{u_i} \| < \varepsilon$ for $i \in \{1,2\}$,
	we have for any $q \in f_{G'}^{-1}(f_{G'}(p'))$ that
	\begin{align*}
		\Big|\left\|q_{v_1} - q_{v_2} \| - \|q_{u_1} - q_{u_2} \right\|\Big| \leq \|(q_{v_1} - q_{u_1}) + (q_{u_2} - q_{v_2})\| \leq \|q_{v_1} - q_{u_1}\| + \|q_{v_2} - q_{u_2}\| < 2 \varepsilon
	\end{align*}
	Hence,
	for any $q \in f_{G'}^{-1}(f_{G'}(p'))$ we have
	\begin{eqnarray*}
		\Big|\|q_{v_1} - q_{v_2} \| - \|p'_{v_1} - p'_{v_2} \| \Big| &=&
		\Big|\|q_{v_1} - q_{v_2} \| - \|q_{u_1} - q_{u_2} \| + \|p'_{u_1} - p'_{u_2} \| - \|p'_{v_1} - p'_{v_2} \| \Big|\\
		&\leq& \Big|\|q_{v_1} - q_{v_2} \| - \|q_{u_1} - q_{u_2} \| \Big| + \Big| \|p'_{u_1} - p'_{u_2} \| - \|p'_{v_1} - p'_{v_2} \| \Big|\\
		&<& 4 \varepsilon < t.
	\end{eqnarray*}
	By our choice of $t$ it follows that for all $q \in f_{G'}^{-1}(f_{G'}(p'))$ we have $\|q_{v_1}-q_{v_2}\| = \|p'_{v_1}-p'_{v_2}\|$,
	and thus $(G',p')$ is globally rigid if and only if $(G'+v_1 v_2,p')$ is globally rigid.
	
	Fix a placement $q \in X^V$ where $(G'+v_1v_2,q) \sim (G'+v_1v_2,p')$.
	As $(G,p)$ is a globally rigid subframework of $(G'+v_1v_2,p')$,
	we may suppose we chose $q$ such that $q_v=p'_v$ for all $v \in V$.
	By the uniqueness of the $z$-reflection $R_z$,
	we have $q_{u_i} \in \{ p'_{u_i}, R_z(p'_{u_i}) \}$ for each $i \in \{1,2\}$.
	However, by our choice of $x_1,x_2$, we must have $q_{u_i} = p'_{u_i}$ for each $i \in \{1,2\}$.
	It follows that $(G'+ v_1 v_2,p')$ is globally rigid,
	and thus $(G',p')$ is also globally rigid.
	
	We conclude by proving $(G',p')$ is infinitesimally rigid;
	it will then follow from Theorem \ref{thm:ave} that $G'$ is globally rigid.
	Choose an infinitesimal flex $a \in X^{V'}$ of $(G',p')$.
	Since $(G,p)$ is redundantly rigid,
	$a$ restricted to the vertices of $G$ must be a trivial infinitesimal flex.
	As every non-Euclidean normed plane has finitely many linear isometries (see \cite[Proposition 2.5(ii)]{dew2}),
	we may suppose that $a_v = 0$ for all $v \in V$.
	By our choice of $x_1$ and $x_2$,
	the vectors $p'_{u_i}- p'_{v_1}$ and $p'_{u_i}- p'_{v_2}$ are linearly independent for each $i \in \{1,2 \}$.
	As $X$ is smooth,
	the vectors $p'_{u_i}- p'_{v_1}$ and $p'_{u_i}- p'_{v_2}$ have unique support functionals,
	and as $X$ is strictly convex,
	the support functionals $\varphi_{p'_{u_i}- p'_{v_1}}$ and $\varphi_{p'_{u_i}- p'_{v_2}}$ are linearly independent.
	For each $i,j \in \{1,2\}$ we have
	\begin{align*}
	    \varphi_{p'_{u_i}- p'_{v_j}}(a_{u_i} - a_{v_j}) = \varphi_{p'_{u_i}- p'_{v_j}}(a_{u_i}) = 0.
	\end{align*}
	Hence $a_{u_1} =a_{u_2} = 0$,
	i.e., $a$ is trivial and $(G',p')$ is infinitesimally rigid.
\end{proof}

\subsection{Main result}

Similarly to how $K_4^-$-extensions preserve global rigidity in analytic normed planes,
generalised vertex split also preserve global rigidity when certain criteria are met.

\begin{thm}[{\cite[Theorem 5.4]{DHN}}]\label{thm:vsplitglobal}
    Let $G$ be a globally rigid graph in an analytic normed plane $X$.
    Let $G'$ be a generalised vertex split of $G$ at the vertex $z$ with new vertices $u,v$ and suppose that $G'-uv$ is rigid in $X$.
    Then $G'$ is globally rigid in $X$.
\end{thm}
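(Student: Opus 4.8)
The plan is to reduce, via the averaging theorem (Theorem \ref{thm:ave}), to exhibiting a single placement $p'$ of $G'$ that is simultaneously infinitesimally rigid and globally rigid; the theorem then promotes this to global rigidity of the graph $G'$. Write $z$ for the split vertex, $N(z) = N_1 \sqcup N_2$ for the partition used, and $u,v$ for the new vertices, so that in $G'$ the vertex $u$ is adjacent to $N_1 \cup \{x\}$ (with $x \notin N_1$) and $v$ is adjacent to $N_2$, together with the edge $uv$. Since $G$ is globally rigid, I would choose by Proposition \ref{prop:regcreg} a completely strongly regular $p \in \operatorname{int}\grig(G;X)$; such a $p$ is automatically infinitesimally rigid (Theorem \ref{t:asimowroth}(ii)) and $\mathcal{C}(G,p)$ is the single class $\tilde p$. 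Translating so that $p_z = 0$ and using Lemma \ref{l:limitedisom}, I would also arrange the ``split direction'' to be fixed by no non-trivial linear isometry, so the relevant $z$-reflections are genuine non-isometries.

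I would then construct $p'$ by setting $p'_w = p_w$ for $w \in V \setminus \{z\}$ and placing $u,v$ at two nearby points at distance $2\epsilon$ about the origin, chosen within the conull regular set of Proposition \ref{prop:regcreg} so that $(G'-uv,p')$ is infinitesimally rigid; this is possible precisely because $G'-uv$ is rigid by hypothesis, and it immediately gives that $(G',p')$ is infinitesimally rigid and hence locally rigid (Theorem \ref{t:asimowroth}(i)). For global rigidity, take any $(G',q) \sim (G',p')$. Because $\|q_u - q_v\| = \|p'_u - p'_v\| = 2\epsilon$, the points $q_u,q_v$ are $2\epsilon$-close; forming the \emph{contracted} placement $\bar q$ of $G$ (with $\bar q_w = q_w$ for $w \neq z$ and $\bar q_z = q_u$), one checks each edge length of $(G,\bar q)$ differs from that of $(G,p)$ by $O(\epsilon)$, so $\tilde f_G(\widetilde{\bar q})$ lies within $O(\epsilon)$ of $\tilde f_G(\tilde p)$. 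As $(G,p)$ is infinitesimally rigid, $\tilde f_G$ is a local diffeomorphism onto its image near $\tilde p$, and since $\mathcal{C}(G,p) = \{\tilde p\}$ while $\tilde f_G$ is proper (compactness of configuration spaces, Lemma \ref{l:config}), the only preimages of a small neighbourhood of $\tilde f_G(\tilde p)$ lie near $\tilde p$. Normalising by an isometry, I would conclude $q_w = p_w + O(\epsilon)$ for all $w \neq z$ and $q_u, q_v = O(\epsilon)$.

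With the base essentially pinned, I would recover $u$ and $v$ exactly as in the proof of Theorem \ref{thm:k4-}. Each of $u,v$ has at least two anchor neighbours (since $G'-uv$ is rigid its vertices have degree $\geq 2$), so $q_u$ is determined by the equations $\|q_u - q_w\| = \|p'_u - p_w\|$ for $w \in N_1 \cup \{x\}$ up to a single reflection, and likewise for $q_v$; by strict convexity and smoothness of $X$ these reflections are the $z$-reflections $R$ attached to the anchor pairs. The constraint $\|q_u - q_v\| = \|p'_u - p'_v\|$ together with the three inequalities of Lemma \ref{l:zreflection} — valid because $\epsilon$ and the split direction were chosen so that the pertinent $R$ is not an isometry — rule out all three ``flipped'' combinations, forcing $q_u = p'_u + O(\epsilon)$ and $q_v = p'_v + O(\epsilon)$ in the unflipped branch. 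Thus $q$ lies within $O(\epsilon)$ of $p'$, and local rigidity of $(G',p')$ then yields $q \sim p'$, so $(G',p')$ is globally rigid and the averaging theorem completes the argument.

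The main obstacle is the interplay of two error scales: the reconstruction places $q$ within distance $C\epsilon$ of $p'$, with $C$ depending only on $(G,p)$ and the split data, whereas the local-rigidity radius of $(G',p')$ could a priori shrink as $\epsilon \to 0$, since the $uv$-row of the rigidity operator has norm $O(\epsilon)$. The resolution, and the technical heart of the proof, is that local rigidity of $(G',p')$ is already guaranteed by that of the subframework $(G'-uv,p')$, whose rigidity operators converge as $\epsilon \to 0$ to that of the limiting configuration $p^0$ with $u,v$ coincident at the origin. Provided this limit is a regular placement of $G'-uv$ — which I would secure by perturbing the globally rigid base $p$ and the split direction within the conull regular sets of Proposition \ref{prop:regcreg} — the smallest singular value of $df_{G'-uv}(p')$ stays bounded below uniformly in $\epsilon$, giving a local-rigidity radius bounded below by some $c>0$; choosing $\epsilon$ with $C\epsilon < c$ then closes the argument. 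Verifying regularity of the limiting coincident configuration, and hence this uniform lower bound, is where the bulk of the careful analysis lies.
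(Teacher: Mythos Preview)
The paper does not prove this theorem; it is quoted verbatim from \cite{DHN} without argument, so there is no in-paper proof to compare against. The title of \cite{DHN} (``Coincident-point rigidity in normed planes'') confirms that the substantive content is exactly what you isolate in your final paragraph: one must establish that the limiting placement of $G'-uv$ with $p'_u = p'_v$ is infinitesimally rigid, and this is a genuine theorem about frameworks with a prescribed vertex coincidence, not something Proposition~\ref{prop:regcreg} gives for free (the coincidence locus is null). Your overall architecture --- place $u,v$ at separation $2\epsilon$, contract back to $G$, use global rigidity of $(G,p)$ together with properness of $\tilde f_G$ to pin the base to within $O(\epsilon)$, then recover $u,v$, then apply Theorem~\ref{thm:ave} --- is the natural strategy and is almost certainly what \cite{DHN} does.

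There is one concrete gap. Your step ``recover $u$ and $v$ exactly as in the proof of Theorem~\ref{thm:k4-}'' does not transfer: in the $K_4^-$-extension the two new vertices share the \emph{same} anchor pair $\{v_1,v_2\}$, so a single $z$-reflection $R_z$ governs both, and Lemma~\ref{l:zreflection} is tailored to exactly that situation. In a generalised vertex split the anchor sets $N_1\cup\{x\}$ and $N_2$ are disjoint, so the candidate reflections for $u$ and for $v$ are through \emph{different} lines, and the three inequalities of Lemma~\ref{l:zreflection} say nothing about the mixed pairs $(R_u(p'_u),p'_v)$, $(p'_u,R_v(p'_v))$, $(R_u(p'_u),R_v(p'_v))$. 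The repair is not deep --- for a generic base placement the anchor lines miss the origin by a fixed margin, so each reflected candidate sits at distance bounded away from zero while $\|p'_u-p'_v\|=2\epsilon$, and the $uv$ edge kills all flipped combinations for small $\epsilon$; and when either anchor set has three or more points the ambiguity disappears altogether --- but it is a different mechanism from the one you invoke, and it needs to be argued.
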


With this, we are now finally ready to prove our main result (Theorem \ref{thm:grne}). Recall that this states, for a graph $G=(V,E)$ with $|V|\geq 5$ and an analytic normed plane $X$, $G$ is globally rigid in $X$ if and only if $G$ is 2-connected and redundantly rigid in $X$.

\begin{proof}[Proof of Theorem \ref{thm:grne}]
	If $G$ is globally rigid in $X$ then it is 2-connected and redundantly rigid in $X$ by Theorem \ref{thm:hendrickson}.
	Conversely, suppose $G$ is 2-connected and redundantly rigid in $X$.
	By Theorem \ref{thm:1},
	$G$ can be generated from $K_5^-$ or $B_1$ by generalised vertex splits that preserve redundant rigidity in $X$, $K_4^-$-extensions and edge additions. 
	The graphs $K_5^-$ and $B_1$ are globally rigid in $X$ by Lemma \ref{lem:basegraphs}.
	By Theorem \ref{thm:k4-},
	$K_4^-$-extensions preserve global rigidity in $X$,
	and by Theorem \ref{thm:vsplitglobal} generalised vertex splits that preserve redundant rigidity in $X$ also preserve global rigidity in $X$.
	Since edge additions certainly preserve global rigidity, the proof is complete.
\end{proof}

We remark that the characterisation is effective since there exist efficient algorithms that can check redundant rigidity in $X$ \cite{B&J,L&S} and 2-connectivity \cite{Tar}.
There is also an immediate link between global rigidity in analytic normed planes and global rigidity in the Euclidean plane.

\begin{cor}\label{cor:euclidean}
    Let $G=(V,E)$ be a globally rigid graph in $\mathbb{E}^2$ with $|V| \geq 2$ and let $X$ be an analytic normed plane.
    Then $G$ is globally rigid in $X$ if and only if $|E| > 2|V|-2$.
\end{cor}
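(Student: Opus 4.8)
The plan is to treat the two implications separately, noting that the forward implication ($G$ globally rigid in $X$ $\Rightarrow$ $|E|>2|V|-2$) does not actually use the Euclidean hypothesis, only the Hendrickson-type conditions already established, whereas the converse genuinely exploits global rigidity in $\mathbb{E}^2$ together with the extra edge.

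For the forward implication, suppose $G$ is globally rigid in $X$. Since $X$ is a non-Euclidean normed plane it has finitely many linear isometries, so Theorem \ref{thm:hendrickson} applies and $G$ is redundantly rigid in $X$. Choosing a completely regular placement $p$ of $G$ in $X$, which exists by Proposition \ref{prop:regcreg}, the framework $(G-e,p)$ is regular for every $e\in E$; since $G-e$ is rigid in $X$, regularity forces $(G-e,p)$ to be infinitesimally rigid, exactly as in the proof of Lemma \ref{lem:Mcon}. Thus $G-e$ is rigid in $X$, so by Theorem \ref{thm:rigidne} it contains a spanning minimally rigid, hence $(2,2)$-tight, subgraph, giving $|E|-1=|E(G-e)|\geq 2|V|-2$, i.e.\ $|E|>2|V|-2$.

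For the converse, suppose $|E|>2|V|-2$, i.e.\ $|E|\geq 2|V|-1$. Then $G$ is not a complete graph on at most three vertices, since its edge count is too large relative to its vertex count, so by Theorem \ref{thm:euclidglobal} the hypothesis that $G$ is globally rigid in $\mathbb{E}^2$ forces $G$ to be $3$-connected and redundantly rigid in $\mathbb{E}^2$. The crucial step is to promote redundant rigidity in $\mathbb{E}^2$ to redundant rigidity in $X$. Fix $e\in E$. As $G-e$ is rigid in $\mathbb{E}^2$, Theorem \ref{thm:rigide} gives a spanning $(2,3)$-tight subgraph $L$ of $G-e$ with $|E(L)|=2|V|-3<2|V|-2\leq|E(G-e)|$; pick $e'\in E(G-e)\setminus E(L)$. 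Then $L+e'$ has $2|V|-2$ edges and is $(2,2)$-sparse: a subgraph avoiding $e'$ lies in $L$ and is $(2,3)$-sparse, hence $(2,2)$-sparse, while a subgraph containing $e'$ becomes a subgraph of $L$ upon deleting $e'$ and so has at most $2|V'|-2$ edges in total. Hence $L+e'$ is a spanning $(2,2)$-tight subgraph of $G-e$, and Theorem \ref{thm:rigidne} shows $G-e$ is rigid in $X$. Since this holds for every $e$, a completely regular placement $p$ of $G$ in $X$ renders $(G-e,p)$ infinitesimally rigid for all $e$, so $(G,p)$ is redundantly rigid and therefore $G$ is redundantly rigid in $X$. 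As $G$ is also $2$-connected, Theorem \ref{thm:grne} gives that $G$ is globally rigid in $X$.

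I do not expect a substantial obstacle. The only points needing care are the standard matroid translation between ``$G$ (redundantly) rigid in $X$'' and ``$G$ contains a spanning $(2,2)$-tight subgraph (after deleting any edge)'', which rests on Theorem \ref{thm:rigidne} and is deployed exactly as in the proof of Lemma \ref{lem:Mcon}, and the fact that for \emph{analytic} normed planes redundant rigidity of a graph really is equivalent to rigidity of all single-edge deletions --- an equivalence guaranteed by completely regular placements (Proposition \ref{prop:regcreg}) but false for general norms (Remark \ref{rem:infnorm}).
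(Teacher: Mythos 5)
Your proposal is correct and follows essentially the same route as the paper: the necessity of $|E|>2|V|-2$ comes from redundant rigidity in $X$ (via the Hendrickson-type conditions) together with Theorem \ref{thm:rigidne}, and the converse augments a spanning $(2,3)$-tight subgraph of $G-e$ (guaranteed by Theorems \ref{thm:euclidglobal} and \ref{thm:rigide}) by one extra edge to obtain a spanning $(2,2)$-tight subgraph, then concludes with Theorem \ref{thm:grne}. The only difference is that you spell out the $(2,2)$-sparsity check and the passage from ``$G-e$ rigid for all $e$'' to redundant rigidity via a completely regular placement, which the paper leaves implicit.
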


\begin{proof}
If $G$ is globally rigid in $X$ then $|E| > 2|V|-2$ by Theorems \ref{thm:grne} and \ref{thm:rigidne}. 
For the converse suppose $G$ is globally rigid in $\mathbb{E}^2$ with $|E| > 2|V|-2$. 
Since $G$ is simple, $|V|\geq 5$, and so $G$ is 2-connected by Theorem \ref{thm:euclidglobal}. 
Choose any edge $e$ of $G$.
By Theorems \ref{thm:euclidglobal} and \ref{thm:rigide},
there exists a 
spanning $(2,3)$-tight subgraph $H \subset G-e$.
As $|E| > 2|V|-2$,
there exists an edge $f$ contained in $G-e$ but not $H$.
The graph $H+f$ is $(2,2)$-tight, hence $G-e$ is rigid in $X$ by Theorem \ref{thm:rigidne}. Since $e$ was arbitrary, the result follows from Theorem \ref{thm:grne}. 
\end{proof}

\subsection{Sufficient conditions for global rigidity}
Motivated by analogous results in the Euclidean setting, e.g., 6-connectivity implies global rigidity \cite[Theorem 7.2]{J&J}, we give some sufficient conditions for global rigidity in analytic normed planes.
We begin by recalling three well-known results from graph theory.
We denote the vertex and edge connectivity of a graph $G$ by $\kappa(G)$ and $\lambda(G)$ respectively.

\begin{thm}[\cite{catlin}]\label{t:cls}
    A graph $G=(V,E)$ is $2k$-edge-connected if and only if for all $F \subseteq E$ with $|F|\leq k$,
    the graph $G-F$ contains $k$ edge-disjoint spanning trees.
\end{thm}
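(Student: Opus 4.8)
The statement to prove is Theorem~\ref{t:cls}, a classical result of Nash-Williams--Tutte flavour attributed to Catlin. Since the excerpt tells us we may assume earlier results, and this is a standard graph-theoretic fact, the natural route is to \emph{derive it from the Nash-Williams/Tutte tree-packing theorem} (which states that a graph $H$ contains $k$ edge-disjoint spanning trees if and only if for every partition $\mathcal{P}$ of $V(H)$ into $r$ parts, the number of cross-edges $e_H(\mathcal{P})$ is at least $k(r-1)$). I would first record this as the tool, then argue both directions.

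\textbf{Forward direction.} Suppose $G$ is $2k$-edge-connected and fix $F\subseteq E$ with $|F|\le k$; set $H=G-F$. I want to verify the partition condition for $H$. Take any partition $\mathcal{P}$ of $V$ into $r\ge 2$ parts. Each part $P_i$ has $d_G(P_i,V\setminus P_i)\ge 2k$ by edge-connectivity, so summing over $i$ gives $2e_G(\mathcal{P})=\sum_i d_G(P_i,V\setminus P_i)\ge 2kr$, i.e.\ $e_G(\mathcal{P})\ge kr$. Removing $F$ destroys at most $|F|\le k$ cross-edges, so $e_H(\mathcal{P})\ge kr-k=k(r-1)$. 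Hence the tree-packing criterion holds and $H$ has $k$ edge-disjoint spanning trees. I expect this direction to be essentially the routine calculation just sketched.

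\textbf{Converse direction.} Suppose that for every $F\subseteq E$ with $|F|\le k$ the graph $G-F$ has $k$ edge-disjoint spanning trees; we must show $\lambda(G)\ge 2k$. Argue by contraposition: if $\lambda(G)\le 2k-1$, pick an edge cut $\delta(S)$ with $|\delta(S)|=\lambda(G)\le 2k-1$, where $\emptyset\ne S\subsetneq V$. Choose $F\subseteq \delta(S)$ with $|F|=\min\{k,\lambda(G)\}$; note $|F|\le k$ and $|\delta(S)\setminus F|\le (2k-1)-k=k-1$. In $G-F$, the cut across $S$ has at most $k-1$ edges, yet $k$ edge-disjoint spanning trees would each need to cross this cut, requiring at least $k$ edges there --- contradiction. (One must handle the degenerate possibility $\lambda(G)=0$ separately, where $G$ is disconnected and trivially has no spanning tree, hence no $k$ for $k\ge1$; and if $k=0$ the statement is vacuous.) This gives $\lambda(G)\ge 2k$.

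\textbf{Main obstacle.} There is no deep obstacle here --- the only thing to be careful about is that the converse is stated with the quantifier ``for all $F$ with $|F|\le k$'', so it suffices to exhibit \emph{one} bad $F$, and one must make sure the chosen $F$ genuinely has size $\le k$ while leaving a cut that is too small; the arithmetic $|\delta(S)\setminus F|\le k-1$ is the crux. The slightly delicate bookkeeping is choosing $|F|=\min\{k,\lambda(G)\}$ so the argument works uniformly whether $\lambda(G)$ is larger or smaller than $k$, and dealing with the trivial edge cases $k=0$ and $\lambda(G)=0$. Everything else reduces to the Nash-Williams/Tutte theorem, which I would cite rather than reprove.
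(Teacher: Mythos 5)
Your proof is correct. Note that the paper does not actually prove this statement: it is quoted as a known result of Catlin and used as a black box, so there is no in-paper argument to compare against. Your derivation is the standard one. The forward direction via the Nash-Williams/Tutte tree-packing criterion is right: for a partition into $r\ge 2$ parts, $2e_G(\mathcal{P})=\sum_i d_G(P_i,V\setminus P_i)\ge 2kr$ gives $e_G(\mathcal{P})\ge kr$, and deleting at most $k$ edges leaves $e_{G-F}(\mathcal{P})\ge k(r-1)$. The converse is also handled correctly: taking $F$ of size $\min\{k,\lambda(G)\}$ inside a minimum cut either disconnects $G-F$ outright (when $\lambda(G)<k$) or leaves a cut of at most $k-1$ edges, which cannot support $k$ edge-disjoint spanning trees since each tree must use a distinct cut edge. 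The degenerate cases $k=0$ and $\lambda(G)=0$ are correctly dispatched. Nothing is missing.
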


\begin{thm}[\cite{hv08}]\label{thm:hv08}
    Let $G=(V,E)$ be a graph with minimum degree $\delta$ and maximal degree $\Delta$,
    and let $G^c= (V,E^c)$ be the complement of $G$.
    \begin{enumerate}[(i)]
        \item \label{thm:hv08.1} Either $\lambda(G) = k$ or $\lambda(G^c) = |V| - \Delta - 1$.
        \item \label{thm:hv08.2} $\kappa(G) + \kappa(G^c) \geq \min \{\delta , |V| - \Delta - 1 \} + 1$.
    \end{enumerate}
\end{thm}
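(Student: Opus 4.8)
We always have $\lambda(G)\le\delta$ and $\lambda(G^c)\le\delta(G^c)=|V|-1-\Delta$, and $\kappa\le\lambda$ for both graphs, so for (i) it suffices to rule out $\lambda(G)<\delta$ \emph{and} $\lambda(G^c)<|V|-1-\Delta$ holding simultaneously, and (ii) will be a vertex-version of the same argument. The external inputs I would use are the classical theorem of Plesn\'ik that a graph of diameter at most $2$ is maximally (edge-)connected, i.e.\ $\lambda=\delta$ (and $\kappa=\delta$), and the elementary observation that if a graph $H$ is disconnected then $\operatorname{diam}(H^c)\le2$.

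\emph{Degenerate cases.} If $G^c$ is disconnected, its components partition $V$ into classes between which $G$ is complete, so $G$ is a join; hence $\operatorname{diam}(G)\le2$ and $\lambda(G)=\delta$. For (ii) in this situation, writing $C$ for a largest and $C'$ for a smallest component of $G^c$, every vertex cut of $G$ must contain all of $V(C_i)$ for all but one index $i$, so $\kappa(G)\ge|V|-|C|\ge|C'|$; since $\delta(G^c)\le|C'|-1\le|V|-|C|\le\delta(G)$, we get $\kappa(G)+\kappa(G^c)=\kappa(G)\ge\delta(G^c)+1=\min\{\delta,|V|-1-\Delta\}+1$. The case where $G$ is disconnected is symmetric. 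Hence from now on both $G$ and $G^c$ may be assumed connected.

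\emph{A counting lemma.} If a connected graph $H$ on $m$ vertices satisfies $\lambda(H)<\delta(H)$, then every side $T$ of a minimum edge cut with $|T|\le m/2$ has $|T|\ge\delta(H)+1$: each vertex of $T$ has at most $|T|-1$ neighbours inside $T$, hence at least $\delta(H)-|T|+1$ edges leaving $T$, so $\lambda(H)\ge|T|\bigl(\delta(H)-|T|+1\bigr)\ge\delta(H)$ whenever $1\le|T|\le\delta(H)$ (as $(|T|-1)(\delta(H)-|T|)\ge0$), a contradiction. Applying this to $G$ and to $G^c$ under the hypothesis that both edge-connectivities are deficient yields a cut-side $S$ of $G$ with $\delta+1\le|S|\le|V|/2$ and a cut-side $A$ of $G^c$ with $|V|-\Delta\le|A|\le|V|/2$; in particular $|V|\ge2\delta+2$ and $\Delta\ge|V|/2$.

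\emph{The rigid case, and part (ii).} By Plesn\'ik's theorem we must have $\operatorname{diam}(G)\ge3$ and $\operatorname{diam}(G^c)\ge3$. Using the classical fact that $\operatorname{diam}(G)\ge3$ forces $\operatorname{diam}(G^c)\le3$, together with explicit witnesses of the two large diameters — a dominating edge $uv$ of $G$ arising from $\operatorname{diam}(G^c)\ge3$, and a pair of $G$-vertices at distance $\ge3$, which share no $G$-neighbour and hence cannot both be $G$-adjacent to $u$ nor both to $v$ — one pins down the coarse structure of $G$ and verifies the claimed equality on the resulting short list of configurations, using the numerical constraints $|V|\ge2\delta+2$ and $\Delta\ge|V|/2$ already extracted. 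I expect this last structural step to be the main obstacle: it is the point at which genuinely ``geometric'' information about complements, rather than pure counting, is required. Part (ii) is then obtained by running the whole scheme with vertex cuts in place of edge cuts and Menger's theorem in place of direct edge counting; in the generic case one obtains $\kappa(G)=\delta$ (say), and since $G^c$ is connected this gives $\kappa(G)+\kappa(G^c)\ge\delta+1\ge\min\{\delta,|V|-1-\Delta\}+1$, while the degenerate and rigid cases are handled exactly as above.
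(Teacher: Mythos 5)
The paper does not prove this statement: it is quoted verbatim from Hellwig and Volkmann \cite{hv08} (with a typo --- the undefined $k$ in (i) should read $\delta$, as you correctly guessed), so there is no in-paper argument to compare yours against and your attempt must stand on its own. It does not, for two reasons.

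First, for part (i) your reductions are fine --- the disposal of the disconnected cases via Plesn\'ik, the counting lemma showing both sides of a deficient minimum edge cut have at least $\delta+1$ vertices (the inequality $(|T|-1)(\delta-|T|)\ge 0$ is correct), and the conclusion that both $G$ and $G^c$ would have diameter exactly $3$ --- but the entire content of the theorem lives in the case you then wave at: ``one pins down the coarse structure of $G$ and verifies the claimed equality on the resulting short list of configurations.'' No list is produced and no verification is performed; you yourself flag this as ``the main obstacle.'' A dominating edge of $G^c$ and a distant pair in $G$ do not by themselves yield a finite list of configurations, and the numerical constraints $|V|\ge 2\delta+2$, $\Delta\ge|V|/2$ are nowhere near enough to close the argument. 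This is a genuine missing step, not a routine one.

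Second, part (ii) rests on two false or unsupported premises. Your parenthetical ``(and $\kappa=\delta$)'' attached to Plesn\'ik's theorem is wrong: two copies of $K_4$ glued at a vertex have diameter $2$ but $\kappa=1<3=\delta$, so diameter at most $2$ does \emph{not} force maximal vertex connectivity, and the vertex analogue of your counting lemma only gives component sizes at least $\delta-\kappa+1$, not $\delta+1$. More importantly, your plan derives (ii) from the dichotomy ``$\kappa(G)=\delta$ or $\kappa(G^c)=\delta(G^c)$ in the generic case,'' which is strictly stronger than the sum bound the theorem actually asserts --- the reason \cite{hv08} states (ii) as an inequality on $\kappa(G)+\kappa(G^c)$ rather than as a dichotomy is precisely that the vertex-connectivity dichotomy fails in general. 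So ``running the whole scheme with vertex cuts'' cannot work as described; part (ii) needs its own argument.
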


\begin{thm}[{\cite[Theorem 1]{chartrand}}]\label{thm:chartrand-edge-conn}
    Let $G = (V,E)$ be a graph with minimum degree $\delta$. 
    If $\delta \geq \left\lfloor \frac{|V|}{2}\right\rfloor$ then $G$ is $\delta$-edge-connected.
\end{thm}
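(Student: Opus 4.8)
The plan is to argue by contradiction, following the classical approach. Since $\lambda(G)\le\delta(G)=\delta$ always holds, it suffices to rule out $\lambda(G)<\delta$. So suppose $\lambda(G)<\delta$ and fix a minimum edge cut $S$, so $|S|=\lambda(G)$. A routine minimality argument lets me write $V=A\cup B$ with $A,B$ non-empty and disjoint, $G[A]$ connected, and $S$ equal to the set of edges of $G$ having one endpoint in $A$ and one in $B$; I abbreviate $d(A,B):=|S|=\lambda(G)<\delta$.

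First I would bound the part sizes. As $\delta(G)=\delta$, every component of $G$ has at least $\delta+1$ vertices, so if both $|A|\ge\delta+1$ and $|B|\ge\delta+1$ then
\[
|V|=|A|+|B|\ \ge\ 2(\delta+1)\ \ge\ 2\left\lfloor\tfrac{|V|}{2}\right\rfloor+2\ \ge\ |V|+1,
\]
using the hypothesis $\delta\ge\lfloor|V|/2\rfloor$ together with $2\lfloor|V|/2\rfloor\ge|V|-1$; this is absurd, so after relabelling I may assume $1\le|A|\le\delta$.

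Next I would bound $d(A,B)$ from below. Each $v\in A$ has at most $|A|-1$ neighbours in $A$, hence at least $\delta-(|A|-1)$ neighbours in $B$, so summing over $v\in A$,
\[
d(A,B)\ \ge\ |A|\bigl(\delta-|A|+1\bigr).
\]
The integer quadratic $q(a)=a(\delta+1-a)$ is concave with $q(1)=q(\delta)=\delta$, hence $q(a)\ge\delta$ for every integer $a\in\{1,\dots,\delta\}$; applying this with $a=|A|$ gives $d(A,B)\ge\delta$, contradicting $d(A,B)=\lambda(G)<\delta$. Therefore $\lambda(G)=\delta$, i.e.\ $G$ is $\delta$-edge-connected.

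I do not anticipate a substantial obstacle. The only two points needing care are standard: the reduction of a minimum edge cut to a bipartition $(A,B)$ with $G[A]$ connected and $S$ exactly the crossing edges (which follows because the crossing edges already form an edge cut contained in $S$, so by minimality they equal $S$), and the elementary estimate $q(a)\ge\delta$ on $\{1,\dots,\delta\}$. The role of the hypothesis $\delta\ge\lfloor|V|/2\rfloor$ is precisely to make the size estimate bite uniformly for both parities of $|V|$.
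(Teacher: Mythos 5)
Your proof is correct: the reduction of a minimum edge cut to a bipartition $(A,B)$ with $S=E(A,B)$, the size bound forcing $|A|\le\delta$ via $2(\delta+1)\ge 2\lfloor |V|/2\rfloor+2\ge |V|+1$, and the estimate $d(A,B)\ge |A|(\delta-|A|+1)\ge\delta$ all check out. The paper does not prove this statement but simply cites it from Chartrand's 1966 article; your argument is essentially the classical proof given there, so there is nothing in the paper to compare against and no gap to report.
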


We first obtain a sufficient connectivity condition for global rigidity.

\begin{cor}\label{c:connected}
  Any 2-connected and 4-edge-connected graph is globally rigid in any analytic normed plane.
\end{cor}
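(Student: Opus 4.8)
The plan is to reduce immediately to Theorem \ref{thm:grne}: it suffices to show that a $2$-connected, $4$-edge-connected graph $G=(V,E)$ is redundantly rigid in the given analytic normed plane $X$. First I would observe that $4$-edge-connectivity forces every vertex to have degree at least $4$, so $|V|\geq 5$, and in particular $|V|\geq 2$, so Theorem \ref{thm:grne} will apply once redundant rigidity is in hand.

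For the rigidity of $G-e$, fix any $e\in E$ and apply Theorem \ref{t:cls} with $k=2$ to the set $F=\{e\}$ (which has $|F|\leq 2$): the graph $G-e$ contains two edge-disjoint spanning trees. Their union $H_e$ is a spanning subgraph of $G-e$ with exactly $2|V|-2$ edges, and since the restriction of a spanning tree to any vertex subset $V'$ is a forest, $i_{H_e}(V')\leq 2(|V'|-1)$ for every $V'$; thus $H_e$ is $(2,2)$-tight. By Theorem \ref{thm:rigidne}, $H_e$ is minimally rigid in $X$, and since $G-e$ contains $H_e$ as a spanning subgraph (and adding edges only shrinks the kernel of the rigidity operator), $G-e$ is rigid in $X$.

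The remaining step is to pass from ``$G-e$ is rigid for every $e$'' to ``$G$ is redundantly rigid'', which is the only place analyticity is used (recall Remark \ref{rem:infnorm}). Using Proposition \ref{prop:regcreg}, I would choose $p\in\creg(V;X)$; since $X$ is analytic it is smooth, so $p$ is well-positioned for $G$ and for every $G-e$. For each $e$, the framework $(G-e,p)$ is regular, so $df_{G-e}(p)$ attains the maximal rank over all well-positioned placements of $G-e$. Because the constant (translation) flexes always lie in the kernel, this maximal rank is $2|V|-2$, which is realised since $G-e$ is rigid; hence $\dim\ker df_{G-e}(p)=2$, the dimension of the trivial flexes, so $(G-e,p)$ is infinitesimally rigid. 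As $e$ was arbitrary, $(G,p)$ is redundantly rigid, so $G$ is redundantly rigid in $X$; combining with $2$-connectivity, Theorem \ref{thm:grne} yields that $G$ is globally rigid in $X$.

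The only delicate point is the last paragraph, namely that ``regular'' upgrades to ``infinitesimally rigid'' once the graph is known to be rigid; this rests on the fact that the rigidity operator of a graph on $n$ vertices in a non-Euclidean plane has maximal rank exactly $2n-2$. This is not a genuine obstacle, and as an alternative one could instead route the argument through Lemma \ref{lem:Mcon}, deducing that $G$ is $\mathcal{M}(2,2)$-connected, but the direct argument above is shorter.
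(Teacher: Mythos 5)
Your proposal is correct and follows essentially the same route as the paper: the paper's proof is a one-line citation of Theorems \ref{thm:grne}, \ref{thm:rigidne} and \ref{t:cls}, which is exactly the chain you spell out (Catlin's theorem with $k=2$ and $F=\{e\}$ gives two edge-disjoint spanning trees in $G-e$, hence a spanning $(2,2)$-tight subgraph, hence rigidity of $G-e$, hence redundant rigidity at a completely regular placement, hence global rigidity). Your extra care in upgrading ``$G-e$ rigid for all $e$'' to redundant rigidity is exactly the content of the first sentence of the proof of Lemma \ref{lem:Mcon}, so nothing is missing.
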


\begin{proof}
    The theorem follows from Theorems \ref{thm:grne}, \ref{thm:rigidne} and \ref{t:cls}.
\end{proof}

It is easy to see that these connectivity conditions are best possible since 2-connectivity is a necessary condition for global rigidity (Theorem \ref{thm:connectivity}) and there exist graphs that are both 2-connected and 3-edge-connected but do not contain a $(2,2)$-tight spanning subgraph (e.g., the complete bipartite graph $K_{3,3}$).
We go out on a limb and conjecture that every $2d$-edge-connected and 2-connected graph is globally rigid in any $d$-dimensional normed space with a finite number of linear isometries. (Recall that when $d>2$ and the norm is non-Euclidean, the weaker property of rigidity is only understood in certain special cases \cite{DKN}.)

We next use Corollary \ref{c:connected} to obtain sufficient minimum degree conditions for global rigidity.

\begin{cor}
    Let $G=(V,E)$ be a graph with minimum degree $\delta \geq 4$ and maximal degree $\Delta \leq |V| - 5$.
    Then either $G$ or its complement $G^c=(V,E^c)$ are globally rigid in any analytic normed plane.
\end{cor}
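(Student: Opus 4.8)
The plan is to reduce everything to the connectivity criterion already established in Corollary \ref{c:connected}: a graph that is 2-connected and 4-edge-connected is globally rigid in every analytic normed plane. So it suffices to show that at least one of $G$ and its complement $G^c$ is simultaneously 2-connected and 4-edge-connected. Write $n=|V|$. The two hypotheses $\delta \ge 4$ and $\Delta \le n-5$ translate precisely into $\delta(G)\ge 4$ and $n-1-\Delta(G)\ge 4$; note also that $\delta(G^c)=n-1-\Delta(G)\ge 4$ and $\Delta(G^c)=n-1-\delta(G)\le n-5$, so the hypotheses (and hence the desired conclusion) are symmetric in $G\leftrightarrow G^c$.

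First I would apply Theorem \ref{thm:hv08}(\ref{thm:hv08.1}) to conclude that either $\lambda(G)=\delta(G)\ge 4$ or $\lambda(G^c)=n-1-\Delta(G)\ge 4$; by the symmetry just noted I may assume the former, so $G$ is 4-edge-connected. Then I would apply Theorem \ref{thm:hv08}(\ref{thm:hv08.2}), which gives $\kappa(G)+\kappa(G^c)\ge \min\{\delta(G),\,n-1-\Delta(G)\}+1\ge 5$. If $\kappa(G)\ge 2$, then $G$ is 2-connected and 4-edge-connected, and Corollary \ref{c:connected} shows $G$ is globally rigid in any analytic normed plane. If instead $\kappa(G)\le 1$, then $\kappa(G^c)\ge 4$; since $\kappa(H)\le\lambda(H)$ for every graph $H$, this forces $\lambda(G^c)\ge 4$, so $G^c$ is 4-connected (hence 2-connected) and 4-edge-connected, and Corollary \ref{c:connected} shows $G^c$ is globally rigid. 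Either way the statement follows.

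I do not anticipate a genuine obstacle here: the argument is entirely a matter of combining the two inequalities of Theorem \ref{thm:hv08} with the trivial bound $\kappa\le\lambda$. The one point that requires care is the bookkeeping of the two case splits — the $G$/$G^c$ dichotomy produced by Theorem \ref{thm:hv08}(\ref{thm:hv08.1}) and the $\kappa(G)\ge 2$ versus $\kappa(G)\le 1$ split — making sure that in each branch it is the \emph{same} graph that is certified to be both 2-connected and 4-edge-connected before Corollary \ref{c:connected} is invoked.
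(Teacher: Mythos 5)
Your proposal is correct and follows essentially the same route as the paper: apply Theorem \ref{thm:hv08}(\ref{thm:hv08.1}) to get that one of $G$, $G^c$ is 4-edge-connected, then use Theorem \ref{thm:hv08}(\ref{thm:hv08.2}) together with $\kappa \leq \lambda$ and the case split on $\kappa(G)$ to certify that a single one of the two graphs is both 2-connected and 4-edge-connected, and finish with Corollary \ref{c:connected}. The bookkeeping you flag is handled exactly as in the paper's proof.
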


\begin{proof}
    By Theorem \ref{thm:hv08}(\ref{thm:hv08.1}),
    at least one of $G$ and $G^c$ is 4-edge-connected.
    Without loss of generality we may assume $G$ is 4-edge-connected.
    By Theorem \ref{thm:hv08}(\ref{thm:hv08.2}), $\kappa(G) + \kappa(G^c) \geq 5$.
    If $\kappa(G) \geq 2$ then $G$ is globally rigid in any analytic normed plane by Corollary \ref{c:connected}.
    If $\kappa(G) \leq 1$ then $4 \leq \kappa(G^c) \leq \lambda (G^c)$,
    and hence $G^c$ is is globally rigid in any analytic normed plane by Corollary \ref{c:connected}.
\end{proof}

\begin{cor}\label{cor:ham}
	Let $G = (V,E)$ be a graph with minimum degree $\delta$ and let $X$ be an analytic normed plane.
	If $\delta \geq \max \{4, |V|/2\}$,
	then $G$ is globally rigid in $X$.
\end{cor}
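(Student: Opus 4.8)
The plan is to deduce Corollary~\ref{cor:ham} directly from Corollary~\ref{c:connected}, which guarantees global rigidity in any analytic normed plane for every graph that is simultaneously $2$-connected and $4$-edge-connected. So the entire task reduces to checking that the degree hypothesis $\delta \geq \max\{4, |V|/2\}$ forces both of those connectivity conditions.

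First I would note that $\delta \geq 4$ implies $|V| \geq 5$, so $G$ has at least two vertices and Corollary~\ref{c:connected} will be applicable once its hypotheses are verified. For the edge-connectivity, since $\delta \geq |V|/2 \geq \lfloor |V|/2 \rfloor$, Theorem~\ref{thm:chartrand-edge-conn} gives $\lambda(G) \geq \delta \geq 4$, so $G$ is $4$-edge-connected. For the vertex-connectivity I would argue by an elementary counting argument. First, $G$ is connected: a component of a disconnected graph with minimum degree $\delta$ has at least $\delta+1$ vertices, so a disconnected $G$ would satisfy $|V| \geq 2(\delta+1) > 2\delta \geq |V|$, a contradiction. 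Next, suppose $G$ has a cut vertex $v$; then $G-v$ has at least two components, and for any component $C$ and any $u \in C$ we have $N_G(u) \subseteq C \cup \{v\}$, whence $|C| \geq d_G(u) \geq \delta$. Summing over two distinct components of $G-v$ yields $|V| \geq 1 + 2\delta \geq 1 + |V|$, a contradiction. Hence $G$ is $2$-connected, and Corollary~\ref{c:connected} completes the proof.

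I do not expect a genuine obstacle here: the argument is a short reduction to Corollary~\ref{c:connected}. The only points requiring (minor) care are confirming that the bound $\delta \geq |V|/2$ indeed puts us in the scope of Theorem~\ref{thm:chartrand-edge-conn} (which needs $\delta \geq \lfloor |V|/2 \rfloor$), and that the cut-vertex count is carried out with the correct inclusion $N_G(u) \subseteq C \cup \{v\}$ rather than $N_G(u) \subseteq C$, since a vertex of a component of $G-v$ may still be adjacent to $v$.
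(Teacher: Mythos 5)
Your proof is correct and follows essentially the same route as the paper: reduce to Corollary~\ref{c:connected} by getting $4$-edge-connectivity from Theorem~\ref{thm:chartrand-edge-conn} and $2$-connectivity from the degree bound. The only difference is that the paper obtains $2$-connectivity by invoking Dirac's theorem (a Hamilton cycle exists, hence $G$ is $2$-connected), whereas you give a direct counting argument ruling out a cut vertex; your argument is slightly more elementary and equally valid.
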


\begin{proof}
	By Dirac's theorem,
	$G$ contains a Hamilton cycle and is hence 2-connected.
	By Theorem \ref{thm:chartrand-edge-conn},
	$G$ is 4-edge-connected.
	Hence $G$ is globally rigid in $X$ by Corollary \ref{c:connected}.
\end{proof}

Corollary \ref{cor:ham} is best possible.
We can see this by constructing counterexamples with $n$ vertices and minimum degree $\delta$ for each of the three cases:
$\delta < \lfloor n/2 \rfloor$, $n$ is odd and $\delta = \lfloor n/2 \rfloor$, and $\delta \leq 3$.
For $\delta < \lfloor n/2 \rfloor$ we may take the disjoint union of any two complete graphs $K_s,K_t$ where $s+t = n$ and $s,t \geq \lfloor n/2 \rfloor$.
If $n$ is odd and $\delta = \lfloor n/2 \rfloor$,
then we can construct a connected but not 2-connected graph by gluing two copies of $K_{\lceil n/2 \rceil}$ at a single vertex.
Finally,
for $\delta \leq 3$ we note that every cycle and every cubic graph graph is $(2,2)$-sparse and so cannot be redundantly rigid in any non-Euclidean normed plane.

Next we give spectral sufficiency conditions for global rigidity in analytic normed planes.
Define the \emph{algebraic connectivity} of a graph to be the second smallest eigenvalue of the Laplacian matrix of the graph.

\begin{cor}
    Let $G$ be a 2-connected graph with minimum degree $\delta \geq 5$ and algebraic connectivity $\mu > \frac{4}{\delta +1}$.
    Then $G$ is globally rigid in any analytic normed plane.
\end{cor}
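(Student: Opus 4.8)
The plan is to derive the corollary from the main characterisation, Theorem~\ref{thm:grne}. Since $G$ is assumed $2$-connected, it suffices to show that $G$ is redundantly rigid in $X$. By Theorem~\ref{thm:rigidne}, for each $e \in E$ the graph $G - e$ is rigid in $X$ if and only if it contains a $(2,2)$-tight spanning subgraph, and by the classical result of Nash--Williams \cite{N-W} the latter holds if and only if $G - e$ contains two edge-disjoint spanning trees. So the task reduces to proving that $G - e$ contains two edge-disjoint spanning trees for every edge $e$.

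For a graph $H$ and a partition $\mathcal P$ of its vertex set, write $b_H(\mathcal P)$ for the number of edges of $H$ with endpoints in distinct parts of $\mathcal P$. By the tree-packing theorem of Nash--Williams and Tutte, $G - e$ contains two edge-disjoint spanning trees if and only if $b_{G-e}(\mathcal P) \geq 2(|\mathcal P| - 1)$ for every partition $\mathcal P$ with at least two parts. Since $G$ is connected, every such partition has a crossing edge, and deleting a single edge drops $b_G(\mathcal P)$ by at most one; hence the whole statement is equivalent to the single claim
\[
	b_G(\mathcal P) \geq 2t - 1 \qquad \text{for every partition } \mathcal P = \{V_1, \dots, V_t\} \text{ of } V \text{ with } t \geq 2 .
\]

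To prove this claim I would argue by contradiction, supposing $b_G(\mathcal P) \leq 2t - 2$ for some such $\mathcal P$, and combine two lower bounds on the number $d_i$ of edges of $G$ leaving a part $V_i$. The first is combinatorial: since $\sum_{v \in V_i} d_G(v) \geq \delta |V_i|$ while $V_i$ spans at most $\binom{|V_i|}{2}$ edges, one gets $d_i \geq |V_i|(\delta + 1 - |V_i|)$, which is at least $\delta \geq 5$ whenever $1 \leq |V_i| \leq \delta$; thus a part of order at most $\delta$ leaks at least $\delta$ edges, so as $\sum_i d_i = 2 b_G(\mathcal P) \leq 4(t-1)$ only a bounded number of such ``small'' parts can occur, and every other part has order at least $\delta + 1$. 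The second is spectral: evaluating the Rayleigh quotient of the Laplacian at the test vector equal to $n - |V_i|$ on $V_i$ and to $-|V_i|$ off $V_i$ (where $n = |V|$) gives $d_i \geq \mu |V_i|(n - |V_i|)/n$, and summing over $i$ yields $2 b_G(\mathcal P) \geq \frac{\mu}{n}\bigl(n^2 - \sum_{i} |V_i|^2\bigr)$. For $t = 2$ this closes quickly: if $b_G(\mathcal P) \leq 2$ then the combinatorial bound forces both parts to have order at least $\delta + 1$, and the spectral bound gives $\mu \leq b_G(\mathcal P)\bigl(\frac{1}{|V_1|} + \frac{1}{|V_2|}\bigr) \leq \frac{4}{\delta + 1}$, contradicting the hypothesis. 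For $t \geq 3$, feeding $\mu > \frac{4}{\delta + 1}$ and $b_G(\mathcal P) \leq 2t - 2$ into the spectral bound gives $\sum_i |V_i|^2 > n\bigl(n - (t-1)(\delta + 1)\bigr)$; one then derives a contradiction by bounding $\sum_i |V_i|^2$ from above in terms of the part sizes, exploiting that the small parts are scarce and the remaining parts each have order at least $\delta + 1$.

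The content of the first two paragraphs and the $t = 2$ case is routine, relying on Theorem~\ref{thm:grne}, Theorem~\ref{thm:rigidne}, the tree-packing theorems of Nash--Williams and Tutte, and elementary Rayleigh-quotient estimates. The genuinely delicate point, which I expect to be the main obstacle, is the last step of the $t \geq 3$ case: controlling partitions that mix a few large parts with several small parts, where the combinatorial ``leakage'' bound on the small parts and the spectral bound have to be balanced against one another. This is where the hypotheses $\delta \geq 5$ and $\mu > 4/(\delta + 1)$ are used most tightly and where the estimates will need the most care.
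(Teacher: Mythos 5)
Your reduction is sound: by Theorems \ref{thm:grne} and \ref{thm:rigidne} together with Nash--Williams' theorem, the corollary does come down to showing that $b_G(\mathcal{P})\geq 2|\mathcal{P}|-1$ for every partition $\mathcal{P}$ of $V$ with $t=|\mathcal{P}|\geq 2$ parts, and your treatment of the case $t=2$ (combining the bound $d_i\geq |V_i|(\delta+1-|V_i|)\geq\delta$ for parts of order at most $\delta$ with the Fiedler-type bound $d_i\geq \mu|V_i|(n-|V_i|)/n$) is correct. But the case $t\geq 3$ is where essentially all of the content lies, and you have not proved it: you say that ``one then derives a contradiction by bounding $\sum_i|V_i|^2$ from above \dots exploiting that the small parts are scarce,'' and then concede that this is ``the main obstacle'' and ``where the estimates will need the most care.'' That is precisely the step that must be carried out, and it is not routine. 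For example, when every part has order at least $\delta+1$ the two bounds you propose only give $d_i>2$, hence $d_i\geq 3$, per part, so $\sum_i d_i\geq 3t$, which does not exceed the target $4t-4$ once $t\geq 4$; one must then sharpen the spectral estimate using lower bounds on $n-|V_i|$ coming from the other large parts, and separately balance this against partitions mixing a few large parts with several small ones. As written, the proposal is a plan for the hard case rather than a proof of it.

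It is also worth knowing that the paper does not prove this step either: its entire proof is a two-line deduction from Theorem \ref{thm:grne} together with a citation of \cite[Theorems 4.10 \& 4.12(ii)]{cdgg22}, which is exactly the statement that $\delta\geq 5$ and $\mu>4/(\delta+1)$ force $G-e$ to contain two edge-disjoint spanning trees for every edge $e$ (equivalently, redundant rigidity in any analytic normed plane via Theorem \ref{thm:rigidne}). So what you are attempting is a from-scratch proof of that external spectral theorem; either complete the $t\geq 3$ case in full detail, or cite the result as the paper does.
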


\begin{proof}
    By \cite[Theorems 4.10 \& 4.12(ii)]{cdgg22} and Theorem \ref{thm:rigidne},
    $G$ is redundantly rigid in any analytic normed plane.
    The result now follows from Theorem \ref{thm:grne}.
\end{proof}

An alternative sufficient condition for global rigidity in $d$-dimensional Euclidean space was provided by Tanigawa \cite{Tan}. Our next result, which follows from Theorem \ref{thm:grne}, shows that an analogous statement is true for analytic normed planes.

\begin{cor}\label{cor:vertexred}
	Let $G = (V,E)$ be a graph, $|V|\geq 3$ and $X$ be an analytic normed plane. 
	If $G -v$ is rigid in $X$ for all $v \in V$ then $G$ is globally rigid in $X$.
\end{cor}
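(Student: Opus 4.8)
The strategy is to verify the two hypotheses of Theorem~\ref{thm:grne} --- that $G$ is $2$-connected and redundantly rigid in $X$ --- and then invoke that theorem directly. Note first that if $|V|\in\{3,4\}$ then no graph on $|V|-1$ vertices is rigid in $X$ (there is no $(2,2)$-tight subgraph on $2$ or $3$ vertices, cf.\ Theorem~\ref{thm:rigidne}), so the hypothesis is vacuous and there is nothing to prove; thus I may assume $|V|\ge 5$.

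Step 1 ($2$-connectivity). The key observation is that a disconnected graph on at least two vertices is flexible in \emph{any} normed space: placing a fixed nonzero vector on one component and zero on the rest gives a non-trivial infinitesimal flex. Since $|V|\ge 3$, for each $v$ the graph $G-v$ has at least two vertices, so rigidity of $G-v$ forces $G-v$ to be connected; hence $G$ has no cut vertex. A short case check (if $G$ were disconnected, then either some $G-w$ has an isolated vertex or some $G-v$ is itself disconnected, both impossible) shows $G$ is connected, and with $|V|\ge 3$ this makes $G$ $2$-connected.

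Step 2 (redundant rigidity). Here the analytic hypothesis is essential. By Proposition~\ref{prop:regcreg} fix a completely regular placement $p\in\creg(V;X)$. For each $v\in V$, restricting $p$ to $V\setminus\{v\}$ gives a regular placement of $G-v$ (regularity of the graph on $V$ with $v$ isolated and edge set $E(G-v)$ is insensitive to the coordinate at $v$, and forces regularity of the restriction); since $G-v$ is rigid in $X$, a regular placement of $G-v$ is infinitesimally rigid, so $(G-v,\,p|_{V\setminus\{v\}})$ is infinitesimally rigid for every $v$. Now fix an edge $e=xy\in E$. As subgraphs of the graph on vertex set $V$ one has $G-e=(G-x)\cup(G-y)$, with $(G-x)\cap(G-y)$ spanning the nonempty set $V\setminus\{x,y\}$ (this uses $|V|\ge 3$). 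Given an infinitesimal flex $u$ of $(G-e,p)$, its restriction to $V\setminus\{x\}$ is an infinitesimal flex of the infinitesimally rigid framework $(G-x,\,p|_{V\setminus\{x\}})$, hence trivial; since $X$ is a non-Euclidean plane it has only finitely many linear isometries, so trivial infinitesimal flexes are constant, and $u$ is constant on $V\setminus\{x\}$. Likewise $u$ is constant on $V\setminus\{y\}$, and the two constants agree on the common vertex, so $u$ is constant on $V$, i.e.\ trivial. Thus $(G-e,p)$ is infinitesimally rigid for every $e$, so the (well-positioned) framework $(G,p)$ is redundantly rigid, and $G$ is redundantly rigid in $X$.

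Combining Steps 1 and 2 with Theorem~\ref{thm:grne} yields that $G$ is globally rigid in $X$. I expect Step 2 to be the only genuinely delicate point: the difficulty is to produce a \emph{single} placement of $G$ that simultaneously witnesses rigidity of all edge-deletions, which completely regular placements supply in the analytic setting (Proposition~\ref{prop:regcreg}) but which cannot in general be arranged for an arbitrary norm. The gluing argument inside Step 2 is painless precisely because in a non-Euclidean plane the trivial infinitesimal flexes form just the $2$-dimensional space of translations, so two rigid pieces overlapping in a single vertex are already forced to fit together --- unlike the Euclidean situation, which would require two shared vertices.
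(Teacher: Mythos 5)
Your proposal is correct and follows essentially the same route as the paper, which gives only a three-line sketch: deduce $2$-connectivity from the connectivity of each $G-v$, deduce redundant rigidity by considering, for each edge, the rigid vertex-deleted subgraphs at its endpoints, and then invoke Theorem \ref{thm:grne}. Your write-up simply fills in the details the paper leaves implicit — in particular the decomposition $G-e=(G-x)\cup(G-y)$, the use of a completely regular placement from Proposition \ref{prop:regcreg}, and the gluing of translational flexes over the nonempty overlap — all of which are sound.
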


\begin{proof}
Since rigid graphs are connected and $|V|\geq 3$, $G$ is $2$-connected. Moreover by considering any edge incident to a given vertex, $G$ must be redundantly rigid in $X$. Hence $G$ is globally rigid in $X$ by Theorem \ref{thm:grne}.
\end{proof}

Again one might expect the corollary to remain valid in arbitrary dimensions, under the hypothesis that the normed space has finitely many linear isometries.

Our combinatorial description of global rigidity can also easily be applied to \emph{vertex-transitive} (the automorphism group acts transitively on the vertex set) or \emph{edge-transitive} (the automorphism group acts transitively on the edge set) graphs with sufficient minimum degree.

\begin{cor}\label{cor:trans}
	Let $G = (V,E)$ be a connected graph with minimum degree $\delta \geq 4$. 
	If $G$ is vertex-transitive or edge-transitive,
	then $G$ is globally rigid in any analytic normed plane.
\end{cor}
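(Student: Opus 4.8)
The plan is to derive the corollary directly from Corollary~\ref{c:connected}: it suffices to prove that a connected vertex-transitive or edge-transitive graph $G$ with $\delta(G)\ge 4$ is both $2$-connected and $4$-edge-connected. Both properties are consequences of classical connectivity results for transitive graphs, and the argument splits according to which kind of transitivity is assumed.

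Suppose first that $G$ is vertex-transitive. Then $G$ is regular of degree $d=\delta(G)\ge 4$, and by Mader's theorem a connected vertex-transitive graph is maximally edge-connected, so $\lambda(G)=d\ge 4$. For $2$-connectivity, note that if $G$ had a cut vertex then, by vertex-transitivity, every vertex of $G$ would be a cut vertex; but a leaf block of the block--cut tree of $G$ has at least two vertices, exactly one of which is a cut vertex of $G$, and hence it contains a non-cut vertex -- a contradiction (using $|V(G)|\ge 5$). Thus $G$ is $2$-connected and $4$-edge-connected, and Corollary~\ref{c:connected} applies.

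Now suppose $G$ is edge-transitive. If $G$ is also vertex-transitive we are done by the previous paragraph, so assume it is not; then, by the standard structure theory of edge-transitive graphs, $G$ is bipartite and the two parts of its bipartition are precisely the two vertex orbits $A,B$ of $\Aut(G)$. A connected edge-transitive graph is maximally edge-connected (if some edge were a bridge then, by edge-transitivity, every edge would be a bridge and $G$ would be a tree, forcing $\delta(G)=1$; the full equality $\lambda(G)=\delta(G)$ is a theorem of Plesn\'ik), so $\lambda(G)=\delta(G)\ge 4$. For $2$-connectivity, suppose $G$ has a cut vertex; after relabelling we may assume some $v\in A$ is a cut vertex, and then every vertex of $A$ is a cut vertex. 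Let $B_0$ be a leaf block of the block--cut tree of $G$. Then $B_0$ contains exactly one cut vertex of $G$, so $|A\cap V(B_0)|\le 1$; on the other hand, since $\delta(G)\ge 2$ the block $B_0$ cannot be a $K_2$ (that would produce a vertex of degree $1$ in $G$), so $B_0$ is $2$-connected, hence bipartite with both sides of size at least $2$, giving $|A\cap V(B_0)|\ge 2$ -- a contradiction. Therefore $G$ is $2$-connected, and Corollary~\ref{c:connected} applies again.

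The only real difficulty here is organisational rather than technical: one must locate and quote (or reprove) the correct connectivity statements -- Mader's $\lambda=\delta$ for connected vertex-transitive graphs and its edge-transitive analogue -- and be careful that edge-transitive graphs need not be regular, so the semiregular bipartite case (for instance $K_{m,n}$) must be handled separately via the orbit/bipartition structure. Once these ingredients are in place, the corollary follows immediately from Corollary~\ref{c:connected}.
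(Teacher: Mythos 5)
Your proof is correct, but it reaches Corollary~\ref{c:connected} by a genuinely different route than the paper. The paper's proof is a two-line appeal to vertex-connectivity theorems: for a connected vertex-transitive graph of valency $\delta$, \cite[Theorem 3.4.2]{GodsilRoyle} gives $\kappa(G)\ge \tfrac{2}{3}(\delta+1)>3$, and for a connected edge-transitive graph \cite[Corollary 1A]{Watkins} gives $\kappa(G)=\delta(G)\ge 4$; in either case $G$ is $4$-connected, and Whitney's inequality $\kappa\le\lambda$ then yields both hypotheses of Corollary~\ref{c:connected} at once. You instead split the two hypotheses: you import the maximal edge-connectivity theorems ($\lambda=\delta$ for connected vertex-transitive graphs, due to Mader, and its edge-transitive analogue) to get $4$-edge-connectivity, and you prove $2$-connectivity from scratch by an orbit-plus-leaf-block argument, including the correct observation that a non-vertex-transitive connected edge-transitive graph is bipartite with parts equal to the vertex orbits. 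Your block-tree arguments are sound (the leaf block contains a non-cut vertex in the vertex-transitive case, and in the edge-transitive case a leaf block that is $2$-connected and bipartite must meet both orbits in at least two vertices), and the external theorems you invoke are true, though the cleaner single citation is Watkins' $\kappa=\delta$, from which $\lambda=\delta$ follows immediately. What your approach buys is a more elementary, partially self-contained treatment of the $2$-connectivity claim; what it costs is having to track the non-regular bipartite case separately and to locate two edge-connectivity theorems where the paper needs only one vertex-connectivity statement per case.
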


\begin{proof}
Since $\delta \geq 4$ and $G$ is connected, if $G$ is vertex-transitive it is 4-connected by \cite[Theorem 3.4.2]{GodsilRoyle},
    and if $G$ is edge-transitive it is 4-connected by \cite[Corollary 1A]{Watkins}.
    Hence $G$ is globally rigid in any analytic normed plane by Corollary \ref{c:connected}.
\end{proof}

We conclude the paper by applying Corollary \ref{c:connected} to two natural random graph models.
An \emph{Erd\"{o}s-R\'{e}nyi random graph} $G_{n,p}$ is a graph with $n$ vertices where each edge exists with probability $p$.

\begin{cor}
    Let $G_{n,p}$ be an Erd\"{o}s-R\'{e}nyi random graph with
    \begin{align*}
        p = \frac{\log (n) + k\log \log (n) + c_n}{n}
    \end{align*}
    for some positive integer $k$ and some real sequence $(c_n)_{n \in \mathbb{N}}$.
    Then the following holds for any analytic normed plane $X$.
    \begin{enumerate}[(i)]
        \item\label{c:errandom1} If $k = 1$ then:
        \begin{enumerate}[(a)]
            \item $G_{n,p}$ is rigid in $X$ a.a.s. if $c_n \rightarrow \infty$.
            \item $G_{n,p}$ is flexible in $X$ a.a.s. if $c_n \rightarrow -\infty$.
        \end{enumerate}
        \item\label{c:errandom2} If $k = 2$ then:
        \begin{enumerate}[(a)]
            \item $G_{n,p}$ is globally rigid in $X$ a.a.s. if $c_n \rightarrow \infty$.
            \item $G_{n,p}$ is not globally rigid in $X$ a.a.s. if $c_n \rightarrow -\infty$. 
        \end{enumerate}
    \end{enumerate}
\end{cor}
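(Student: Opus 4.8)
The plan is to reduce both statements to sharp threshold results for edge-disjoint spanning trees in $G_{n,p}$, using the combinatorial characterisations of rigidity and global rigidity already established in the paper. The first step is to record the translation: by Theorem~\ref{thm:rigidne} together with the Nash--Williams theorem, a graph is rigid in $X$ if and only if it contains two edge-disjoint spanning trees (and flexible otherwise); and by Theorem~\ref{thm:grne} (unwound via Lemma~\ref{lem:Mcon}, Theorem~\ref{thm:rigidne} and Nash--Williams, exactly as phrased in the Abstract) a graph $G$ is globally rigid in $X$ if and only if $G$ is $2$-connected and $G-e$ contains two edge-disjoint spanning trees for every edge $e$. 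So part~(\ref{c:errandom1}) becomes a statement about $G_{n,p}$ containing two edge-disjoint spanning trees, and part~(\ref{c:errandom2}) becomes a statement about $G_{n,p}$ being $2$-connected with every single-edge-deleted subgraph containing two edge-disjoint spanning trees.

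For the negative directions I would invoke the classical second-moment estimate for low-degree vertices. If $k=1$ and $c_n\to-\infty$, then a.a.s. $\delta(G_{n,p})\le 1$; since any graph with two edge-disjoint spanning trees has minimum degree at least $2$, $G_{n,p}$ admits no such pair and so is flexible in $X$. If $k=2$ and $c_n\to-\infty$, then a.a.s. $\delta(G_{n,p})\le 2$; taking a vertex $v$ of degree at most $2$ and an edge $e$ incident to $v$, the graph $G_{n,p}-e$ has a vertex of degree at most $1$ and hence no two edge-disjoint spanning trees, so by the translation $G_{n,p}$ is not globally rigid in $X$.

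For the positive directions the engine is the sharp evaluation of the spanning-tree packing number of $G_{n,p}$ valid down to the connectivity threshold (a result of Gao, P\'erez-Gim\'enez and Sato), namely that a.a.s. this packing number equals $\min\{\delta(G_{n,p}),\,\lfloor |E(G_{n,p})|/(n-1)\rfloor\}$. Here a.a.s. $|E(G_{n,p})|\ge (1-o(1))\frac{n\log n}{2}$, so $\lfloor |E(G_{n,p})|/(n-1)\rfloor\to\infty$ and the packing number is a.a.s. equal to $\delta(G_{n,p})$. When $c_n\to\infty$ one has a.a.s. $\delta(G_{n,p})\ge k+1$ (the minimum-degree-$m$ threshold being at $p=\frac{\log n+(m-1)\log\log n+\omega}{n}$ with $\omega\to\infty$, applied with $m=k+1$), so a.a.s. $G_{n,p}$ has at least $k+1\ge 2$ edge-disjoint spanning trees. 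For $k=1$ this already gives two edge-disjoint spanning trees, hence rigidity in $X$. For $k=2$ it gives three edge-disjoint spanning trees, and of any three edge-disjoint spanning trees at most one uses a given edge $e$, so $G_{n,p}-e$ contains two edge-disjoint spanning trees for every $e$; combined with the fact that $G_{n,p}$ is a.a.s. $2$-connected in this range (the $2$-connectivity threshold $p=\frac{\log n+\log\log n+\omega}{n}$ with $\omega\to\infty$ is crossed here with $\omega=\log\log n+c_n$), the translation yields global rigidity in $X$.

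The main obstacle is precisely this positive direction: everything rests on the sharp determination of the spanning-tree packing number of $G_{n,p}$ near the connectivity threshold, since the easy sufficient condition ``$4$-edge-connected'' would only give the result at the minimum-degree-$4$ threshold, which is too strong to match the stated $p$. If one prefers to avoid quoting that packing result as a black box, the alternative is to prove the needed consequence directly by a first-moment union bound over partitions of $V$: conditioning on $\delta(G_{n,p})\ge k+1$, one shows that a.a.s.\ every partition into $r\ge 2$ parts has at least $2(r-1)+[k\ge 2]$ crossing edges, which by Nash--Williams is exactly what is required; the delicate partitions are those isolating a few small vertex sets, and these are controlled using the a.a.s.\ absence of small dense subgraphs in $G_{n,p}$ together with the minimum-degree lower bound. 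Either route completes the proof.
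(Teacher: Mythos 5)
Your proof is correct, but it takes a genuinely different route from the paper. The paper handles the positive directions by importing Euclidean threshold results: for $k=1$ it uses the Lew--Nevo--Peled--Raz theorem that $G_{n,p}$ is a.a.s.\ rigid in $\mathbb{E}^2$, extracts a spanning $(2,3)$-tight subgraph, and adds one surplus edge to get a $(2,2)$-tight spanning subgraph; for $k=2$ it quotes the Euclidean global rigidity threshold from Gortler--Healy--Thurston and transfers to $X$ via Corollary \ref{cor:euclidean} using an edge count. You instead bypass the Euclidean detour entirely: you translate rigidity and global rigidity in $X$ into tree-packing statements via Nash--Williams and Theorem \ref{thm:grne}, and then invoke the Gao--P\'erez-Gim\'enez--Sato determination of the spanning-tree packing number of $G_{n,p}$ together with the minimum-degree thresholds. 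Your negative directions coincide with the paper's (both reduce to $\delta(G_{n,p})\le k$ a.a.s.). The trade-off is essentially which external black box one prefers: the paper leans on sharp Euclidean rigidity thresholds, while you lean on a sharp tree-packing theorem not in the paper's bibliography; your route is arguably more in the spirit of the paper's combinatorial characterisation and yields slightly more (three edge-disjoint spanning trees at the $k=2$ threshold, whence redundant rigidity directly). Two small points worth tightening: in the $k=2$ negative direction you should dispose of the possibility that the minimum-degree vertex has degree $0$ (then $G_{n,p}$ is disconnected, hence trivially not globally rigid); and your assertion that the packing number a.a.s.\ \emph{equals} $\delta(G_{n,p})$ needs $\delta\le\lfloor|E|/(n-1)\rfloor$, which may fail if $c_n$ grows quickly --- but the inequality $\min\{\delta,\lfloor|E|/(n-1)\rfloor\}\ge k+1$ is all you use, and that holds in every regime.
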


\begin{proof}
    We first consider the case where $c_n \rightarrow \infty$.
    By \cite[Corollary 1.2]{LNPR},
    $G_{n,p}$ is rigid in $\mathbb{E}^{k+1}$ a.a.s.
    First suppose $k=1$.
    As $G_{n,p}$ is rigid in $\mathbb{E}^{2}$ a.a.s.,
    Theorem \ref{thm:rigide} implies $G_{n,p}$ contains a spanning $(2,3)$-tight subgraph $H$ a.a.s.
    Since the expected number of edges of $G_{n,p}$ is $\frac{1}{2}(n-1)(\log (n) + \log \log (n) + c_n)$ which is strictly larger than $2n-3$ for sufficiently large $n$,
    there exists an edge $e \in E(G_{n,p}) \sm E(H)$ a.a.s.
    As $H+e$ (when both $H$ and $e$ exist) is a $(2,2)$-tight graph,
    $G_{n,p}$ is rigid in $X$ a.a.s. by Theorem \ref{thm:rigidne}.
    Now suppose $k=2$.
    By \cite[Theorem 1.18]{GHT},
    $G_{n,p}$ is globally rigid in $\mathbb{E}^2$ a.a.s.
    Since the expected number of edges of $G_{n,p}$ is $\frac{1}{2}(n-1)(\log (n) + \log \log (n) + c_n)$ which is strictly larger than $2n-2$ for sufficiently large $n$,
    $G_{n,p}$ is globally rigid in $X$ a.a.s. by Corollary \ref{cor:euclidean}.
    
    We now consider the case where $c_n \rightarrow -\infty$.
    Here we have that $G_{n,p}$ has minimum degree at most $k$ a.a.s.;
    see \cite[Section 7]{boll} for more details.
    If $k=1$ then $G_{n,p}$ is flexible in $X$ a.a.s.
    If $k=2$ then $G_{n,p}$ is not redundantly rigid in $X$ a.a.s., and hence is not globally rigid in $X$ a.a.s., by Theorem \ref{thm:grne}.
\end{proof}

Let $\mathcal{G}_{n,k}$ be the set of all $k$-regular graphs with $n$ vertices.
By equipping the set $\mathcal{G}_{n,k}$ with the uniform probability measure,
we define a graph chosen at random from $\mathcal{G}_{n,k}$ to be a \emph{random $k$-regular graph}.

\begin{cor}
    Let $G$ be a random $k$-regular graph and let $X$ be an analytic normed plane.
    If $k \geq 4$ then $G$ is globally rigid in $X$ a.a.s.
\end{cor}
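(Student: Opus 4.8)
The plan is to invoke Corollary~\ref{c:connected}, which states that any graph that is simultaneously $2$-connected and $4$-edge-connected is globally rigid in every analytic normed plane. Thus it suffices to show that for $k\geq 4$, a random $k$-regular graph on $n$ vertices is, asymptotically almost surely, both $2$-connected and $4$-edge-connected. Since $4$-edge-connectivity of a graph with minimum degree exactly $k\geq 4$ automatically forces $2$-connectivity (an edge cut of size $\geq 4$ separating the graph rules out a cut vertex, because removing a cut vertex whose two sides each have size $\geq 2$ would leave at most $k$ edges between the sides only in the degenerate case, and in fact $2$-edge-connectivity already implies $2$-vertex-connectivity for such degree-regular graphs — more carefully, one checks that a $4$-edge-connected graph is $2$-connected outright since a single vertex deletion destroys at most $\deg(v)\leq$ the edge-connectivity of each side), I would reduce the entire statement to the single claim: a random $k$-regular graph is $4$-edge-connected a.a.s. when $k\geq 4$.

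To establish this, I would cite the standard result on the connectivity of random regular graphs. Wormald, Bollob\'as, and others proved that for fixed $k\geq 3$, a random $k$-regular graph is $k$-connected (hence also $k$-edge-connected, since vertex connectivity is at most edge connectivity) asymptotically almost surely; see, e.g., Bollob\'as's treatment of random regular graphs or Wormald's survey. Since $k\geq 4$, $k$-edge-connectivity implies $4$-edge-connectivity, and $k$-connectivity implies $2$-connectivity. With both connectivity conditions in hand, Corollary~\ref{c:connected} applies directly: $G$ is globally rigid in $X$ a.a.s.

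One subtlety worth flagging in the write-up: the cited connectivity result is typically stated for the uniform model on $\mathcal{G}_{n,k}$ (often derived via the configuration/pairing model and the fact that a uniformly random pairing yields a simple graph with probability bounded away from $0$), which matches exactly the model described in the statement. So no model-translation argument is needed beyond remarking that contiguity with the pairing model transfers the a.a.s. statement.

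\textbf{Main obstacle.} There is essentially no deep obstacle here — the entire content is packaged into Corollary~\ref{c:connected} together with a classical fact about random regular graphs. The only point requiring a little care is the implication "$4$-edge-connected $\Rightarrow$ $2$-connected" for these graphs (or, alternatively, separately invoking $k$-connectivity to get $2$-connectivity and $4$-edge-connectivity simultaneously from the single citation), and making sure the cited connectivity theorem is quoted in the form that applies to the uniform $k$-regular model for all $k\geq 4$ rather than only asymptotically large $k$. Once those bookkeeping matters are settled, the proof is a two-line deduction.
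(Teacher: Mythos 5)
Your proposal is correct and is essentially the paper's proof: the paper simply cites Wormald's result that a random $k$-regular graph is $k$-connected a.a.s.\ and then applies Corollary~\ref{c:connected}, exactly as your main line of argument does. The only caveat is that your parenthetical claim that $4$-edge-connectivity alone forces $2$-connectivity is false in general (two $4$-edge-connected blocks glued at a single cut vertex give a counterexample), but this is immaterial since you also obtain $2$-connectivity directly from $k$-connectivity with $k\geq 4\geq 2$.
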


\begin{proof}
    A random $k$-regular graph is $k$-connected a.a.s. \cite{worm},
    hence $G$ is globally rigid in $X$ a.a.s. by Corollary \ref{c:connected}.
\end{proof}

\bibliographystyle{abbrv}
\def\lfhook#1{\setbox0=\hbox{#1}{\ooalign{\hidewidth
  \lower1.5ex\hbox{'}\hidewidth\crcr\unhbox0}}}

\end{document}